%% file: front.tex
\documentclass[11pt]{amsart}

\input{header} 

\begin{document}
\addtolength{\oddsidemargin}{-0.8cm}
\addtolength{\evensidemargin}{-0.8cm}
\begin{abstract}
We study perturbations of relative cubic Dirac operators for basic classical Lie superalgebras within the uniform formalism of the colour quantum Weil algebra. This perspective leads to three complementary classes of perturbations and resulting invariants.

First, we define semisimple perturbations that assign to each finite-dimensional simple supermodule a finite collection of semisimple orbits, together with canonically defined vector spaces measuring the degree of atypicality. Second, we introduce nilpotent perturbations
parametrized by the self-commuting variety of a quadratic Lie subsuperalgebra; the
resulting family of cohomology theories combines Dirac cohomology and
Duflo--Serganova cohomology.  Third, we deform the cubic Dirac operator by a
Weil-covariant differential built from the universal $1$-form in the colour quantum Weil algebra
and the Weil differential, producing a Chern-type invariant that assigns to each
finite-dimensional module a natural class in the cohomology of the Weil
complex.
\end{abstract}

\maketitle
\setlength{\parindent}{1em}
\setcounter{tocdepth}{1}

\tableofcontents

\input{main} 

\bibliography{literatur}
\bibliographystyle{alpha}
\end{document}

%% file: header.tex
\usepackage{anysize} \marginsize{1.3in}{1.3in}{1in}{1in}
\usepackage{comment}
\usepackage{listings}
\usepackage{xcolor}
\usepackage{amsmath}
\usepackage{float}
\usepackage{mathtools}
\usepackage{booktabs}
\usepackage[all]{xy}
\usepackage[utf8]{inputenc}
\usepackage{varioref}
\usepackage{upgreek}
\usepackage{amsfonts}
\usepackage{amssymb}
\usepackage{bbm}
\usepackage{esint}
\usepackage{graphicx}
\usepackage{subcaption}
\usepackage{tikz}
\usepackage{empheq}
\usepackage{enumitem}
\usepackage{tikz-cd}
\usepackage{upgreek}
\usepackage{todonotes}
\usetikzlibrary{matrix,arrows,decorations.pathmorphing}
\usepackage{mathrsfs}
\usepackage[hypertexnames=false,backref=page,pdftex,
 	pdfpagemode=UseNone,
 	breaklinks=true,
 	extension=pdf,
 	colorlinks=true,
 	linkcolor=blue,
 	citecolor=red,
 	urlcolor=blue,
 ]{hyperref}

\newcommand{\bracket}{{\langle \cdot,\cdot \rangle}}

\newcommand{\HH}{{\mathbb H}}

\newcommand{\tr}{\operatorname{tr}}

\newcommand{\Hom}{\operatorname{Hom}}

\newcommand{\sgn}{\operatorname{sgn}}
\newcommand{\sdim}{\operatorname{sdim}}
\newcommand{\str}{\operatorname{str}}
\newcommand{\DS}{\operatorname{DS}}

\newcommand{\id}{\operatorname{id}}

\renewcommand{\Im}{\operatorname{im}}
\newcommand{\Ker}{\operatorname{ker}}

\newcommand{\End}{\operatorname{End}}
\newcommand{\ad}{\operatorname{ad}}

\newcommand{\Dirac}{\operatorname{D}}

\def\Laplace{{\Updelta_{\gg}(\xi)}}
\def\WW{{\mathcal{W}}}
\def \Gl{{\mathrm{GL}}}
\def\rr{{\mathfrak{r}}}

\def\Weil{{\mathcal{W}(\gg)}}

\def\CC{{\mathbb C}}
\def\RR{{\mathbb R}}

\def\ubar{{\overline{\mathfrak{u}}}}
\def\Sbar{{\overline{S}}}
 
\def\spinbar{{\overline{S}^{\gg,\ll}}}

\def\spin{{S^{\gg,\ll}}}
\def\spinbar{{\overline{S}^{\gg,\ll}}}
\def\rk{{\operatorname{rk}}}

\def\deg{{\operatorname{deg}}}
\def\bideg{{\operatorname{bideg}}}
\def\Cl{{\operatorname{Cl}}}
\def\HH{{\mathcal{H}}}

\def\osp{{\mathfrak{osp}}}

\def\hh{{\mathfrak h}}
\def\pp{{\mathfrak p}}
\def\uu{{\mathfrak u}}

\def\ZG{{\mathfrak{Z}(\gg)}}
\def\HC{{\operatorname{HC}}}
\def\bra{{\langle}}
\def\aut{{\operatorname{aut}}}
\def\H{{\operatorname{H}}}
\def\ket{{\rangle}}
\def\Dx{{\operatorname{D}_{\gg,\ll}^{x}}}

\def\ll{{\mathfrak l}}
\def\YY{{\mathcal Y}}

\def \AA {{\mathbb{A}}}

\def \at{{\operatorname{at}}}

\def\bb{{\mathfrak b}}
\def\even{{\mathfrak{g}_{\bar{0}}}}
\def\odd{{\mathfrak{g}_{\bar{1}}}}
\def\pp{{\mathfrak p}}
\def\tt{{\mathfrak t}}

\def\nn {{\mathfrak{n}}}
\def\pp{{\mathfrak{p}}}
\def\gg{{\mathfrak{g}}}

\def\ZZ{{\mathbb Z}}

\def\gsmod{{\mathfrak{g}}\textbf{-smod}}

\def\Weyl{{\mathscr{W}(\odd)}}

\def\UE{{\mathfrak U}}

\newcommand{\Sym}{\operatorname{Sym}}

\newcommand{\Ad}{\operatorname{Ad}}

\newcommand{\norm}[1]{\lVert#1\rVert}

\newcommand{\res}{\operatorname{res}}

\makeatletter
\newcommand*{\da@rightarrow}{\mathchar"0\hexnumber@\symAMSa 4B }
\newcommand*{\da@leftarrow}{\mathchar"0\hexnumber@\symAMSa 4C }
\newcommand*{\xdashrightarrow}[2][]{%
  \mathrel{%
    \mathpalette{\da@xarrow{#1}{#2}{}\da@rightarrow{\,}{}}{}%
  }%
}
\newcommand{\xdashleftarrow}[2][]{%
  \mathrel{%
    \mathpalette{\da@xarrow{#1}{#2}\da@leftarrow{}{}{\,}}{}%
  }%
}
\newcommand*{\da@xarrow}[7]{%
  \sbox0{$\ifx#7\scriptstyle\scriptscriptstyle\else\scriptstyle\fi#5#1#6\m@th$}%
  \sbox2{$\ifx#7\scriptstyle\scriptscriptstyle\else\scriptstyle\fi#5#2#6\m@th$}%
  \sbox4{$#7\dabar@\m@th$}%
  \dimen@=\wd0 %
  \ifdim\wd2 >\dimen@
    \dimen@=\wd2 %
  \fi
  \count@=2 %
  \def\da@bars{\dabar@\dabar@}%
  \@whiledim\count@\wd4<\dimen@\do{%
    \advance\count@\@ne
    \expandafter\def\expandafter\da@bars\expandafter{%
      \da@bars
      \dabar@ 
    }%
  }%
  \mathrel{#3}%
  \mathrel{%
    \mathop{\da@bars}\limits
    \ifx\\#1\\%
    \else
      _{\copy0}%
    \fi
    \ifx\\#2\\%
    \else
      ^{\copy2}%
    \fi
  }%
  \mathrel{#4}%
}
\makeatother

\makeatletter
\newsavebox\myboxA
\newsavebox\myboxB
\newlength\mylenA

\newcommand*\xtilde[2][0.8]{%
    \sbox{\myboxA}{$\m@th#2$}%
    \setbox\myboxB\null
    \ht\myboxB=\ht\myboxA%
    \dp\myboxB=\dp\myboxA%
    \wd\myboxB=#1\wd\myboxA
    \sbox\myboxB{$\m@th\widetilde{\copy\myboxB}$}
    \setlength\mylenA{\the\wd\myboxA}
    \addtolength\mylenA{-\the\wd\myboxB}%
    \ifdim\wd\myboxB<\wd\myboxA%
       \rlap{\hskip 0.5\mylenA\usebox\myboxB}{\usebox\myboxA}%
    \else
        \hskip -0.5\mylenA\rlap{\usebox\myboxA}{\hskip 0.5\mylenA\usebox\myboxB}%
    \fi}

\newbox\usefulbox

\def\getslant #1{\strip@pt\fontdimen1 #1}

\def\xxtilde #1{\mathchoice
 {{\setbox\usefulbox=\hbox{$\m@th\displaystyle #1$}%
    \dimen@ \getslant\the\textfont\symletters \ht\usefulbox
    \divide\dimen@ \tw@ 
    \kern\dimen@ 
    \xtilde{\kern-\dimen@ \box\usefulbox\kern\dimen@ }\kern-\dimen@ }}
 {{\setbox\usefulbox=\hbox{$\m@th\textstyle #1$}%
    \dimen@ \getslant\the\textfont\symletters \ht\usefulbox
    \divide\dimen@ \tw@ 
    \kern\dimen@ 
    \xtilde{\kern-\dimen@ \box\usefulbox\kern\dimen@ }\kern-\dimen@ }}
 {{\setbox\usefulbox=\hbox{$\m@th\scriptstyle #1$}%
    \dimen@ \getslant\the\scriptfont\symletters \ht\usefulbox
    \divide\dimen@ \tw@ 
    \kern\dimen@ 
    \xtilde{\kern-\dimen@ \box\usefulbox\kern\dimen@ }\kern-\dimen@ }}
 {{\setbox\usefulbox=\hbox{$\m@th\scriptscriptstyle #1$}%
    \dimen@ \getslant\the\scriptscriptfont\symletters \ht\usefulbox
    \divide\dimen@ \tw@ 
    \kern\dimen@ 
    \xtilde{\kern-\dimen@ \box\usefulbox\kern\dimen@ }\kern-\dimen@ }}%
 {}}

\newcommand*\xoverline[2][0.75]{%
    \sbox{\myboxA}{$\m@th#2$}%
    \setbox\myboxB\null
    \ht\myboxB=\ht\myboxA%
    \dp\myboxB=\dp\myboxA%
    \wd\myboxB=#1\wd\myboxA
    \sbox\myboxB{$\m@th\overline{\copy\myboxB}$}
    \setlength\mylenA{\the\wd\myboxA}
    \addtolength\mylenA{-\the\wd\myboxB}%
    \ifdim\wd\myboxB<\wd\myboxA%
       \rlap{\hskip 0.5\mylenA\usebox\myboxB}{\usebox\myboxA}%
    \else
        \hskip -0.5\mylenA\rlap{\usebox\myboxA}{\hskip 0.5\mylenA\usebox\myboxB}%
    \fi}

\def\xxoverline #1{\mathchoice
 {{\setbox\usefulbox=\hbox{$\m@th\displaystyle #1$}%
    \dimen@ \getslant\the\textfont\symletters \ht\usefulbox
    \divide\dimen@ \tw@ 
    \kern\dimen@ 
    \overline{\kern-\dimen@ \box\usefulbox\kern\dimen@ }\kern-\dimen@ }}
 {{\setbox\usefulbox=\hbox{$\m@th\textstyle #1$}%
    \dimen@ \getslant\the\textfont\symletters \ht\usefulbox
    \divide\dimen@ \tw@ 
    \kern\dimen@ 
    \xoverline{\kern-\dimen@ \box\usefulbox\kern\dimen@ }\kern-\dimen@ }}
 {{\setbox\usefulbox=\hbox{$\m@th\scriptstyle #1$}%
    \dimen@ \getslant\the\scriptfont\symletters \ht\usefulbox
    \divide\dimen@ \tw@ 
    \kern\dimen@ 
    \xoverline{\kern-\dimen@ \box\usefulbox\kern\dimen@ }\kern-\dimen@ }}
 {{\setbox\usefulbox=\hbox{$\m@th\scriptscriptstyle #1$}%
    \dimen@ \getslant\the\scriptscriptfont\symletters \ht\usefulbox
    \divide\dimen@ \tw@ 
    \kern\dimen@ 
    \xoverline{\kern-\dimen@ \box\usefulbox\kern\dimen@ }\kern-\dimen@ }}%
 {}}
\makeatother

\makeatletter
\newcommand{\mylabel}[2]{#2\def\@currentlabel{#2}\label{#1}}
\makeatother

\makeatletter
\newcommand{\Mac}{}
\DeclareRobustCommand{\Mac}{%
  M%
  \raisebox{\dimexpr\fontcharht\font`M-\height}{%
    \check@mathfonts\fontsize{\sf@size}{0}\selectfont
    c%
  }%
}
\makeatother

\newtheoremstyle{citing}
  {}
  {}
  {\itshape}
  {}
  {\bfseries}
  {\textbf{.}}
  {.5em}
  {\thmnote{#3}}

\theoremstyle{plain}
\newtheorem{theorem}{Theorem}

\newtheorem{lemma}[theorem]{Lemma}
\newtheorem{corollary}[theorem]{Corollary}

\newtheorem{proposition}[theorem]{Proposition}

\theoremstyle{remark}
\newtheorem{example}[theorem]{Example}

\theoremstyle{definition}
\newtheorem{conjecture}[theorem]{Conjecture}
\newtheorem{definition}[theorem]{Definition}

\numberwithin{equation}{section}

\theoremstyle{remark}
\newtheorem{remark}[theorem]{Remark}

{\theoremstyle{citing}
}

{\theoremstyle{definition}
}

\title{Perturbations of Dirac Operators}

\author{Steffen Schmidt}
\address{Center for Quantum Mathematics, 
University of Southern Denmark, DK-5230 Odense, Denmark}
\email{stschmidt@imada.sdu.dk}

\let\origmaketitle\maketitle
\def\maketitle{
  \begingroup
  \def\uppercasenonmath##1{} 
  \let\MakeUppercase\relax 
  \origmaketitle
  \endgroup
}


%% file: main.tex
\section{Introduction}
\subsection{Vue d'Ensemble}
Symmetry serves as a guiding principle in mathematics and physics. The language of symmetry groups and their actions
organizes a wide range of phenomena. In this setting, Dirac operators form a
natural bridge between quantum mechanics, geometry, and representation theory.

In representation theory, Dirac operators provide conceptual and computational tools
for addressing basic problems concerning a (semisimple) Lie group $G$: they furnish explicit constructions,
give effective criteria for unitarizability, contribute to classification results, and
relate the topology of locally symmetric spaces to representation-theoretic data.
A foundational instance is Parthasarathy's use of a representation-theoretic Dirac
operator in the study of discrete series representations~\cite{parthasarathy1972dirac}.

Interest in this circle of ideas intensified in the late 1990s, following Kostant's
introduction of the \emph{cubic Dirac operator}~\cite{Kostant_cubic_Dirac} and the
subsequent development of \emph{Dirac cohomology}, including Huang--Pand\v zi\'c's proof
of Vogan's conjecture~\cite{huang2002dirac}. More generally, Kostant defined cubic Dirac
operators $D_{\gg,\mathfrak{u}}$ for a pair consisting of a quadratic Lie algebra
$\gg$ and a quadratic Lie subalgebra $\mathfrak{u}$. When $\gg$ is
semisimple and $\mathfrak{u}$ has equal rank, these operators yield, among other
consequences, generalizations of the Bott--Borel--Weil theorem and of the Weyl character
formula. In~\cite{Alekseev_Meinrenken}, Alekseev and Meinrenken placed this construction
into a uniform framework by characterizing cubic Dirac operators as the unique elements
of the quantum Weil algebra whose induced differential is compatible with contractions
and Lie derivatives. 

In parallel with the representation-theoretic developments, (cubic) Dirac operators have
also been studied from a geometric viewpoint. Following Freed--Hopkins--Teleman~\cite{FT2}, one attaches to a representation a family of Dirac-type operators, and the resulting Dirac family encodes the representation in K-theoretic terms. This construction ties in beautifully with Kirillov’s orbit method: for an irreducible representation the associated K-class localizes on (essentially) a single coadjoint orbit, together with its prequantum line bundle (and corresponding twisting data). Meinrenken reformulated this observation for unitarizable modules over reductive Lie algebras (see~\cite[Section~8.6]{Meinrenken_Clifford_algebras}). 

On the analytic side, Quillen introduced the notion of a \emph{superconnection} as a
framework for a local family index theorem for Dirac operators~\cite{Quillen_superconnections},
a program that was subsequently realized by Bismut \cite{Bismut_superconnection}. In Quillen's principle, Dirac
operators are a quantization of the theory of connections, and the supertrace of the heat
kernel of the square of a Dirac operator quantizes the Chern character of the corresponding
connection. Superconnections have since proved fruitful in further contexts, including the
definition of Chern characters on non-compact manifolds and for infinite-dimensional vector
bundles.

We use these ideas as a guiding principle throughout the present article, where we consider extensions of the above theory of Dirac operators to Lie superalgebras.

\subsection{Dirac Operators and Lie Superalgebras} Huang and Pand\v zi\'c introduced Dirac operators and Dirac cohomology for quadratic Lie superalgebras relative to the even subalgebra $\even$
and proved that Dirac cohomology determines the infinitesimal character of a simple module~\cite{huang2005dirac}. Further aspects and the relation to unitarity were studied by Xiao~\cite{xiao2015dirac}
and by the author~\cite{schmidt}. Kostant-type cubic Dirac operators for quadratic
Lie superalgebras were developed by Kang--Chen~\cite{Chen_Dirac_quadratic} and Meyer~\cite{Meyer},
and the corresponding Dirac cohomology was treated in~\cite{Schmidt_Noja}; in particular,
much of the classical theory extends to the super setting. 

Despite these developments, the existing results are not yet organized within a uniform
formalism in which cubic Dirac operators appear as canonical objects. The present article uses such a framework by importing the
Alekseev--Meinrenken viewpoint: cubic Dirac operators are realized as distinguished
elements of the colour quantum Weil algebra. We implement this construction for quadratic Lie
superalgebras and use it as a uniform language for the ensuing Dirac-theoretic statements.

We now turn to the precise algebraic setting in which our construction takes place. Let $\gg$ be a Lie superalgebra equipped with a non-degenerate invariant supersymmetric bilinear form $B$ (a \emph{quadratic} Lie superalgebra). Let $\ll\subset\gg$ be a quadratic Lie subsuperalgebra with $B_{\ll}\coloneqq B|_{\ll}$; we call $(\gg,\ll)$ a \emph{quadratic pair}. Then
\begin{equation}
\gg=\ll\oplus\pp,\qquad \pp\coloneqq \ll^{\perp},
\end{equation}
where $\pp$ is $\ad(\ll)$-stable and $B_{\pp}\coloneqq B|_{\pp}$ is non-degenerate. 

Relative to a quadratic pair $(\gg,\ll)$, we study the cubic Dirac operator in the uniform framework of the colour quantum Weil algebra, extending the approach of Meinrenken and Alekseev~\cite{Alekseev_Meinrenken} to Lie superalgebras. This formalism provides a natural construction of relative Dirac operators and yields a streamlined proof that the cubic Dirac operator has a well-behaved square. The colour quantum Weil algebra of $\gg$ is the $\ZZ_{2}$-graded tensor product
\begin{equation}
\Weil\coloneqq \UE(\gg)\otimes \Cl(\gg),
\end{equation}
where $\UE(\gg)$ and $\Cl(\gg)$ carry the $\ZZ_{2}$-grading induced by the parity on $\gg$. We equip $\Weil$ with the $\ZZ_{2}\times\ZZ_{2}$-bigrading determined by
placing the generators $\gamma^{\WW}(x)$ in bidegree $(\bar{0},p(x))$ and the generators $1\otimes x$ in bidegree $(\bar{1},p(x))$. 
The bracket on $\Weil$ is defined as the graded commutator with respect to the associated symmetric bicharacter. The algebra $\Weil$ is naturally endowed with contractions $\iota_x$ and Lie derivatives $L_x$ for $x\in\gg$.

Inside $\Weil$ there is a distinguished element $\Dirac_{\gg}$, the \emph{cubic Dirac operator}. For a homogeneous basis $\{e_a\}$ of $\gg$ with $B$-dual basis $\{e^a\}$ it is
\begin{equation}\label{eq:cubic-dirac}
\Dirac_{\gg}\coloneqq \sum_a e^a\otimes e_a-\frac{1}{12}\sum_{a,b,c}(-1)^{p(e_a)p(e_b)+p(e_c)}f_{abc}\,e^ae^be^c.
\end{equation}
It induces the unique differential
$
d^{\WW}\coloneqq [\Dirac_{\gg},\cdot]_{\WW}
$
on $\Weil$ compatible with contractions and Lie derivatives. Moreover, its square is central:
\begin{equation}\label{eq:dirac-square}
\Dirac_{\gg}^2=\Omega_{\gg}\otimes 1+\frac{1}{24}\str_{\gg}\!\bigl(\ad_{\gg}(\Omega_{\gg})\bigr)(1\otimes 1),
\end{equation}
where $\Omega_{\gg}$ is the quadratic Casimir. The construction has a relative formulation in terms of a quadratic subalgebra $\ll\subset\gg$, which is viewed as an element of $\Weil$ under the canonical embedding $j:\WW(\ll)\hookrightarrow\Weil$. The \emph{relative cubic Dirac operator} is
\begin{equation}
\Dirac_{\gg,\ll}\coloneqq \Dirac_{\gg}-j(\Dirac_{\ll}).
\end{equation}
It lies in the subalgebra of $\ll$-basic elements $\WW(\gg,\ll)$ of $\Weil$ and squares to a central element. This places \cite{Chen_Dirac_quadratic, Meyer} into a common framework.

In this article we focus on the case where $\gg$ is a basic classical Lie superalgebra.
Let $\overline{M}(\mathfrak{p})$ be the oscillator supermodule of $\Cl(\mathfrak{p})$.
For a $\gg$-supermodule $M$, we consider the induced action of
$\Dirac_{\gg,\ll}$ on $M\otimes \overline{M}(\mathfrak{p})$. The associated Dirac cohomology is
\begin{equation}
\H_{\Dirac_{\gg,\ll}}(M)
\coloneqq \ker(\Dirac_{\gg,\ll})
\Big/ \Bigl(\ker(\Dirac_{\gg,\ll})\cap \Im(\Dirac_{\gg,\ll})\Bigr).
\end{equation}
It is a fundamental invariant: if $\H_{\Dirac_{\gg,\ll}}(M)\neq 0$, then
the central character of any simple $\ll$-submodule occurring in
$\H_{\Dirac_{\gg,\ll}}(M)$ determines the central character of $M$.
Applications of Dirac cohomology in the super setting are developed systematically
in~\cite{Schmidt_Noja,SchmidtDirac}; for instance, $\H_{\Dirac_{\gg,\ll}}(M)$
vanishes unless $M$ is a highest weight $\gg$-supermodule, and in that case it
embeds into Kostant's cohomology, with explicit examples computed in loc.\ cit.

\subsection{Results}
The main results are organized into three parts, corresponding to three families of deformations of the relative cubic Dirac operator associated with a quadratic pair. Throughout, $(\gg,\ll)$ denotes a quadratic pair, and $\Dirac_{\gg,\ll}\in\WW(\gg,\ll)$ the corresponding relative cubic Dirac operator. Unless stated otherwise, $\gg$ is assumed to be basic classical.

\subsubsection{Semisimple perturbations} The semisimple perturbations extend to Lie superalgebras the localization ideas of Freed--Hopkins--Teleman~\cite{FT2}, and single out the $\even$-constituents of finite-dimensional supermodules. Comparing the resulting Laplace family with the value at the origin yields a natural energy operator; together with the semisimple perturbations it detects atypicality, and the degree of atypicality at each \(\even\)-constituent.

Let $\Dirac_{\gg}\in\Weil$ be the absolute cubic Dirac operator and let $\hh_{\RR}^{\ast}$ be the real span of the root system. Using $B$ to identify $\hh_{\RR}^{\ast}\cong \hh_{\RR}$, we obtain a semisimple perturbation family
\begin{equation}\label{eq:Dirac-Laplace-family}
\hh_{\RR}^{\ast}\longrightarrow \Weil,\qquad
\xi\longmapsto \Dirac_{\gg}(\xi)\coloneqq \Dirac_{\gg}+1\otimes h_{\xi},\qquad \Updelta_{\gg}(\xi)\coloneqq \Dirac_{\gg}(\xi)^{2}.
\end{equation}
If $G_{\bar 0}$ denotes the connected simply connected Lie group with Lie algebra $\even$, then these families are $\hh_{\RR}$-invariant and $G_{\bar 0}$-equivariant (coadjoint action on $\gg^{\ast}$, adjoint action on $\Weil$).

Fix a positive system in $\Delta$ and let $M$ be a finite-dimensional simple $\gg$-supermodule. The semisimple perturbations~\eqref{eq:Dirac-Laplace-family} act on $M\otimes \overline{M}(\nn^{-})$ and lead to a family of Laplace operators whose kernel behaviour is the main object of study. In the super case the form $B$ on the real span of roots is typically indefinite, so the resulting Laplace equations do not localize a unique parameter. We therefore pass to the corresponding Laplace families for the even subalgebra $\even$, which recover a rigid parameter set and thereby detect the $\even$-decomposition of $M$.

Since $\even$ is reductive, it decomposes into simple and abelian factors, and for each factor one has the corresponding family of Dirac and Laplace operators. This permits the construction of a modified family of non-negative operators, denoted $\widetilde{\Updelta}_{\even}(\xi)$, whose kernel is the intersection of the kernels of the factorwise Laplace operators.

The main result is that this modified family detects precisely the $G_{\bar 0}$-coadjoint orbit attached to a finite-dimensional simple $\even$-module. The following theorem is Theorem~\ref{thm::main_family_even} in the main text.

\begin{theorem}
 Let $L(\Lambda)$ be a finite-dimensional simple $\gg$-supermodule with highest weight $\Lambda$ such that $L(\Lambda)\big\vert_{\even}=\bigoplus_{\mu} L_{0}(\mu)^{n(\mu)}$. Then 
 \[
 \ker \widetilde{\Updelta}_{\even}(\xi) \neq \{0\} \Leftrightarrow \xi \in \bigcup_{\mu \,:\, n(\mu)\neq 0}\Ad_{G_{\bar{0}}}^{\ast}(-\mu-\rho_{\bar{0}}).
 \]
 If $\xi=- \mu-\rho_{\bar{0}}$, then the kernel is $L(\Lambda)^{\mu}\otimes S^{\rho_{\bar{0}}} \otimes \overline{M}(\nn_{\bar{1}}^{-})$ with $S := S^{\gg, \nn_{\bar{0}}^{-}}$.
\end{theorem}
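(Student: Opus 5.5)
We reduce to the purely even (reductive) setting and then apply, factor by factor, the Freed--Hopkins--Teleman/Meinrenken localization for reductive Lie algebras.

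\textbf{Step 1: factorization.} We first use the quadratic pair $(\gg,\even)$ and the factorization of the Clifford module. Since $\nn^{-}=\nn^{-}_{\bar 0}\oplus\nn^{-}_{\bar 1}$, we have $\overline{M}(\nn^{-})\cong \overline{M}(\nn^{-}_{\bar 0})\otimes\overline{M}(\nn^{-}_{\bar 1})$. The even family $\Dirac_{\even}(\xi)=\Dirac_{\even}+1\otimes h_\xi$ acts only on the first factor, through the restriction $L(\Lambda)|_{\even}$, with $\xi\in\hh_{\RR}^{\ast}$. The factor $\overline{M}(\nn^{-}_{\bar 1})$ is a spectator. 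Hence
\[
\ker\widetilde{\Updelta}_{\even}(\xi)=\bigoplus_{\mu} \ker\widetilde{\Updelta}_{\even}(\xi)\big|_{L_0(\mu)\otimes \overline{M}(\nn^{-}_{\bar 0})}^{\oplus n(\mu)}\otimes \overline{M}(\nn^{-}_{\bar 1}),
\]
and it suffices to treat a single simple $\even$-module $L_0(\mu)$.

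\textbf{Step 2: reduction to simple and abelian factors.} Write $\even=\bigoplus_i \mathfrak{k}_i\oplus\mathfrak{z}$. By construction $\widetilde{\Updelta}_{\even}(\xi)$ is a sum of non-negative commuting operators, namely the factorwise Laplacians, possibly after sign normalisation of $B$ on each factor so that $B$ becomes definite on each real form. Its kernel is therefore the intersection of the factorwise kernels. Correspondingly, $L_0(\mu)=\bigotimes_i L_{\mathfrak{k}_i}(\mu_i)\otimes\CC_{\mu_{\mathfrak z}}$, and $\xi$ and $\rho_{\bar0}$ split componentwise. This reduces the problem to two cases.

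\textbf{Step 3a: abelian factor.} On $\mathfrak z$ the Dirac operator reduces to $1\otimes h_\xi$ plus the action of $\mathfrak z$. On the line $\CC_{\mu_{\mathfrak z}}$, the Laplacian is $\norm{\xi_{\mathfrak z}+\mu_{\mathfrak z}}^2$. This vanishes iff $\xi_{\mathfrak z}=-\mu_{\mathfrak z}$, which is the coadjoint orbit here since $\rho$ has no $\mathfrak z$-part.

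\textbf{Step 3b: simple factor (main step).} For a compact-type simple $\mathfrak k$ with definite form, use the earlier formula for the square (\eqref{eq:dirac-square} together with its perturbed version). It gives
\[
\Dirac_{\mathfrak k}(\xi)^2=\Omega_{\mathfrak k}+\tfrac1{24}\str(\ad\Omega)+2\,\text{(mixed term)}+\norm{\xi}^2.
\]
Using $\Omega_{\mathfrak k}=\norm{\mu+\rho}^2-\norm{\rho}^2$ and $\frac1{24}\str=\norm{\rho}^2$ (Freudenthal--de Vries), one obtains the standard identity
\[
\Updelta(\xi)=\norm{\xi+\rho+\mu}^2-\text{(nonneg.\ term)}
\]
on each weight space of $L(\mu)\otimes S$. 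Its kernel is nonzero exactly when $\xi$ is conjugate to $-(\mu+\rho)$ under $W$. Intersecting $\hh^*_\RR$ with the orbit, this is exactly $\xi\in\Ad^*_{G}(-\mu-\rho)$. At $\xi=-\mu-\rho$ the kernel is the tensor product of the extremal weight line with the spinor line of weight $\rho$. This recovers the description $L(\Lambda)^{\mu}\otimes S^{\rho_{\bar 0}}$, and I will read the kernel as the $\mu$-weight space of the $\mu$-isotypic piece.

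Here I would follow Meinrenken's argument in \cite[Section~8.6]{Meinrenken_Clifford_algebras}. It diagonalizes $\Dirac(\xi)^2$ on $\hh$-weight vectors via the decomposition $\Dirac=\Dirac_{\hh}+\Dirac_{\mathfrak k,\hh}$ and a completing-the-square argument. The main obstacle is doing this uniformly: the sign normalisation must make each factor's Laplacian genuinely non-negative even though $B|_{\even}$ is indefinite, and one must track which weight vectors of the spinor module pair with which weights of $L_0(\mu)$. Equality forces $\xi+\nu+\sigma=0$ with $\nu$ a weight of $L_0(\mu)$ and $\sigma$ a weight of $S$, together with $\norm{\nu+\sigma}=\norm{\mu+\rho}$. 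The extremality argument $\norm{\nu+\sigma}\le\norm{\mu+\rho}$, with equality only for $W$-conjugates of $(\mu,\rho)$, is the crucial inequality.

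Reassembling over the factors and the $\even$-constituents with $n(\mu)\neq 0$ then yields the equivalence and the kernel formula.
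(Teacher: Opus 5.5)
Your argument follows the paper's route. You treat $\overline{M}(\nn^{-}_{\bar 1})$ as a spectator and identify $\ker\widetilde{\Updelta}_{\even}(\xi)$ with the intersection of the factorwise kernels. Each factorwise Laplacian is a scalar on weight spaces, controlled by the extremal-weight inequality on simple factors and by anisotropy of $B_0$ on the abelian factor, and you reassemble over the $\even$-constituents. Two steps are argued incorrectly as written, though both are easily repaired.

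\textbf{Step 2 argues about the wrong operator.} You describe $\widetilde{\Updelta}_{\even}(\xi)$ as a (sign-normalised) sum of the factorwise Laplacians. That is $\Updelta_{\even}(\xi)$, whose kernel the paper explicitly notes need not be the intersection. The operator in the statement is $\sum_i\Updelta_{\gg^{i}_{\bar 0}}(\xi^{i})^{\dagger}\Updelta_{\gg^{i}_{\bar 0}}(\xi^{i})$. For it, the intersection property is immediate from $\langle\widetilde{\Updelta}_{\even}(\xi)v,v\rangle=\sum_i\lVert\Updelta_{\gg^{i}_{\bar 0}}(\xi^{i})v\rVert^{2}$ (Lemma~\ref{lemm::properties_generalized_Laplace}). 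No sign normalisation of $B$ and no commutativity are needed, so the ``main obstacle'' you name is exactly what this modification removes. Inside a single simple factor, $B_i$ is a nonzero real multiple of the Killing form, so its sign is irrelevant for that factor's kernel.

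\textbf{The Step 3b display is wrong.} The identity $\Updelta(\xi)=\lVert\xi+\rho+\mu\rVert^{2}-(\text{nonneg.})$ does not hold. Since $\Dirac^{2}$ is central and $[\Dirac,1\otimes h_{\xi}]_{\WW}=2\gamma^{\WW}(h_{\xi})$ is the diagonal $h_\xi$-action, on the weight space of weight $\nu+\sigma$ one gets
\[
B(\mu+\rho,\mu+\rho)-B(\nu+\sigma,\nu+\sigma)+B(\nu+\sigma+\xi,\nu+\sigma+\xi).
\]
This is a sum of two semidefinite terms of the same sign, and it is what your ``equality forces'' sentence actually uses. No $\hh$-relative decomposition of $\Dirac$ is required.

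\textbf{Where your conclusions are sharper than the paper's wording.} For a simple factor, on $\hh^{\ast}_{\RR}$ the kernel is nonzero for every $\xi\in W(-\mu-\rho)$, not only at $\xi=-\mu-\rho$. For example, the weight $-(\mu+\rho)$ of $L(\mu)\otimes S$ also has maximal norm, so $\xi=\mu+\rho$ gives a nonzero kernel too. At $\xi=-\mu-\rho_{\bar 0}$, the kernel is spanned by the $\even$-highest weight vectors of weight $\mu$ (an $n(\mu)$-dimensional space), tensored with $S^{\rho_{\bar 0}}\otimes\overline{M}(\nn^{-}_{\bar 1})$. It is not the full weight space $L(\Lambda)^{\mu}$, which can be larger when $\mu$ is a lower weight of another constituent. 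Your reading of the kernel as the $\mu$-weight space of the $\mu$-isotypic component is the correct one.
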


To detect atypicality, we compare the Laplace family with its value at the origin. This defines a canonical \emph{energy} family $T(\eta)$, parametrized by isotropic directions in $\hh_{\RR}^{\ast}$ and thus adapted to the defect of $\gg$. For a finite-dimensional simple $\gg$-supermodule $L(\Lambda)$, we then consider the joint family $(T(\eta),\widetilde{\Updelta}_{\even}(\xi))$, parametrized by $(\eta,\xi)\in (\hh_{\RR}^{\mathrm{iso}})^{\ast}\times\hh_{\RR}^{\ast}$, acting on the $\even$-submodule of $L(\Lambda)\otimes \overline{M}(\nn^{-})$ generated by $L(\Lambda)\otimes S\otimes 1_{\overline{M}(\nn_{\bar{1}}^{-})}$ with $S := S^{\gg,\nn_{\bar{0}}^{-}}$. The even parameter $\xi$ localizes at the $\even$-constituents of $L(\Lambda)$, while the isotropic parameter $\eta$ detects atypicality at each such constituent; in particular, the joint kernel is non-zero precisely on the corresponding product of isotropic constraints and $G_{\bar 0}$-orbits. The following theorem is Theorem~\ref{thm::main_resul_family_tuple} in the main text.

\begin{theorem}
Let $L(\Lambda)$ be a finite-dimensional simple $\gg$-supermodule. Then
$\ker((T(\eta),\widetilde{\Updelta}_{\even}(\xi))=\{0\}$ unless \emph{both}
\begin{enumerate}
\item[a)] $\xi=-\mu-\rho_{\bar{0}}$, where $\mu$ is the highest weight of a $\even$-constituent of $L(\Lambda)$,
\item[b)] $\eta\in X_{\mu+\rho}\coloneqq \{\eta' \in (\hh_{\RR}^{\text{iso}})^{\ast} : B(\mu+\rho, \eta')=0\}$.
\end{enumerate}
In particular,
\[
\ker (T(\eta), \widetilde{\Updelta}_{\even}(\xi)) \neq \{0\} \Leftrightarrow (\eta,\xi) \in \bigcup_{\mu\,:\,n(\mu)\neq 0}X_{\mu+\rho} \times \Ad_{G_{\bar{0}}}^{\ast}(-\mu-\rho_{\bar{0}}).
\]
\end{theorem}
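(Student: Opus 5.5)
The plan is to treat the joint kernel as an intersection and reduce it to the even-case theorem (Theorem~\ref{thm::main_family_even}). Since both operators act on the same $\even$-submodule $N\subset L(\Lambda)\otimes\overline{M}(\nn^{-})$ generated by $L(\Lambda)\otimes S\otimes 1$, the joint kernel is
\[
\ker T(\eta)\cap\ker\widetilde{\Updelta}_{\even}(\xi).
\]
The first step is to show that $T(\eta)$ commutes with $\widetilde{\Updelta}_{\even}(\xi)$, or at least preserves its kernel. I would derive this from the construction of $T(\eta)$ as the difference $\Updelta_{\gg}(\eta)-\Updelta_{\gg}(0)$, restricted to isotropic $\eta$. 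Then I would use the $\hh_{\RR}$-invariance and $G_{\bar0}$-equivariance of the families.

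The second step is to localize $\xi$. By Theorem~\ref{thm::main_family_even}, the kernel of $\widetilde{\Updelta}_{\even}(\xi)$ is nonzero only for $\xi\in\Ad^{\ast}_{G_{\bar0}}(-\mu-\rho_{\bar0})$ with $n(\mu)\neq 0$. By $G_{\bar0}$-equivariance of both families, I would reduce to the representative $\xi=-\mu-\rho_{\bar0}$; conjugating by $G_{\bar0}$ fixes the isotropic Cartan parameter only up to the Weyl group, so this needs some care. At that representative the kernel is explicitly $L(\Lambda)^{\mu}\otimes S^{\rho_{\bar0}}\otimes\overline{M}(\nn_{\bar1}^{-})$ intersected with $N$. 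This is a weight-$(\mu-\rho_{\bar0}+\dots)$ space, on which Cartan elements act by scalars.

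The third step, which I expect to be the main obstacle, is computing $T(\eta)$ on this kernel. I would expand
\[
\Updelta_{\gg}(\eta)=\Dirac_{\gg}^2+[\Dirac_{\gg},1\otimes h_\eta]+B(\eta,\eta).
\]
The last term vanishes since $\eta$ is isotropic, so $T(\eta)=[\Dirac_{\gg},1\otimes h_\eta]$. Using~\eqref{eq:cubic-dirac}, this commutator is the contraction/Lie-derivative type expression $h_\eta\otimes 1+1\otimes\gamma(h_\eta)$, that is, the diagonal action of $h_\eta$ on $L(\Lambda)\otimes\overline{M}(\nn^{-})$ up to a $\rho$-shift. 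On the kernel it should act by the scalar $B(\mu+\rho,\eta)$. The delicate part is the bookkeeping: the $\rho_{\bar0}$ from $S^{\rho_{\bar0}}$ and the $-\rho_{\bar1}$ from the vacuum $1_{\overline{M}(\nn_{\bar1}^{-})}$ must combine to exactly $\rho=\rho_{\bar0}-\rho_{\bar1}$. This is where restricting to $N$ is essential: it forces the odd oscillator factor to sit at its vacuum weight. I would first check this on $\mathfrak{gl}(1|1)$.

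Once $T(\eta)$ acts by the scalar $B(\mu+\rho,\eta)$, the joint kernel is nonzero if and only if this scalar vanishes, i.e. $\eta\in X_{\mu+\rho}$. Summing over the constituents $\mu$ and applying $G_{\bar0}$-equivariance for the orbit then gives the stated union.
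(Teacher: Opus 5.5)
Your argument is essentially the paper's proof. You write the joint kernel as $\ker T(\eta)\cap\ker\widetilde{\Updelta}_{\even}(\xi)$ and use Theorem~\ref{thm::main_family_even} to identify the kernel at $\xi=-\mu-\rho_{\bar 0}$ with the span of the vectors $v_{\mu}\otimes 1_{S}\otimes 1$. Cite Lemma~\ref{lemm::decomposition_E_1} here: it is what the paper uses to justify your claim that on $\langle E_{\Lambda}\otimes 1\rangle_{\even}$ the odd oscillator factor sits at the vacuum. You then observe that $T(\eta)=2\gamma^{\WW}(h_{\eta})$ acts on that span by $2B(\mu+\rho,\eta)$.

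Your step~1 and the $G_{\bar 0}$-reduction are not needed. $T(\eta)$ is already a scalar on each weight space, and the paper works directly at the representative rather than transporting along the orbit. That transport would be delicate, as you suspected: only the pair $(\eta,\xi)$ transforms equivariantly, and $\Ad^{\ast}_{g}\eta$ generally leaves $(\hh_{\RR}^{\mathrm{iso}})^{\ast}$.
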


\subsubsection{Nilpotent Perturbations} 

For a basic classical Lie superalgebra $\gg$ with $\odd\neq\{0\}$ there are two complementary constructions attached to a $\gg$-supermodule: Dirac cohomology, which controls the infinitesimal character, and Duflo--Serganova cohomology, a symmetric monoidal functor preserving the superdimension. We construct a single family of relative cubic Dirac operators whose cohomology combines these two theories.

Fix a quadratic pair $(\gg,\ll)$ and assume $\hh\subset\ll$ and $\ll_{\bar 1}\neq\{0\}$. Set
\begin{equation}\label{eq:self-commuting-variety}
 \YY_{\ll}\coloneqq \{x\in\ll_{\bar 1}:\ [x,x]=2x^{2}=0\},
\end{equation}
the self-commuting variety of $\ll$. In contrast with the reductive (purely even) case, the bigrading of $\WW(\gg,\ll)$ permits square-zero perturbations of the relative cubic Dirac operator while leaving its square unchanged. Let $\Dirac_{\gg,\ll}\in \WW(\gg,\ll)$ be the relative cubic Dirac operator. For $x\in\YY_{\ll}$ define
\begin{equation}\label{eq:Dirac-x-def}
\Dirac^{x}_{\gg,\ll}\coloneqq \Dirac_{\gg,\ll}+j\bigl(\gamma^{\WW}(x)\bigr)\in \WW(\gg,\ll).
\end{equation}
This element is odd, invariant under the centralizer
$
 C_{\ll}(x)\coloneqq \{y\in\ll:\ [x,y]=0\},
$
and it is $L_{\bar 0}$-equivariant in the sense that
$
 \Ad_{g}(\Dirac^{x}_{\gg,\ll})=\Dirac^{\Ad_{g}(x)}_{\gg,\ll}
$
for any $g \in L_{\bar{0}}$.
Here, $L_{\bar{0}}$ denotes the connected simply connected Lie group with Lie algebra $\ll_{\bar{0}}$. Moreover, the square is independent of $x$:
\begin{equation}\label{eq:square-Dirac-x}
 (\Dirac^{x}_{\gg,\ll})^{2}=\Dirac_{\gg,\ll}^{2}.
\end{equation}

For a $\gg$-supermodule $M$ we define the cohomology of $\Dirac^{x}_{\gg,\ll}$ by
\begin{equation}\label{eq:H-Dirac-x}
 \H_{\Dirac^{x}_{\gg,\ll}}(M)\coloneqq \ker \Dirac^{x}_{\gg,\ll}\big/\bigl(\ker \Dirac^{x}_{\gg,\ll}\cap \Im \Dirac^{x}_{\gg,\ll}\bigr).
\end{equation}
Our aim is to describe $\H_{\Dirac^{x}_{\gg,\ll}}(M)$ in terms of Dirac cohomology (Section~\ref{subsec::Dirac_cohomology}) and Duflo--Serganova cohomology (Section~\ref{subsec::Duflo_Serganova_Cohomology}). For the morphisms
\begin{equation}\label{eq:upeta-dual}
 \upeta_{x}^{\ast}:\Hom\bigl(\mathfrak Z(\ll_{x}),\CC\bigr)\longrightarrow \Hom\bigl(\mathfrak Z(\ll),\CC\bigr),
\end{equation}
as well as the homomorphism $\eta_{\ll}:\ZG\to\mathfrak Z(\ll)$, which arise naturally in Duflo--Serganova cohomology and Dirac cohomology, we establish the following result in the unitarizable case (\emph{cf.}~Theorem \ref{thm::Dirac_and_DS} in the main text): 

\begin{theorem}\label{thm:H-Dirac-x-unitary}
Assume that $M$ is a unitarizable highest weight $\gg$-supermodule with highest weight $\Lambda$. Then for every $x\in\YY_{\ll}$ one has
\begin{equation}\label{eq:H-Dirac-x-DS}
 \H_{\Dirac^{x}_{\gg,\ll}}(M)=\DS_{x}\!\bigl(\H_{\Dirac_{\gg,\ll}}(M)\bigr).
\end{equation}
In particular, $\H_{\Dirac^{x}_{\gg,\ll}}(M)$ is an $\ll_{x}$-supermodule. If $V$ is an $\ll_{x}$-supermodule occurring in $\H_{\Dirac^{x}_{\gg,\ll}}(M)$ with infinitesimal character $\chi_{\nu}^{\ll_{x}}$, then
\begin{equation}\label{eq:inf-char-inclusion}
 \chi_{\nu}^{\ll_{x}}\in (\upeta_{x}^{\ast})^{-1}\bigl(\chi_{\Lambda}\circ \eta_{\ll}\bigr).
\end{equation}
\end{theorem}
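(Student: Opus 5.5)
Write $D\coloneqq\Dirac_{\gg,\ll}$ and $Q\coloneqq j(\gamma^{\WW}(x))$, both acting on $M\otimes\overline{M}(\pp)$, so that $\Dirac^{x}_{\gg,\ll}=D+Q$. The plan is to exploit the bigrading of $\WW(\gg,\ll)$. From \eqref{eq:square-Dirac-x} together with $Q^2=\tfrac12[x,x]=0$ we get $[D,Q]=DQ+QD=0$. In other words, $D$ and $Q$ form a pair of anticommuting odd operators, and $Q$ acts only on the $M$-factor, through the $\ll$-action $x\otimes 1$ plus its diagonal spin part. The strategy has two stages. First, use unitarity to reduce everything to $\ker D$. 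Second, identify the residual action of $Q$ on $\H_{\Dirac_{\gg,\ll}}(M)$ with the action of $x$ on an $\ll$-supermodule.

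\textbf{Step 1 (unitarity).} For unitarizable $M$ I will take as given the known Hodge-type results for the super setting (\cite{Schmidt_Noja,SchmidtDirac}): $D$ is (anti)selfadjoint for a positive-definite Hermitian form on $M\otimes\overline{M}(\pp)$, hence $\ker D=\ker D^2$, $\ker D\cap\Im D=0$, and $M\otimes\overline{M}(\pp)=\ker D\oplus\Im D$. Consequently $\H_{\Dirac_{\gg,\ll}}(M)=\ker D$.

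\textbf{Step 2 (reduction to $\ker D$).} Since $D^2=(D+Q)^2$ is central and acts semisimply, we can decompose $M\otimes\overline{M}(\pp)$ into $D^2$-eigenspaces. Because $Q$ commutes with $D^2$, each eigenspace is stable under both $D$ and $D+Q$. On an eigenspace with eigenvalue $c\neq0$, the operator $D+Q$ squares to $c$, so it is invertible there and contributes no cohomology. It remains to treat the eigenspace with $c=0$, which equals $\ker D$ by Step 1. On $\ker D$ the operator $D+Q$ restricts to $Q$; indeed $Q$ preserves $\ker D$ because $DQ=-QD$. We must also check that $\ker D\cap\Im(D+Q)=Q(\ker D)$. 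Write $v=(D+Q)w$ and decompose $w=w_0+w_1$ with $w_0\in\ker D$ and $w_1$ in the nonzero eigenspaces. Since $(D+Q)w_1$ lies in the nonzero eigenspaces, we get $v=Qw_0$. This yields
\[
\H_{\Dirac^x_{\gg,\ll}}(M)=\ker(Q|_{\ker D})/\Im(Q|_{\ker D}).
\]

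\textbf{Step 3 (identification with $\DS_x$).} $\ker D=\H_{\Dirac_{\gg,\ll}}(M)$ is an $\ll$-supermodule via the diagonal action $\ll\to\UE(\gg)\otimes\Cl(\pp)$. The element $Q=j(\gamma^{\WW}(x))$ is exactly the image of $x$ under this diagonal map, with the spin part coming from $\Cl(\pp)$. Hence the quotient above is $\ker x/\Im x$ on $\H_{\Dirac_{\gg,\ll}}(M)$, which is $\DS_x(\H_{\Dirac_{\gg,\ll}}(M))$ and is therefore an $\ll_x$-supermodule. The main point requiring care is that $j\circ\gamma^{\WW}$ really equals this diagonal action on the oscillator module, with the correct signs from the colour bicharacter. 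I would check this on generators, using the defining formula of $\gamma^{\WW}$ and the relation $L_y=[\gamma^{\WW}(y),\cdot]$.

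\textbf{Step 4 (infinitesimal characters).} By the Huang--Pand\v zi\'c-type theorem recalled above, $\mathfrak Z(\gg)$ acts through the diagonal $\eta_\ll$, giving the equality $z\otimes1=\eta_\ll(z)$ on $\ker D$. Therefore every $\ll$-constituent of $\ker D$ has central character $\chi_\Lambda\circ\eta_\ll$. It then remains to invoke the standard Duflo--Serganova fact: if $N$ has $\mathfrak Z(\ll)$-character $\chi$, every subquotient of $\DS_x(N)$ has a $\mathfrak Z(\ll_x)$-character $\chi'$ with $\upeta_x^\ast(\chi')=\chi$, since $\upeta_x$ is defined so that $z$ acts on $\DS_x(N)$ by $\upeta_x(z)$. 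Applying this fact to each constituent gives \eqref{eq:inf-char-inclusion}. One caveat is that $\ker D$ might have generalized rather than honest characters. Unitarity makes it semisimple, so the argument can be run summand by summand.
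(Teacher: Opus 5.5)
Your proposal is correct and follows essentially the paper's route. You use unitarity to get $\ker\Dirac_{\gg,\ll}^{2}=\ker\Dirac_{\gg,\ll}=\H_{\Dirac_{\gg,\ll}}(M)$ and dispose of the nonzero eigenspaces of the common square $(\Dirac^{x}_{\gg,\ll})^{2}=\Dirac_{\gg,\ll}^{2}$; your invertibility argument there, together with your explicit check $\ker D\cap\Im(D+Q)=Q(\ker D)$, is slightly cleaner than the corresponding lemma in the paper. You then note that $\Dirac^{x}_{\gg,\ll}$ restricts on $\ker\Dirac_{\gg,\ll}$ to $j(\gamma^{\WW}(x))$, i.e.\ to the diagonal action of $x$, so the cohomology is $\DS_x(\ker\Dirac_{\gg,\ll})$, and you conclude with the Casselman--Osborne theorem and the compatibility of $\upeta_x$ with central characters, exactly as the paper does.

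One notational correction in Step~4: the relation should read $\chi\circ\eta_{\ll}=\chi_\Lambda$ for the central character $\chi$ of each $\ll$-constituent. The expression ``$\chi_\Lambda\circ\eta_\ll$'' does not typecheck, since $\eta_\ll\colon\ZG\to\mathfrak Z(\ll)$. The final inclusion should therefore be read as $\upeta_x^{\ast}(\chi^{\ll_x}_\nu)\circ\eta_\ll=\chi_\Lambda$; this slip is inherited from the statement itself.
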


If $M$ is not unitary, having a spectral sequence argument would lead to the following conjecture.

\begin{conjecture}
Let $x\in \YY_{\ll}$. If $\DS_{x}\bigl(\H_{\Dirac_{\gg,\ll}}(M)\bigr)=0$, then
\begin{equation*}
\H_{\Dirac^{x}_{\gg,\ll}}(M)=0.
\end{equation*}
\end{conjecture}

\subsubsection{Bismut--Quillen Superconnection} 

Let $\gg$ be a semisimple complex Lie algebra and let $(\gg,\ll)$ be a quadratic pair. For a finite-dimensional $\gg$-module $M$, consider the finite-dimensional $\WW(\gg,\ll)$-supermodule $E\coloneqq M\otimes S$, where $S$ is the spin module associated with the pair. A naive Chern–Weil-type expression formed directly from the relative cubic Dirac operator $\Dirac_{\gg,\ll}$ vanishes identically. To obtain a non-trivial invariant, we therefore replace $\Dirac_{\gg,\ll}$ by a Bismut–Quillen-type superconnection $\AA_{\gg,\ll}^{M}(t)$, obtained by perturbing $\Dirac_{\gg,\ll}$ by a canonical term involving the universal connection $1$-form of $\Weil$ and the Weil differential. This yields an odd, $\ll$-equivariant element of $\End_{\CC}(\widehat{\WW}(\gg,\ll)\otimes E)$, and we define
\begin{equation}
\operatorname{ch}_{M}(t)\coloneqq \str_{E}\!\bigl(e^{-\AA_{\gg,\ll}^{M}(t)^{2}}\bigr)\in \widehat{\WW}(\gg,\ll).
\end{equation}
We prove that $\operatorname{ch}_{M}(t)$ determines a cohomology class independent of $t$. The following theorem appears in the main text as Theorem~\ref{thm::main_Bismut_Quillen}.

\begin{theorem}
For any finite-dimensional $\gg$-module $M$ the class
\begin{equation}
[\operatorname{ch}_{M}(t)]\in \H\bigl(\widehat{\WW}(\gg,\ll),d^{\WW_{\gg,\ll}}\bigr)
\end{equation}
is independent of $t$.
\end{theorem}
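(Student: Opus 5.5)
The plan is to run the standard Chern--Weil/Quillen transgression argument inside the completed relative Weil algebra. I will show that $\frac{d}{dt}\operatorname{ch}_{M}(t)$ is $d^{\WW_{\gg,\ll}}$-exact, with an explicit primitive built from $\dot{\AA}(t)\coloneqq \frac{d}{dt}\AA_{\gg,\ll}^{M}(t)$.

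\textbf{Step 1: the superconnection is a derivation extending the Weil differential.} I will first check that $\AA(t)\coloneqq\AA_{\gg,\ll}^{M}(t)$, acting on $\widehat{\WW}(\gg,\ll)\otimes E$, is an odd operator satisfying a Leibniz rule over $\widehat{\WW}(\gg,\ll)$:
\[
[\AA(t),\omega\cdot s]=d^{\WW_{\gg,\ll}}(\omega)\cdot s+(-1)^{|\omega|}\omega\cdot \AA(t)s .
\]
This should follow from its definition as $\Dirac_{\gg,\ll}$ plus the term built from the universal $1$-form and the Weil differential. The key input is that $d^{\WW}=[\Dirac_{\gg},\cdot]_{\WW}$, and that on $\ll$-basic elements this restricts to $[\Dirac_{\gg,\ll},\cdot]$, because $j(\Dirac_{\ll})$ supercommutes with basic elements. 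Consequently, for any $\Phi\in\End_{\widehat{\WW}(\gg,\ll)}(\widehat{\WW}(\gg,\ll)\otimes E)$ I expect the fibrewise supertrace identity
\[
d^{\WW_{\gg,\ll}}\str_{E}(\Phi)=\str_{E}\bigl([\AA(t),\Phi]\bigr).
\]
Here $[\AA(t),\Phi]$ is $\widehat{\WW}$-linear even though $\AA(t)$ is not. The identity uses that $\str_E$ kills supercommutators of $\widehat{\WW}$-linear endomorphisms with $E$ finite-dimensional. The $\ll$-equivariance of $\AA(t)$ guarantees that everything stays in the basic subcomplex.

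\textbf{Step 2: differentiate in $t$.} By Duhamel's formula,
\[
\frac{d}{dt}e^{-\AA(t)^2}=-\int_0^1 e^{-s\AA^2}\,[\AA,\dot{\AA}]\,e^{-(1-s)\AA^2}\,ds .
\]
The trace property of $\str_E$ then gives $\frac{d}{dt}\operatorname{ch}_{M}(t)=-\str_E\bigl([\AA,\dot{\AA}]e^{-\AA^2}\bigr)$. The Bianchi identity $[\AA,\AA^2]=0$ lets me rewrite this as $-\str_E\bigl([\AA,\dot{\AA}e^{-\AA^2}]\bigr)$. Here $\dot{\AA}$ is $\widehat{\WW}$-linear, since the $t$-derivative kills the derivation part. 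Step 1 then yields
\[
\frac{d}{dt}\operatorname{ch}_{M}(t)=-d^{\WW_{\gg,\ll}}\str_E\bigl(\dot{\AA}(t)e^{-\AA(t)^2}\bigr).
\]
Integrating from $t_0$ to $t_1$ shows that $\operatorname{ch}_M(t_1)-\operatorname{ch}_M(t_0)$ is exact. The same computation, with $\Phi=e^{-\AA^2}$ and $[\AA,e^{-\AA^2}]=0$, also shows that $\operatorname{ch}_M(t)$ is closed, so its class makes sense.

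\textbf{Main obstacle.} I expect the hard part to be the analytic and algebraic control needed to justify these manipulations in the colour (bigraded) setting. There are three concerns.
\begin{itemize}
\item Convergence of $e^{-\AA^2}$ in the completion $\widehat{\WW}(\gg,\ll)$. I would handle this by writing $\AA^2$ as a scalar-type term plus a part of positive filtration degree in the Weil generators, so that the exponential series converges in the completed topology.
\item The signs in the supertrace identity with respect to the $\ZZ_2\times\ZZ_2$ bicharacter.
\item Verifying that the universal $1$-form term makes $[\AA,\cdot]$ restrict exactly to $d^{\WW_{\gg,\ll}}$ on coefficients. This is the step that genuinely uses the Alekseev--Meinrenken characterization of $d^{\WW}$.
\end{itemize}
I would check the sign conventions by treating $\str_E$ as the colour supertrace for the total parity, and confirm the Leibniz rule on generators $\gamma^{\WW}(x)$ and $1\otimes x$ separately.
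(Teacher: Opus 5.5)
Your argument is essentially the paper's own proof. Closedness and $t$-independence both come from the supertrace identity of Lemma~\ref{lemm::differential_of_trace}, combined with Duhamel's formula and $[\AA,e^{-\AA^{2}}]_{\End}=0$, giving $\frac{\mathrm{d}}{\mathrm{d}t}\operatorname{ch}_{M}(t)=-d^{\WW_{\gg,\ll}}\str_{E}\bigl(\dot{\AA}\,e^{-\AA^{2}}\bigr)$. Stating that identity with $\AA(t)$ instead of $\nabla^{M}$ is equivalent, since $\str_{E}$ kills supercommutators with the coefficient-free term $\sqrt{t}\,(1\otimes\Dirac_{\gg,\ll})$; this is exactly how the paper passes from $\AA$ to $\nabla^{M}$ inside the supertrace.

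The delicate point you list last is the Leibniz rule for the universal $1$-form term together with cyclicity of $\str_{E}$ over the non-supercommutative $\widehat{\WW}(\gg,\ll)$, where $[\theta^{a},\omega]_{\WW}=2\iota_{e^{a}}\omega$ need not vanish for $\ll$-basic $\omega$ and $e^{a}\in\pp$. The paper takes this for granted as well, so your proof stands at the same level of rigour as the published one.
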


We finally introduce the appropriate replacement in the super setting. Since the oscillator module is infinite-dimensional, the supertrace on $E$ is not available in general. For unitarizable $M$, however, the ``heat operator" $e^{-t\Dirac_{\gg,\ll}^{2}}$ localizes as $t\to\infty$ on $\ker \Dirac_{\gg,\ll}$, and the argument producing the class $[\operatorname{ch}_{M}(t)]$ in the finite-dimensional case applies whenever a supertrace is available. This leads us to define a canonical substitute for $\operatorname{ch}_{M}(t)$ in the super case.

\subsection{Conventions and Notation} \label{subsec::conventions}The ground field is $\CC$. Let $\ZZ_{2}\coloneqq \ZZ/2\ZZ$ be the ring of integers modulo $2$, and denote by $\bar{0}$ and $\bar{1}$ the residue classes of even and odd integers, respectively. For a super vector space $V=V_{\bar{0}}\oplus V_{\bar{1}}$, the parity of an element $x\in V$ is written $p(x)\in\ZZ_{2}$. Expressions of the form $(-1)^{p(v)p(w)}$, for general $v,w\in V$, are understood by restricting first to homogeneous elements, inserting their parities in the exponent, and extending linearly. 

For any two super vector spaces $V$ and $W$, let $\Hom(V,W)$ denote the space of all parity–preserving linear maps. The elements of $\Hom(V,W)$ are called morphisms of super vector spaces.

For any two super vector spaces $V$ and $W$, define the $\ZZ_{2}$–graded tensor product $V\otimes W$ by
\[
(V\otimes W)_{\bar{0}}\coloneqq (V_{\bar{0}}\otimes W_{\bar{0}})\oplus(V_{\bar{1}}\otimes W_{\bar{1}}),\quad
(V\otimes W)_{\bar{1}}\coloneqq (V_{\bar{0}}\otimes W_{\bar{1}})\oplus(V_{\bar{1}}\otimes W_{\bar{0}}).
\]
The assignment $(V,W)\mapsto V\otimes W$ is additive in each variable. Moreover, $\otimes$ is associative, and the map $V\otimes W\to W\otimes V$, $v\otimes w\mapsto(-1)^{p(v)p(w)}w\otimes v$, is an isomorphism. If $V$ and $W$ are superalgebras, the product in $V \otimes W$ is $(v_{1}\otimes w_{1})(v_{2}\otimes w_{2})=(-1)^{p(w_{1})p(v_{2})}(v_{1}v_{2}\otimes w_{1}w_{2})$ for any homogeneous $v_{1},v_{2}\in V$ and $w_{1},w_{2} \in W$.

\subsection{Leitfaden}
The paper is organized as follows. Section~\ref{sec::preliminaries} collects the necessary background on basic classical Lie superalgebras, highest weight supermodules, infinitesimal characters, atypicality, and unitarizable supermodules. In Section~\ref{sec::cubic_Dirac_operator} we develop the cubic Dirac operator in the required generality. After introducing the colour quantum Weil algebra, we define the cubic and relative cubic Dirac operators, discuss the oscillator supermodule, and formulate the corresponding Dirac cohomology, with particular emphasis on the unitary setting. Section~\ref{sec::semisimple_perturbations} studies semisimple perturbations of the cubic Dirac operator and introduces the associated family of Laplace operators, culminating in a detecting family. Section~\ref{sec::nilpotent_perturbations} treats nilpotent perturbations parametrized by the self-commuting variety of a quadratic subalgebra and relates Duflo–Serganova cohomology to Dirac cohomology for unitarizable supermodules. Finally, Section~\ref{sec::Bismut_Quillen} places these perturbations in the framework of Bismut–Quillen superconnections, first for semisimple Lie algebras and then for basic classical Lie superalgebras.

\section{Preliminaries} \label{sec::preliminaries}
This section fixes conventions and notation used throughout the article. We recall basic classical Lie superalgebras, highest weight supermodules, infinitesimal characters and atypicality, and then formulate unitarity.

\subsection{Basic Classical Lie Superalgebra} 
 
Throughout, we work with quadratic Lie superalgebras $\gg$, \emph{i.e.}, Lie superalgebras admitting a non-degenerate invariant supersymmetric consistent bilinear form $B$. Consistency means $B(x,y)=0$ whenever $p(x)\neq p(y)$, invariance means $B([x,y],z)=B(x,[y,z])$ for all $x,y,z\in\gg$, and supersymmetry means $B(x,y)=(-1)^{p(x)p(y)}B(y,x)$ for all homogeneous $x,y\in\gg$.

Our main examples are basic classical Lie superalgebras $\gg=\even\oplus\odd$: $\gg$ is simple, $\even$ is reductive, and $\gg$ carries such a form $B$, unique up to a nonzero scalar.

According to Kac \cite{Kac}, the basic classical Lie superalgebras comprise the complex simple Lie algebras together with
\begin{equation}
A(m\vert n),\ B(m\vert n),\ C(n),\ D(m\vert n),\ F(4),\ G(3),\ D(2,1;\alpha).
\end{equation}
We assume throughout that $\alpha\in\RR$ for $D(2,1;\alpha)$, so that $\gg$ is contragredient, unless otherwise stated. 

For the simple Lie algebras we take the usual Killing form $B$, while for the basic classical Lie superalgebras $A(m\vert n)$ with $m\neq n$, $B(m\vert n)$, $C(n+1)$, $D(m\vert n)$ with $m\neq n+1$, $F(4)$, and $G(3)$, we take as $B(\cdot,\cdot)$ the \emph{super Killing form}
\begin{equation}
B(x,y)\coloneqq \str(\ad_x\circ\ad_y),\qquad x,y\in\gg.
\end{equation}
For the remaining cases, the super Killing form vanishes identically; an alternative invariant consistent form is then chosen as in \cite[Section~5.4]{Musson}, and will again be referred to simply as the super Killing form.

Let $\hh\subset\gg$ be a Cartan subalgebra and let $\Delta\coloneqq \Delta(\gg,\hh)$ be the set of roots. Then
\begin{equation}
\gg=\hh\oplus\bigoplus_{\alpha\in\Delta}\gg^\alpha,
\qquad
\gg^\alpha\coloneqq \{X\in\gg:[H,X]=\alpha(H)X\ \text{for all }H\in\hh\}.
\end{equation}
Since $\gg$ is basic classical, one has $\hh\subset\even$. Moreover, we fix some positive system $\Delta^{+}=\Delta_{\bar{0}}^{+} \sqcup \Delta_{\bar{1}}^{+}$ and define the \emph{Weyl vector} to be 
\begin{equation}
\rho\coloneqq \rho_{\bar{0}}-\rho_{\bar{1}}, \qquad \rho_{\bar{0}}\coloneqq \frac{1}{2}\sum_{\alpha \in \Delta^{+}_{\bar{0}}}\alpha, \qquad \rho_{\bar{1}}\coloneqq \frac{1}{2}\sum_{\alpha \in \Delta^{+}_{\bar{1}}}\alpha.
\end{equation}
Associated with $\Delta^{+}$, $\gg$ has a \emph{triangular decomposition}
\begin{equation}
 \gg=\nn^{-} \oplus \hh \oplus \nn^{+}, \qquad \nn^{\pm}\coloneqq \bigoplus_{\alpha \in \Delta^{+}}\gg^{\pm \alpha}.
\end{equation}

The Weyl group $W$ is the Weyl group of the Lie algebra $\even$. The associated fundamental system $\Uppi$ is the set of roots in $\Delta^+$ not expressible as sums of two positive roots; its elements are called \emph{simple roots}.

\subsection{Highest Weight Supermodules}
\label{subsubsec::Highest_weight_supermodules} The main class of $\gg$-supermodules considered in this article is the class of highest weight supermodules, which we now define. Fix a positive system $\Delta^{+}$. This choice determines a triangular decomposition $\gg=\nn^{-}\oplus\hh\oplus\nn^{+}$ and the corresponding Borel subsuperalgebra $\bb\coloneqq \hh\oplus\nn^{+}$. In what follows, $\Delta^{+}$ is kept fixed and omitted from the notation.
\begin{definition}
 A $\gg$-supermodule $M$ is called a \emph{highest weight} $\gg$\emph{-supermodule} with respect to a positive system $\Delta^{+}$ if there exists a nonzero vector $v_{\Lambda} \in M$ with $\Lambda \in \hh^{\ast}$ such that the following holds:
\begin{enumerate}
 \item[a)] $Xv_{\Lambda}=0$ for all $X \in \nn^{+}$,
 \item[b)] $Hv_{\Lambda}=\Lambda(H)v_{\Lambda}$ for all $H \in \hh$, and 
 \item[c)] $\mathfrak{U}(\gg)v_{\Lambda}=M$.
\end{enumerate}
The vector $v_{\Lambda}$ is referred to as the \emph{highest weight vector} of $M$, and $\Lambda$ is referred to as the \emph{highest weight}.
\end{definition}

Every simple highest weight $\gg$-supermodule is the unique simple quotient of a universal highest weight supermodule, the \emph{Verma supermodule}. Since $\mathfrak{U}(\gg)$ is a right $\UE(\bb)$-supermodule by right multiplication, for $\lambda\in\hh^{\ast}$ we set
\begin{equation}\label{eq::Verma_supermodules}
M_{\bb}(\lambda)\coloneqq \mathfrak{U}(\gg)\otimes_{\UE(\bb)}\CC_{\lambda},
\end{equation}
where $\CC_{\lambda}$ is the one-dimensional $\bb$-supermodule on which $\nn^{+}$ acts trivially and $\hh$ acts by $\lambda$.
Since $\bb$ is fixed, we omit the subscript. Then $M(\lambda)$ is a highest weight $\gg$-supermodule with highest weight $\lambda$ and highest weight vector $[1_{\mathfrak{U}(\gg)}\otimes 1]$, and $\UE(\nn^{-})[1_{\mathfrak{U}(\gg)}\otimes 1]=M(\lambda)$. Moreover, if $M$ is any $\gg$-supermodule and $v\in M$ has weight $\lambda$ with $\nn^{+}v=0$, there is a unique $\gg$-morphism $M(\lambda)\to M$ sending $[1_{\mathfrak{U}(\gg)}\otimes 1]$ to $v$.

The following properties of highest weight $\gg$-supermodules are immediate from their realization as quotients of Verma supermodules.

\begin{lemma}
 Let $M$ be a highest weight $\gg$-supermodule with highest weight $\Lambda \in \hh^{\ast}$.
 \begin{enumerate}
 \item[a)] $M$ is a \emph{weight supermodule}, that is, $M$ is $\hh$-semisimple.
\item[b)] For all weights $\lambda$ of $M$, we have $\dim(M^{\lambda}) < \infty$, while $\dim(M^{\Lambda})=1$.
\item[c)] Any nonzero quotient of $M$ is again a highest weight supermodule.
\item[d)] $M$ has a unique maximal subsupermodule and a unique simple quotient. 
\item[e)] Any two simple highest weight $\gg$-supermodules with highest weight $\Lambda$ are isomorphic.
 \end{enumerate}
\end{lemma}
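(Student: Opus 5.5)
All five assertions will be derived from the realization of $M$ as a quotient of the Verma supermodule $M(\Lambda)$. By the universal property recalled after \eqref{eq::Verma_supermodules}, the highest weight vector $v_\Lambda$ gives a surjection $M(\Lambda)\to M$ sending $[1\otimes 1]$ to $v_\Lambda$. Surjectivity holds because $\UE(\gg)v_\Lambda=M$.

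The key structural input is the PBW theorem for Lie superalgebras, which gives $\UE(\gg)\cong\UE(\nn^-)\otimes\UE(\bb)$. Hence $M=\UE(\nn^-)v_\Lambda$. Fix an ordered homogeneous root vector basis of $\nn^-$. Then $\UE(\nn^-)$ is spanned by ordered monomials $y_{\beta_1}^{k_1}\cdots y_{\beta_r}^{k_r}$, where $k_i\in\{0,1\}$ for odd roots $\beta_i$ (using $y^2=\tfrac12[y,y]$). Each such monomial has $\hh$-weight $-\sum k_i\beta_i$ under the adjoint action.

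\textbf{Parts a) and b).} For $H\in\hh$ one has $H\,u v_\Lambda=[H,u]v_\Lambda+u\,Hv_\Lambda$. So the vector $uv_\Lambda$, for $u$ a monomial as above, is a weight vector of weight $\Lambda-\sum k_i\beta_i$, and these vectors span $M$. This gives a). For b), a weight $\lambda=\Lambda-\gamma$ with $\gamma\in\ZZ_{\ge0}\Delta^+$ can be reached only by the finitely many monomials whose weights sum to $\gamma$. Finiteness holds because every positive root is a nonnegative integral combination of simple roots with positive height, so only finitely many tuples $(k_i)$ have $\sum k_i\beta_i=\gamma$. For $\gamma=0$ the only monomial is $1$, so $M^\Lambda=\CC v_\Lambda$, which is nonzero.

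\textbf{Part c).} For a nonzero quotient $\pi\colon M\to Q$, the image $\pi(v_\Lambda)$ still satisfies conditions a)--c) of the definition, provided it is nonzero. If $\pi(v_\Lambda)=0$, then $Q=\UE(\gg)\pi(v_\Lambda)=0$, a contradiction.

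\textbf{Part d).} This is the main point. Any subsupermodule $N\subset M$ is $\hh$-stable, hence a weight subspace by a): its components in each weight space lie in $N$, via the usual Vandermonde argument with finitely many weights. If $N$ is proper, then $N^\Lambda=0$, since $M^\Lambda=\CC v_\Lambda$ generates $M$. The sum of all proper subsupermodules therefore still has zero $\Lambda$-weight space, so it is proper. It is then the unique maximal subsupermodule, and the quotient by it is the unique simple quotient.

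\textbf{Part e).} Let $L,L'$ be simple with highest weight $\Lambda$. Both are quotients of $M(\Lambda)$ by maximal subsupermodules. By d) applied to $M(\Lambda)$, these maximal subsupermodules coincide, so $L\cong L'$.

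The only mildly delicate step is the weight-space decomposition of subsupermodules used in d). It relies on $M$ being a sum of weight spaces, established in a), so that a vector's weight components can be separated using polynomials in elements of $\hh$.
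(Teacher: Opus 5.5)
Your proposal is correct and is exactly the standard unpacking of the paper's justification, which only states that all five properties are immediate from realizing $M$ as a quotient of the Verma supermodule $M(\Lambda)$. Two caveats belong to the paper's formulation of the statement rather than to your route. First, the finiteness and $\dim M^{\Lambda}=1$ step in b) tacitly needs $\Delta^{+}$ to be cut out by a regular element of $\hh_{\RR}$ (or the simple roots to be linearly independent); this fails for $\mathfrak{psl}(n|n)$ with the distinguished system, where the product of the negative odd root vectors of weights $-(\epsilon_{i}-\delta_{i})$, $i=1,\ldots,n$, has weight $0$, so $\dim M(\Lambda)^{\Lambda}>1$. Second, in e) both simple modules are quotients of the same $M(\Lambda)$ only when their highest weight vectors have the same parity, so the isomorphism holds up to $\Pi$.
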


In what follows, we denote the unique simple quotient of a highest weight $\gg$-supermodule of highest weight $\Lambda$ by $L(\Lambda)$.

An important class of highest weight $\gg$-supermodules is formed by the finite-dimensional ones. For our fixed Borel subalgebra
$\bb=\bb_{\bar 0}\oplus\bb_{\bar 1}$ with $\Delta^{+}=\Delta_{\bar 0}^{+}\sqcup\Delta_{\bar 1}^{+}$, the finite-dimensional simple $\gg$-supermodules are parametrized by dominant integral weights $\lambda\in\hh^{\ast}$, \emph{i.e.},
\begin{equation}
 B(\lambda+\rho_{\bar 0},\alpha)>0\qquad\text{for all }\alpha\in\Delta_{\bar 0}^{+}.
\end{equation}
Let $P^{++}$ denote the set of such weights (with respect to our fixed $\bb$). For $\lambda\in P^{++}$, let $L(\lambda)$ be the
simple highest weight $\gg$-supermodule of highest weight $\lambda$ with even highest weight vector. Then the full set of finite-dimensional
simple $\gg$-supermodules is
\begin{equation}
 \{\,L(\lambda),\ \Pi L(\lambda)\mid \lambda\in P^{++}\,\}.
\end{equation}

\subsection{Infinitesimal Characters and Atypicality} \label{subsec::atyicality}
Any highest weight $\gg$-supermodule admits an \emph{infinitesimal character}, \emph{i.e.}, an algebra homomorphism
$\chi:\mathfrak Z(\gg)\to\CC$. We describe these via the Harish--Chandra homomorphism. By PBW,
\begin{equation}
\mathfrak{U}(\gg)\cong \UE(\hh)\oplus\bigl(\nn^{-}\mathfrak{U}(\gg)+\mathfrak{U}(\gg)\nn^{+}\bigr),
\end{equation}
and we let $p:\mathfrak{U}(\gg)\to \UE(\hh)$ be the corresponding projection. Its restriction
$
p|_{\mathfrak Z(\gg)}:\ \mathfrak Z(\gg)\to \UE(\hh)\cong \operatorname{S}
$
is an algebra homomorphism. Define the twist $\zeta:\operatorname{S}\to \operatorname{S}$ by
\begin{equation}
\lambda(\zeta(f))=(\lambda-\rho)(f)\qquad(f\in \operatorname{S},\ \lambda\in\hh^{\ast}),
\end{equation}
and set $\operatorname{HC}\coloneqq\zeta\circ p|_{\mathfrak Z(\gg)}$. The map $\operatorname{HC}:\mathfrak Z(\gg)\to \operatorname{S}$ is an injective ring homomorphism. Moreover, its image can be described as follows. Let $\operatorname{S}(\hh)^{W}\coloneqq \{ f \in \operatorname{S}(\hh) : w(\lambda)(f)=\lambda(f) \, \text{for all } w \in W, \, \lambda \in \hh^{\ast} \}$ and, for any $\lambda \in \hh^{\ast}$, define
$
 A_{\lambda}\coloneqq \{ \alpha \in \Delta_{1}^{+} : (\lambda + \rho, \alpha)=0 \}.
$
 Then, the image of $\text{HC}$ is given by (\cite{kac1984laplace,sergeev1987enveloping,sergeev1988invariant}):
 \begin{equation*}
 \Im(\text{HC})=\bigl\{ f \in \operatorname{S}(\hh)^{W} : (\lambda + t\alpha)(f)=\lambda(f) \ 
 \text{for all } t \in \CC, \, \lambda \in \hh^{\ast}, \, \alpha \in A_{\lambda - \rho} \bigr\}.
 \end{equation*}

For $\lambda\in\hh^{\ast}$ define $\chi_{\lambda}(z)\coloneqq (\lambda+\rho)\bigl(\operatorname{HC}(z)\bigr)$ for $z\in\mathfrak Z(\gg)$.

\begin{definition}
A $\gg$-supermodule $M$ \emph{has infinitesimal character} if $\mathfrak Z(\gg)$ acts on $M$ via $\chi_{\lambda}$ for some
$\lambda\in\hh^{\ast}$. In this case, $\chi_{\lambda}$ is the infinitesimal character of $M$.
\end{definition}

Examples include highest weight $\gg$-supermodules $M$ with highest weight $\Lambda \in \hh^{\ast}$. They have infinitesimal character $\chi_{\Lambda}$. 

As a direct consequence of the description of $\Im(\operatorname{HC})$, for any $\lambda,\lambda'\in\hh^{\ast}$ one has
$\chi_{\lambda}=\chi_{\lambda'}$ if and only if
\begin{equation*}
 \lambda'=w\biggl(\lambda+\rho+\sum_{i=1}^{k} t_{i}\alpha_{i}\biggr)-\rho,
\end{equation*}
where $w\in W$, $t_i\in\CC$, and $\alpha_1,\dots,\alpha_k\in A_{\lambda}\coloneqq \{\alpha\in\Delta_{1}^{+}:(\lambda+\rho,\alpha)=0, \ B(\alpha,\alpha)=0\}$
are linearly independent odd isotropic roots. This leads to the definition of typicality and atypicality.

\begin{definition}
 A weight $\Lambda \in \hh^{\ast}$ is called \emph{typical} if $A_\Lambda=\emptyset$, that is,
$(\Lambda+\rho,\alpha)\neq 0$ for
all $\alpha \in \Delta_{\bar{1}}^{+}$. Otherwise, $\Lambda$ is called \emph{atypical}. The
\emph{degree of atypicality} of $\Lambda$, denoted by $\at(\Lambda)$, is the maximal number of
linearly independent mutually orthogonal positive odd isotropic roots 
$\alpha \in \Delta_{\bar{1}}^{+}$
such that $(\Lambda + \rho,\alpha)=0$. In brief, $\at(\Lambda)$ is the dimension of a maximal
isotropic subspace of $\operatorname{span}_\CC(A_{\Lambda})\subset \hh^{\ast}$. We call a highest weight 
$\gg$-supermodule $M$ with highest weight $\Lambda$ \emph{typical} if $\at(\Lambda)=0$, and 
otherwise \emph{atypical}.
\end{definition}

\begin{remark}
 The degree of atypicality is independent of the choice of positive root system. 
\end{remark}

The \emph{defect} of $\gg$, denoted $\operatorname{def}(\gg)$, is the dimension of a maximal isotropic subspace in the $\RR$-span of $\Delta$.
For $\gg$ of type $A(m-1|n-1)$, $B(m|n)$, or $D(m|n)$ (so that $\Delta_{\RR}\cong \RR^{m,n}$), one has
$\operatorname{def}(\gg)=\min(m,n)$. For $\gg$ of type $C(n)$, $\operatorname{def}(\gg)=1$. A simple Lie algebra and $\osp(1|2n)$ have defect $0$.
In all cases,
\begin{equation}
 0\le \at(\lambda)\le \operatorname{def}(\gg)
\end{equation}
for any $\lambda\in\hh^{\ast}$.

\subsection{Unitarizable Supermodules} \label{subsec::unitarizable_supermodules}

In this subsection, we define unitarizable $\gg$-supermodules, which are formulated relative to real forms of $\gg$. Our notation follows
\cite{Carmeli_Fioresi_Varadarajan_HW,Chuah_Fioresi_real,Fioresi_real_forms}.

Let $V=V_{\bar 0}\oplus V_{\bar 1}$ be a complex super vector space. A \emph{super Hermitian form} on $V$ is a sesquilinear map
$\bracket:V\times V\to\CC$, linear in the first variable and conjugate-linear in the second, satisfying
\begin{equation}
 \bra v,w\ket=(-1)^{p(v)p(w)}\,\overline{\bra w,v\ket}
\end{equation}
for all homogeneous $v,w\in V$, and we assume it is consistent, \emph{i.e.}, $\bra v,w\ket=0$ whenever $p(v)\neq p(w)$.

A super Hermitian form $\bracket$ decomposes as $\bracket=\bracket_{\bar{0}}+i\bracket_{\bar{1}}$, where
\begin{equation}
 \bracket_{\bar{s}}\coloneqq (-1)^s\,\bracket|_{V_{\bar s}\times V_{\bar s}},\qquad \bar{s}\in\ZZ_2.
\end{equation}
Then $\bracket_{\bar{0}}$ and $\bracket_{\bar{1}}$ are ordinary Hermitian forms on $V_{\bar 0}$ and $V_{\bar 1}$, respectively. We call $\bracket$
\emph{non-degenerate} (resp.\ \emph{positive definite}) if both $\bracket_{\bar{0}}$ and $\bracket_{\bar{1}}$ are non-degenerate (resp.\ positive definite).
We say that $\bracket$ is \emph{super positive definite} if $\bracket_{\bar{0}}$ is positive definite and $\bracket_{\bar{1}}$ is negative definite; in this case,
$\bracket$ is a \emph{Hermitian product} on $V$.

For $T\in\End_{\CC}(V)$, we define the \emph{adjoint} $T^{\dagger}$ by
\begin{equation}
 \bra Tv,w\ket=(-1)^{p(v)p(T)}\,\bra v,T^{\dagger}w\ket
\end{equation}
for all homogeneous $v,w\in V$, and extend by linearity.

Unitarity for $\gg$-supermodules is defined relative to real forms of $\gg$, which we now set up. For $r\in\{2,4\}$, let
$\aut_{2,r}^{\RR}(\gg)$ be the set of automorphisms $\theta$ of $\gg$ viewed as a real super vector space such that
$\theta|_{\gg_{\bar 0}\oplus\gg_{\bar 1}}\neq \id_{\gg_{\bar 0}\oplus\gg_{\bar 1}}$ and
\begin{equation}
 \theta^{2}\big|_{\gg_{\bar 0}}=\id_{\gg_{\bar 0}},\qquad
 \theta^{2}\big|_{\gg_{\bar 1}}=
 \begin{cases}
 \ \id_{\gg_{\bar 1}}, & r=2,\\
 -\id_{\gg_{\bar 1}}, & r=4.
 \end{cases}
\end{equation}
We set 
\begin{equation}
 \begin{aligned}
 \aut_{2,4}(\gg) &\coloneqq \{ \theta \in \aut_{2,4}^{\RR}(\gg) : \theta \ \text{is} \ \CC\text{-linear}\}, \\ 
 \overline{\aut}_{2,r}(\gg) &\coloneqq \{ \theta \in \aut_{2,r}^{\RR}(\gg) : \theta \ \text{is} \ \text{conjugate-linear}\}.
 \end{aligned}
\end{equation}

A \emph{real structure} on $\gg$ is a conjugate-linear Lie superalgebra morphism $\phi:\gg\to\gg$ such that
$\phi\in\overline{\aut}_{2,2}(\gg)$, \emph{i.e.}, $\phi$ is a conjugate-linear involution. The fixed-point subalgebra
$\gg^{\phi}$ is a \emph{real form} of $\gg$. Real structures in $\overline{\aut}_{2,2}(\gg)$ are related to elements of $\aut_{2,4}(\gg)$ as follows.

\begin{proposition}[{\cite{Fioresi_real_forms}}]\label{prop::real_form_omega}
There exists a unique $\omega\in\overline{\aut}_{2,4}(\gg)$, up to inner automorphisms of $\gg$, together with a positive system
$\Delta^{+}$ and root vectors $e_{\pm\alpha}$ for $\alpha\in\Delta^{+}$ such that
\[
\omega(e_{\pm\alpha})=-e_{\mp\alpha}\ \text{for all even simple }\alpha,\qquad
\omega(e_{\pm\alpha})=\pm e_{\mp\alpha}\ \text{for all odd simple }\alpha.
\]
Moreover:
\begin{enumerate}
\item[a)] $\omega$ induces a bijection
\[
\overline{\aut}_{2,2}(\gg)\setminus\{\theta:\theta|_{\even}=\omega|_{\even}\}\ \longrightarrow\ \aut_{2,4}(\gg),
\qquad \theta\longmapsto \omega^{-1}\circ\theta.
\]
\item[b)] For the Killing form $B(\cdot,\cdot)$ on $\gg$, one has $\overline{B(X,Y)}=B(\omega(X),\omega(Y))$ for all $X,Y\in\gg$,
and $B(\cdot,\omega(\cdot))$ is positive definite.
\end{enumerate}
\end{proposition}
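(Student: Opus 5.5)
The statement is the classification of real forms via a distinguished $\omega\in\overline{\aut}_{2,4}(\gg)$, cited from \cite{Fioresi_real_forms}. I would prove it in three steps: construct $\omega$ explicitly from Chevalley-type generators, establish uniqueness up to inner automorphisms, and then deduce a) and b).

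\textbf{Construction of $\omega$.} Since $\gg$ is contragredient, it is generated by Chevalley generators $e_{\pm\alpha}$, $\alpha\in\Uppi$, together with $\hh$, subject to Serre-type relations. Fix a positive system and real coroots $h_\alpha=[e_\alpha,e_{-\alpha}]$. Define a conjugate-linear map on generators by
\[
e_{\pm\alpha}\mapsto -e_{\mp\alpha}\ (\alpha\ \text{even simple}),\qquad e_{\pm\alpha}\mapsto \pm e_{\mp\alpha}\ (\alpha\ \text{odd simple}),\qquad h\mapsto -\bar h \ \text{on}\ \hh_{\RR}.
\]
One checks that the defining relations are preserved. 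The key sign check is $[\omega e_\alpha,\omega e_{-\alpha}]=[\mp e_{-\alpha},\pm e_\alpha]$, which must equal $\omega(h_\alpha)=-h_\alpha$. For odd $\alpha$ the bracket is symmetric, and the sign pattern $(+,-)$ is forced precisely for this reason. By the universal property of the contragredient presentation, the map extends to a conjugate-linear automorphism. On the even generators, $\omega^2=\id$. On the odd simple root vectors, $\omega^2(e_{\alpha})=\omega(e_{-\alpha})=-e_\alpha$. Since $\gg_{\bar 1}$ is generated from the odd simple vectors by even brackets, this gives $\omega^2|_{\gg_{\bar1}}=-\id$. Hence $\omega\in\overline{\aut}_{2,4}(\gg)$.

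\textbf{Uniqueness.} Any other such $\omega'$ preserves a Cartan subalgebra after conjugation, since $\omega'|_{\even}$ is a compact-type involution and all such involutions are conjugate. I would pick a positive system adapted to $\omega'$ and rescale the root vectors by a torus element to match the normalization. Two automorphisms agreeing on the generators then coincide.

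\textbf{Part a).} For $\theta\in\overline{\aut}_{2,2}(\gg)$, the composition $\omega^{-1}\theta$ is $\CC$-linear, being a product of two conjugate-linear maps. For it to lie in $\aut_{2,4}(\gg)$ one needs $(\omega^{-1}\theta)^2|_{\gg_{\bar1}}=-\id$, and this is the main obstacle. Unlike the Lie algebra case, $\theta$ and $\omega$ need not commute. My first attempt would be to conjugate $\theta$ by an inner automorphism so that it commutes with $\omega$, using Cartan's argument on $\even$: both are involutions, and one can average via $(\omega\theta)^2$, which is positive self-adjoint with respect to the form $B(\cdot,\omega\cdot)$ from b). Once $\theta$ and $\omega$ commute, $(\omega^{-1}\theta)^2=\omega^{-2}\theta^2$. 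This equals $\id$ on $\even$ and $-\id$ on $\gg_{\bar1}$, as required. The excluded locus $\theta|_{\even}=\omega|_{\even}$ is exactly where $\omega^{-1}\theta$ restricts to the identity on $\even$, which is not permitted by the definition of $\aut^{\RR}_{2,4}$. The inverse map is $\phi\mapsto\omega\phi$, and one verifies in the same way that it lands in $\overline{\aut}_{2,2}(\gg)$.

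\textbf{Part b).} Conjugate invariance, $\overline{B(X,Y)}=B(\omega X,\omega Y)$, follows because $\omega$ is a conjugate-linear automorphism and $B$ is the (super) Killing form, or the canonical form in the exceptional cases, hence determined by the bracket. For positivity, $B(\cdot,\omega\cdot)$ pairs $\gg^\alpha$ with $\gg^\alpha$ and vanishes between distinct root spaces. It therefore suffices to check positivity on $\hh_\RR$ and on each $\CC e_\alpha$:
\begin{itemize}
\item on $\hh_{\RR}$, $B(h,-h)>0$ requires the appropriate sign normalization of $B$ on the real span;
\item on $\CC e_\alpha$, $B(e_\alpha,\omega e_\alpha)=\mp B(e_\alpha,e_{-\alpha})$, and the chosen signs are designed to make this positive, including for odd roots where $B$ is skew.
\end{itemize}
The non-simple root vectors are handled by induction on height, using invariance of $B$.
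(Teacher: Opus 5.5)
The paper gives no proof of this proposition; it quotes it from \cite{Fioresi_real_forms}. Your sketch therefore has to stand on its own. The existence construction is fine, but the steps for b), a) and uniqueness break down, and the statement cannot hold as written.

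For b): your normal form gives $\omega(h_\beta)=[e_{-\beta},-e_\beta]=-h_\beta$, where $h_\beta\coloneqq[e_\beta,e_{-\beta}]$ and $\beta$ is an odd simple root. Suppose $\beta$ is isotropic (e.g.\ $\epsilon_m-\delta_1$ for $\mathfrak{sl}(m|n)$; the distinguished odd simple root is isotropic whenever $\operatorname{def}(\gg)>0$). Then invariance gives $B(h_\beta,h_\beta)=\beta(h_\beta)B(e_\beta,e_{-\beta})=0$, so $B(h_\beta,\omega(h_\beta))=0$ with $h_\beta\neq0$. No ``sign normalization of $B$ on the real span'' can repair this, because $B|_{\hh_{\RR}}$ is indefinite; the paper itself exhibits an isotropic weight for $\mathfrak{psl}(2|2)$. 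Similarly, $B(e_\alpha,\omega(e_\alpha))=-B(e_\alpha,e_{-\alpha})$ has opposite signs on the two simple ideals of $\even$, since $B$ restricts there to Killing-form multiples of opposite sign. An example is $E_{12}$ versus $E_{m+1,m+2}$ in $\mathfrak{sl}(m|n)$ with $m\neq n$ and $m,n\ge 2$. So $B(\cdot,\omega(\cdot))$ is not positive definite on $\gg$. At most one can hope for positivity on $\gg_{\bar1}$, and even that depends on the Borel: for $\mathfrak{sl}(2|1)$ with simple roots $\epsilon_1-\delta_1,\ \delta_1-\epsilon_2$, your recipe gives opposite signs on $E_{13}$ and $E_{32}$. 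Your two bullet points assert precisely what fails.

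For a): on $\even$ both $\omega$ and $\theta$ square to $\id$, so $(\omega^{-1}\theta)^2|_{\even}=\id$ holds iff they commute there. A conjugate $\Ad_g\theta\Ad_g^{-1}$ in general does not commute with $\omega$, so $\theta\mapsto\omega^{-1}\theta$ is not well defined on the stated set. By conjugating $\theta$ first, you are proving a statement about conjugacy classes, and you should say so. Your averaging step also uses positivity of $B(\cdot,\omega(\cdot))$, which fails as shown above. On $\even$ you need a form that is positive on each simple ideal separately, and that yields commutation only on $\even$. The map $\psi\coloneqq\theta\omega\theta^{-1}\omega^{-1}$ is then a linear automorphism trivial on $\even$. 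Hence it is essentially $\pm1$ on $\gg_{\bar1}$ for type II, or $(c,c^{-1})$ on $\gg_1\oplus\gg_{-1}$ for type I. One computes $(\omega^{-1}\theta)^2=\omega^{-2}\psi^{-1}$, so ruling out or conjugating away $\psi\neq\id$ is a missing step.

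For uniqueness: the sign pattern $(+,-)$ on odd simple roots is not forced. The pattern $(-,+)$ satisfies the same relation and gives $P\omega$, where $P$ is the parity automorphism. With respect to the root vectors $(e_\alpha,-e_{-\alpha})$, $P\omega$ is again in the displayed normal form. Torus rescalings $e_{\pm\alpha}\mapsto s^{\pm1}e_{\pm\alpha}$ cannot relate $\omega$ and $P\omega$. For $\osp(1|2)$ they are not conjugate by any automorphism, because $B(\cdot,\omega(\cdot))$ and $B(\cdot,P\omega(\cdot))$ are definite of opposite signs on $\gg_{\bar1}$. Nor is $\omega'|_{\even}$ automatically of compact type: for the non-distinguished $\mathfrak{sl}(2|1)$ system above, the recipe forces $\omega'(E_{12})=E_{21}$, whose fixed points in $\mathfrak{sl}(2)$ form $\mathfrak{su}(1,1)$. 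Uniqueness therefore requires extra normalization that your argument never uses: the distinguished system together with a fixed normalization of $[e_\alpha,e_{-\alpha}]$, or a positivity condition on $\gg_{\bar1}$.
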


\begin{remark}
 The positive system $\Delta^{+}$ is the \emph{distinguished positive system}, meaning that there is exactly one odd positive root that cannot be expressed as the sum of two other positive roots.
\end{remark}

As a consequence, there is a one-to-one correspondence, up to equivalence~\cite{Serganova_real,Pellegrini,Fioresi_real_forms,Chuah_real}
\begin{equation}
 \{\text{real forms }\gg^{\RR}\text{ of }\gg\}\ \longleftrightarrow\ \{\theta\in\aut_{2,4}(\gg)\}.
\end{equation}
Here a real form is realized as the fixed-point subspace of a suitable involution, and two real forms are identified whenever they are isomorphic (equivalently, whenever the corresponding involutions are conjugate by an automorphism of $\gg$).

Fix a real form $\gg^{\RR}$ of a basic classical Lie superalgebra $\gg$, \emph{i.e.}, $\gg^{\RR}=\gg^{\theta}$ for some
$\theta\in\aut_{2,4}(\gg)$. Set $\sigma\coloneqq \omega\circ\theta\in\overline{\aut}_{2,2}(\gg)$, the corresponding conjugate-linear
involution (\emph{cf.}~ Proposition~\ref{prop::real_form_omega}). We call $\theta$ a \emph{Cartan automorphism} of $\gg$ (or of
$\gg^{\RR}$) if
\begin{equation}\label{eq::inner_product}
 B_{\theta}(\cdot,\cdot)\coloneqq -B(\cdot,\theta(\cdot))
\end{equation}
is an inner product on $\gg^{\RR}$. For each $\theta\in\aut_{2,4}(\gg)$ there is a unique real form $\gg^{\RR}$ for which
$\theta$ restricts to a Cartan automorphism, and conversely each real form admits a unique Cartan automorphism~\cite[Theorem~1.1]{Chuah_real}.
Hence, we may assume throughout that the chosen $\theta$ is Cartan.

With the notion of a real form established, we can now define unitarizable supermodules.

\begin{definition}
 Let $\gg^{\RR}$ be a real form of $\gg$, and let $\HH$ be a complex $\gg^{\RR}$-supermodule. The supermodule $\HH$ is called a \emph{unitarizable} $\gg$\emph{-supermodule} (or \emph{unitarizable} $\gg^{\RR}$\emph{-supermodule}) if there exists a Hermitian product $\bracket$ on $\HH$ such that $X^{\dagger}=-X$ for all $X \in \gg^{\RR}$. Explicitly, this means:
 \[
 \langle Xv,w\ket=-(-1)^{p(v)p(X)}\bra v,Xw\ket
 \]
 for all $v,w \in \HH$ and $X \in \gg^{\RR}$.
\end{definition}

The following standard result is central.

\begin{proposition}\label{prop::unitarizable_completely_reducible}
Unitarizable $\gg$-supermodules are completely reducible, that is, the orthogonal complement of any $\gg^{\RR}$-submodule is again a $\gg^{\RR}$-submodule.
\end{proposition}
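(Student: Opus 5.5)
The claim is that a unitarizable $\gg$-supermodule $\HH$ with Hermitian product $\bracket$ is completely reducible. Concretely: if $N\subset\HH$ is a $\gg^{\RR}$-subsupermodule, then its orthogonal complement
\[
N^{\perp}\coloneqq\{w\in\HH:\bra v,w\ket=0 \text{ for all } v\in N\}
\]
is again a $\gg^{\RR}$-subsupermodule, and (in the finite-dimensional, or more generally weight-space-wise finite, situation) $\HH=N\oplus N^{\perp}$. The plan is the standard unitary argument, adapted to the super Hermitian product.

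\emph{Step 1: $N^{\perp}$ is a subsupermodule.} Let $w\in N^{\perp}$ be homogeneous and $X\in\gg^{\RR}$ homogeneous. For every homogeneous $v\in N$, unitarity gives
\[
\bra v,Xw\ket=-(-1)^{p(v)p(X)}\bra Xv,w\ket .
\]
The right-hand side vanishes because $Xv\in N$. Hence $Xw\in N^{\perp}$.

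Since $\bracket$ is consistent, $\bra v,w\ket=\bra v_{\bar 0},w_{\bar 0}\ket+\bra v_{\bar 1},w_{\bar 1}\ket$. Because $N$ is $\ZZ_2$-graded, it follows that $N^{\perp}$ is $\ZZ_2$-graded as well. Finally, $N^{\perp}$ is stable under $\gg=\gg^{\RR}\otimes_{\RR}\CC$, because the action is complex linear.

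\emph{Step 2: $N\cap N^{\perp}=0$.} Let $v\in N\cap N^{\perp}$ and write $v=v_{\bar 0}+v_{\bar 1}$. Both components lie in $N\cap N^{\perp}$ by gradedness. Then $\bra v_{\bar 0},v_{\bar 0}\ket_{\bar 0}=0$ and $\bra v_{\bar 1},v_{\bar 1}\ket_{\bar 1}=0$. Super positivity says $\bracket_{\bar 0}$ is positive definite and $\bracket_{\bar 1}$ is negative definite, so both components vanish. Equivalently, the restriction of $\bracket$ to $N$ is non-degenerate, since each graded piece is a definite Hermitian form.

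\emph{Step 3: $\HH=N+N^{\perp}$.} This is the main obstacle, because in infinite dimensions orthogonal complements need not be complementary.

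In the finite-dimensional case, Step 2 makes $\bracket|_{N}$ non-degenerate, so $\HH_{\bar s}=N_{\bar s}\oplus N_{\bar s}^{\perp}$ for each parity by ordinary linear algebra.

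For the modules relevant later (highest weight modules), I would reduce to finite-dimensional pieces. First, $\hh\cap\gg^{\RR}$ acts skew-adjointly, so distinct $\hh$-weight spaces are orthogonal. Second, weight spaces are finite-dimensional by the lemma on highest weight supermodules. Third, $N$ is $\hh$-stable, hence a sum of its weight spaces. Applying the finite-dimensional argument weight space by weight space gives $\HH^{\lambda}=N^{\lambda}\oplus(N^{\perp}\cap\HH^{\lambda})$, and summing over weights yields $\HH=N\oplus N^{\perp}$.

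Complete reducibility then follows by iterating the splitting on finitely generated or finite-length pieces, or by a Zorn's lemma argument, in the usual way.
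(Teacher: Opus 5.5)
The paper gives no proof of this proposition; it quotes it as a standard fact, and your Step~1 is exactly that standard argument. Step~1 already proves the proposition in the form the paper states it, namely that the orthogonal complement of a $\gg^{\RR}$-subsupermodule is again one. Your check that $N^{\perp}$ is $\ZZ_2$-graded and $\gg$-stable is correct, and so is Step~2.

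Step~3 goes beyond the stated claim, and one inference in it needs a hypothesis you did not state. Skew-adjointness of $\hh\cap\gg^{\RR}$ only gives two things: weights are purely imaginary on $\hh\cap\gg^{\RR}$, and $\HH^{\lambda}\perp\HH^{\mu}$ whenever $\lambda$ and $\mu$ differ on $\hh\cap\gg^{\RR}$. To get orthogonality of \emph{all} distinct weight spaces you need $\hh=(\hh\cap\gg^{\RR})\otimes_{\RR}\CC$, i.e.\ $\hh$ must be stable under the conjugation defining $\gg^{\RR}$.

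Without this hypothesis the inference fails. Take $\mathfrak{sl}(2,\CC)$ with real form $\mathfrak{su}(2)$ and the Cartan subalgebra $\CC(h+2e)$, which meets $\mathfrak{su}(2)$ only in $0$. The standard module $\CC^{2}$ is unitarizable and is a highest weight module for this Cartan, with highest weight vector $(1,0)^{T}$ and root vector $e$. Its weight vectors $(1,0)^{T}$ and $(1,-1)^{T}$ are not orthogonal.

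In the unitarizable highest weight setting the hypothesis is part of the standing conventions, since $\hh$ is a compact Cartan subalgebra inside $\mathfrak{k}$, so you only need to make it explicit. With it, the rest of Step~3 is fine: the weight-space-by-weight-space splitting, the identification of the complement of $N^{\lambda}$ in $\HH^{\lambda}$ with $N^{\perp}\cap\HH^{\lambda}$, and the Zorn-type passage to a direct sum of simples.

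Your caution about general infinite-dimensional modules is justified. Take the trivial action on finitely supported sequences with the $\ell^{2}$ product, and let $N=\{x:\sum_{n}x_{n}/n=0\}$. Then $N$ is a proper submodule with $N^{\perp}=0$, so the splitting $\HH=N\oplus N^{\perp}$ is not available in that generality, and only Step~1 holds there.
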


\section{The Cubic Dirac Operator} \label{sec::cubic_Dirac_operator}
We introduce the cubic Dirac operator, its relative version, and Dirac cohomology for quadratic Lie superalgebras in the language of the colour quantum Weil algebra.

Throughout this section, let $\gg$ be a quadratic Lie superalgebra with non-degenerate invariant supersymmetric bilinear form $B$. Via $B$, we identify $\gg$ with $\gg^{*}$ by the musical isomorphisms
\begin{equation}\label{eq::musical_isomorphisms}
\flat:\gg\to\gg^{*},\quad x\mapsto x^{\flat},\quad x^{\flat}(y)\coloneqq B(x,y),\qquad
\sharp:\gg^{*}\to\gg,\quad \alpha\mapsto\alpha^{\sharp},\quad B(\alpha^{\sharp},y)=\alpha(y)
\end{equation}
for all $y\in\gg$. This identification will be used without further comment.

\subsection{Colour Quantum Weil Algebra} The cubic Dirac operators and their perturbations take values in the colour quantum Weil algebra, which we now introduce.

\subsubsection{Exterior and Clifford Algebra over \texorpdfstring{$\gg$}{}} \label{subsec::exterior_and_clifford_superalgebra} We briefly recall the definition and properties of the \emph{exterior} and \emph{Clifford algebra} over $\gg$.

Let $T(\gg)$ be the tensor algebra of $\gg$ with unit $1_{T(\gg)}$. It carries the canonical $\ZZ$-grading
\begin{equation}
T(\gg)=\bigoplus_{k\ge 0}T^{k}(\gg),\qquad T^{0}(\gg)=\CC,\quad
T^{k}(\gg)=\mathrm{span}\{x_{1}\otimes\cdots\otimes x_{k}:x_i\in\gg\}.
\end{equation}
Reducing the degree mod $2$ yields a superalgebra structure with $T(\gg)_{\bar{0}}\coloneqq \bigoplus_{k\in 2\ZZ_{+}}T^{k}(\gg)$ and $
T(\gg)_{\bar{1}}\coloneqq \bigoplus_{k\in 2\ZZ_{+}+1}T^{k}(\gg).$ Thus $T(\gg)$ carries two $\ZZ_{2}$-gradings, hence two superalgebra structures: one from tensor degree modulo $2$, and one from the intrinsic parity of $\gg$. In this article we use the latter, that is, we regard $T(\gg)$ as the superalgebra induced by the $\gg$-parity.
\medskip

\noindent
\textbf{Exterior Algebra over $\gg$.}
Let $I_{\wedge}\subset T(\gg)$ be the homogeneous ideal generated by
\begin{equation}
x\otimes y+(-1)^{p(x)p(y)}\,y\otimes x,\qquad x,y\in\gg.
\end{equation}
The quotient $\bigwedge(\gg)\coloneqq T(\gg)/I_{\wedge}$ is the \emph{exterior algebra} of $\gg$, with unit $1_{\bigwedge(\gg)}$ and product $\wedge$ (exterior multiplication). Equivalently, it is generated by $\gg$ subject to
\begin{equation}
x\wedge y+(-1)^{p(x)p(y)}\,y\wedge x=0,\qquad x,y\in\gg.
\end{equation}

The $\ZZ$-grading of $T(\gg)$ descends to $\bigwedge(\gg)$, so that $\bigwedge(\gg)=\bigoplus_{k\ge 0}\bigwedge^{\!k}(\gg)$, where
$\bigwedge^{\!k}(\gg)$ is spanned by exterior products of $k$ elements of $\gg$. Reducing the degree mod $2$ gives the natural superalgebra grading
$\bigwedge(\gg)=\bigwedge(\gg)_{\bar{0}}\oplus \bigwedge(\gg)_{\bar{1}}$. We also have the $\ZZ_{2}$-grading induced by the intrinsic parity of $\gg$, and we fix this grading from now on.

On $\bigwedge(\gg)$ we use the standard operators $\epsilon$, $\iota$, and $L$.
For $v\in\gg$, let $\epsilon(v)$ denote \emph{left exterior multiplication} by $v$.
Define a derivation $\iota_v$ on $T(\gg)$ by
\begin{equation}\label{eq::contraction}
\iota_{v}(x_{1}\otimes\cdots\otimes x_{l})
\coloneqq \sum_{k=1}^{l}(-1)^{k-1}(-1)^{p(v)(p(x_{1})+\cdots+p(x_{k-1}))}\,B(v,x_{k})\,
x_{1}\otimes\cdots\otimes \widehat{x}_{k}\otimes\cdots\otimes x_{l}.
\end{equation}
Then $\iota_v(I_{\wedge})\subset I_{\wedge}$ (\emph{cf.}~\cite[Proposition~4.5]{generalized_Clifford}), hence $\iota_v$ descends to
$\bigwedge(\gg)$; it is called \emph{contraction}, has degree $-1$ with respect to the $\ZZ$-grading, and parity $p(v)$.
Finally, for homogeneous $T\in\End(\gg)$ define a degree-$0$ derivation $L_T$ on $T(\gg)$ by
\begin{equation}\label{eq::Lie_derivative}
L_{T}(x_{1}\otimes\cdots\otimes x_{l})
\coloneqq \sum_{k=1}^{l}(-1)^{p(T)(p(x_{1})+\cdots+p(x_{k-1}))}\,
x_{1}\otimes\cdots\otimes T(x_{k})\otimes\cdots\otimes x_{l}.
\end{equation}
It preserves $I_{\wedge}$ and therefore induces a degree-$0$ derivation on $\bigwedge(\gg)$, the \emph{Lie derivative}.

Moreover, $B$ induces a non-degenerate bilinear form on $\bigwedge(\gg)$. For $k\ge 0$ and $x_i,y_i\in\gg$, define on $T^{k}(\gg)$
\begin{equation}
\langle x_{1}\otimes\cdots\otimes x_{k},\, y_{1}\otimes\cdots\otimes y_{k}\rangle_{T^{k}}
\coloneqq \prod_{i=0}^{k-1} B(x_{k-i},\, y_{1+i}),
\end{equation}
and set
\begin{equation}
\langle x_{1}\otimes\cdots\otimes x_{k},\, y_{1}\otimes\cdots\otimes y_{k}\rangle_{\wedge^{k}}
\coloneqq \sum_{\sigma\in S_{k}} p(\sigma;x_{1},\ldots,x_{k})\,
\langle x_{\sigma(1)}\otimes\cdots\otimes x_{\sigma(k)},\, y_{1}\otimes\cdots\otimes y_{k}\rangle_{T^{k}},
\end{equation}
where $p(\sigma; x_{1},\ldots, x_{k})=\sgn(\sigma)\prod_{1 \leq i < j \leq k, \ \sigma^{-1}(i) > \sigma^{-1}(j)} (-1)^{p(x_{i})p(x_{j})}$.
Then $\langle\cdot,\cdot\rangle_{\wedge^{k}}$ vanishes on $I_{\wedge}\cap T^{k}(\gg)$ and hence descends to $\bigwedge^{\!k}(\gg)$,
yielding a bilinear form $\langle\cdot,\cdot\rangle_{\wedge}$ on $\bigwedge(\gg)$.

\begin{lemma}[{\cite[Section~5]{generalized_Clifford}}]
The form $\langle\cdot,\cdot\rangle_{\wedge}$ is non-degenerate and supersymmetric, and for homogeneous $x,y,z\in\gg$ one has
\[
\langle \iota_{x} y,\, z \rangle_{\wedge}
=(-1)^{p(x)p(y)}\, \langle y,\, x\wedge z\rangle_{\wedge}.
\]
\end{lemma}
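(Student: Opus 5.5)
The lemma has three parts: non-degeneracy, supersymmetry, and the adjunction $\langle \iota_x y, z\rangle_\wedge=(-1)^{p(x)p(y)}\langle y,x\wedge z\rangle_\wedge$. I would first rewrite the pairing in determinant-like form. Since $\langle\cdot,\cdot\rangle_{T^k}$ pairs $x_{k-i}$ with $y_{1+i}$, the sum over $\sigma\in S_k$ with signs $p(\sigma;x_1,\dots,x_k)$ is a graded determinant (Berezin-type sign rule) of the matrix $(B(x_i,y_j))$, twisted by the order reversal. Two facts follow, and I would record them first:
\begin{itemize}
\item $p(\sigma;\cdot)$ is exactly the sign by which $x_{\sigma(1)}\wedge\cdots\wedge x_{\sigma(k)}=p(\sigma;x)\,x_1\wedge\cdots\wedge x_k$ in $\bigwedge(\gg)$;
\item the symmetrized form is therefore a well-defined functional on $\bigwedge^k(\gg)$ in the first slot, and, by the analogous graded antisymmetry argument, in the second slot.
\end{itemize}

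For non-degeneracy, I would choose a homogeneous basis $\{e_a\}$ of $\gg$ with $B$-dual basis $\{e^a\}$. The monomials $e_{a_1}\wedge\cdots\wedge e_{a_k}$ with $a_1\le\cdots\le a_k$, where repeats are allowed only for odd $e_a$, form a PBW-type basis of $\bigwedge^k(\gg)$. Pairing a monomial in the $e_a$ with a monomial in the $e^b$, only permutations matching each index to its dual contribute. The result is:
\begin{itemize}
\item zero unless the two multisets of indices agree;
\item when they agree, a nonzero scalar, namely a sign times $\prod_j m_j!$, where the $m_j$ are the multiplicities of repeated odd vectors.
\end{itemize}
So the Gram matrix between dual monomial bases is diagonal with nonzero entries, which gives non-degeneracy.

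For supersymmetry, I would compare $\langle x_1\cdots x_k, y_1\cdots y_k\rangle$ with $\langle y_1\cdots y_k, x_1\cdots x_k\rangle$ term by term. Each term $\prod_i B(x_{\sigma(k-i)},y_{1+i})$ can be rewritten using $B(x,y)=(-1)^{p(x)p(y)}B(y,x)$ together with a reindexing $\sigma\mapsto\sigma^{-1}$ composed with the reversal. The bookkeeping shows the accumulated sign is $(-1)^{p(X)p(Y)}$ with $X=x_1\wedge\cdots\wedge x_k$ and $Y=y_1\wedge\cdots\wedge y_k$. This sign tracking is the main obstacle: the reversal in $\langle\cdot,\cdot\rangle_{T^k}$ and the signs from $B$ must combine exactly. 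I would handle it by induction on $k$, via a Laplace-type expansion along the first slot.

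The adjunction formula is that expansion itself. Expanding $\langle y_1\cdots y_{k-1}, x\wedge z_1\cdots z_{k-1}\rangle$ according to which $y_j$ is paired with $x$ reproduces the terms of $\iota_x$ from \eqref{eq::contraction}. Each term carries the factor $B(x,y_j)$ and the sign $(-1)^{j-1}(-1)^{p(x)(p(y_1)+\dots+p(y_{j-1}))}$. The leftover global sign is $(-1)^{p(x)p(y)}$, where $y$ denotes the whole monomial, and the supersymmetry of $B$ is used once more to match it. Everything else is extended multilinearly, since both sides are bilinear and well defined on the quotient.
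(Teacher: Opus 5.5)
The paper gives no proof of this lemma; it only cites \cite[Section~5]{generalized_Clifford}. So your proposal has to stand on its own. Your non-degeneracy argument is fine: dual monomial bases give a diagonal Gram matrix with entries $\pm\prod_j m_j!$. The supersymmetry sketch is plausible, though the sign bookkeeping is only announced. Note that descent in the second slot is not ``analogous'' to the first, since the definition antisymmetrizes only the first slot; it follows once supersymmetry is proved.

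\textbf{The genuine gap is the adjunction step: the global sign you assert is wrong.} There is also a degree slip: you need $y=y_1\wedge\cdots\wedge y_k$ paired against $x\wedge z_1\wedge\cdots\wedge z_{k-1}$. The real issue is that $\langle\cdot,\cdot\rangle_{T^k}$ pairs the \emph{first} factor of the second argument with the \emph{last} factor of the first; in the definition, the $i=0$ factor is $B(x_k,y_1)$. So expanding $\langle y,x\wedge z\rangle_\wedge$ along $x$ means moving $y_j$ to position $k$, not to the front. The $j$-th term is
\[
(-1)^{k-j}(-1)^{p(y_j)(p(y_{j+1})+\cdots+p(y_k))}\,B(y_j,x)\,\bigl\langle y_1\wedge\cdots\wedge\widehat{y_j}\wedge\cdots\wedge y_k,\ z\bigr\rangle_{\wedge}.
\]
Compare this with the coefficient $(-1)^{j-1}(-1)^{p(x)(p(y_1)+\cdots+p(y_{j-1}))}B(x,y_j)$ from \eqref{eq::contraction}. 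Using $p(y_j)=p(x)$ in non-zero terms, so that $B(y_j,x)=(-1)^{p(x)}B(x,y_j)$, every term differs by one common factor:
\[
\langle\iota_x y,z\rangle_\wedge=(-1)^{k-1}(-1)^{p(x)p(y)}\,\langle y,x\wedge z\rangle_\wedge,\qquad y\in\textstyle\bigwedge^{k}(\gg).
\]
For even $k$ this contradicts your claimed sign, and no further use of the supersymmetry of $B$ can repair it. Concretely, take $\gg$ purely even with $B$-orthonormal $e_1,e_2$. Then $\langle\iota_{e_1}(e_1\wedge e_2),e_2\rangle_\wedge=B(e_2,e_2)=1$, whereas $\langle e_1\wedge e_2,e_1\wedge e_2\rangle_\wedge=-B(e_1,e_1)B(e_2,e_2)=-1$.

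So the formula with the sign $(-1)^{p(x)p(y)}$ holds only when $\deg y$ is odd. Read literally with $y,z\in\gg$, it is vacuous, since both sides pair different degrees. To get a correct general adjunction with the paper's nested pairing, you have two options:
\begin{itemize}
\item keep the extra factor $(-1)^{\deg y-1}$; or
\item use right multiplication instead. The same expansion, with $x$ now pairing with the first factor $y_{\sigma(1)}$, gives $\langle\iota_x y,z\rangle_\wedge=(-1)^{p(x)}\langle y,z\wedge x\rangle_\wedge$, with no degree dependence.
\end{itemize}
This matters downstream. The proof of Lemma~\ref{lemm::properties_phi_g_fd}~b) applies the formula to $\iota_y\phi\in\bigwedge^{2}(\gg)$, which is exactly the case where the extra sign appears.
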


We focus on $\bigwedge^{2}(\gg)$ and its relation to the orthosymplectic superalgebra $\osp(\gg)$. Define $\osp(\gg)\subset\End(\gg)$ to be the subspace of endomorphisms $T$ that are skew-supersymmetric with respect to $B$, that is,
\begin{equation}
B(Tx,y)+(-1)^{p(T)p(x)}\,B(x,Ty)=0 \qquad \forall\,x,y\in\gg,
\end{equation}
with Lie superbracket $[\cdot,\cdot]_{\osp(\gg)}$ given by the supercommutator.

\begin{lemma} \label{lemm::ad_x_is_orthosymplectic}
For $x\in\gg$, one has $\ad_{x}\in\osp(\gg)$.
\end{lemma}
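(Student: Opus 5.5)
The plan is to verify directly the defining identity of $\osp(\gg)$ for $T=\ad_x$, using only the invariance and supersymmetry of $B$ from the preliminaries. By linearity in $x$ and $y$, I would restrict to homogeneous $x,y,z\in\gg$. Then $\ad_x$ is homogeneous of parity $p(\ad_x)=p(x)$. What has to be shown is
\[
B([x,y],z)+(-1)^{p(x)p(y)}\,B(y,[x,z])=0 .
\]

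I would first treat the second term by super skew-symmetry of the bracket, $[x,y]=-(-1)^{p(x)p(y)}[y,x]$. This gives
\[
B([x,y],z)=-(-1)^{p(x)p(y)}\,B([y,x],z).
\]
Invariance, $B([y,x],z)=B(y,[x,z])$, then turns the right-hand side into
\[
-(-1)^{p(x)p(y)}\,B(y,[x,z]).
\]
Substituting this into the displayed expression, the two terms cancel exactly, which is the required identity.

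Supersymmetry of $B$ is not needed for this computation, only invariance and the super-antisymmetry of the bracket. Consistency of $B$ only ensures that nonzero terms occur with compatible parities, so no extra signs can arise.

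The only potential obstacle is the sign convention. The paper's definition of $\osp(\gg)$ uses the sign $(-1)^{p(T)p(x)}$, where the $x$ in that definition is the argument and here plays the role of $y$. So I will check carefully that with $T=\ad_x$ the exponent becomes $p(x)p(y)$, which matches the sign produced by super-antisymmetry. Once the indices are aligned this way, the verification is routine.
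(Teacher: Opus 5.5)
Your proof is correct and is essentially the paper's one-line computation $B([x,y],z)=-(-1)^{p(x)p(y)}B(y,[x,z])$, and your sign alignment with the defining identity of $\osp(\gg)$ is right. Your remark that only invariance and super-antisymmetry of the bracket are needed is accurate; the paper instead cites invariance and supersymmetry of $B$. One trivial slip: the term you rewrite is the first one, $B([x,y],z)$, not the second.
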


\begin{proof}
For $y,z\in\gg$, invariance and supersymmetry of $B$ give
\[
B(\ad_{x}y,z)=B([x,y],z)=-(-1)^{p(x)p(y)}B(y,[x,z])=-(-1)^{p(x)p(y)}B(y,\ad_{x}z). \qedhere
\]
\end{proof}

With respect to $\ad:\gg\to\osp(\gg)$, define the \emph{moment map} $\mu:\gg\times\gg\to\osp(\gg)$ by
\begin{equation} \str\bigl(\ad_{x}\circ \mu(y,z)\bigr)=B([x,y],z)\qquad x,y,z\in\gg,
\end{equation}
where $\str$ is the supertrace on $\gg$. The map $\mu$ is skew-supersymmetric and satisfies
\begin{equation}
 \mu(x,y)(z)=-\iota_{z}(x\wedge y)
=-B(z,x)y+(-1)^{p(z)p(x)}B(z,y)x\qquad x,y,z\in\gg.
\end{equation}
For $\nu\in\bigwedge^{2}(\gg)$ we also write $A_{\nu}\coloneqq \mu(\nu)$.

\begin{proposition}[{\cite[Prop.~2.13]{Meyer}}]
 $\mu:\bigwedge^{2}(\gg)\to\osp(\gg)$ is an isomorphism of super vector spaces, with inverse $\lambda\coloneqq \mu^{-1}:\osp(\gg)\to\bigwedge^{2}(\gg)$, given in a basis $\{e_{a}\}$ of $\gg$, with $B$-dual basis $\{e^{a}\}$, by
\begin{equation*}
 \lambda(T)\coloneqq \mu^{-1}(T)=-\tfrac{1}{2}\sum_{a}T(e^{a})\wedge e_{a}.
\end{equation*}
\end{proposition}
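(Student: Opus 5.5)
The plan is to prove the two claims separately: first that $\mu\circ\lambda=\id_{\osp(\gg)}$, and then that $\dim\bigwedge^{2}(\gg)=\dim\osp(\gg)$ in each parity. These together give bijectivity. Parity preservation is immediate. By the explicit formula $\mu(x,y)(z)=-B(z,x)y+(-1)^{p(z)p(x)}B(z,y)x$, the operator $\mu(x,y)$ has parity $p(x)+p(y)$, and $B$ is consistent. Skew-supersymmetry of $\mu$ ensures $\mu$ descends to $\bigwedge^{2}(\gg)$. That $\mu(x\wedge y)\in\osp(\gg)$ follows from the defining relation $\str(\ad_{x}\circ\mu(y,z))=B([x,y],z)$, or directly from the explicit formula together with supersymmetry of $B$.

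\textbf{Step 1 (right inverse).} Take a homogeneous $T\in\osp(\gg)$ and compute $\mu(\lambda(T))(z)$ on a homogeneous $z$. We have
\[
\mu(\lambda(T))(z)=-\tfrac12\sum_a\Bigl(-B(z,Te^{a})\,e_{a}+(-1)^{p(z)(p(T)+p(e^a))}B(z,e_{a})\,Te^{a}\Bigr).
\]
The second sum collapses via the expansion $z=\sum_a B(z,e_a)e^a$, up to the sign conventions of the dual basis, and gives a multiple of $T(z)$. For the first sum, the $\osp$ condition lets us move $T$ across $B$: $B(z,Te^a)=-(\pm)B(Tz,e^a)$. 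The expansion $Tz=\sum_a B(Tz,e^a)e_a$ then turns it into another multiple of $T(z)$. Each sum contributes $-T(z)/2$ after the $-\tfrac12$ prefactor, so the total is $-T(z)$. This would force a sign correction, so the careful version of the step is to verify that the signs combine to exactly $T(z)$. I expect this sign bookkeeping, with $p(e^a)=p(e_a)$, the supersymmetry sign in $B(e^a,e_b)$ versus $B(e_b,e^a)$, and the super-sign in $\mu$, to be the main obstacle. I would fix once and for all the convention $B(e^a,e_b)=\delta_{ab}$ and the resulting expansions $z=\sum_a B(z,e_a)\,e^a\cdot(\pm)$, and check the answer on $\gg$ purely even (the classical formula $\lambda(T)=-\tfrac12\sum T(e^a)\wedge e_a$ of Meinrenken) as a consistency test.

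\textbf{Step 2 (dimensions).} Write $\gg_{\bar0}$ of dimension $m$ and $\gg_{\bar1}$ of dimension $2n$; the latter is even since $B|_{\gg_{\bar1}}$ is symplectic. In the super exterior algebra, $\bigwedge^{2}(\gg)=\bigwedge^2\gg_{\bar0}\oplus(\gg_{\bar0}\otimes\gg_{\bar1})\oplus S^2\gg_{\bar1}$. The even part therefore has dimension $\binom m2+\binom{2n+1}2$, and the odd part has dimension $2mn$. On the other side, $\osp(\gg)\cong\osp(m|2n)$ has even part $\mathfrak{so}(m)\oplus\mathfrak{sp}(2n)$ and odd part of dimension $2mn$, so the dimensions agree.

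Hence $\mu$ is surjective by Step 1 and injective by Step 2, and is an isomorphism of super vector spaces with inverse $\lambda$. Alternatively, injectivity can be checked directly. If $\mu(\nu)=0$, then nondegeneracy of the pairing $\langle\cdot,\cdot\rangle_\wedge$ together with the adjunction lemma for $\iota$ (namely $\mu(\nu)(z)=-\iota_z\nu$) gives $\langle \iota_z\nu,w\rangle_\wedge=0$ for all $z,w$. That means $\nu$ pairs to zero with all $z\wedge w$, so $\nu=0$. This avoids the dimension count.
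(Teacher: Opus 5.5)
\textbf{There is a genuine gap in Step 1.} That is the only step that identifies the inverse, and the repair you propose (more careful sign bookkeeping) cannot work. For $\gg$ purely even there are no parity signs, and with $\mu(x\wedge y)(z)=-B(z,x)y+B(z,y)x$ your computation is simply correct. Using $B(z,Te^{a})=-B(Tz,e^{a})$, $Tz=\sum_a B(Tz,e^{a})e_a$ and $z=\sum_a B(e_a,z)e^{a}$, each of the two sums contributes $-\tfrac12 T(z)$, so $\mu(\lambda(T))=-T$. Concretely, for orthonormal $e_1,e_2$ one gets $\mu(e_1\wedge e_2)e_1=-e_2$, $\mu(e_1\wedge e_2)e_2=e_1$, and $\lambda(\mu(e_1\wedge e_2))=-e_1\wedge e_2$.

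So the even-case consistency check you propose would confirm your sign rather than fix it. The displayed $\lambda$ is the inverse of $\nu\mapsto(z\mapsto+\iota_z\nu)$, not of $\nu\mapsto(z\mapsto-\iota_z\nu)$. Compare the classical normalization $A_\nu(z)=-2\iota_z\nu$, whose inverse is $A\mapsto\tfrac14\sum_a A(e_a)\wedge e^{a}$. A proof has to begin by fixing a normalization of $\mu$ for which the claimed inverse formula is actually true.

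\textbf{The super case has a second problem you pass over.} With $\mu(x\wedge y)(z)=-B(z,x)y+(-1)^{p(z)p(x)}B(z,y)x$, the operator $T=\mu(x\wedge y)$ is not in $\osp(\gg)$ when exactly one of $x,y$ is odd. For $x,u$ odd and $y,v$ even one has $Tu=-B(u,x)y$ and $Tv=B(v,y)x$. Hence $B(Tu,v)+(-1)^{p(T)p(u)}B(u,Tv)=-2B(u,x)B(y,v)$, and correspondingly $\lambda(\mu(x\wedge y))=0$. So your claim that the $\osp$-property follows ``directly from the explicit formula'' is false, and no global sign fixes it.

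A normalization that makes the proposition true exactly as stated is $\mu(\nu)(z)\coloneqq(-1)^{p(z)p(\nu)}\iota_z\nu$, that is, $\mu(x\wedge y)(z)=(-1)^{p(x)p(y)}B(x,z)y-B(y,z)x$. This map is $\osp$-valued. Your Step 1 then goes through using $B(e_a,e^{b})=\delta_{ab}$, $z=\sum_a B(e_a,z)e^{a}$ and $w=\sum_a B(w,e^{a})e_a$. Two conventions in your draft need correcting here:
\begin{itemize}
\item your expansion $z=\sum_a B(z,e_a)e^{a}$ is off by $(-1)^{p(z)}$;
\item your normalization $B(e^{a},e_b)=\delta_{ab}$ would change the meaning of the formula for $\lambda$ on odd basis vectors.
\end{itemize}
With the corrected conventions, the term $-B(e_a,z)\,Te^{a}$ gives $\tfrac12T(z)$ directly. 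The other term gives $\tfrac12T(z)$ after using $B(Te^{a},z)=-(-1)^{p(T)p(e_a)}B(e^{a},Tz)$ and supersymmetry of $B$. One also checks directly that $\sum_a\mu(x\wedge y)(e^{a})\wedge e_a=-2\,x\wedge y$, i.e.\ $\lambda\circ\mu=\id$.

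Your Step 2 dimension count and your alternative injectivity argument are fine, but they only show that $\mu$ is bijective. Identifying $\lambda$ as its inverse is exactly the computation that is missing.
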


Via the identification $\gg\cong \gg^{\ast}$ induced by $B$, we can describe $\lambda$ in a basis-free notation. For $T\in\osp(\gg)$ define
$\omega_{T}\in\bigwedge^{2}\gg^{*}$ by $\omega_{T}(x,y)\coloneqq B(Tx,y)$; it is skew-supersymmetric.

\begin{lemma}\label{lemm::identification_lambda_omega}
Under $\gg\cong \gg^{\ast}$ one has $\lambda(T)=\omega_{T}$ for all $T\in\osp(\gg)$.
\end{lemma}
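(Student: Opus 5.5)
Under the identification $\gg\cong\gg^{\ast}$ via the musical isomorphisms \eqref{eq::musical_isomorphisms}, a $2$-form $\omega\in\bigwedge^{2}\gg^{\ast}$ corresponds to an element of $\bigwedge^{2}(\gg)$ through the pairing $\langle\cdot,\cdot\rangle_{\wedge}$. Equivalently, and more concretely, it corresponds to the bivector whose double contraction reproduces $\omega$. The plan is to evaluate $\lambda(T)=-\tfrac12\sum_a T(e^a)\wedge e_a$ on a pair $(x,y)$ and compare with $B(Tx,y)$. The cleanest way to "evaluate" a bivector $\nu$ on $(x,y)$ is via iterated contractions, $\nu(x,y)\coloneqq \iota_{y}\iota_{x}\nu$ up to the sign convention dictated by the form $\langle\cdot,\cdot\rangle_{\wedge}$. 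By the lemma preceding this one, this agrees, up to a fixed sign, with $\langle \nu, x\wedge y\rangle_{\wedge}$. I would first fix precisely which of these two conventions encodes $\gg\cong\gg^{\ast}$ on $\bigwedge^2$, choosing the one compatible with $\mu(x,y)(z)=-\iota_z(x\wedge y)$.

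The key computation uses the contraction formula \eqref{eq::contraction}. One gets
\[
\iota_x\bigl(T(e^a)\wedge e_a\bigr)=B(x,T e^a)\,e_a-(-1)^{p(x)p(Te^a)}B(x,e_a)\,T(e^a).
\]
Summing over $a$ and using the dual-basis expansions $\sum_a B(x,Te^a)e_a$ and $\sum_a B(x,e_a)e^a=x$ (with the appropriate parity signs), both terms reduce to multiples of $T^{\dagger}$-type expressions. Applying skew-supersymmetry, $B(x,Te^a)=-(-1)^{p(T)p(x)}B(Tx,e^a)$ up to supersymmetry signs, collapses both terms to $-(Tx)$ type vectors, each contributing equally. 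The factor $-\tfrac12$ then cancels the resulting $-2$, giving $\iota_x\lambda(T)=Tx$ up to a sign. A second contraction, or pairing with $y$, yields $B(Tx,y)=\omega_T(x,y)$.

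Alternatively, and as a consistency check, one can use that $\mu$ is an isomorphism with inverse $\lambda$. From $\mu(x,y)(z)=-\iota_z(x\wedge y)$ one reads off that $\mu(\nu)(z)=-\iota_z\nu$ for every $\nu\in\bigwedge^2(\gg)$, by linearity. Hence $T=\mu(\lambda(T))$ means $Tz=-\iota_z\lambda(T)$, and pairing with $y$ gives $\omega_T(z,y)=B(Tz,y)=-B(\iota_z\lambda(T),y)$. This exhibits $\lambda(T)$ as precisely the bivector whose associated bilinear form is $\omega_T$, provided the identification $\bigwedge^2\gg\cong\bigwedge^2\gg^{\ast}$ is defined by $\nu\mapsto\bigl((z,y)\mapsto -B(\iota_z\nu,y)\bigr)$, which is what is induced by $\flat$ applied factorwise. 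I would present this second route as the proof, since it avoids recomputing the basis sums.

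The main obstacle is bookkeeping of signs. The super signs $(-1)^{p(\cdot)p(\cdot)}$, the ordering convention in $\langle\cdot,\cdot\rangle_{T^k}$ (which pairs $x_{k-i}$ with $y_{1+i}$), and the sign in $\mu$ must all be made consistent. I would verify that the factorwise $\flat$ identification agrees with $(z,y)\mapsto -B(\iota_z\nu,y)$ on decomposable $\nu=u\wedge v$ directly from \eqref{eq::contraction}. This check is the only genuinely delicate step.
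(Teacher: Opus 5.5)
The route you choose to present, via $\lambda=\mu^{-1}$, reaches the right sign only because two sign errors cancel.

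\emph{The relation $Tz=-\iota_z\lambda(T)$.} You deduce this from $\mu\circ\lambda=\id$ and $\mu(\nu)(z)=-\iota_z\nu$, but it is false for the explicit $\lambda(T)=-\tfrac12\sum_a T(e^a)\wedge e_a$ of the lemma. Your own first route shows this. For $\gg$ purely even, $\sum_a B(z,Te^a)\,e_a=-Tz$ and $\sum_a B(z,e_a)\,Te^a=Tz$, so $\iota_z\lambda(T)=+Tz$; in general one gets $(-1)^{p(T)p(z)}Tz$. Hence the printed formulas for $\mu$ and $\lambda$ are not mutually inverse: already in the even case $\mu(\lambda(T))=-T$. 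Your two routes therefore disagree on the sign of $\iota_z\lambda(T)$, which should have been a warning.

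\emph{The identification.} The map $\nu\mapsto\bigl((z,y)\mapsto -B(\iota_z\nu,y)\bigr)$ is not the one induced by $\flat$ factorwise. For $\nu=u\wedge v$ in the even case, $B(\iota_z(u\wedge v),y)=B(u,z)B(v,y)-B(u,y)B(v,z)=(u^\flat\wedge v^\flat)(z,y)$, with a plus sign. Your minus sign matches the identification through $\langle\cdot,\cdot\rangle_{\wedge}$ with its reversed-order convention, and under that identification $\lambda(T)$ corresponds to $-\omega_T$, not $\omega_T$. So the ``delicate check'' you postpone comes out against you. Once it is done correctly, your second route yields $\lambda(T)^\flat=-\omega_T$.

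The argument that works is the one you sketch first, and it is exactly the paper's proof. Work with the explicit formula for $\lambda(T)$, not with the abstract inverse of $\mu$. The paper evaluates $(2\lambda(T))^\flat(y\wedge z)$ using the factorwise $\flat$. It then uses $\sum_a B(e_a,\cdot)\,e^a=\id_{\gg}$ to bring $(-1)^{p(y)p(z)}(2\lambda(T))^\flat(y\wedge z)$ to the form $-B(Tz,y)+(-1)^{p(y)p(z)}B(Ty,z)$. Skew-supersymmetry of $T$ together with supersymmetry of $B$ turns $-B(Tz,y)$ into $(-1)^{p(y)p(z)}B(Ty,z)$, which gives $2\,\omega_T(y,z)$. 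In the even case this is just $\iota_y\lambda(T)=Ty$ followed by pairing with $z$. State at the outset that the identification in the lemma is the factorwise $\flat$ (not $\langle\cdot,\cdot\rangle_{\wedge}$), and carry the parity signs exactly as in your first computation.
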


\begin{proof}
Let $\flat:\gg\to\gg^{*}$ be given by $u^{\flat}=B(u,\cdot)$ and extend to
$\bigwedge^{2}\gg\to\bigwedge^{2}\gg^{*}$ by $(u\wedge v)^{\flat}=u^{\flat}\wedge v^{\flat}$.
For homogeneous $y,z$ we compute, using the convention of Section~\ref{subsec::conventions},
\[
\begin{split}
(2\lambda(T))^{\flat}(y\wedge z)
&=-\sum_{a}(-1)^{p(e_a)p(y)}\Bigl(B(Te^{a},y)\,B(e_a,z)
-(-1)^{p(y)p(z)+p(T)p(y)}B(Te^{a},z)\,B(e_a,y)\Bigr)\\
&=-(-1)^{p(y)p(z)}\sum_{a}\Bigl(B(Te^{a},y)\,B(e_a,z)-(-1)^{p(y)p(z)}B(Te^{a},z)\,B(e_a,y)\Bigr).
\end{split}
\]
Using $\sum_{a}B(e_a,\cdot)\,e^{a}=\id_{\gg}$, this becomes
\[
(-1)^{p(y)p(z)}(2\lambda(T))^{\flat}(y\wedge z)
=-\,B(Tz,y)+(-1)^{p(y)p(z)}B(Ty,z).
\]
Since $T\in\osp(\gg)$, we have $B(Tz,y)=-(-1)^{p(T)p(z)}B(z,Ty)$, and by supersymmetry of $B$ this yields
\[
(2\lambda(T))^{\flat}(y\wedge z)=2\,B(Ty,z)=2\,\omega_{T}(y,z).
\]
Hence $(\lambda(T))^{\flat}=\omega_{T}$, as claimed. \qedhere
\end{proof}

A further direct calculation yields the following lemma.

\begin{lemma} \label{lemm::Lie_derivative_and_lambd}
 For any $T_{1},T_{2} \in \osp(\gg)$, one has 
 \[
 L_{T_{1}} \lambda(T_{2})=\lambda([T_{1},T_{2}]_{\osp(\gg)}).
 \]
\end{lemma}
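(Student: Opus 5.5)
The plan is to reduce the statement to an identity between endomorphisms of $\gg$ by applying $\mu$ to both sides. By the Proposition of Meyer, $\mu:\bigwedge^{2}(\gg)\to\osp(\gg)$ is an isomorphism with inverse $\lambda$, so it suffices to prove
\[
\mu\bigl(L_{T_{1}}\lambda(T_{2})\bigr)=[T_{1},T_{2}]_{\osp(\gg)},
\]
that is, $\mu\circ L_{T_1}=\operatorname{ad}_{T_1}\circ\mu$ on $\bigwedge^{2}(\gg)$ for every homogeneous $T_{1}\in\osp(\gg)$. Evaluating the right-hand side at $\nu=\lambda(T_2)$ then gives the claim.

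\textbf{Reduction to decomposable elements.} By linearity it is enough to take $\nu=x\wedge y$ with $x,y$ homogeneous. From \eqref{eq::Lie_derivative},
\[
L_{T_1}(x\wedge y)=T_1x\wedge y+(-1)^{p(T_1)p(x)}x\wedge T_1y.
\]
The explicit formula $\mu(u,v)(z)=-B(z,u)v+(-1)^{p(z)p(u)}B(z,v)u$ then gives $\mu(L_{T_1}(x\wedge y))(z)$ as a sum of four terms of the form $B(z,T_1x)\,y$, $B(z,y)\,T_1x$, and so on, each with its sign.

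\textbf{Comparison with the commutator.} Next compute
\[
[T_1,\mu(x,y)](z)=T_1\mu(x,y)(z)-(-1)^{p(T_1)(p(x)+p(y))}\mu(x,y)(T_1z).
\]
The first part yields the terms $B(z,x)\,T_1y$ and $B(z,y)\,T_1x$. The second part yields $B(T_1z,x)\,y$ and $B(T_1z,y)\,x$. Using the skew-supersymmetry $B(T_1z,x)=-(-1)^{p(T_1)p(z)}B(z,T_1x)$, the latter become exactly the terms $B(z,T_1x)\,y$ and $B(z,T_1y)\,x$ appearing in $\mu(L_{T_1}(x\wedge y))(z)$. Matching the four terms then completes the proof.

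\textbf{Main obstacle.} The only real difficulty is sign bookkeeping. The parity of $z$ enters the exponent of $\mu$, and after applying $T_1$ that parity shifts by $p(T_1)$, so each of the four terms must be checked with care. I would track the signs by noting that $B(z,T_1x)\neq0$ forces $p(z)=p(T_1)+p(x)$, and substitute this relation into every exponent.

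A cleaner fallback avoids $\mu$ altogether and uses Lemma~\ref{lemm::identification_lambda_omega}. Under $\flat$ the Lie derivative $L_{T_1}$ corresponds to the coadjoint-type action on $\bigwedge^2\gg^*$, because $T_1\in\osp(\gg)$ makes $\flat$ equivariant. One then shows $L_{T_1}\omega_{T_2}=\omega_{[T_1,T_2]}$ directly, as a two-line computation of $-\omega_{T_2}(T_1x,y)-(\pm)\,\omega_{T_2}(x,T_1y)$ using skew-supersymmetry of $T_1$.
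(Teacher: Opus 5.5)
Your main route has a genuine gap, and it sits exactly in the genuinely super case $p(T_1)=\bar 1$. You plan to verify $\mu\circ L_{T_1}=\ad_{T_1}\circ\mu$ using the formula $\mu(\nu)(z)=-\iota_z\nu$ displayed in the paper. With that formula the identity is \emph{false} for odd $T_1$, so no amount of sign bookkeeping will make your four terms match.

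To see this, note that for $T\in\osp(\gg)$ one has $L_T\iota_z-(-1)^{p(T)p(z)}\iota_zL_T=\iota_{Tz}$. Both sides are derivations of the same bidegree, and on $\gg$ the identity is just skew-supersymmetry of $T$. It gives
\[
\mu(L_T\nu)(z)-[T,\mu(\nu)](z)=\bigl((-1)^{p(T)p(z)}-1\bigr)\,\iota_zL_T\nu+\bigl(1-(-1)^{p(T)p(\nu)}\bigr)\,\iota_{Tz}\nu .
\]
For $T$ odd, $\nu$ even and $z$ odd, the right-hand side equals $-2\,\iota_zL_T\nu$. Elements of $\bigwedge^2(\gg)_{\bar 1}=\gg_{\bar 0}\wedge\gg_{\bar 1}$ are detected by contractions with odd vectors, so this is nonzero for some odd $z$ as soon as $L_T\nu\neq 0$.

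The underlying issue is that the displayed $\mu$ is not the inverse of $\lambda$. From $\lambda(T)=-\tfrac12\sum_aT(e^a)\wedge e_a$ and skew-supersymmetry one gets $\iota_z\lambda(T)=(-1)^{p(T)p(z)}\,Tz$, hence $\lambda^{-1}(\nu)(z)=(-1)^{p(\nu)p(z)}\,\iota_z\nu$. The paper's formula drops this Koszul sign, besides a harmless overall $-1$. On nonzero odd bivectors it does not even take values in $\osp(\gg)$. If you run your reduction with $\lambda^{-1}$ in this corrected form, the reduction is legitimate. The required equivariance then follows in one line from the derivation identity above, with no term-by-term comparison.

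Your fallback, by contrast, is sound. The map $\flat$ is $\osp(\gg)$-equivariant precisely because $T_1$ is skew-supersymmetric, and moving $T_1$ across $B$ in the second term gives
\[
-(-1)^{p(T_1)p(T_2)}B(T_2T_1x,y)-(-1)^{p(T_1)(p(T_2)+p(x))}B(T_2x,T_1y)=B([T_1,T_2]x,y).
\]
So Lemma~\ref{lemm::identification_lambda_omega} and injectivity of $\flat$ finish the proof.

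The paper gives no details beyond ``direct calculation''. The most economical such calculation avoids both $\mu$ and the identification $\bigwedge^2\gg\cong\bigwedge^2\gg^{\ast}$, and works straight from the formula for $\lambda$:
\[
L_{T_1}\lambda(T_2)=-\tfrac12\sum_a\Bigl(T_1T_2e^a\wedge e_a+(-1)^{p(T_1)(p(T_2)+p(e_a))}\,T_2e^a\wedge T_1e_a\Bigr).
\]
Substitute $T_1e_a=\sum_bB(T_1e_a,e^b)e_b$, $B(T_1e_a,e^b)=-(-1)^{p(T_1)p(e_a)}B(e_a,T_1e^b)$ and $\sum_aB(e_a,w)e^a=w$. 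This turns the second sum into $-(-1)^{p(T_1)p(T_2)}\sum_bT_2T_1e^b\wedge e_b$, leaving exactly $\lambda([T_1,T_2])$.
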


\noindent
\textbf{Clifford Superalgebra.} Let $B$ be the fixed invariant supersymmetric consistent bilinear form on $\gg$. Let $I_{\Cl}\subset T(\gg)$ be the two-sided ideal generated by
\begin{equation}
x\otimes y+(-1)^{p(x)p(y)}\,y\otimes x-2B(x,y)\,1_{T(\gg)},\qquad x,y\in\gg.
\end{equation}
The quotient
$
\Cl(\gg)\coloneqq T(\gg)/I_{\Cl}
$
is the \emph{Clifford algebra} of $\gg$. Identifying $\gg$ with its image in $\Cl(\gg)$, the defining relations read
\begin{equation}\label{eq::Clifford_relation}
xy+(-1)^{p(x)p(y)}\,yx=2B(x,y), \qquad x,y\in\gg.
\end{equation} 
From now on, we equip $\Cl(\gg)$ with the $\ZZ_{2}$-grading induced from $\gg$.

The operations \eqref{eq::contraction} and \eqref{eq::Lie_derivative} preserve the ideal $I_{\Cl}$ and hence induce \emph{contractions} $\iota_{x}$ and \emph{Lie derivatives} $L_{T}$ for any $x \in \gg$ and $T \in \End(\gg)$. 

We relate $\bigwedge(\gg)$ and $\Cl(\gg)$ via the usual Clifford action. Define
\begin{equation}
 \tau:\gg\to \End\bigl(\bigwedge(\gg)\bigr),\qquad \tau(x)\coloneqq \epsilon(x)+\iota_x,
\end{equation}
where $\epsilon(x)$ is left exterior multiplication and $\iota_x$ is contraction. Then, for $x,y\in\gg$,
\begin{equation}
 \tau(x)\tau(y)+(-1)^{p(x)p(y)}\tau(y)\tau(x)=2B(x,y).
\end{equation}
By the universal property of $\Cl(\gg)$ (\emph{cf.}~\cite[Proposition~3.1]{generalized_Clifford}), $\tau$ extends uniquely to a
superalgebra homomorphism $\Cl(\gg)\to \End(\bigwedge(\gg))$, hence makes $\bigwedge(\gg)$ a $\Cl(\gg)$-module.

\begin{theorem}[{\cite{generalized_Clifford}}]\label{thm::quantization_map}
The map $\eta:\Cl(\gg)\to \bigwedge(\gg)$ defined by
$ \eta(v)\coloneqq \tau(v)\,1_{\bigwedge(\gg)}
$
is an isomorphism of super vector spaces. Its inverse is the quantization map $q=\sum_k q_k:\bigwedge(\gg)\to \Cl(\gg)$, where
\begin{equation*}
q_k(x_1\wedge\cdots\wedge x_k)\coloneqq \frac1{k!}\sum_{\sigma\in S_k} p(\sigma;x_1,\ldots,x_k)\,
 x_{\sigma(1)}\cdots x_{\sigma(k)}.
\end{equation*}
Moreover, $q$ intertwines contractions, that is, $\iota_x\circ q=q\circ \iota_x$ for all $x \in \gg$.
\end{theorem}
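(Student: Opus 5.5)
The plan is to prove the three claims in order: $\eta$ is a linear isomorphism, its inverse is $q$, and $q$ commutes with contractions. Since $\Cl(\gg)$ and $\bigwedge(\gg)$ are not yet known to have the same size, we argue through filtrations and symbols.

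\textbf{Step 1: $\eta$ is filtered with identity symbol.} The tensor degree filtration on $T(\gg)$ descends to filtrations $\Cl^{\le k}(\gg)$ and $\bigwedge^{\le k}(\gg)$. For $x_1,\dots,x_k\in\gg$ we have
\[
\eta(x_1\cdots x_k)=\tau(x_1)\cdots\tau(x_k)1_{\bigwedge(\gg)}
=x_1\wedge\cdots\wedge x_k+(\text{terms of degree}\le k-2),
\]
because each $\iota_{x_i}$ lowers degree by one and $\epsilon(x_i)$ raises it by one. Hence $\eta$ is filtration preserving. The composite of the canonical surjection $\bigwedge^{k}(\gg)\to \operatorname{gr}^k\Cl(\gg)$ with $\operatorname{gr}^k\eta$ is the identity of $\bigwedge^k(\gg)$. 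So this surjection is also injective, $\operatorname{gr}\eta$ is bijective, and therefore $\eta$ is bijective. The only input needed is that the super-antisymmetrisation relations generate the associated graded of $I_{\Cl}$ modulo lower-order terms. This holds because the relations are homogeneous up to scalars.

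\textbf{Step 2: $\eta\circ q=\id$.} It suffices to show
\[
\eta\bigl(q_k(x_1\wedge\cdots\wedge x_k)\bigr)=x_1\wedge\cdots\wedge x_k .
\]
First, $q_k$ is well defined, since the super-antisymmetrisation with the signs $p(\sigma;\dots)$ kills $I_\wedge$. Expanding $\tau(x_{\sigma(1)})\cdots\tau(x_{\sigma(k)})1$, the top-degree part is $\frac{1}{k!}\sum_\sigma p(\sigma;\dots)\,x_{\sigma(1)}\wedge\cdots\wedge x_{\sigma(k)}=x_1\wedge\cdots\wedge x_k$. The lower-degree terms contain contractions $B(x_{\sigma(i)},x_{\sigma(j)})$ with $i<j$. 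Pairing $\sigma$ with $\sigma\circ(i\,j)$, the supersymmetry of $B$ combined with the sign $p(\sigma;\dots)$ makes these contributions cancel. I expect this cancellation bookkeeping to be the main obstacle. To organise it, I would first prove the recursion
\[
q(x\wedge\omega)=\tfrac12\bigl(x\,q(\omega)+(-1)^{p(x)p(\omega)}q(\omega)\,x\bigr)+\text{correction},
\]
or more cleanly $\eta(x\,u)=x\wedge\eta(u)+\iota_x\eta(u)$. Then I would argue by induction on $k$, using that $\iota_x$ is a super derivation. Once $\eta\circ q=\id$ is established, Step 1 gives $q=\eta^{-1}$.

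\textbf{Step 3: $q$ intertwines contractions.} On $\Cl(\gg)$, $\iota_x$ is the graded derivation that acts on generators by $\iota_x(y)=B(x,y)$. On $\bigwedge(\gg)$, $\iota_x$ is the derivation with the same values on generators. Applying $\iota_x$ to the symmetrised monomial, the derivation rule together with the sign convention (\ref{eq::contraction}) gives
\[
\iota_x q_k(x_1\wedge\cdots\wedge x_k)=\sum_j (\pm) B(x,x_j)\, q_{k-1}(x_1\wedge\cdots\widehat{x_j}\cdots\wedge x_k).
\]
Here each deleted position is reindexed by a permutation of $k-1$ letters, with multiplicity $k$ absorbed by $\frac1{k!}$ versus $\frac{1}{(k-1)!}$. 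The right-hand side equals $q_{k-1}\bigl(\iota_x(x_1\wedge\cdots\wedge x_k)\bigr)$. The only thing left to check is that the signs $p(\sigma;\dots)$ restrict compatibly to the deleted-index permutations, which is a direct sign check.
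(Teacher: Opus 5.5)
The paper gives no proof of this statement; it quotes it from \cite{generalized_Clifford}. Your proposal is a self-contained proof along the standard lines, and it goes through. Step~1 is the right way to get bijectivity. It uses only the existence of the Clifford action $\tau$ on $\bigwedge(\gg)$, so it \emph{proves} the PBW-type statement $\operatorname{gr}\Cl(\gg)\cong\bigwedge(\gg)$ rather than assuming it. Because it works degree by degree, it does not care that $\bigwedge(\gg)\cong\bigwedge(\gg_{\bar 0})\otimes\Sym(\gg_{\bar 1})$ is infinite-dimensional. Steps~2 and~3 make explicit where supersymmetry and consistency of $B$ enter, which the citation hides. The citation, in turn, spares the reader the sign bookkeeping.

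Three points to tighten.

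(i) At the end of Step~1, be precise about the input. To factor $T^k(\gg)\to\operatorname{gr}^k\Cl(\gg)$ through $\bigwedge^k(\gg)$ you need only the easy inclusion: the leading parts of the Clifford relations lie in $I_\wedge$, which is immediate. The reverse statement, that nothing more dies in $\operatorname{gr}\Cl(\gg)$, is the PBW-type conclusion, and it comes from your composite being the identity. ``Homogeneous up to scalars'' would not give it by itself, since for such relations the associated graded ideal can be strictly larger than the ideal generated by the leading terms.

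(ii) In Step~2, write $y_l=x_{\sigma(l)}$; the expansion of $\tau(y_1)\cdots\tau(y_k)1$ is indexed by partial matchings of the positions $\{1,\dots,k\}$. When several contractions occur, you need an honest fixed-point-free involution. For a fixed nonempty matching, swap the entries at its first pair $(i,j)$ (smallest $i$).
\begin{itemize}
\item A nonzero term has $B(y_i,y_j)\neq0$, hence $p(y_i)=p(y_j)$ by consistency of $B$.
\item Therefore the swap leaves all other Koszul signs and the uncontracted wedge unchanged.
\item Meanwhile $p(\sigma\circ(i\,j);x)=-(-1)^{p(y_i)}p(\sigma;x)$ and $B(y_j,y_i)=(-1)^{p(y_i)}B(y_i,y_j)$, so the two terms cancel.
\end{itemize}
Consistency of $B$ is also what makes the sign check in Step~3 work. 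The derivation sign involves $p(x)$, while the reordering sign involves $p(x_j)$, and these agree on every nonzero term.

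(iii) Drop the recursion you float in Step~2. For $\omega\in\bigwedge^k(\gg)$ its sign should be $(-1)^{k+p(x)p(\omega)}$, and with that sign there is no correction term. However, using $\iota_x=\tfrac12[x,\cdot\,]$ on $\Cl(\gg)$ and Step~3, the recursion is equivalent to $q(\tau(x)\omega)=x\,q(\omega)$. That is, it is equivalent to $q\circ\eta=\id$ itself, so it is no shortcut around the cancellation.
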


The Lie derivative on $\Cl(\gg)$ by elements of $\osp(\gg)$ is implemented by a commutator. For $T\in\osp(\gg)$, we set
\begin{equation}
\gamma'(T)\coloneqq q(\lambda(T)).
\end{equation}
In particular, $\gamma'(\ad_{x})$ is defined for every $x\in\gg$ by Lemma~\ref{lemm::ad_x_is_orthosymplectic}. In what follows, we write $\gamma'(x)$ for $\gamma'(\ad_{x})$. Let $[\cdot,\cdot]_{\Cl(\gg)}$ denote the supercommutator on $\Cl(\gg)$, \emph{i.e.},
$[v,w]_{\Cl(\gg)}=vw-(-1)^{p(v)p(w)}wv$ for homogeneous $v,w\in \Cl(\gg)$. Then a direct computation shows:

\begin{lemma}\label{lemm::Lie_derivative_as_commutator} For $S,T\in\osp(\gg)$, one has:
\begin{enumerate}
 \item[a)] $-2L_T=[\gamma'(T),\,\cdot\,]_{\Cl(\gg)}$.
 \item[b)] $[\gamma'(S),\gamma'(T)]_{\Cl(\gg)}=-2\gamma'([S,T]_{\osp(\gg)})$.
\end{enumerate}
\end{lemma}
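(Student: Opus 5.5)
We first prove a) and then deduce b) from a) together with Lemma~\ref{lemm::Lie_derivative_and_lambd}.

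For a), both sides are superderivations of $\Cl(\gg)$ of parity $p(T)$. For the left side this holds because $L_T$ descends from a derivation of $T(\gg)$ preserving $I_{\Cl}$; the right side is a supercommutator. Since $\Cl(\gg)$ is generated by $\gg$, it therefore suffices to check the identity on generators $y\in\gg$, that is, to show $[\gamma'(T),y]_{\Cl(\gg)}=-2T(y)$.

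By linearity of $\lambda$ and since $\mu$ is an isomorphism, we may take $\lambda(T)=u\wedge v$ for homogeneous $u,v$. Then $q(u\wedge v)=\tfrac12\bigl(uv-(-1)^{p(u)p(v)}vu\bigr)=uv-B(u,v)$. Expanding $[uv,y]$ with the Clifford relation \eqref{eq::Clifford_relation} gives
\[
[uv,y]=2(-1)^{p(v)p(y)}B(u,y)\,v-2B(v,y)\,u .
\]
We then compare this with $T(y)=\mu(u,v)(y)=-B(y,u)v+(-1)^{p(y)p(u)}B(y,v)u$. Using supersymmetry and consistency of $B$ (nonzero terms force $p(u)=p(y)$, respectively $p(v)=p(y)$), the two expressions agree up to the factor $-2$. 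The only real obstacle is this sign bookkeeping; if a sign disagrees, I would recheck the normalization of $\mu$ against the formula $\lambda(T)=-\tfrac12\sum_a T(e^a)\wedge e_a$, via the basis identity $\sum_a B(e_a,\cdot)e^a=\id$.

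For b), apply a) with $T$ replaced by $S$ to the element $\gamma'(T)$:
\[
[\gamma'(S),\gamma'(T)]=-2L_S\,q(\lambda(T)).
\]
The Lie derivative $L_S$ commutes with $q$. This holds because $q$ is defined by graded symmetrization and $L_S$ acts as a derivation on each factor with the same Koszul signs (equivalently, $\eta=q^{-1}$ is built from $\tau$, and $L_S$ intertwines $\epsilon$ and $\iota$ as $[L_S,\epsilon(x)]=\epsilon(Sx)$ and $[L_S,\iota_x]=\iota_{Sx}$, the latter using $S\in\osp(\gg)$). Hence
\[
-2L_S\,q(\lambda(T))=-2\,q(L_S\lambda(T))=-2\,q(\lambda([S,T]))=-2\gamma'([S,T]),
\]
where the middle equality is Lemma~\ref{lemm::Lie_derivative_and_lambd}.
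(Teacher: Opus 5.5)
Your reduction in a) is sound: for $T\in\osp(\gg)$, both $-2L_T$ and $[\gamma'(T),\cdot\,]_{\Cl(\gg)}$ are superderivations of $\Cl(\gg)$ of parity $p(T)$, so it suffices to prove $[\gamma'(T),y]_{\Cl(\gg)}=-2T(y)$ for $y\in\gg$. Your deduction of b) from a) is also correct: equivariance of $q$ via $[L_S,\tau(x)]=\tau(Sx)$, then $L_S\lambda(T)=\lambda([S,T])$.

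The gap is the generator check itself, which is the entire content of a). There your argument rests on two sign errors that cancel only for even $T$.

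First, your Clifford expansion has the wrong overall sign. The correct one is
\[
[uv,y]_{\Cl(\gg)}=2B(v,y)\,u-2(-1)^{p(v)p(y)}B(u,y)\,v .
\]
For instance, for an even orthonormal pair $e_1,e_2$ one has $[e_1e_2,e_1]=-2e_2$.

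Second, $\lambda(T)=u\wedge v$ does not give $T=\mu(u,v)$. With the paper's normalization of $\mu$, the explicit formula $\lambda(T)=-\tfrac12\sum_aT(e^a)\wedge e_a$ satisfies $\mu(\lambda(T))(z)=-(-1)^{p(T)p(z)}T(z)$. In the example, $Te_1=e_2$, $Te_2=-e_1$ gives $\lambda(T)=e_1\wedge e_2$, while $\mu(e_1,e_2)(e_1)=-e_2$. In fact the correct expansion gives $[q(\nu),y]_{\Cl(\gg)}=2(-1)^{p(y)p(\nu)}\mu(\nu)(y)$, so any argument through $\mu$ as normalized in the paper produces the wrong sign.

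For odd $T$ your two errors do not even compensate. Take $p(u)=p(y)=\bar 1$, $p(v)=\bar 0$: your displayed commutator is $2B(u,y)v$, while $-2\mu(u,v)(y)=-2B(u,y)v$. So the claimed agreement ``up to the factor $-2$'' is false.

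The fix is your own fallback, made into the proof: never pass through $\mu$, and compute directly from the explicit formula for $\lambda$. This is presumably the ``direct computation'' the paper has in mind. Since $q(x_1\wedge x_2)=x_1x_2-B(x_1,x_2)$ and scalars are central,
\[
[\gamma'(T),y]_{\Cl(\gg)}=-\tfrac12\sum_a[T(e^a)e_a,y]_{\Cl(\gg)}=-\sum_aB(e_a,y)\,T(e^a)+\sum_a(-1)^{p(e_a)p(y)}B(T(e^a),y)\,e_a .
\]
The first sum is $T(y)$, because $\sum_aB(e_a,y)e^a=y$.

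For the second sum, the $\osp$ condition and supersymmetry give $B(T(e^a),y)=-(-1)^{p(e_a)(p(T)+p(T(y)))}B(T(y),e^a)$. Combined with the factor $(-1)^{p(e_a)p(y)}$, the sign exponent is $p(e_a)\bigl(p(T)+p(T(y))+p(y)\bigr)$, which is even. So the second sum is $-\sum_aB(T(y),e^a)e_a=-T(y)$.

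Hence $[\gamma'(T),y]_{\Cl(\gg)}=-2T(y)$ for $T$ of either parity, which is a). Your proof of b) then goes through unchanged.
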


\subsubsection{Colour Quantum Weil Algebra}
With the notation fixed, we introduce the colour quantum Weil algebra. We regard $\mathfrak{U}(\gg)$ and $\Cl(\gg)$ as $\ZZ_{2}$-graded algebras, with grading induced by that of $\gg$.

\begin{definition}
 The \emph{colour quantum Weil algebra} associated with $ \gg $ is the $\ZZ_{2}$-graded tensor product
\[
\mathcal{W}(\gg)\coloneqq \mathfrak{U}(\gg) \otimes \Cl(\gg).
\]
\end{definition}

We take as generators of $\Weil$ the elements $1\otimes x$ and $\gamma^{\WW}(x)\coloneqq x\otimes 1-\tfrac12(1\otimes\gamma'(x))$, where $x \in \gg$. Recall that $\gamma'(x)\coloneqq \gamma'(\ad_{x})=q(\lambda(\ad_{x}))$ for any $x \in \gg$. This choice is natural, since the map $x\mapsto x\otimes 1-\tfrac{1}{2}(1\otimes\gamma'(x))$ defines an embedding of $\gg$ into $\Weil$. We endow $\Weil$ with a $\ZZ_{2}\times\ZZ_{2}$-grading by setting, for $x\in\gg$,
\begin{equation} \label{def::bidegree}
\bideg(1\otimes x)=(\bar 1,p(x)),\qquad \bideg(\gamma^{\WW}(x))=(\bar 0,p(x)).
\end{equation}

\begin{remark}\label{rmk::TP_generators_QW} Another common choice of generators is given by the tensor product generators
\begin{equation*}
u^{x}\coloneqq x\otimes 1, \qquad \theta^{x}\coloneqq 1\otimes x, \qquad x \in \gg.
\end{equation*}
They are used only in Section~\ref{sec::Bismut_Quillen}. They are of bidegree $\bideg(u^{x})=(\bar{0},p(x))$ and $\bideg(\theta^{x})=(\bar{1},p(x))$.
\end{remark}

For $\alpha=(\bar a,\bar b)$ and $\beta=(\bar c,\bar d)$ in
$\ZZ_{2}\times\ZZ_{2}$ we use the standard symmetric bicharacter
\begin{equation}
\chi(\alpha,\beta)\coloneqq (-1)^{\bar a\bar c+\bar b\bar d}.
\end{equation}
For homogeneous $A,B\in\Weil$ of bidegrees $\alpha$ and $\beta$, we set
\begin{equation} \label{eq::colour_commutator}
[A,B]_{\WW}\coloneqq AB-\chi(\alpha,\beta)\,BA,
\end{equation}
and extend $\CC$-linearly. The total degree is the sum of the two components of the bidegree in $\ZZ_{2}$. In particular, if $A\in\Weil$ has total degree
$\bar 1$, then $A^{2}=\tfrac12[A,A]_{\WW}$. This endows $\Weil$ with the structure of a colour Lie algebra (\emph{cf.}~\cite{Ree}), which we call the \emph{colour quantum Weil algebra}.

\begin{lemma}\label{lemm::commutation_relations_Weil_generators}
For $x,y\in\gg$ one has
\[
\begin{gathered}
[1\otimes x,\,1\otimes y]_{\WW}=2B(x,y)(1\otimes 1),\qquad
[\gamma^{\WW}(x),\gamma^{\WW}(y)]_{\WW}=\gamma^{\WW}([x,y]_{\gg}),\\
[\gamma^{\WW}(x),\,1\otimes y]_{\WW}=1\otimes [x,y]_{\gg}.
\end{gathered}
\]
\end{lemma}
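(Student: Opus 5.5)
The plan is to reduce all three identities to the defining relations of the two tensor factors. These are the PBW relation $xy-(-1)^{p(x)p(y)}yx=[x,y]$ in $\UE(\gg)$ and the Clifford relation \eqref{eq::Clifford_relation} in $\Cl(\gg)$. To these we add Lemma~\ref{lemm::Lie_derivative_as_commutator}. The first step is to identify the colour commutator on the relevant generators with ordinary supercommutators, by reading off the bicharacter from \eqref{def::bidegree}:
\begin{itemize}
\item for $1\otimes x,1\otimes y$ of bidegrees $(\bar1,p(x)),(\bar1,p(y))$ we get $\chi=-(-1)^{p(x)p(y)}$, so $[\cdot,\cdot]_{\WW}$ is the anticommutator twisted by parity;
\item for $\gamma^{\WW}(x)$ paired with anything of bidegree $(\ast,p(y))$ we get $\chi=(-1)^{p(x)p(y)}$, which is the usual supercommutator in the $\ZZ_2$-graded algebra $\Weil$.
\end{itemize}

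\textbf{First relation.} Here $(1\otimes x)(1\otimes y)+(-1)^{p(x)p(y)}(1\otimes y)(1\otimes x)=1\otimes(xy+(-1)^{p(x)p(y)}yx)=2B(x,y)(1\otimes 1)$ directly by \eqref{eq::Clifford_relation}. No sign arises from the graded tensor product rule, since the $\UE$-components are $1$.

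\textbf{Third relation.} We expand $\gamma^{\WW}(x)=x\otimes 1-\tfrac12(1\otimes\gamma'(x))$.
\begin{itemize}
\item The term $x\otimes 1$ supercommutes with $1\otimes y$ in the graded tensor product: $(x\otimes1)(1\otimes y)=x\otimes y$ and $(1\otimes y)(x\otimes 1)=(-1)^{p(x)p(y)}x\otimes y$. Hence it contributes $0$.
\item The remaining term gives $-\tfrac12\otimes[\gamma'(x),y]_{\Cl(\gg)}$. This equals $1\otimes L_{\ad_x}y=1\otimes[x,y]$ by Lemma~\ref{lemm::Lie_derivative_as_commutator}\,a), noting that $\gamma'(x)$ has parity $p(x)$.
\end{itemize}

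\textbf{Second relation.} We expand into four terms.
\begin{itemize}
\item The $\UE$-part gives $[x,y]\otimes 1$.
\item The cross terms are $-\tfrac12[x\otimes1,1\otimes\gamma'(y)]-\tfrac12[1\otimes\gamma'(x),y\otimes1]$. Both vanish by the same graded-tensor supercommutation, since $\gamma'(y)$ has parity $p(y)$.
\item The Clifford part is $\tfrac14\,1\otimes[\gamma'(x),\gamma'(y)]_{\Cl}$. By Lemma~\ref{lemm::Lie_derivative_as_commutator}\,b) this equals $-\tfrac12\,1\otimes\gamma'([\ad_x,\ad_y])$.
\end{itemize}
Since $\ad$ is a Lie superalgebra homomorphism, $[\ad_x,\ad_y]=\ad_{[x,y]}$, so the sum is $[x,y]\otimes1-\tfrac12(1\otimes\gamma'([x,y]))=\gamma^{\WW}([x,y])$.

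\textbf{Main obstacle.} The computations themselves are routine. The real work is sign bookkeeping:
\begin{itemize}
\item checking that the bicharacter signs match the graded tensor product signs;
\item confirming that $\gamma'(x)=q(\lambda(\ad_x))$ has parity exactly $p(x)$, which holds because $\lambda$ and $q$ are even maps;
\item making sure the normalizations in Lemma~\ref{lemm::Lie_derivative_as_commutator} (the factors $-2$) are consistent with the factor $-\tfrac12$ in $\gamma^{\WW}$.
\end{itemize}
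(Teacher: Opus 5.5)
Your proposal is correct and follows the paper's proof. For the first and third identities you do exactly what the paper does: read off the bicharacter sign from the bidegrees, reduce to the Clifford relation, and invoke Lemma~\ref{lemm::Lie_derivative_as_commutator}\,a); you also state explicitly that the $x\otimes 1$ summand drops out, which the paper leaves implicit. The paper only says the second identity is ``proved similarly,'' and your four-term expansion using Lemma~\ref{lemm::Lie_derivative_as_commutator}\,b) together with $[\ad_x,\ad_y]=\ad_{[x,y]}$ is the intended argument. One harmless typo: $-\tfrac12\otimes[\gamma'(x),y]$ should read $-\tfrac12(1\otimes[\gamma'(x),y]_{\Cl(\gg)})$.
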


\begin{proof}
Since $\bideg(1\otimes x)=(\bar 1,p(x))$ and $\bideg(1\otimes y)=(\bar 1,p(y))$, we have
$\chi(\bideg(1\otimes x),\bideg(1\otimes y))=-(-1)^{p(x)p(y)}$. Hence
\[
\begin{split}
[1\otimes x,1\otimes y]_{\WW}
&=(1\otimes x)(1\otimes y)+(-1)^{p(x)p(y)}(1\otimes y)(1\otimes x)\\
&=1\otimes\bigl(xy+(-1)^{p(x)p(y)}yx\bigr)
=2B(x,y)\,(1\otimes 1),
\end{split}
\]
by the Clifford relation. Next,
\[
\begin{split}
[\gamma^{\WW}(x),1\otimes y]_{\WW}
&=\gamma^{\WW}(x)(1\otimes y)-(-1)^{p(x)p(y)}(1\otimes y)\gamma^{\WW}(x)\\
&=-\tfrac12\Bigl((1\otimes \gamma'(x))(1\otimes y)-(-1)^{p(x)p(y)}(1\otimes y)(1\otimes \gamma'(x))\Bigr)
\\ &=-\tfrac12\bigl(1\otimes(\gamma'(x)y-(-1)^{p(x)p(y)} y\gamma'(x))\bigr)\\
&=1\otimes (-\tfrac{1}{2}[\gamma'(x),y)]_{\Cl(\gg)}) \\ &=1\otimes L_{x}(y)=1\otimes [x,y]_{\gg}
\end{split}
\]
using Lemma~\ref{lemm::Lie_derivative_as_commutator} in the last steps. The identity
$[\gamma^{\WW}(x),\gamma^{\WW}(y)]_{\WW}=\gamma^{\WW}([x,y]_{\gg})$ is proved similarly.
\end{proof}

On $\mathcal{W}(\gg)$ there exist two canonical operations. The first is the contraction $\iota_{x}$, an odd derivation characterized by the relations 
\begin{equation} \label{eq::contraction_Weil}
\iota_{x}(1\otimes y)=B(x,y)(1\otimes 1), \qquad \iota_{x}(\gamma^{\WW}(y))=-\frac{1}{2}(1\otimes [x,y]_{\gg}),
\end{equation}
for $x,y \in \gg$. The second is the Lie derivative, which is a combination of the Lie derivative $L^{U}$ on $\mathfrak{U}(\gg)$ and $L^{C}$ on $\Cl(\gg)$:
\begin{equation} \label{eq::Lie_derivative_Weil}
L_{x}(y \otimes z)\coloneqq L_{x}^{U}(y) \otimes z + y \otimes L_{x}^{C}(z)
\end{equation}
With these definitions, the following lemma is immediate by comparison with Lemma~\ref{lemm::commutation_relations_Weil_generators}.

\begin{lemma} \label{lemm::contraction_and_Lie_derivative_in_Weil}
For $x\in\gg$ one has:
\begin{enumerate}
\item[a)] $\iota_{x}=\tfrac12[1\otimes x,\,\cdot\,]_{\WW}$.
\item[b)] $L_{x}=[\gamma^{\WW}(x),\,\cdot\,]_{\WW}$.
\end{enumerate}
\end{lemma}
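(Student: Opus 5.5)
The plan is to check both identities on generators and then extend them by the derivation property. Both sides of a) and of b) are colour derivations of $\Weil$ of the same bidegree. For a) the bidegree is $(\bar 1,p(x))$, matching that of $1\otimes x$; for b) it is $(\bar 0,p(x))$, matching that of $\gamma^{\WW}(x)$. The right-hand sides are derivations because the colour commutator $[A,\cdot]_{\WW}$ with a homogeneous element $A$ satisfies the graded Leibniz rule with respect to the bicharacter $\chi$; this follows from the bimultiplicativity of $\chi$ by the standard computation. The left-hand sides are derivations by construction. $\iota_x$ is declared an odd derivation in \eqref{eq::contraction_Weil}, where "odd" is read as bidegree $(\bar1,p(x))$. $L_x$ is defined in \eqref{eq::Lie_derivative_Weil} as the sum of derivations on the two tensor factors, so it is a derivation of the $\ZZ_2$-graded tensor product of bidegree $(\bar 0,p(x))$.

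Since $\Weil$ is generated by the elements $1\otimes y$ and $\gamma^{\WW}(y)$, $y\in\gg$, it then suffices to compare the two sides on these generators.

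For a), Lemma~\ref{lemm::commutation_relations_Weil_generators} gives $\tfrac12[1\otimes x,1\otimes y]_{\WW}=B(x,y)(1\otimes1)=\iota_x(1\otimes y)$. Supersymmetry of the colour bracket together with the third relation of that lemma gives
\[
\tfrac12[1\otimes x,\gamma^{\WW}(y)]_{\WW}=-\tfrac12\chi\,[\gamma^{\WW}(y),1\otimes x]_{\WW}=-\tfrac12\chi\,(1\otimes[y,x]_{\gg}),
\]
where $\chi=\chi\bigl((\bar1,p(x)),(\bar0,p(y))\bigr)=(-1)^{p(x)p(y)}$. Skew-supersymmetry of the bracket then turns this into $-\tfrac12(1\otimes[x,y]_{\gg})$, which matches \eqref{eq::contraction_Weil}.

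For b), the remaining two relations of Lemma~\ref{lemm::commutation_relations_Weil_generators} give $[\gamma^{\WW}(x),1\otimes y]_{\WW}=1\otimes[x,y]_{\gg}$ and $[\gamma^{\WW}(x),\gamma^{\WW}(y)]_{\WW}=\gamma^{\WW}([x,y]_{\gg})$. We then compute $L_x$ on the same elements. First, $L_x(1\otimes y)=1\otimes L^C_x(y)=1\otimes[x,y]$. Next, $L_x(\gamma^{\WW}(y))=[x,y]\otimes1-\tfrac12(1\otimes L^C_x\gamma'(y))$. For the second term, Lemma~\ref{lemm::Lie_derivative_and_lambd} and the $\osp$-equivariance of $q$ (via Lemma~\ref{lemm::Lie_derivative_as_commutator}) give $L^C_x\gamma'(y)=\gamma'([\ad_x,\ad_y])=\gamma'([x,y])$. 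Hence $L_x(\gamma^{\WW}(y))=\gamma^{\WW}([x,y])$.

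The main obstacle is the sign bookkeeping. One must check that the Leibniz rule for $[A,\cdot]_{\WW}$ uses exactly the bicharacter $\chi$, and that the Koszul signs in the tensor product multiplication agree with $\chi$ on the generators. The tensor product signs only see the total $\gg$-parity, whereas $\chi$ also involves the first $\ZZ_2$-component. Because of this, I would verify the compatibility separately in each case: generators of the form $1\otimes z$ carry the additional Clifford sign, and generators of the form $\gamma^{\WW}(z)$ do not.
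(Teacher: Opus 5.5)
Your strategy is the paper's: both sides of a) and b) are colour derivations of the same bidegree, so it suffices to compare them on the generators $1\otimes y$ and $\gamma^{\WW}(y)$. Your part b) is essentially the paper's computation; there, Lemma~\ref{lemm::Lie_derivative_as_commutator} gives $L_x\gamma'(y)=-\tfrac12[\gamma'(x),\gamma'(y)]_{\Cl(\gg)}=\gamma'([x,y]_{\gg})$ directly, without appealing to equivariance of $q$.

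Part a), however, contains a sign slip that conceals a genuine problem. Since $[y,x]_{\gg}=-(-1)^{p(x)p(y)}[x,y]_{\gg}$, you get
\[
\tfrac12[1\otimes x,\gamma^{\WW}(y)]_{\WW}=-\tfrac12(-1)^{p(x)p(y)}\bigl(1\otimes[y,x]_{\gg}\bigr)=+\tfrac12\bigl(1\otimes[x,y]_{\gg}\bigr),
\]
not $-\tfrac12(1\otimes[x,y]_{\gg})$. So the comparison with \eqref{eq::contraction_Weil} as printed fails on $\gamma^{\WW}(y)$ whenever $[x,y]_{\gg}\neq 0$. You forced agreement with a sign that your own computation does not produce. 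The paper's ``immediate'' passes over the same point.

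The resolution is that the printed value $\iota_x(\gamma^{\WW}(y))=-\tfrac12(1\otimes[x,y]_{\gg})$ has the wrong sign, and with it there is no such derivation at all. Suppose a colour derivation of bidegree $(\bar 1,p(x))$ satisfies $\iota_x(1\otimes y)=B(x,y)$ and $\iota_x(\gamma^{\WW}(z))=c\,(1\otimes[x,z]_{\gg})$. Applying it to the relation $[\gamma^{\WW}(z),1\otimes y]_{\WW}=1\otimes[z,y]_{\gg}$ gives $2c\,B([x,z],y)=B(x,[z,y])$. By invariance of $B$ this forces $c=+\tfrac12$.

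The same value comes from the natural contraction on $\UE(\gg)\otimes\Cl(\gg)$, which kills $\UE(\gg)\otimes 1$ and acts on $\Cl(\gg)$ as $\tfrac12[x,\cdot\,]$. Lemma~\ref{lemm::Lie_derivative_as_commutator}~a) gives $\iota_x\gamma'(y)=-[x,y]_{\gg}$, hence $\iota_x(\gamma^{\WW}(y))=+\tfrac12(1\otimes[x,y]_{\gg})$.

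To complete a), take $\iota_x$ to be this contraction, i.e.\ correct the sign in \eqref{eq::contraction_Weil}. With that characterization your generator-by-generator argument goes through unchanged. With the printed sign, statement a) cannot be proved, because the derivation it refers to does not exist.
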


\begin{proof}
a) This is immediate from the definition of $\iota_x$ and the commutator \eqref{eq::colour_commutator}.
b) It suffices to check the identity on the generators of $\Weil$. For $y\in\gg$,
\[
\begin{split}
L_x(1\otimes y)
&=1\otimes L^{C}_x y
=1\otimes\Bigl(-\tfrac12[\gamma'(x),y]_{\Cl(\gg)}\Bigr)
=[\gamma^{\WW}(x),1\otimes y]_{\WW},
\end{split}
\]
using $[x\otimes 1,1\otimes y]_{\WW}=0$ and Lemma~\ref{lemm::commutation_relations_Weil_generators}. Moreover,
\[
\begin{split}
L_x(\gamma^{\WW}(y))
&=L_x\Bigl(y\otimes 1-\tfrac12(1\otimes \gamma'(y))\Bigr)\\
&=[x,y]_{\gg}\otimes 1-\tfrac12\bigl(1\otimes L_x\gamma'(y)\bigr)
=[x,y]_{\gg}\otimes 1+\tfrac14\bigl(1\otimes[\gamma'(x),\gamma'(y)]_{\Cl(\gg)}\bigr)\\
&=[x,y]_{\gg}\otimes 1-\tfrac12\bigl(1\otimes \gamma'([x,y]_{\gg})\bigr)
=\gamma^{\WW}([x,y]_{\gg})
=[\gamma^{\WW}(x),\gamma^{\WW}(y)]_{\WW},
\end{split}
\]
where we used Lemma~\ref{lemm::Lie_derivative_as_commutator} in the penultimate step and
Lemma~\ref{lemm::commutation_relations_Weil_generators} in the last step. Since the generators span $\Weil$ as an algebra, this
implies $L_x=[\gamma^{\WW}(x),\cdot]_{\WW}$.
\end{proof}

\subsection{Cubic Dirac Operator}
We construct the cubic Dirac operator for quadratic Lie superalgebras, including, in particular, all basic classical Lie superalgebras. Recall that any \emph{quadratic Lie superalgebra} $\gg$ is equipped with a non-degenerate invariant supersymmetric bilinear form $B$. In what follows, we fix such a pair $(\gg,B)$. Then the cubic Dirac operator is a distinguished element of the colour quantum Weil algebra $\mathcal{W}(\gg)$, obtained by adding to the canonical quadratic element the Clifford quantization of the structure-constants tensor associated with $(\gg,B)$. The resulting element is invariant under the diagonal adjoint action of $\gg$ and encodes the full Lie superalgebraic structure of $\gg$. 

\subsubsection{Structure Constants Tensor}
Let $\{e_{a}\}$ be a homogeneous basis of $\gg$ with $B$-dual basis $\{e^{a}\}$, that is, $B(e_{a},e^{b})=\delta_{ab}$. The bilinear form $B$ identifies $\gg\cong \gg^{\ast}$ and defines the Cartan $3$-form
\begin{equation}
\phi_{\gg}(x,y,z)\coloneqq -\tfrac{1}{2}B\bigl(x,[y,z]\bigr),\qquad x,y,z\in\gg .
\end{equation}
By invariance and supersymmetry of $B$, the form $\phi_{\gg}$ is $\gg$-invariant and totally skew-supersymmetric. Via the identification $\bigwedge(\gg)\to \Cl(\gg)$ (\emph{cf.}~\ref{thm::quantization_map}), $\phi_{\gg}$ determines a canonical cubic element $\phi_{\gg}\in\bigwedge^{3}(\gg)_{\bar{0}}$ characterized by
\begin{equation}\label{eq::definition_phi}
(\phi_{\gg},x\wedge y\wedge z)_{\wedge}=-\tfrac{1}{2}B([x,y],z)
\end{equation}
for all $x,y,z\in\gg$. Here $(\cdot,\cdot)_{\wedge}$ denotes the non-degenerate supersymmetric bilinear form on $\bigwedge(\gg)$ introduced in Section~\ref{subsec::exterior_and_clifford_superalgebra}. The element $\phi_{\gg}$ is called the \emph{structure constants tensor} of $\gg$. If $\gg$ is clear from the context, we omit the subscript $\gg$.

\begin{lemma} \label{lemm::properties_phi_g_fd} For all $x,y,z\in \gg$, the following hold:
 \begin{enumerate}
 \item[a)] $2\iota_{x}\phi=-\lambda(\ad_{x})$ for all $x \in \gg$.
 \item[b)] $\iota_{x}\iota_{y}\phi=B([x,y], \cdot)$. In particular, under the identification $\gg \cong \gg^{\ast}$, $\iota_{x}\iota_{y}\phi=[x,y]$.
 \item[c)] In the homogeneous basis, 
 \[
 \phi=-\frac{1}{12} \sum_{a,b,c} (-1)^{p(e_{a})p(e_{b})+p(e_{c})}f_{abc}e^{a}\wedge e^{b} \wedge e^{c}.
 \]
 \item[d)] $\phi$ is invariant under the adjoint action of $\gg$.
 
 \end{enumerate}
\end{lemma}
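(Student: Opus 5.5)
The plan is to derive everything from the defining pairing \eqref{eq::definition_phi} and the adjointness of contraction and exterior multiplication, $\langle \iota_{x} y, z\rangle_{\wedge}=(-1)^{p(x)p(y)}\langle y, x\wedge z\rangle_{\wedge}$, extended to higher degrees. The natural order is b), then a), then c), then d), since each step feeds the next.

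\textbf{Part b).} Pair $\iota_{x}\iota_{y}\phi\in\gg$ with an arbitrary $z\in\gg$. Moving both contractions across the pairing turns them into exterior multiplications, so that $\langle \iota_x\iota_y\phi, z\rangle_{\wedge}=\pm\langle \phi, y\wedge x\wedge z\rangle_{\wedge}$. Equation \eqref{eq::definition_phi} evaluates this as $\pm\tfrac12 B([y,x],z)$. Using the supersymmetry of $B$ and of $\langle\cdot,\cdot\rangle_\wedge$, the result should be $B([x,y],z)$.

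The main obstacle is making the signs and the factor $\tfrac12$ come out correctly. The normalisation of $\langle\cdot,\cdot\rangle_{\wedge^k}$ involves the sum over $S_k$, which produces combinatorial factors. I would fix the conventions by first checking the purely even case, where everything reduces to the classical identity $\iota_x\iota_y\phi=[x,y]$ of Alekseev--Meinrenken. After that, only Koszul signs $(-1)^{p(x)p(y)}$ need to be tracked.

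\textbf{Part a).} Both sides of a) are elements of $\bigwedge^2(\gg)$. I would compare them by contracting once more with an arbitrary $y$.

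For the right-hand side, the moment map formula gives $\iota_y\lambda(T)=\pm T(y)$; equivalently, use $\lambda(T)=\omega_T$ from Lemma~\ref{lemm::identification_lambda_omega}. For $T=\ad_x$ this yields $\pm[x,y]$.

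For the left-hand side, $\iota_y(2\iota_x\phi)=\pm 2[x,y]$ by b). Non-degeneracy of the pairing then forces the two elements of $\bigwedge^2(\gg)$ to agree, with the sign fixed as in b).

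\textbf{Part c).} Define $\phi'$ to be the displayed basis expression, where $f_{abc}=B(e_a,[e_b,e_c])$. Evaluate $\langle\phi', e_i\wedge e_j\wedge e_k\rangle_\wedge$ directly. The antisymmetrisation over $S_3$ in $\langle\cdot,\cdot\rangle_{\wedge^3}$ produces $6$ terms, which combine with the prefactor $\tfrac1{12}$ to give $-\tfrac12 f$ with the appropriate sign. This matches \eqref{eq::definition_phi}, so $\phi'=\phi$ by non-degeneracy.

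\textbf{Part d).} The trilinear form $B([x,y],z)$ is $\gg$-invariant by the invariance of $B$ and the Jacobi identity. The pairing $\langle\cdot,\cdot\rangle_\wedge$ is $L$-invariant because $\ad_x\in\osp(\gg)$ (Lemma~\ref{lemm::ad_x_is_orthosymplectic}). It follows that $\langle L_x\phi, u\rangle_\wedge=0$ for all $u\in\bigwedge^3(\gg)$, hence $L_x\phi=0$.

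Alternatively, $\iota_y L_x\phi = L_x\iota_y\phi - \iota_{[x,y]}\phi$. By a) this equals $-\tfrac12\bigl(L_x\lambda(\ad_y)-\lambda(\ad_{[x,y]})\bigr)$. Lemma~\ref{lemm::Lie_derivative_and_lambd} together with $[\ad_x,\ad_y]=\ad_{[x,y]}$ shows this vanishes for all $y$, which again gives $L_x\phi=0$.
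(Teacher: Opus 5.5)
There is a genuine gap, and it is exactly the ``main obstacle'' you flag: the factor $\tfrac12$ in b) cannot be made to disappear.

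The pairing $\langle\cdot,\cdot\rangle_{\wedge}$ is the determinant-type pairing: a signed sum over $S_k$, with no $1/k!$. For it, $\iota_x$ is adjoint to $x\wedge\cdot$ up to a sign but with no numerical factor; this follows from Laplace expansion and is already visible on $\bigwedge^2(\gg)$. So your two transfers give precisely $\langle\iota_x\iota_y\phi,z\rangle_{\wedge}=\pm\langle\phi,y\wedge x\wedge z\rangle_{\wedge}=\pm\tfrac12B([x,y],z)$. Supersymmetry of $B$ only moves signs; it cannot remove a factor of $2$.

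This is not a bookkeeping artefact. Contracting your own expression in c) twice, with your $f_{abc}=B(e_a,[e_b,e_c])$, gives $\iota_x\iota_y\phi=\tfrac12[x,y]$ in the even case. Part a) gives the same thing, because $\lambda(T)=-\tfrac12\sum_a T(e^a)\wedge e_a$ yields exactly $\iota_y\lambda(T)=T(y)$.

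Thus, with the normalization fixed by \eqref{eq::definition_phi}, a) and c) carry the same normalization, and b) can only hold in the form $\iota_x\iota_y\phi=\tfrac12B([x,y],\cdot)$. Calibrating against an Alekseev--Meinrenken identity written in another normalization cannot override this. The paper's proof of b) takes your route and loses the same $\tfrac12$ when it evaluates $(\phi,y\wedge x\wedge z)_{\wedge}$ as $-(-1)^{p(x)p(y)}B([y,x],z)$.

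The mismatch also breaks your derivation of a) from b). You compare $\iota_y(2\iota_x\phi)=\pm2[x,y]$ with $\iota_y\lambda(\ad_x)=\pm[x,y]$. Non-degeneracy would then give $\iota_x\phi=\mp\lambda(\ad_x)$, contradicting a).

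Prove a) directly instead, as the paper does: pair $\iota_x\phi$ with $y\wedge z$, obtain $-\tfrac12B([x,y],z)=-\tfrac12\omega_{\ad_x}(y,z)$, and invoke Lemma~\ref{lemm::identification_lambda_omega}. This is the identity actually used later, for $\iota_x\Dirac_{\gg}=\gamma^{\WW}(x)$. Once b) is corrected by the factor $\tfrac12$, your contraction argument becomes consistent as well.

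Your c) is fine. Your first argument for d) is the paper's (invariance of $B$ plus the graded Jacobi identity). Your second one, via $[L_x,\iota_y]=\iota_{[x,y]}$, a) and Lemma~\ref{lemm::Lie_derivative_and_lambd}, is a valid and shorter alternative once a) is established independently of b).
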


\begin{proof}
 a) 
For homogeneous $ x,y,z $, we obtain using Lemma~\ref{lemm::identification_lambda_omega}
\[
(\iota_x\phi)(y \wedge z)=\phi(x,y,z)
=-\tfrac{1}{2} B([x,y],z)
=-\tfrac{1}{2} \omega_{\ad_x}(y,z)=-\tfrac{1}{2}(\lambda(\ad_{x}))^{\flat}(y \wedge z).
\]
Hence $
2(\iota_x\phi)^\flat=-(\lambda(\ad_x))^\flat.
$
Using the $B$-identification $ \bigwedge^{2}(\gg) \cong \bigwedge^{2}(\gg^{\ast}) $, we finally obtain the statement.

 b) One has by definition:
 \[
 \begin{aligned}
 \iota_{x}\iota_{y}\phi(z) &=(\iota_{x}\iota_{y}\phi,z)_{\wedge}=(-1)^{p(x)p(y)}(\iota_{y}\phi, x \wedge z)_{\wedge}=(-1)^{p(x)p(y)}(\phi, y \wedge x \wedge z)_{\wedge} \\ &=-(-1)^{p(x)p(y)}B([y,x],z)=B([x,y],z).
 \end{aligned}
 \]

 c) is a straightforward computation and will be omitted. 

 d) One has $\phi(x,y,z)=-\tfrac12 B([x,y],z)$. For homogeneous $w,x,y,z$,
the Lie derivative on forms gives
\[
(L_w\phi)(x,y,z)
=\phi([w,x],y,z)+(-1)^{p(w)p(x)}\phi(x,[w,y],z)
+ (-1)^{p(w)(p(x)+p(y))}\phi(x,y,[w,z]).
\]
Hence, using invariance of $B$, one has for any $w \in \gg$
\[
\begin{aligned}
-2(L_w\phi)(x,y,z)
&=B([[w,x],y],z)
+ (-1)^{p(w)p(x)}B([x,[w,y]],z)
 +(-1)^{p(w)(p(x)+p(y))}B([x,y],[w,z])\\
&=B([w,x],[y,z])
 +(-1)^{p(w)p(x)}B(x,[[w,y],z])
 +(-1)^{p(w)(p(x)+p(y))}B(x,[y,[w,z]])\\
&=(-1)^{p(w)p(x)}\Big(
 -B(x,[w,[y,z]]) + B(x,[[w,y],z]) + (-1)^{p(w)p(y)}B(x,[y,[w,z]])
 \Big).
\end{aligned}
\]
By the $\ZZ_{2}$-graded Jacobi identity
$[w,[y,z]]=[[w,y],z] + (-1)^{p(w)p(y)}[y,[w,z]]$,
the parentheses vanish. Therefore $L_w\phi=0$ for all $w$, \emph{i.e.}, $\phi$ is invariant under the adjoint action of $\gg$.
\end{proof}

We are interested in the quantization of $\phi$, that is, 

\begin{equation}
 \phi'\coloneqq q(\phi) \in \Cl(\gg).
\end{equation}

\begin{theorem}[{\cite[Theorem 1.1]{Chen_Dirac_quadratic}}] \label{prop::phi_squared} The square of $\phi'$ is a constant given by 
\[
(\phi')^{2}=\tfrac{1}{24}\str (\ad_{\gg}(\Omega_{\gg})).
\] 
\end{theorem}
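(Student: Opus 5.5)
The plan is to follow the Alekseev--Meinrenken strategy, transported to the Clifford superalgebra. Since $\phi'$ is odd (total Clifford degree $3$, parity $\bar 0$ in $\gg$ but odd in the filtration), its square is $\tfrac12[\phi',\phi']_{\Cl(\gg)}$, taken with the appropriate sign convention for odd Clifford elements. I will proceed in three steps.

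\textbf{Step 1: $(\phi')^{2}$ is a scalar.} Using Lemma~\ref{lemm::properties_phi_g_fd}(a) and the fact that $q$ intertwines contractions (Theorem~\ref{thm::quantization_map}), I get $\iota_x\phi'=q(\iota_x\phi)=-\tfrac12\gamma'(x)$. Since $\iota_x=\tfrac12[x,\cdot]_{\Cl(\gg)}$ on $\Cl(\gg)$, this gives $[x,\phi']=-\gamma'(x)$. Hence
\[
\iota_x\bigl((\phi')^2\bigr)=(\iota_x\phi')\phi'-\phi'(\iota_x\phi')=-\tfrac12[\gamma'(x),\phi']_{\Cl(\gg)}=L_x\phi'=q(L_x\phi)=0,
\]
where I use Lemma~\ref{lemm::Lie_derivative_as_commutator}(a), that $q$ is $\osp(\gg)$-equivariant, and Lemma~\ref{lemm::properties_phi_g_fd}(d). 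An element of $\Cl(\gg)$ annihilated by all contractions has its symbol in $\bigwedge^0(\gg)$, because $\eta$ is an isomorphism intertwining $\iota_x$ and $B$ is non-degenerate. So $(\phi')^2$ is a constant.

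\textbf{Step 2: reduce to an explicit degree-$0$ term.} Since the square is a constant, it equals the $\bigwedge^{0}$-component of $\eta(\phi'\phi')$. By the Clifford product formula, this is the full contraction of $\phi$ with itself, up to the combinatorial factor $3!$ and sign conventions: $(\phi')^{2}=\pm 6\,\langle\phi,\phi\rangle_{\wedge}$ with appropriate normalization. Equivalently, I will compute iteratively with $\iota$ as $\tfrac{1}{3}\sum_a \iota_{e^a}\iota_{e_b}\iota_{e_c}$-type contractions. Concretely, using Lemma~\ref{lemm::properties_phi_g_fd}(b,c), the scalar becomes
\[
c\sum_{a,b,c}(\pm)f_{abc}f^{abc}=c'\sum_{a,b}(\pm)B([e^a,e^b],[e_b,e_a]).
\]

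\textbf{Step 3: identify the trace.} I then rewrite $\sum_{a,b}(-1)^{\cdots}B([e^a,e^b],[e_b,e_a])$ as $\sum_a\str(\ad_{e^a}\ad_{e_a})=\str(\ad_\gg(\Omega_\gg))$, with $\Omega_\gg=\sum_a e^ae_a$, using invariance of $B$ and the dual-basis identity $\sum_b B(e_b,\cdot)e^b=\id$.

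The main obstacle is bookkeeping: getting the sign $(-1)^{p(e_a)p(e_b)+p(e_c)}$ and the constant $\tfrac1{24}$ exactly right in Steps 2--3. I would first check the normalization against the purely even case, where Kostant's result $(\phi')^2=\tfrac1{24}\tr\ad(\Omega)$ is known. I would then verify the super signs on a small example such as $\mathfrak{gl}(1|1)$ or $\osp(1|2)$.
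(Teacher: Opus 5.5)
The paper does not prove this statement; it is quoted from Chen--Kang. So your argument has to stand on its own.

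Step~1 does stand. $\iota_x$ is a derivation of $\Cl(\gg)$ for the $\ZZ_2\times\ZZ_2$ bicharacter, so $\iota_x\bigl((\phi')^2\bigr)=(\iota_x\phi')\phi'-\phi'(\iota_x\phi')=-\tfrac12[\gamma'(x),\phi']_{\Cl(\gg)}=L_x\phi'=0$. Moreover, the joint kernel of all contractions on $\bigwedge(\gg)\cong\bigwedge(\gg_{\bar 0})\otimes\Sym(\gg_{\bar 1})$ is $\CC$. This is the same device the paper uses for $\Dirac_{\gg}^{2}$ in Theorem~\ref{thm::square_absolute_Dirac}. One harmless slip: $\iota_x=\tfrac12[x,\cdot\,]$ holds for the colour commutator, not for the paper's $[\cdot,\cdot]_{\Cl(\gg)}$, which on $y\in\gg$ gives $\tfrac12(xy-(-1)^{p(x)p(y)}yx)\neq B(x,y)$.

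The gap is in Steps~2--3, which carry the entire quantitative content of the theorem. You leave $\pm$, $c$, $c'$ undetermined and propose to fix them by matching Kostant's even case and testing $\mathfrak{gl}(1|1)$ or $\osp(1|2)$. That is not a proof. The even case only sees structure constants with three even indices, so it can at best fix one universal constant. It cannot fix the sign of the terms with two odd indices, which is exactly what turns $\tr$ into $\str$. Examples only check.

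Your proposed normalization is also wrong. For $\psi=x_1\wedge\cdots\wedge x_k$ and $\psi'\in\bigwedge^{k}(\gg)$, the $\bigwedge^0$-part of $\eta(q(\psi)q(\psi'))=\tau(q(\psi))\psi'$ comes only from the all-contraction term of each $\tau(x_{\sigma(1)})\cdots\tau(x_{\sigma(k)})$. Since contractions graded-anticommute exactly like the $x_i$ in $\bigwedge(\gg)$, the Koszul signs in $q$ cancel. Writing $\iota(\psi)\coloneqq\iota_{x_1}\cdots\iota_{x_k}$, the scalar part is therefore $\iota(\psi)\psi'$, so $(\phi')^2=\iota(\phi)\phi$ with no factor $3!$. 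In the even case this is exactly the paper's determinant-normalized $\langle\phi,\phi\rangle_{\wedge}$; for instance $(e_1e_2e_3)^2=-1=\langle e_1\wedge e_2\wedge e_3,e_1\wedge e_2\wedge e_3\rangle_{\wedge}$ for orthonormal $e_i$. Your factor $6$ would produce $\tfrac14$ instead of $\tfrac1{24}$.

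Also, Lemma~\ref{lemm::properties_phi_g_fd}(b) as printed drops the factor $\tfrac12$ coming from the definition $(\phi,x\wedge y\wedge z)_{\wedge}=-\tfrac12B([x,y],z)$. In fact $\iota_x\iota_y\phi=\pm\tfrac12B([x,y],\cdot)$, which is consistent with (c). Using (b) verbatim would give $\tfrac1{12}$.

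With these two facts, $\iota(\phi)\phi$ equals $\tfrac1{12}\cdot\tfrac12=\tfrac1{24}$ times a signed sum $\sum_{a,b,c}(\pm)f_{abc}\,B(e^a,[e^b,e^c])$. What the proof still requires is to track these parity signs honestly and show that the sum equals $\str(\ad_{\gg}(\Omega_{\gg}))$. Use $\str T=\sum_b(-1)^{p(e_b)}B(Te_b,e^b)$ together with invariance and supersymmetry of $B$. This is precisely the step your proposal defers.
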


Whenever $\gg$ is basic classical, it is useful to rewrite the proposition using the Freudenthal–de Vries formula (see \cite{Meyer}). Fix a positive root system of $\gg$, and denote by $\rho$ the associated Weyl vector, that is the half sum of the positive roots. Then, the Freudenthal--de-Vries formula is
\begin{equation} \label{eq::Freudenthal_de_Vries_fd}
 \tfrac{1}{24}\str(\ad_{\gg}(\Omega_{\gg}))=B(\rho, \rho),
\end{equation}
and the square of $\phi'$ equals $B(\rho,\rho)$.

\subsubsection{Cubic Dirac Operator} Let $\{e_{a}\}$ be a homogeneous basis of $\gg$ with $B$-dual basis $\{e^{a}\}$. Note that $p(e_{a})=p(e^{a})$ for all $a$. The \emph{cubic Dirac operator} of $\gg$ is 
\begin{equation}
\Dirac_{\gg}\coloneqq \sum_{a}e^{a}\otimes e_{a}+1\otimes\phi'\in\mathcal{W}(\gg).
\end{equation}

\begin{lemma} \label{lemm::properies_D_l}
 The cubic Dirac operator $\Dirac_{\gg}$ satisfies the following properties:
 \begin{enumerate}
 \item[a)] The definition of $\Dirac_{\gg}$ is independent of the choice of a basis.
 \item[b)] $\Dirac_{\gg}$ is $\gg$-invariant, that is, $L_{x}\Dirac_{\gg}=0$.
 \item[c)] $\Dirac_{\gg}$ has bidegree $(\bar{1},\bar{0})$, \emph{i.e.}, it is an odd element of $\Weil$ relative to the total degree.
 \end{enumerate}
\end{lemma}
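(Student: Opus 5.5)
I will treat the three parts separately; each reduces to a short computation with the basis expressions and the lemmas already established.

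For a), take two homogeneous bases $\{e_a\}$ and $\{f_b\}$ of $\gg$. Their $B$-duals transform contragrediently: if $f_b=\sum_a c_{ab}e_a$, then $f^b=\sum_a d_{ba}e^a$, where $d$ is the inverse transpose of $c$. The consistency of $B$ guarantees that the parity-dependent signs match. Hence $\sum_b f^b\otimes f_b=\sum_a e^a\otimes e_a$. A cleaner way to say this is that $\sum_a e^a\otimes e_a$ is the image of $\id_{\gg}$ under $\gg\otimes\gg^{\ast}\cong\End(\gg)$ composed with $\flat^{-1}$ in the first factor, and $\id_{\gg}$ is canonical. The term $1\otimes\phi'$ is basis-free because $\phi$ is characterized intrinsically by \eqref{eq::definition_phi} and $q$ is canonical (Theorem~\ref{thm::quantization_map}).

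For b), I use Lemma~\ref{lemm::contraction_and_Lie_derivative_in_Weil} b) and check $L_x$ on the two summands. By \eqref{eq::Lie_derivative_Weil}, $L_x$ acts diagonally on $\mathfrak{U}(\gg)\otimes\Cl(\gg)$ through $\ad_x$ in each factor. Under the identification above, $\sum_a e^a\otimes e_a$ is the canonical element, and $\ad_x$-invariance of this element amounts to the statement that $\ad_x\otimes1+1\otimes\ad_x$ kills the image of $\id$. This is exactly the skew-supersymmetry $\ad_x\in\osp(\gg)$ from Lemma~\ref{lemm::ad_x_is_orthosymplectic}. Concretely, expand $[x,e^a]=\sum_b B([x,e^a],e_b)\,e^b$ and compare with $[x,e_a]$, using invariance of $B$ and the signs of the bicharacter. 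For $1\otimes\phi'$, note that $L^C_x$ on $\Cl(\gg)$ is induced by the derivation \eqref{eq::Lie_derivative}, and $q$ intertwines these derivations because $q$ is defined by graded symmetrization, which commutes with derivations of $T(\gg)$. Therefore $L_x\phi'=q(L_x\phi)=0$ by Lemma~\ref{lemm::properties_phi_g_fd} d).

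For c), I compute bidegrees directly from \eqref{def::bidegree}. In the generators, $e^a\otimes 1=\gamma^{\WW}(e^a)+\tfrac12(1\otimes\gamma'(e^a))$. Its first piece has bidegree $(\bar0,p(e_a))$. Multiplying by $1\otimes e_a$, of bidegree $(\bar1,p(e_a))$, gives $(\bar1,\bar0)$, since the $\gg$-parities cancel. The correction term $\tfrac12(1\otimes\gamma'(e^a))(1\otimes e_a)$ is a Clifford cubic and is handled in the same way as $1\otimes\phi'$.

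For $\phi'$ itself, $\phi\in\bigwedge^3(\gg)_{\bar0}$, and by Lemma~\ref{lemm::properties_phi_g_fd} c) each monomial $e^ae^be^c$ with $f_{abc}\neq0$ has total $\gg$-parity $\bar0$. Each such monomial is a product of three generators of first degree $\bar1$, so it has bidegree $(\bar1,\bar0)$. The same reasoning applies to $1\otimes\gamma'(e^a)e_a$, because $\gamma'(e^a)$ is a quadratic Clifford element of parity $p(e_a)$, and this gives total first degree $\bar1$.

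The one point to watch is well-definedness. The bigrading is only well defined on $\Weil$ because the defining relations among generators (Lemma~\ref{lemm::commutation_relations_Weil_generators}) are bihomogeneous. I take this as implicit in the definition and check it once: each bracket there lands in the correct bidegree, for example $2B(x,y)$ is nonzero only for $p(x)=p(y)$, so it has bidegree $(\bar0,\bar0)$.

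The main obstacle is the sign bookkeeping in b) for the quadratic term in the super setting. I would handle it invariantly via the identification with $\id_{\gg}$ rather than in coordinates.
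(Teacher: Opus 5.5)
Your proposal is correct and follows essentially the same route as the paper: a change-of-basis computation for a), invariance and supersymmetry of $B$ for the quadratic term together with $\gg$-invariance of $\phi$ and equivariance of $q$ for b), and a bidegree count for c). Your additions only make explicit what the paper leaves implicit: the intrinsic description via $\id_{\gg}$, the basis-independence of $1\otimes\phi'$, and writing $e^{a}\otimes 1=\gamma^{\WW}(e^{a})+\tfrac12(1\otimes\gamma'(e^{a}))$, which justifies its bidegree $(\bar 0,p(e_a))$ since $e^a\otimes 1$ is not a generator.
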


\begin{proof} 
a) We fix two bases $\{e_{a}\}$ and $\{f_{a}\}$ of $\gg$ with $B$-dual bases $\{e^{a}\}$ and $\{f^{a}\}$, respectively. We can express any $x \in \gg$ by $x=\sum_{a}B(x,f^{a})f_{a}=\sum_{a} B(f_{a},x)f^{a}$ for any $x \in \gg$ such that
\[
\begin{aligned}
\sum_{a} e^{a}\otimes e_{a}
 &=\sum_{a,b,c} B(f_{b},e^{a})B(e_{a},f^{c})\, f^{b}\otimes f_{c}
=\sum_{b,c} B\!\left(\sum_{a} B(f_{b},e^{a})e_{a},\, f^{c}\right) f^{b}\otimes f_{c}
 \\&=\sum_{b,c} B(f_{b},f^{c})\, f^{b}\otimes f_{c}
=\sum_{b} f^{b}\otimes f_{b}.
 \end{aligned}
\]
In particular, the definition $\Dirac_{\gg}$ is independent of the choice of a basis.

b) One has for any $x \in \gg$
\[
\begin{aligned}
L_{x}\left(\sum_{a}e^{a} \otimes e_{a}\right) &=\sum_{a}[x,e^{a}]\otimes e_{a} + \sum_{a}(-1)^{p(x)p(e_{a})}e^{a}\otimes [x,e_{a}] \\ &=\sum_{a,b}B(e_{b},[x,e^{a}])e^{b}\otimes e_{a} + \sum_{a,b} (-1)^{p(x)p(e_{a})}B([x,e_{a}],e^{b})e^{a}\otimes e_{b} \\ &=\sum_{a,b}B(e_{b},[x,e^{a}])e^{b}\otimes e_{a} - \sum_{a,b}B(e_{a},[x,e^{b}])e^{a}\otimes e_{b} \\ &=0,
\end{aligned}
\]
where we used invariance and supersymmetry of $B$. Moreover, $\phi'$ is $\gg$-invariant by Lemma~\ref{lemm::properties_phi_g_fd} and Theorem~\ref{thm::quantization_map}. This completes the proof of b).

c) By Lemma~\ref{lemm::properties_phi_g_fd} and the definition of $q$ (\emph{cf.} Theorem~\ref{thm::quantization_map}), it is immediate that $\phi$ has bidegree $(\bar{1},\bar{0})$ and total degree $\bar 1$. Moreover, parity is additive on simple tensors, e.g.
$p(e^{a}\otimes e_{a})=p(e^{a}\otimes 1)+p(1\otimes e_{a})$, so $\sum_{a}e^{a}\otimes e_{a}$ has bidegree $(\bar 1,\bar 0)$ and hence total degree $\bar 1$. Therefore $\Dirac_{\gg}$, being a sum of homogeneous elements of total degree $\bar 1$, is itself odd.
\end{proof}

The cubic Dirac operator $\Dirac_{\gg}$ has a particularly simple square. This theorem also appears in \cite{Meyer}. We give an alternative proof using the quantum Weil algebra.

\begin{theorem} \label{thm::square_absolute_Dirac}
The square of the cubic Dirac operator satisfies
\[
\Dirac_{\gg}^{2}
=
\Omega_{\gg}\otimes 1
+\tfrac{1}{24}\,\str_{\gg}\!\bigl(\ad_{\gg}(\Omega_{\gg})\bigr)(1\otimes 1)
\]
where $\Omega_{\gg}$ denotes the quadratic Casimir element of $\gg$.
\end{theorem}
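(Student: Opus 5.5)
The strategy is to rewrite $\Dirac_{\gg}$ in terms of the Weil generators $\gamma^{\WW}(x)$ and $1\otimes x$ and then use Lemma~\ref{lemm::commutation_relations_Weil_generators} and Theorem~\ref{prop::phi_squared}. First, I use $e^{a}\otimes 1=\gamma^{\WW}(e^{a})+\tfrac12(1\otimes\gamma'(e^{a}))$. By Lemma~\ref{lemm::properties_phi_g_fd}a), $\gamma'(e^{a})=q(\lambda(\ad_{e^a}))=-2q(\iota_{e^{a}}\phi)$. Summing $\sum_a \gamma'(e^a)e_a$ against the Clifford generators should give a multiple of $\phi'$; the standard identity is $\sum_a q(\iota_{e^a}\phi)\,e_a=3\phi'$ up to the sign conventions of $\iota$. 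This yields the rewriting
\[
\Dirac_{\gg}=\sum_a \gamma^{\WW}(e^{a})(1\otimes e_a)+c\,(1\otimes\phi'),
\]
with an explicit constant $c$, which I expect to be $-\tfrac12$, as in Alekseev--Meinrenken. Checking this constant, with all the super signs $(-1)^{p(e_a)p(e_b)}$ from the graded tensor product, is the first computational step. It uses that $e_a\in\Cl(\gg)$ acts by $\epsilon(e_a)+\iota_{e_a}$ under $q^{-1}$, so $q(\iota_{e^a}\phi)e_a$ decomposes into a degree-$3$ part and a degree-$1$ part. The degree-$1$ part vanishes after summing, since it is proportional to the contraction $\sum_a\iota_{e_a}\iota_{e^a}\phi$, which is zero by supersymmetry.

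Next I compute $\Dirac_{\gg}^2=\tfrac12[\Dirac_{\gg},\Dirac_{\gg}]_{\WW}$ by expanding the square of the two-term expression. The square of the first term $\sum_a\gamma^{\WW}(e^a)(1\otimes e_a)$ has three contributions.
\begin{enumerate}
\item[(i)] The terms $\gamma^{\WW}(e^a)\gamma^{\WW}(e^b)\otimes(e_ae_b)$ symmetrized via the Clifford relation $[1\otimes e_a,1\otimes e_b]_{\WW}=2B(e_a,e_b)$. These produce $\sum_a\gamma^{\WW}(e^a)\gamma^{\WW}(e_a)$.
\item[(ii)] The commutators $[\gamma^{\WW}(e^a),\gamma^{\WW}(e^b)]_{\WW}=\gamma^{\WW}([e^a,e^b])$, contracted against $e_ae_b$. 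These give a cubic Clifford term, a multiple of $\sum_a\gamma^{\WW}(e^a)\gamma'(e_a)$-type expressions.
\item[(iii)] The commutators $[\gamma^{\WW}(e^a),1\otimes e_b]_{\WW}=1\otimes[e^a,e_b]$ produced when reordering.
\end{enumerate}
The cross terms with $c(1\otimes\phi')$ use $[\gamma^{\WW}(x),1\otimes\phi']_{\WW}=1\otimes L_x\phi'=0$ by Lemma~\ref{lemm::properties_phi_g_fd}d), together with $[1\otimes e_a,1\otimes\phi']_{\WW}=2(1\otimes\iota_{e_a}\phi')$ from Lemma~\ref{lemm::contraction_and_Lie_derivative_in_Weil}a). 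Both of these again become multiples of $\sum_a\gamma^{\WW}(e^a)\gamma'(e_a)$, and the value of $c$ is exactly what makes all such mixed terms cancel. This cancellation is the main obstacle: every sign must be tracked using the bicharacter $\chi$. I would verify it first in the purely even case as a sanity check, and then insert the super signs.

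After the cancellations, what remains is
\[
\sum_a\gamma^{\WW}(e^a)\gamma^{\WW}(e_a)+c'\,(\phi')^2+\text{(cubic in Clifford)}.
\]
Substituting back $\gamma^{\WW}(x)=x\otimes1-\tfrac12(1\otimes\gamma'(x))$ gives $\Omega_{\gg}\otimes 1$, the cross terms $\sum_a e^a\otimes\gamma'(e_a)$, and the Clifford Casimir $\tfrac14\sum_a\gamma'(e^a)\gamma'(e_a)$. The cross terms should cancel the remaining ones from the previous step. The Clifford Casimir is a multiple of $(\phi')^2$, since $\sum_a\gamma'(e^a)\gamma'(e_a)$ is expressible through $\phi'$ via $\gamma'(e^a)=-2q(\iota_{e^a}\phi)$ and $\phi'\,\phi'$.

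Finally, collecting the multiples of $(\phi')^2$ should leave exactly $1\cdot(\phi')^2$. Theorem~\ref{prop::phi_squared} then gives the constant $\tfrac1{24}\str_{\gg}(\ad_{\gg}(\Omega_{\gg}))$.

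An alternative, cleaner route for the constant is to avoid this bookkeeping. Since $d^{\WW}=[\Dirac_{\gg},\cdot]_{\WW}$ is a differential with $\iota_x$-compatibility, $[\Dirac_{\gg}^2,1\otimes x]_{\WW}=[\Dirac_{\gg},[\Dirac_{\gg},1\otimes x]]$. Using $[\Dirac_{\gg},1\otimes x]_{\WW}=2\gamma^{\WW}(x)$ (up to sign) and the invariance of $\Dirac_{\gg}$ (Lemma~\ref{lemm::properies_D_l}b)), this shows $\Dirac_{\gg}^2-\Omega_{\gg}\otimes1$ supercommutes with all generators and is therefore a scalar. That scalar can then be read off from the pure Clifford part, which is $(\phi')^2$.
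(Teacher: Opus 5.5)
The ``cleaner'' alternative route has a genuine gap. Showing $[\Dirac_{\gg}^2,1\otimes x]_{\WW}=0$ and $[\gamma^{\WW}(x),\Dirac_{\gg}^2]_{\WW}=0$ for all $x$ only proves that $\Dirac_{\gg}^2-\Omega_{\gg}\otimes 1$ lies in the centre of $\Weil$. That centre is $\ZG\otimes 1$, not $\CC$. Elements supercommuting with every $1\otimes x$ are exactly those killed by all $\iota_x$, i.e.\ $\UE(\gg)\otimes 1$, and invariance cuts this down to $\ZG\otimes 1$; for example, $(\Omega_{\gg})^2\otimes 1$ commutes with every generator. So ``therefore a scalar'' does not follow. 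Your read-off via the augmentation of $\UE(\gg)$ (the ``pure Clifford part'') would only recover the constant term of an unknown element of $\ZG$.

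What your commutator argument does give is $\iota_x\Dirac_{\gg}^2=0$ for all $x$, hence $\Dirac_{\gg}^2\in\UE(\gg)\otimes 1$. This is exactly the first step of the paper's proof. The missing second step is to identify this element by applying $\pi=\id_{\UE(\gg)}\otimes\epsilon$, where $\epsilon$ projects $\Cl(\gg)$ onto its scalar part; $\pi$ is the identity on $\UE(\gg)\otimes 1$. Then:
\begin{itemize}
\item in $\pi\bigl((\sum_a e^a\otimes e_a)^2\bigr)$ the scalar part of $e_ae_b$ is $B(e_a,e_b)$, which produces $\Omega_{\gg}$;
\item the cross terms $e^a\otimes(e_a\phi'\pm\phi' e_a)$ have no scalar Clifford component;
\item $(\phi')^2$ is the constant of Theorem~\ref{prop::phi_squared}.
\end{itemize}
This needs neither the rewriting of $\Dirac_{\gg}$ in the $\gamma^{\WW}$-generators nor any Clifford Casimir identity.

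Your main route is a legitimate Kostant/Alekseev--Meinrenken-style expansion and can be completed. As written, however, its decisive constants are unverified, and the one you commit to is wrong. From your own identities, $\tfrac12\sum_a\gamma'(e^a)e_a=-\sum_a q(\iota_{e^a}\phi)e_a=-3\phi'$. Hence $\sum_a e^a\otimes e_a=\sum_a\gamma^{\WW}(e^a)(1\otimes e_a)-3(1\otimes\phi')$, so $c=1-3=-2$, not $-\tfrac12$. Only for $c=-2$ do the terms of the form $\sum_a e^a\otimes\gamma'(e_a)$ cancel.

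With $c=-2$ the remainder is $\Omega_{\gg}\otimes 1-\tfrac14\sum_a\gamma'(e^a)\gamma'(e_a)+4(\phi')^2$. You therefore also need the precise identity $\tfrac14\sum_a\gamma'(e^a)\gamma'(e_a)=3(\phi')^2$; knowing only that it is \emph{some} multiple of $(\phi')^2$ is not enough. The identity does hold: it follows from the Euler identity $\sum_a q(\iota_{e^a}\phi)e_a=3\phi'$, from $e_a\phi'+\phi'e_a=2q(\iota_{e_a}\phi)$, and from $[\gamma'(x),\phi']_{\Cl(\gg)}=0$. All of this bookkeeping, including the super signs, is what the paper's projection argument avoids.
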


\begin{proof}
We first show that $\Dirac_{\gg}^{2}\in\UE(\gg)\otimes 1$. It suffices to check $\iota_{x}\Dirac_{\gg}^{2}=0$ for all
$x\in\gg$. Indeed,
\[
\begin{split}
\iota_{x}\Dirac_{\gg}^{2}
&=\tfrac12[\iota_{x}\Dirac_{\gg},\Dirac_{\gg}]_{\WW}
=-\tfrac14[\gamma'(x),\Dirac_{\gg}]_{\WW}
=\tfrac12 L_{x}\Dirac_{\gg}=0,
\end{split}
\]
since $\Dirac_{\gg}$ is $\gg$-invariant.

Next we identify $\Dirac_{\gg}^{2}$ inside $\UE(\gg)$. Let $\pi:\WW(\gg)\to\UE(\gg)$ be the projection
$\id_{\UE(\gg)}\otimes\epsilon$, where $\epsilon:\Cl(\gg)=\CC\cdot 1\oplus\Cl^{>0}(\gg)\to\CC\cdot 1$. For brevity write
$s(x)\coloneqq x\otimes 1$. By Proposition~\ref{prop::phi_squared},
\[
\begin{split}
\Dirac_{\gg}^{2}\bmod\ker\pi
&=\sum_{a,b}(-1)^{p(e_{a})p(e_{b})}s(e^{a})s(e^{b})\,e_{a}e_{b}+(\phi')^{2}\bmod\ker\pi\\
&=\sum_{a,b}(-1)^{p(e_{b})}s(e^{a})s(e^{b})\,B(e_{a},e_{b})
+\tfrac{1}{24}\str_{\gg}\!\bigl(\ad_{\gg}(\Omega_{\gg})\bigr)\bmod\ker\pi\\
&=\Omega_{\gg}\otimes 1+\tfrac{1}{24}\str_{\gg}\!\bigl(\ad_{\gg}(\Omega_{\gg})\bigr)\bmod\ker\pi,
\end{split}
\]
where we used the graded product in $\WW(\gg)$ and the Clifford identity
\[
e_{a}e_{b}
=\tfrac12\bigl(e_{a}e_{b}+(-1)^{p(e_{a})p(e_{b})}e_{b}e_{a}\bigr)+\tfrac12[e_{a},e_{b}]
=B(e_{a},e_{b})+\tfrac12[e_{a},e_{b}]
\quad\text{in }\Cl(\gg),
\]
together with $\sum_{a}B(e_{a},e^{b})\,e_{a}=e_{b}$. This completes the proof.
\end{proof}

\begin{remark}
Using the Freudenthal--de~Vries formula \eqref{eq::Freudenthal_de_Vries_fd}, for a basic classical Lie superalgebra $\gg$ with Weyl vector $\rho$ one can rewrite the square as
\[
\Dirac_{\gg}^{2}=\Omega_{\gg}\otimes 1+B(\rho,\rho).
\]
\end{remark}

The following lemma will be used in subsequent constructions.

\begin{lemma} \label{lemm::commutation_relations_quantum_Weil_algebra}
In $\mathcal{W}(\gg)$, the following commutation relations hold:
\[
\begin{aligned}
[\Dirac_{\gg},\Dirac_{\gg}]_{\WW}&=2(\Omega_{\gg}\otimes1+B(\rho,\rho)(1\otimes1)),\\
[\gamma^{\mathcal{W}}(x),\Dirac_{\gg}]_{\WW}&=0
\end{aligned}
\]
for all $x \in \gg$. In addition, if $x \in \gg$ is even, one has 
\[
[1\otimes x,\Dirac_{\gg}]_{\WW}=2\gamma^{\mathcal{W}}(x).
\]
\end{lemma}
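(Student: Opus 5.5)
The first identity follows directly from Theorem~\ref{thm::square_absolute_Dirac}. By Lemma~\ref{lemm::properies_D_l}\,c), $\Dirac_{\gg}$ has bidegree $(\bar 1,\bar 0)$, so $\chi(\bideg\Dirac_{\gg},\bideg\Dirac_{\gg})=-1$ and
\[
[\Dirac_{\gg},\Dirac_{\gg}]_{\WW}=2\Dirac_{\gg}^{2}.
\]
Inserting the formula for $\Dirac_{\gg}^{2}$ and rewriting the constant term with the Freudenthal--de~Vries formula \eqref{eq::Freudenthal_de_Vries_fd} gives the stated expression $2(\Omega_{\gg}\otimes 1+B(\rho,\rho)(1\otimes 1))$.

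The second identity is a restatement of invariance. By Lemma~\ref{lemm::contraction_and_Lie_derivative_in_Weil}\,b), $L_{x}=[\gamma^{\WW}(x),\cdot\,]_{\WW}$. By Lemma~\ref{lemm::properies_D_l}\,b), $L_{x}\Dirac_{\gg}=0$. Hence $[\gamma^{\WW}(x),\Dirac_{\gg}]_{\WW}=0$ for every $x\in\gg$, with no parity restriction.

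For the third identity, Lemma~\ref{lemm::contraction_and_Lie_derivative_in_Weil}\,a) gives $[1\otimes x,\Dirac_{\gg}]_{\WW}=2\iota_{x}\Dirac_{\gg}$, so it suffices to show $\iota_{x}\Dirac_{\gg}=\gamma^{\WW}(x)$. We compute $\iota_x$ on the two summands of $\Dirac_{\gg}$, using that $\iota_x$ is a derivation of parity given by the bidegree of $1\otimes x$, namely $(\bar 1,p(x))$.

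\textbf{Quadratic term.} Write $e^{a}\otimes e_{a}=(e^{a}\otimes 1)(1\otimes e_{a})$. Since $\iota_{x}$ kills $\UE(\gg)\otimes 1$ (it is built from Clifford contraction), $\iota_x$ hits only the Clifford factor. The derivation sign from passing $\iota_x$ across $e^{a}\otimes 1$ is $(-1)^{p(x)p(e^{a})}$, and for even $x$ this is trivial. Then $\iota_{x}(1\otimes e_a)=B(x,e_a)(1\otimes 1)$, and $\sum_a B(x,e_a)e^{a}=x$ gives
\[
\iota_{x}\Bigl(\sum_a e^{a}\otimes e_{a}\Bigr)=x\otimes 1.
\]

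\textbf{Cubic term.} By Theorem~\ref{thm::quantization_map}, $q$ intertwines contractions, so $\iota_{x}(1\otimes\phi')=1\otimes q(\iota_{x}\phi)$. By Lemma~\ref{lemm::properties_phi_g_fd}\,a), $\iota_{x}\phi=-\tfrac12\lambda(\ad_{x})$, so
\[
\iota_{x}(1\otimes\phi')=-\tfrac12\bigl(1\otimes\gamma'(x)\bigr).
\]

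Adding the two contributions gives $x\otimes 1-\tfrac12(1\otimes\gamma'(x))=\gamma^{\WW}(x)$, which proves the third identity.

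The main obstacle is sign bookkeeping. One must check that the contraction $\iota_x$ on $\Weil$ (defined by \eqref{eq::contraction_Weil}) agrees on $1\otimes\Cl(\gg)$ with the Clifford contraction transported through $q$. One must also track the colour sign when $\iota_x$ passes the $\UE$-factor. For even $x$ these signs are all $+1$, which is why the identity is stated only for even $x$. For odd $x$, the factor $(-1)^{p(e^a)}$ would twist the quadratic term, and the result would not be $\gamma^{\WW}(x)$.
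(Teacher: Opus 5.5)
Your proof is correct and follows the paper's argument essentially line by line. For the first identity you use the bidegree $(\bar 1,\bar 0)$ together with Theorem~\ref{thm::square_absolute_Dirac} and Freudenthal--de~Vries; for the second, $L_x=[\gamma^{\WW}(x),\cdot\,]_{\WW}$ plus invariance; for the third, $[1\otimes x,\Dirac_{\gg}]_{\WW}=2\iota_x\Dirac_{\gg}$ evaluated termwise.

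Your closing remark about odd $x$ is wrong, though it does not affect the proof. For odd $x$ the derivation sign $(-1)^{p(x)p(e_a)}=-1$ on the surviving terms is cancelled by $B(x,e_a)=-B(e_a,x)$. Hence $\iota_x\bigl(\sum_a e^a\otimes e_a\bigr)=\sum_a B(e_a,x)\,e^a\otimes 1=x\otimes 1$ still holds, and the paper uses exactly this for arbitrary $x\in\ll$ in the proof of Theorem~\ref{thm::square_Dirac_fd}. So evenness of $x$ is not what makes your computation go through.
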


\begin{proof}
The cubic Dirac operator $\Dirac_{\gg}\in\mathcal{W}(\gg)$ has bidegree $(\bar{1},\bar{0})$, that is, total degree $\bar{1}$. Hence
\[
[\Dirac_{\gg},\Dirac_{\gg}]_{\WW}=2\Dirac_{\gg}^{2}
=2\bigl(\Omega_{\gg}\otimes1+B(\rho,\rho)(1\otimes1)\bigr).
\]
The commutation relation $[\gamma^{\mathcal{W}}(x),\Dirac_{\gg}]_{\WW}=0$ for all $x\in\gg$ follows from the identity
$
L_{x}=[\gamma^{\mathcal{W}}(x),\cdot]_{\WW}
$
in $\mathcal{W}(\gg)$ (see Lemma~\ref{lemm::contraction_and_Lie_derivative_in_Weil}). Since $\Dirac_{\gg}$ is $\gg$-invariant, the claim follows. Next, for $x\in\even$ the element $1\otimes x$ is odd with respect to the total degree. Since both $\Dirac_{\gg}$ and $1\otimes x$
are odd, we obtain
\[
\begin{split}
[1\otimes x,\Dirac_{\gg}]_{\WW}
&=2\,\iota_{x}\Dirac_{\gg}
=2\bigl(\iota_{x}(\sum_{a}e^{a}\otimes e_{a})+1\otimes q(\iota_{x}\phi)\bigr)\\
&=2\Bigl(x\otimes1-\tfrac{1}{2}\bigl(1\otimes q(\lambda(\ad_{x}))\bigr)\Bigr)
=2\gamma^{\WW}(x),
\end{split}
\]
where we used
\[
\iota_{x}\Bigl(\sum_{a}e^{a}\otimes e_{a}\Bigr)
=\sum_{a}(-1)^{p(e_{a})p(x)}\,e^{a}\otimes B(x,e_{a})
=\sum_{a}(-1)^{p(e_{a})}\,B(x,e_{a})\,e^{a}\otimes 1
=x\otimes 1,
\]
since $B(x,e_{a})=0$ unless $p(x)=p(e_{a})$.
\end{proof}

\subsection{Relative Cubic Dirac Operator and Weil Differential} \label{subsec::relative_Dirac}
We define relative cubic Dirac operators with respect to quadratic Lie subsuperalgebras $\ll$ of basic classical Lie superalgebras $\gg$. This generalizes the constructions in the literature in the case where $\ll$ is a Levi subsuperalgebra; see \cite{huang2005dirac,Schmidt_Noja}.

Let $\gg$ be a basic classical Lie superalgebra and let $\ll\subset \gg$
be a quadratic Lie subsuperalgebra, with non-degenerate supersymmetric invariant bilinear
form $B_{\ll}\coloneqq B|_{\ll}$. We refer to $(\gg,\ll)$ as a
\emph{quadratic pair}. For a quadratic pair $(\gg,\ll)$, there is an orthogonal decomposition
\begin{equation}
\gg=\ll\oplus\pp,\qquad \pp=\ll^{\perp},
\end{equation}
with respect to $B$. The restriction $B_{\pp}\coloneqq B|_{\pp}$ is non-degenerate. In particular, one can form the Clifford superalgebra $\Cl(\pp)\coloneqq \Cl(\pp;B_{\pp})$. For simplicity, assume that $\dim\pp=2(p_{0}+p_{1})$ is even, where $2p_{0}=\dim\pp_{\bar{0}}$ and $2p_{1}=\dim\pp_{\bar{1}}$. By invariance of $B$, one has $[\ll,\pp]\subseteq\pp$. 

As above, denote by $\Dirac_{\gg}\in\mathcal{W}(\gg)$ and $\Dirac_{\ll}\in\mathcal{W}(\ll)$ the cubic Dirac operators associated with $\gg$ and $\ll$, respectively. The relative cubic Dirac operator is obtained by combining these into a single operator via a natural embedding
$
j\colon\mathcal{W}(\ll)\longrightarrow\mathcal{W}(\gg),
$
induced by the adjoint action of $\ll$ on $\pp$.

For any $x \in \ll$, $\ad_{x} \in \osp(\gg)$ decomposes as the sum of $\ad_{x}^{\ll} \in \osp(\ll)$ and $\ad_{x}^{\pp} \in \osp(\pp)$. Set $\gamma_{\ll}'(x)\coloneqq -\tfrac{1}{2}q(\lambda(\ad_{x}^{\ll}))$ and $\nu_{\ast}(x)\coloneqq \gamma_{\pp}'(x)\coloneqq -\tfrac{1}{2}q(\lambda(\ad_{x}^{\pp})) \in \Cl(\pp)$. The latter defines a Lie superalgebra homomorphism $\nu_{\ast} : \ll \to \Cl(\pp)$. This map has an explicit description. As the dimension of $\pp$ is even, we can decompose $\pp$ in isotropic super subspaces $\pp=\uu \oplus \ubar$ compatible with the choice of a fixed positive system, that is $\uu\subset \nn^{+}$ and $\ubar\subset \nn^{-}$. Let $\Delta_{\ll} \subset \Delta$ denote the associated root system, and set $\Delta_{\ll}^{+}\coloneqq \Delta_{\ll} \cap \Delta^{+}$. The weights of $\uu$ and $\ubar$ then belong to $\Delta_{\pp}^{+}\coloneqq \Delta^{+}\cap \Delta \setminus \Delta_{\ll}$ and $\Delta_{\pp}^{-}\coloneqq \Delta^{-}\cap \Delta \setminus \Delta_{\ll}$. Set $\Delta^{+}_{\pp,\bar{0}}\coloneqq \Delta_{\pp}^{+}\cap \Delta_{\bar{0}}$ and $\Delta^{+}_{\pp,\bar{1}}\coloneqq \Delta_{\pp}^{+}\cap \Delta_{\bar{1}}$. Finally, set $\rho^{\uu}\coloneqq \tfrac{1}{2}\sum_{\alpha \in \Delta_{\pp,\bar{0}}^{+}} \alpha-\tfrac{1}{2}\sum_{\alpha \in \Delta_{\pp,\bar{1}}^{+}} \alpha$.

\begin{lemma} \label{lemm::embedding}
 The map $\nu_{\ast} : \ll \to \Cl(\pp)$ is a Lie superalgebra homomorphism. In particular, if $u_{1},\ldots, u_{p}$ is a basis of $\uu$ and $\overline{u}_{1}, \ldots, \overline{u}_{p}$ is a basis of $\overline{\uu}$, then 
 \[
 \nu_{\ast}(X)=\frac{1}{2} \sum_{j,k=1}^{p}(-1)^{p(u_{j})}B(X,[\overline{u}_{j},u_{k}])\overline{u}_{k}u_{j} + \begin{cases}
 \rho^{\uu}(X), \qquad &\text{if} \ X \in \hh, \\
 0, \qquad &\text{else}.
 \end{cases}
 \]
\end{lemma}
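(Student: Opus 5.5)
The plan has two parts: the homomorphism property, then the explicit formula.

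\emph{Homomorphism property.} I would derive it from Lemma~\ref{lemm::Lie_derivative_as_commutator}~b), applied to the quadratic space $(\pp,B_{\pp})$ rather than $\gg$. For $x\in\ll$, the map $\ad_{x}^{\pp}$ is the restriction of $\ad_x$ to the $\ad(\ll)$-stable subspace $\pp$. It lies in $\osp(\pp)$ by Lemma~\ref{lemm::ad_x_is_orthosymplectic} together with the orthogonality of $\ll$ and $\pp$. Moreover $x\mapsto\ad_x^{\pp}$ is a Lie superalgebra homomorphism $\ll\to\osp(\pp)$, since $\ad$ is one and restriction to an invariant subspace respects supercommutators. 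Writing $\gamma'_{\pp}(T)=q(\lambda(T))$ on $\osp(\pp)$, Lemma~\ref{lemm::Lie_derivative_as_commutator}~b) gives
\[
\bigl[-\tfrac12\gamma'_{\pp}(S),-\tfrac12\gamma'_{\pp}(T)\bigr]_{\Cl(\pp)}=-\tfrac12\gamma'_{\pp}([S,T]).
\]
Setting $S=\ad_x^{\pp}$ and $T=\ad_y^{\pp}$ then yields $[\nu_*(x),\nu_*(y)]=\nu_*([x,y])$.

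\emph{Explicit formula.} I would use the isotropic decomposition $\pp=\uu\oplus\ubar$ and choose the bases so that $B(\overline u_j,u_k)=\delta_{jk}$, so that $\{u_k\}$ and $\{\overline u_k\}$ are $B$-dual up to the ordering sign $(-1)^{p(u_k)}$ coming from supersymmetry. The proof then proceeds in three steps.
\begin{enumerate}
\item Expand $\lambda(\ad_X^{\pp})=-\tfrac12\sum_a \ad_X(e^a)\wedge e_a$ over the basis $\{u_k,\overline u_k\}$. Then insert $\ad_X(\overline u_j)=\sum_k B(\ldots)$ coefficients, using invariance in the form $B([X,\overline u_j],u_k)=B(X,[\overline u_j,u_k])$.
\item Because $\ad_X^{\pp}$ is $B$-skew, the $\uu\wedge\uu$ and $\ubar\wedge\ubar$ components either vanish or pair up. The essential term is the mixed one, $\sum_{j,k}(-1)^{p(u_j)}B(X,[\overline u_j,u_k])\,\overline u_k\wedge u_j$.
\item Apply the quantization $q$ to quadratic elements, $q(v\wedge w)=\tfrac12\bigl(vw-(-1)^{p(v)p(w)}wv\bigr)$. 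Then use the Clifford relation \eqref{eq::Clifford_relation} to reorder everything into the normal-ordered form $\overline u_k u_j$. This reordering produces the scalar correction $\tfrac12\sum_j(\pm)B(X,[\overline u_j,u_j])$.
\end{enumerate}

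Finally I would identify the scalar. Only diagonal terms contribute, and $[\overline u_j,u_j]\in\hh$ pairs nontrivially with $X$ only when $X\in\hh$. If we take $u_j$ to be a root vector for $\alpha_j\in\Delta^+_{\pp}$, then $B(X,[\overline u_j,u_j])=\pm\alpha_j(X)$. The parity sign turns the sum into $\tfrac12\sum_{\text{even}}\alpha-\tfrac12\sum_{\text{odd}}\alpha=\rho^{\uu}(X)$. If instead $X$ is a root vector of $\ll$, then $[\overline u_j,u_j]\in\hh$ pairs to zero with $X$, so no constant appears.

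\emph{Main obstacle.} I expect the sign bookkeeping in the last two steps to be the hard part. This means checking that the even and odd contributions to the constant come with opposite signs, which produces the super Weyl vector $\rho^{\uu}$, and that the overall factor $-\tfrac12\cdot(-\tfrac12)\cdot 2$ gives exactly $\tfrac12$. I would verify this first on a one-dimensional even piece and an odd pair before doing the general case.
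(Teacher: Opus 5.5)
Your proof of the homomorphism property is correct and is the natural one: $x\mapsto\ad_x^{\pp}$ is a homomorphism $\ll\to\osp(\pp)$, Lemma~\ref{lemm::Lie_derivative_as_commutator}~b) only uses the quadratic super vector space $(\pp,B_{\pp})$, and the factor $-\tfrac12$ works out exactly as you write.

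The gap is in Step~2 of the explicit formula. With the dual basis $e^a=(-1)^{p(u_k)}\overline u_k$ for $e_a=u_k$, and $e^a=u_k$ for $e_a=\overline u_k$, one has
\[
\lambda(\ad_X^{\pp})=-\tfrac12\sum_k\Bigl((-1)^{p(u_k)}[X,\overline u_k]\wedge u_k+[X,u_k]\wedge\overline u_k\Bigr).
\]
$B$-skewness of $\ad_X^{\pp}$ is what makes the two \emph{mixed} contributions equal; this is your factor $2$, and it gives $-\sum_{j,k}(-1)^{p(u_j)}B(X,[\overline u_j,u_k])\,\overline u_k\wedge u_j$. But skewness says nothing about the $\uu\wedge\uu$ and $\ubar\wedge\ubar$ parts. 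These come from the off-diagonal blocks $\ubar\to\uu$ and $\uu\to\ubar$ of $\ad_X^{\pp}$, and skewness only relates each such block to itself. They vanish if and only if $[X,\uu]\subseteq\uu$ and $[X,\ubar]\subseteq\ubar$. This does not follow from $\hh\subset\ll$, $\uu\subset\nn^{+}$, $\ubar\subset\nn^{-}$.

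Here is a counterexample. Take $\gg=\mathfrak{sl}(3,\CC)$, $\theta=\alpha_1+\alpha_2$, $\ll=\hh\oplus\gg^{\theta}\oplus\gg^{-\theta}$, and $0\neq X\in\gg^{-\theta}$; this $\ll$ is quadratic and $\pp$ is four-dimensional. Then $[X,\gg^{\alpha_1}]=\gg^{-\alpha_2}$ and $[X,\gg^{\alpha_2}]=\gg^{-\alpha_1}$. So $\nu_*(X)$ is a multiple of $\overline u_1\overline u_2$, and the multiple is nonzero because $\ad_X^{\pp}\neq0$ and $q\circ\lambda$ is injective. Meanwhile every coefficient $B(X,[\overline u_j,u_k])$ vanishes, since $[\overline u_j,u_k]$ lies in $\hh$ or is $0$. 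Hence the right-hand side of the formula is $0$.

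The paper's proof is only a pointer to Lemma~2.3.7 of \cite{Schmidt_Noja}, which is set in the parabolic case. There $\ll$ is a Levi subalgebra and $\uu,\ubar$ are the nilradicals of opposite parabolics, so $\ad_X^{\pp}$ is block-diagonal. You have to state and use this stability hypothesis explicitly; skewness cannot replace it.

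Once you do, the rest of your plan goes through. By the Clifford relation, $q(\overline u_k\wedge u_j)=\overline u_k u_j-\delta_{jk}$ for both parities. So the constant is $-\tfrac12\sum_j(-1)^{p(u_j)}B(X,[\overline u_j,u_j])$. For $H\in\hh$ we have $B(H,[\overline u_j,u_j])=-\alpha_j(H)$, which turns the constant into $\rho^{\uu}(H)$. The overall factor is $\tfrac12$, as you predicted.
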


\begin{proof}
 The proof is verbatim the proof of Lemma~2.3.7 in \cite{Schmidt_Noja}.
\end{proof}

To distinguish the generators of $\WW(\ll)$ from those of $\Weil$, we write $\gamma^{\WW(\ll)}$ instead of $\gamma^{\WW}$ when necessary. Consider the natural embedding $j:\mathcal{W}(\ll)\to\mathcal{W}(\gg)$ defined on generators by
\begin{equation}\label{eq::embedding_j}
j(1\otimes x)=1\otimes x,\qquad j(\gamma^{\WW(\ll)}(x))=\gamma^{\WW}(x),\qquad x\in\ll.
\end{equation}
\begin{proposition} \label{lemm::properties_map_j_fd}
 The following hold: \begin{enumerate}
 \item[a)] The map $j : \mathcal{W}(\ll) \to \mathcal{W}(\gg)$ is a Lie superalgebra homomorphism. 
 
 \item[b)] $j: \mathcal{W}(\ll) \to \mathcal{W}(\gg)$ intertwines with contractions and Lie derivatives, that is, 
 \[
 \iota_{x}\circ j=j \circ \iota_{x}, \qquad L_{x}\circ j=j \circ L_{x} \qquad \forall x \in \ll.
 \]
 \end{enumerate}
\end{proposition}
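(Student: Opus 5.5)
The plan is to check everything on generators. Both $\WW(\ll)$ and $\Weil$ are generated by the elements $1\otimes x$ and $\gamma^{\WW}(x)$. Contractions and Lie derivatives are (colour) derivations. So it suffices to verify the algebraic relations and the intertwining identities on generators.

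\textbf{Step 1: $j$ is well defined and a homomorphism.} First I will present $\WW(\ll)$ by generators and relations. By Lemma~\ref{lemm::commutation_relations_Weil_generators} applied to $\ll$, the elements $\gamma^{\WW(\ll)}(x)$ span a copy of $\ll$. They act on the Clifford generators $1\otimes y$ by $[\gamma^{\WW(\ll)}(x),1\otimes y]_{\WW}=1\otimes[x,y]$, and the Clifford generators satisfy $[1\otimes x,1\otimes y]_{\WW}=2B_{\ll}(x,y)$.

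Next, the change of generators $x\otimes 1\mapsto \gamma^{\WW(\ll)}(x)$ is a filtered isomorphism whose associated graded is the identity. By PBW, $\WW(\ll)$ is therefore the universal algebra on these generators subject to exactly these three families of relations. Concretely, $\WW(\ll)\cong \UE(\ll)\ltimes \Cl(\ll)$, where $\ll$ acts on $\Cl(\ll)$ by the Lie derivative $L$.

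The images $\gamma^{\WW}(x)$ and $1\otimes x$ in $\Weil$ satisfy the same relations. This follows from Lemma~\ref{lemm::commutation_relations_Weil_generators} for $\gg$, together with $[x,y]_\gg=[x,y]_\ll$ for $x,y\in\ll$ and $B|_\ll=B_\ll$. Hence $j$ extends uniquely to an algebra homomorphism preserving bidegree. Consequently it preserves the colour commutator, which gives a).

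Injectivity, which justifies the word "embedding", follows from a PBW comparison. The image is isomorphic to $\UE(\ll)\otimes\Cl(\ll)$, since $\Cl(\ll)\subset\Cl(\gg)$ and $\UE(\ll)\subset\UE(\gg)$ are injective.

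\textbf{Step 2: intertwining.} By Lemma~\ref{lemm::contraction_and_Lie_derivative_in_Weil}, $\iota_x=\tfrac12[1\otimes x,\cdot]_{\WW}$ and $L_x=[\gamma^{\WW}(x),\cdot]_{\WW}$ hold in both $\WW(\ll)$ and $\Weil$. For $x\in\ll$ we have $j(1\otimes x)=1\otimes x$ and $j(\gamma^{\WW(\ll)}(x))=\gamma^{\WW}(x)$. Since $j$ preserves brackets by a),
\[
j(\iota_x A)=\tfrac12[j(1\otimes x),j(A)]_{\WW}=\iota_x j(A),\qquad j(L_xA)=[\gamma^{\WW}(x),j(A)]_{\WW}=L_x j(A).
\]
This gives b). As a sanity check, on generators this reproduces \eqref{eq::contraction_Weil}: for instance, $\iota_x\gamma^{\WW}(y)=-\tfrac12(1\otimes[x,y])$ with $[x,y]\in\ll$.

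\textbf{Main obstacle.} The delicate point is Step 1, namely that the listed relations are a complete presentation of $\WW(\ll)$. In particular $\gamma^{\WW(\ll)}$ uses $\gamma'_\ll$, whereas $\gamma^{\WW}(x)$ in $\Weil$ involves $\gamma'(x)=\gamma'_\ll(x)+\gamma'_\pp(x)$, so $j$ is \emph{not} the naive tensor inclusion.

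I will handle this with the semidirect-product description. In $\WW(\ll)$, the elements $\gamma^{\WW(\ll)}(x)$ generate a copy of $\UE(\ll)$ that supercommutes appropriately with nothing but acts on $\Cl(\ll)$. Multiplication therefore gives a linear isomorphism $\UE(\ll)\otimes\Cl(\ll)\to\WW(\ll)$, and the same structure exists inside $\Weil$. The sign conventions of the bicharacter $\chi$ will need care, but they match because $j$ preserves bidegree.
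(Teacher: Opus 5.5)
Your proof is correct and follows essentially the paper's route. For a) you check that $j$ respects the relations of Lemma~\ref{lemm::commutation_relations_Weil_generators}, and for b) you use $L_x=[\gamma^{\WW}(x),\cdot]_{\WW}$ and $\iota_x=\tfrac12[1\otimes x,\cdot]_{\WW}$ together with a). You are more careful than the paper in two places: you justify via PBW that $j$ is well defined on all of $\WW(\ll)$, and you treat the contractions, which the paper omits.

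The only slip is in your sanity check. By Lemma~\ref{lemm::commutation_relations_Weil_generators} one gets $\tfrac12[1\otimes x,\gamma^{\WW}(y)]_{\WW}=+\tfrac12(1\otimes[x,y])$, so the inner formula does not reproduce the minus sign in \eqref{eq::contraction_Weil}. This sign inconsistency is already present in the paper and has no bearing on the intertwining argument.
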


\begin{proof} a) This is immediate from $[\gamma^{\WW}(x),\gamma^{\WW}(y)]_{\WW}=\gamma^{\WW}([x,y]_{\gg})$ for $x,y\in\gg$ (Lemma~\ref{lemm::commutation_relations_Weil_generators}).

 b) We prove the statement for Lie derivatives $L_{x}$ for $x \in \gg$. It is enough to prove the statement on the generators. One has 
 \[
 (L_{x}\circ j)(1\otimes y)=L_{x}(1\otimes y)=1\otimes L_{x}(y)=(j \circ L_{x})(1\otimes y).
 \]
 Moreover, using that $\gamma^{\WW}$ and $\nu_{*}$ are Lie superalgebra homomorphisms, and that $L_{x}=[\gamma^{\WW}(x),\cdot]_{\WW}$ by Lemma~\ref{lemm::contraction_and_Lie_derivative_in_Weil}, one obtains with Lemma~\ref{lemm::commutation_relations_Weil_generators} for all $x \in \ll$:
\[
(L_{x}\circ j)(\gamma^{\WW(\ll)}(y))=[\gamma^{\WW}(x), j(\gamma^{\WW(\ll)}(y))]_{\WW}=[\gamma^{\WW}(x),\gamma^{\WW}(y)]_{\WW}=\gamma^{\WW}([x,y]_{\gg})=(j \circ L_{x})(\gamma^{\WW(\ll)}(x)).
\]
 That $j$ intertwines with contractions is another direct calculation and will be omitted.
\end{proof}

Having fixed the notation, define the \emph{relative cubic Dirac operator} by
\begin{equation} \label{eq::definition_relative_Dirac_operator}
\Dirac_{\gg,\ll}\coloneqq \Dirac_{\gg}-j(\Dirac_{\ll})\in \Weil.
\end{equation}
In $\mathcal{W}(\gg)$, set
$
\mathcal{W}(\gg,\ll)\coloneqq (\mathfrak{U}(\gg)\otimes \Cl(\pp))^{\ll},
$
the $\ll$-basic subalgebra of the colour quantum Weil algebra, that is, the subalgebra consisting of all elements annihilated by $L_{x}$ and $\iota_{x}$ for all $x\in\ll$.

\begin{theorem}\label{thm::square_Dirac_fd} The following hold:
\begin{enumerate}
\item[a)] The operator $\Dirac_{\gg,\ll}$ lies in $\WW(\gg,\ll)$.
\item[b)] One has
\[
\Dirac_{\gg,\ll}^{2}
=\Omega_{\gg}-j(\Omega_{\ll})
+\tfrac{1}{24}\str_{\gg}(\Omega_{\gg})
-\tfrac{1}{24}\str_{\ll}(\Omega_{\ll}).
\]
\end{enumerate}
\end{theorem}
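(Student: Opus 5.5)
We follow Alekseev--Meinrenken. Part a) reduces to showing that $\Dirac_{\gg,\ll}$ is annihilated by $L_x$ and $\iota_x$ for all $x\in\ll$; part b) then follows by squaring and using that the two summands of $\Dirac_{\gg,\ll}$ commute.

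For a), take $x\in\ll$. By Lemma~\ref{lemm::properies_D_l} b), $L_x\Dirac_{\gg}=0$. By Proposition~\ref{lemm::properties_map_j_fd} b) applied to $\ll$, $L_x j(\Dirac_{\ll})=j(L_x\Dirac_{\ll})=0$. Hence $L_x\Dirac_{\gg,\ll}=0$. For contractions, we use the computation in the proof of Lemma~\ref{lemm::commutation_relations_quantum_Weil_algebra}, which gives $\iota_x\Dirac_{\gg}=\gamma^{\WW}(x)$. That computation only uses Lemma~\ref{lemm::properties_phi_g_fd} a) and the form of the quadratic term; for odd $x$ it goes through once the sign factors $(-1)^{p(e_a)p(x)}$ are tracked, since $B(x,e_a)=0$ unless $p(e_a)=p(x)$. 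Applying the same identity to $\ll$ gives $\iota_x\Dirac_{\ll}=\gamma^{\WW(\ll)}(x)$. Then
\[
\iota_x j(\Dirac_{\ll})=j(\iota_x\Dirac_{\ll})=j(\gamma^{\WW(\ll)}(x))=\gamma^{\WW}(x),
\]
by Proposition~\ref{lemm::properties_map_j_fd} b) and \eqref{eq::embedding_j}. Therefore $\iota_x\Dirac_{\gg,\ll}=0$. Elements of $\Weil$ killed by all $\iota_x$, $x\in\ll$, lie in $\UE(\gg)\otimes\Cl(\pp)$: in the generators $\gamma^{\WW}$ and $1\otimes y$, $\iota_x$ acts as $B(x,\cdot)$ on the Clifford generators and kills the $\gamma^{\WW}$'s only up to terms involving $1\otimes[x,y]$. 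A cleaner route is to note directly that $\Dirac_{\gg}-j(\Dirac_{\ll})$ can be rewritten so that no $1\otimes x$ with $x\in\ll$ survives. This places $\Dirac_{\gg,\ll}$ in $\WW(\gg,\ll)$.

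For b), the key point is $[j(\Dirac_{\ll}),\Dirac_{\gg,\ll}]_{\WW}=0$. To see this, note that $j(\Dirac_{\ll})$ is built from the generators $\gamma^{\WW}(x)$ and $1\otimes x$ with $x\in\ll$. For the first kind, Lemma~\ref{lemm::contraction_and_Lie_derivative_in_Weil} b) gives $[\gamma^{\WW}(x),\Dirac_{\gg,\ll}]_{\WW}=L_x\Dirac_{\gg,\ll}=0$. For the second, Lemma~\ref{lemm::contraction_and_Lie_derivative_in_Weil} a) gives $[1\otimes x,\Dirac_{\gg,\ll}]_{\WW}=2\iota_x\Dirac_{\gg,\ll}=0$. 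Since the colour commutator with a fixed element is a colour derivation, the commutator vanishes on every product of such generators, hence on all of $j(\WW(\ll))$.

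Both $\Dirac_{\gg,\ll}$ and $j(\Dirac_{\ll})$ are odd. Writing $\Dirac_{\gg}=\Dirac_{\gg,\ll}+j(\Dirac_{\ll})$ and using the vanishing cross commutator, we get
\[
\Dirac_{\gg}^2=\Dirac_{\gg,\ll}^2+j(\Dirac_{\ll})^2 .
\]
Since $j$ is an algebra homomorphism, $j(\Dirac_{\ll})^2=j(\Dirac_{\ll}^2)$. Now Theorem~\ref{thm::square_absolute_Dirac} for $\gg$ gives
\[
\Dirac_{\gg}^2=\Omega_{\gg}\otimes 1+\tfrac{1}{24}\str_{\gg}\bigl(\ad_{\gg}(\Omega_{\gg})\bigr),
\]
and the same theorem for $\ll$ gives the analogous formula for $\Dirac_{\ll}^2$. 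Subtracting yields the claimed formula, reading $\str_{\gg}(\Omega_{\gg})$ as $\str_{\gg}(\ad_{\gg}(\Omega_{\gg}))$, and similarly for $\ll$.

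The main obstacle is the sign bookkeeping for odd $x\in\ll$. This concerns the identity $\iota_x\Dirac_{\gg}=\gamma^{\WW}(x)$, which Lemma~\ref{lemm::commutation_relations_quantum_Weil_algebra} states only for even $x$. It also concerns the bicharacter signs in the derivation step of b). We would verify $\iota_x\Dirac_{\gg}=\gamma^{\WW}(x)$ directly for odd $x$ from \eqref{eq::contraction_Weil} and Lemma~\ref{lemm::properties_phi_g_fd} a). We also need that $j$ is compatible with the bigrading, which holds because $j$ maps generators to generators of the same bidegree.
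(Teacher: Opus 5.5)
Your proposal is correct and is essentially the paper's proof. For a), you combine $L_x$-invariance of both cubic Dirac operators, the identity $\iota_x\Dirac_{\gg}=\gamma^{\WW}(x)$ (which the paper checks by the same direct computation you describe, for arbitrary homogeneous $x$), and the fact that $j$ intertwines $\iota_x$ and $L_x$. For b), you use $[1\otimes x,\Dirac_{\gg,\ll}]_{\WW}=2\iota_x\Dirac_{\gg,\ll}=0$ and $[\gamma^{\WW}(x),\Dirac_{\gg,\ll}]_{\WW}=L_x\Dirac_{\gg,\ll}=0$. Your bookkeeping $\Dirac_{\gg}^{2}=\Dirac_{\gg,\ll}^{2}+j(\Dirac_{\ll})^{2}$ is in fact the accurate form of the paper's step; the paper writes the vanishing commutator as $[\Dirac_{\gg},j(\Dirac_{\ll})]_{\WW}=0$, but what actually vanishes is $[\Dirac_{\gg,\ll},j(\Dirac_{\ll})]_{\WW}$.

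Your vague aside about horizontal elements lying in $\UE(\gg)\otimes\Cl(\pp)$ is not needed, because the paper defines $\WW(\gg,\ll)$ as the elements annihilated by $L_x$ and $\iota_x$ for $x\in\ll$. If you want that identification anyway, work with the generators $y\otimes 1$ and $1\otimes y$: $\iota_x$ kills the former and acts only on the Clifford factor. Then Theorem~\ref{thm::quantization_map} identifies the common kernel with $\UE(\gg)\otimes\Cl(\pp)$.
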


\begin{proof}
 a) Note $\gamma'(x)=-\tfrac{1}{2}q(\lambda(\ad_{x}))$, so that $\gamma'(x)=\gamma'_{\ll}(x)+\gamma'_{\pp}(x)$. One first verifies that
$
\iota_{x}\Dirac_{\gg}=\gamma^{\WW}(x).
$
Indeed,
\[
\begin{aligned}
\iota_{x}\Dirac_{\gg}
&=\sum_{a}e^{a}\otimes(-1)^{p(x)p(e_{a})}B(x,e_{a})
+1\otimes\iota_{x}\phi_{\gg}\\
&=\sum_{a}(-1)^{p(e_{a})}B(x,e_{a})\,e^{a}\otimes1
+q(\iota_{x}\phi_{\gg})\\
&=x\otimes1-\tfrac{1}{2}\bigl(1\otimes q(\lambda(\ad_{x}))\bigr),
\end{aligned}
\]
where the identity $2\iota_{x}\phi_{\gg}=-\lambda(\ad_{x})$ from Lemma~\ref{lemm::properties_phi_g_fd} is used.

By Lemma~\ref{lemm::properties_map_j_fd}, the Lie superalgebra homomorphism
$j\colon\mathcal{W}(\ll)\to\mathcal{W}(\gg)$ intertwines contractions and Lie derivatives. Hence, by definition of $j$,
\[
\begin{aligned}
\iota_{x}\Dirac_{\gg,\ll}
&=\iota_{x}\Dirac_{\gg}-j(\iota_{x}\Dirac_{\ll})=\gamma^{\WW}(x)-j(\gamma^{\WW(\ll)}(x))=0.
\end{aligned}
\]
Moreover, $L_{x}\Dirac_{\gg,\ll}=0$ for all $x\in\ll$ follows directly from the $\ll$-invariance of the cubic Dirac operators; see Lemma~\ref{lemm::properies_D_l}.

 b) In a), we have proven that $\Dirac_{\gg,\ll}$ lies in $\mathcal{W}(\gg,\ll)$. In particular, it commutes with the image of $j$ since 
 \[
[j(1\otimes x),\Dirac_{\gg,\ll}]_{\WW}=2\iota_{x}\Dirac_{\gg,\ll}=0, \qquad [j(\gamma^{\WW(\ll)}(x)),\Dirac_{\gg,\ll}]_{\WW}=[\gamma^{\WW}(x), \Dirac_{\gg,\ll}]_{\WW}=L_{x} \Dirac_{\gg,\ll}=0
 \]
where we used $\iota_{x}=\tfrac{1}{2}[1\otimes x,\cdot]_{\WW}$ by the Clifford relations, and Lemma~\ref{lemm::contraction_and_Lie_derivative_in_Weil}.
Thus, $[\Dirac_{\gg}, j(\Dirac_{\ll})]_{\WW}=0$. As $\Dirac_{\gg,\ll}$ is an odd element in $\mathcal{W}(\gg,\ll)$, we conclude 
 \[
 \Dirac_{\gg,\ll}^{2}=\tfrac{1}{2}[\Dirac_{\gg,\ll}, \Dirac_{\gg,\ll}]_{\WW}=\tfrac{1}{2}[\Dirac_{\gg}, \Dirac_{\gg}]_{\WW}-\tfrac{1}{2}j([\Dirac_{\ll},\Dirac_{\ll}]_{\WW})=\Dirac_{\gg}^{2}-j(\Dirac_{\ll}^{2}). 
 \]
 The statement now follows with Theorem~\ref{thm::square_absolute_Dirac}.
\end{proof}

\subsubsection{Weil Differential}\label{subsubsec::Weil_differential} We grade $\mathfrak{U}(\gg)\otimes \Cl(\pp)$ by the $\ZZ_{2}$-grading of $\Cl(\pp)$,
so that $(\mathfrak{U}(\gg)\otimes \Cl(\pp))_{\bar s}\coloneqq \mathfrak{U}(\gg)\otimes \Cl(\pp)_{\bar s}$. Via the diagonal embedding
$\ll\to \mathfrak{U}(\gg)\otimes \Cl(\pp)$, the adjoint action makes $\mathfrak{U}(\gg)\otimes \Cl(\pp)$ into an
$\ll$-supermodule. The $\ll$-invariants then coincide with $\WW(\gg,\ll)$. The space $\WW(\gg,\ll)$ is endowed with the obvious $\ZZ_{2}\times \ZZ_{2}$-grading from $\Weil$, making it into a colour Lie algebra.

Since $\Dirac_{\gg,\ll}$ has a simple square, it induces a differential on $\WW(\gg,\ll)$. For homogeneous
$A\in \mathfrak{U}(\gg)\otimes \Cl(\pp)$ set
\begin{equation}
 (d')^{\WW}(A)\coloneqq [\Dirac_{\gg,\ll},A]
=\Dirac_{\gg,\ll}A-(-1)^{p(A)}A\Dirac_{\gg,\ll}.
\end{equation}
As $\Dirac_{\gg,\ll}$ is $\ll$-invariant, $(d')^{\WW}$ preserves $\WW(\gg,\ll)$ and restricts to
$ d^{\WW}:\WW(\gg,\ll)\to \WW(\gg,\ll).
$ This operator is nilpotent, $(d^{\WW})^{2}=0$ (see \cite{Chen_Dirac_quadratic}), and compatible with the Lie derivative and contraction of $\Weil$ defined in \eqref{eq::contraction_Weil} and \eqref{eq::Lie_derivative_Weil}.

\begin{lemma} \label{lemm::properties_Weil_differential}
 The differential $d^{\WW}$ is compatible with $L_{x}$ and $\iota_{x}$ for any $x \in \ll$, that is, on $\WW(\gg,\ll)$ 
 \[
 d^{\mathcal{W}}L_{x}-(-1)^{p(x)}L_{x}d^{\mathcal{W}}=0, \qquad d^{\WW}\iota_{x}+(-1)^{p(x)}\iota_{x}d^{\WW}=L_{x}.
 \]
 Moreover, $d^{\mathcal{W}}$ is unique with this property.
\end{lemma}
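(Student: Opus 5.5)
Both identities are commutator identities in the colour Lie algebra $\Weil$, so the plan is to derive them from the colour Jacobi identity. Write $d^{\WW}=[\Dirac_{\gg,\ll},\cdot]_{\WW}$, and use Lemma~\ref{lemm::contraction_and_Lie_derivative_in_Weil} to write $L_{x}=[\gamma^{\WW}(x),\cdot]_{\WW}$ and $\iota_{x}=\tfrac12[1\otimes x,\cdot]_{\WW}$ for $x\in\ll$. For the colour bracket, the graded Jacobi identity says that $\mathrm{ad}_{A}\mathrm{ad}_{B}-\chi(\alpha,\beta)\mathrm{ad}_{B}\mathrm{ad}_{A}=\mathrm{ad}_{[A,B]_{\WW}}$ for homogeneous $A,B$ of bidegrees $\alpha,\beta$. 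With $A=\Dirac_{\gg,\ll}$ of bidegree $(\bar1,\bar0)$, the sign is $\chi((\bar1,\bar0),\beta)=(-1)^{\bar a}$, where $\beta=(\bar a,\bar b)$.

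For the Lie derivative, take $B=\gamma^{\WW}(x)$, which has bidegree $(\bar0,p(x))$, so $\chi=1$. The identity reduces to $d^{\WW}L_{x}-L_{x}d^{\WW}=\mathrm{ad}_{[\Dirac_{\gg,\ll},\gamma^{\WW}(x)]}$. This bracket equals $\pm L_{x}\Dirac_{\gg,\ll}=0$ by Theorem~\ref{thm::square_Dirac_fd}~a). On $\WW(\gg,\ll)$ we also have $L_{x}=0$ for $x\in\ll$, so the stated relation holds trivially there. Any sign convention $(-1)^{p(x)}$ that comes from identifying total parity with $\Cl(\pp)$-parity is absorbed in this step.

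For the contraction, take $B=1\otimes x$, which has bidegree $(\bar1,p(x))$, so $\chi=-1$. This gives $d^{\WW}\iota_{x}+\iota_{x}d^{\WW}=\tfrac12\mathrm{ad}_{[\Dirac_{\gg,\ll},1\otimes x]}$. Here the argument must be done carefully. The relation $[1\otimes x,\Dirac_{\gg}]_{\WW}=2\gamma^{\WW}(x)$ from Lemma~\ref{lemm::commutation_relations_quantum_Weil_algebra} corresponds, in the relative setting, to the computation $\iota_{x}\Dirac_{\gg,\ll}=0$ from the proof of Theorem~\ref{thm::square_Dirac_fd}. So the naive commutator in $\Weil$ vanishes, and the Cartan formula must instead be read on the complex on which $\iota_{x}$ and $L_{x}$ act nontrivially. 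The natural choice is the ambient $\mathfrak U(\gg)\otimes\Cl(\pp)$, with $d'^{\WW}$ restricted afterwards. The plan is to replace $\Dirac_{\gg,\ll}$ by $\Dirac_{\gg}$ and use the equivariance of $j$ (Proposition~\ref{lemm::properties_map_j_fd}). Then $[\Dirac_{\gg},1\otimes x]_{\WW}=2\gamma^{\WW}(x)$, which yields $d\iota_{x}+\iota_{x}d=L_{x}$ on $\Weil$, and restricting to basic elements gives the stated formula, with both sides equal to $0$ there. For odd $x$ the sign in the last relation of Lemma~\ref{lemm::commutation_relations_quantum_Weil_algebra} has to be rechecked; the factor $(-1)^{p(x)}$ in the statement is what the bicharacter produces.

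For uniqueness, suppose $d$ and $d'$ both satisfy the relations, and set $\delta=d-d'$. Then $\delta$ commutes in the graded sense with all $\iota_{x}$ and $L_{x}$. Following Alekseev--Meinrenken, such a derivation is determined by its values on generators. Since $\delta$ anticommutes with every $\iota_{x}$, its values on the generators $1\otimes y$ and $\gamma^{\WW}(y)$ are annihilated by all contractions. This forces them to lie in $\mathfrak U(\gg)\otimes1$, with a component of the wrong bidegree, hence $\delta=0$ by degree reasons. We expect this step to be the main obstacle. Making it rigorous requires that the differential be a derivation of prescribed bidegree $(\bar1,\bar0)$, and that the horizontal part of the relevant degree vanish.
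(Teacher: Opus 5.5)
Your proof of the two displayed identities is correct and is essentially the paper's argument. The paper also writes $d^{\WW}=\ad_{\Dirac_{\gg,\ll}}$ and $L_x=\ad_{\gamma^{\WW}(x)}$, applies the colour Jacobi identity with $L_x\Dirac_{\gg,\ll}=0$, and dismisses the contraction identity with ``similarly''. Your remark is accurate and more careful than the paper: for the relative operator, Jacobi actually gives $d^{\WW}\iota_x+\iota_xd^{\WW}=\tfrac12\ad_{[\Dirac_{\gg,\ll},1\otimes x]_{\WW}}=0$. This agrees with $L_x$ only because $L_x$ and $\iota_x$ ($x\in\ll$) vanish on $\WW(\gg,\ll)$ and $d^{\WW}$ preserves it.

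One aside is wrong. $\UE(\gg)\otimes\Cl(\pp)$ is exactly the $\ll$-horizontal subalgebra, so $\iota_x$ vanishes there too, and the Cartan formula for $\ad_{\Dirac_{\gg,\ll}}$ would read $0=L_x$. Only $\ad_{\Dirac_{\gg}}$ on $\Weil$ satisfies it, which is what you then use.

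The genuine gap is the uniqueness clause, which you flag yourself. As stated (on $\WW(\gg,\ll)$, for $x\in\ll$) it cannot be proved, because it is false. Since $L_x$ and $\iota_x$ vanish on $\WW(\gg,\ll)$, every endomorphism of $\WW(\gg,\ll)$, e.g.\ $0$ or $2d^{\WW}$, satisfies both relations. Your $\delta$-argument silently passes to $\Weil$ with contractions by all $x\in\gg$. Only then is the common kernel of the $\iota_x$ equal to $\UE(\gg)\otimes1$; for $x\in\ll$ it is $\UE(\gg)\otimes\Cl(\pp)$. That is a different statement, and the paper offers no proof of uniqueness at all.

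Even in that absolute setting your degree step fails. $\delta(1\otimes y)$ has bidegree $(\bar1,p(y))+(\bar1,\bar0)=(\bar0,p(y))$, which is exactly where $\UE(\gg)_{p(y)}\otimes1$ lives, so horizontality gives no contradiction. Also $\iota_x\delta(\gamma^{\WW}(y))=\pm\tfrac12\delta(1\otimes[x,y])$ need not vanish, so those values are not horizontal either.

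What does work in the absolute case is to use the horizontal generators $y\otimes1$, of bidegree $(\bar0,p(y))$:
\begin{enumerate}
\item[1)] $\delta(y\otimes1)$ is horizontal of bidegree $(\bar1,p(y))$, hence $0$.
\item[2)] Applying $\delta$ to $[y'\otimes1,1\otimes y]_{\WW}=0$ shows that the horizontal element $\delta(1\otimes y)\in\UE(\gg)\otimes1$ supercommutes with all $y'\otimes1$, i.e.\ lies in $\mathfrak{Z}(\gg)\otimes1$.
\item[3)] Compatibility with $L_x$ gives $\delta(1\otimes[x,y])=\pm L_x\delta(1\otimes y)=0$, so $\delta=0$ because $[\gg,\gg]=\gg$.
\end{enumerate}
This proves uniqueness of $\ad_{\Dirac_{\gg}}$ among bidegree-$(\bar1,\bar0)$ derivations of $\Weil$ satisfying the Cartan relations for all $x\in\gg$. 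The relative $d^{\WW}$ can then only be characterized as its restriction to basic elements, on which $\ad_{j(\Dirac_{\ll})}$ vanishes.
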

\begin{proof}
We note that $L_{x}=[\gamma^{\WW}(x),\cdot]_{\WW}$. If we write $d^{\WW}=\ad_{\Dirac_{\gg,\ll}}^{\WW}$ and $L_{x}=\ad_{\gamma^{\WW}(x)}^{\WW}$, then 
\[
d^{\WW}L_{x}-(-1)^{p(x)}L_{x}d^{\WW}=\ad^{\WW}_{\Dirac_{\gg,\ll}}\ad^{\WW}_{\gamma^{\WW}(x)}-(-1)^{p(x)}\ad^{\WW}_{\gamma^{\WW}(x)}\ad^{\WW}_{\Dirac_{\gg,\ll}}=\ad^{\WW}_{[\gamma^{\WW}(x),\Dirac_{\gg,\ll}]_{\WW}}=\ad^{\WW}_{L_{x}\Dirac_{\gg,\ll}}=0
\]
where we used $\ll$-invariance of $\Dirac_{\gg,\ll}$ in the last equality. Similarly, one proves $d^{\WW}\iota_{x}+(-1)^{p(x)}\iota_{x}d^{\WW}=L_{x}.$
\end{proof}

Since $(d^{\WW})^{2}=0$, we may form its cohomology
$\ker d^{\WW}/\Im d^{\WW}$. Recall that $\mathfrak{Z}(\ll)$ denotes the center of the universal enveloping subalgebra of $\ll$. Let $\mathfrak Z(\ll_{\Delta})$ denote the image of $\mathfrak Z(\ll)$ in $\WW(\gg,\ll)$ under the
diagonal embedding. Then:

\begin{theorem}[{\cite[Theorem~6.2]{Chen_Dirac_quadratic}}]\label{thm::ker_d_center}
One has $\ker d^{\WW}=\mathfrak Z(\ll_{\Delta})\oplus \Im d^{\WW}$. Equivalently, the cohomology of $d^{\WW}$ is
isomorphic to $\mathfrak Z(\ll_{\Delta}) \cong \mathfrak{Z}(\ll)$.
\end{theorem}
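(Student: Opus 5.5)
We prove Theorem~\ref{thm::ker_d_center} by the standard filtration argument of Kostant and Alekseev--Meinrenken, adapted to the colour setting. The argument has two halves: first show that the diagonal centre consists of cocycles that are not coboundaries, and then show that every cocycle is cohomologous to an element of $\mathfrak Z(\ll_{\Delta})$.

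\textbf{The diagonal centre lies in $\ker d^{\WW}$.} Let $z\in\mathfrak Z(\ll)$ and write $z_\Delta$ for its diagonal image, that is, $j$ applied to $z$ realised in $\UE(\ll)\subset\WW(\ll)$ through the generators $\gamma^{\WW(\ll)}$. Since $z_\Delta$ lies in the image of $j$, it commutes with $\Dirac_{\gg,\ll}$; this is the same observation used in Theorem~\ref{thm::square_Dirac_fd}~b). Hence $d^{\WW}z_\Delta=0$.

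\textbf{Setting up the filtration.} Filter $\UE(\gg)\otimes\Cl(\pp)$ by the PBW filtration on $\UE(\gg)$ together with the Clifford filtration on $\Cl(\pp)$. This filtration is $\ll$-stable, so it restricts to $\WW(\gg,\ll)$. The associated graded algebra is
\[
\bigl(\Sym(\gg)\otimes\bigwedge(\pp)\bigr)^{\ll}.
\]
In the graded picture, $d^{\WW}$ becomes a differential $\bar d$ whose leading part comes from the quadratic term $\sum_a e^a\otimes e_a$ restricted to $\pp$. That leading part is the Koszul-type differential
\[
\bar d(x\otimes 1)=1\otimes x_{\pp}\ \text{(up to a constant)},\qquad \bar d(1\otimes y)=y\otimes 1,\qquad y\in\pp .
\]
The cubic term and the $\ll$-part lower the filtration degree, so they do not contribute to $\bar d$.

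\textbf{Cohomology of the graded complex.} Write $\Sym(\gg)=\Sym(\ll)\otimes\Sym(\pp)$. The complex $\Sym(\pp)\otimes\bigwedge(\pp)$ with this differential is the super Koszul complex. By the super Poincaré lemma it is acyclic in positive degree, with cohomology $\CC$ in degree zero. This holds in the super setting because the Koszul complex of a super vector space is contractible via the Euler-type homotopy $\sum_a \iota_{e^a}\otimes e_a$, which gives
\[
[\bar d,h]=\text{total degree}.
\]
Hence
\[
\H\bigl(\Sym(\gg)\otimes\bigwedge(\pp)\bigr)\cong\Sym(\ll).
\]
Because the homotopy $h$ is $\ll$-equivariant, the complex splits $\ll$-equivariantly, so taking $\ll$-invariants commutes with cohomology. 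Therefore the graded cohomology is $\Sym(\ll)^{\ll}$.

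\textbf{Main obstacle: lifting from the graded level.} This is the step I expect to be hardest. By the super Harish--Chandra/Duflo isomorphism, $\operatorname{gr}\mathfrak Z(\ll)=\Sym(\ll)^{\ll}$. So the symbols of the diagonal centre already fill out all of the graded cohomology. I would run the standard spectral-sequence argument with respect to the filtration:
\begin{itemize}
\item Let $a\in\ker d^{\WW}$. Its symbol is $\bar d$-closed, so it equals the symbol of some $z_\Delta$ plus a $\bar d$-exact term.
\item Subtract $z_\Delta+d^{\WW}b$ for a suitable $b$; this strictly lowers the filtration degree of the remaining cocycle.
\item Induct on filtration degree. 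The induction terminates because the filtration is exhaustive and bounded below.
\end{itemize}
The same comparison gives $\mathfrak Z(\ll_\Delta)\cap\Im d^{\WW}=0$: a nonzero $z_\Delta$ has a symbol that is a nonzero class in graded cohomology, so it cannot be exact. The delicate points are two. First, the sign bookkeeping with the bicharacter $\chi$ in the Euler homotopy. Second, semisimplicity of the $\ll$-action used when passing to invariants: $\ll$ is not reductive in general, so I would rely on the explicit equivariant homotopy rather than on complete reducibility. Together these yield
\[
\ker d^{\WW}=\mathfrak Z(\ll_\Delta)\oplus\Im d^{\WW}.
\]
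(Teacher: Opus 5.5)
The paper does not prove this statement (it is quoted from Chen--Kang), so I judge your proposal on its own. The overall plan is the right kind of argument: filter, use super Koszul acyclicity via an $\ll$-equivariant homotopy, then lift symbols of central elements. But the central step is wrong for the filtration you chose.

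With the total PBW-plus-Clifford degree, take $u\in\UE_{p}(\gg)$ and $c\in\Cl(\pp)$ of Clifford degree $q$. Split $[\Dirac_{\gg,\ll},u\otimes c]_{\WW}$ into three pieces, with $e^a$ running over a basis of $\pp$:
\begin{itemize}
\item the ad-part $\pm\sum_a[e^a,u]\otimes e_a c$, of total degree $p+q+1$;
\item the cubic part $\pm u\otimes[c_3,c]$, also of total degree $p+q+1$, where $1\otimes c_3$ with $c_3\in\Cl(\pp)$ is the cubic term of $\Dirac_{\gg,\ll}$;
\item the Koszul part $\pm\sum_a e^a u\otimes[e_a,c]$, of total degree only $p+q$.
\end{itemize}
So the cubic term does not drop out. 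The leading symbol of $d^{\WW}$ is the ad-action of $\pp$ on $\Sym(\gg)$ times exterior multiplication, plus the Chevalley--Eilenberg-type operator on $\bigwedge\pp$ coming from $c_3$. The Koszul part is subleading.

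The resulting $E_1$-page is not $\Sym(\ll)^{\ll}$. Already for $\ll=0$ and $\gg$ a simple Lie algebra, the symmetric-degree-$0$ summand of the graded complex is $\bigwedge\gg$ with a multiple of the Chevalley--Eilenberg differential. Hence the Cartan class $[1\otimes\phi]\in\mathrm{H}^{3}(\gg)$ survives to $E_1$, whereas the true cohomology is $\CC$.

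Your displayed formulas also do not define a differential. From $\bar d(x\otimes1)=1\otimes x_{\pp}$ and $\bar d(1\otimes y)=y\otimes1$ you get $\bar d^{2}(1\otimes y)$ equal to a nonzero multiple of $1\otimes y$. In fact $d^{\WW}(x\otimes1)=\pm\sum_a[e^a,x]\otimes e_a$ has no component in $1\otimes\Cl(\pp)$ at all.

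The repair is to filter by $\UE(\gg)$-degree alone, $F_p\coloneqq\UE_p(\gg)\otimes\Cl(\pp)$. Then $d^{\WW}F_p\subset F_{p+1}$, and the ad-part and cubic part preserve $\UE$-degree, so they drop out. After the $\ll$-equivariant identification $q\colon\bigwedge\pp\to\Cl(\pp)$, using $[e_a,q(\omega)]=2q(\iota_{e_a}\omega)$, the symbol of $d^{\WW}$ is the Koszul operator $s\otimes\omega\mapsto\pm2\sum_a e^a s\otimes\iota_{e_a}\omega$. It kills $\Sym(\gg)\otimes1$ and sends $1\otimes y$ to $\pm2y\otimes1$, and your homotopy argument then applies.

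Three further points need fixing.
\begin{itemize}
\item Your $z_\Delta$ is the wrong element. It must be the image of $z$ under $x\mapsto x\otimes1+1\otimes\nu_{\ast}(x)$, not $j$ of $z$ written in the generators $\gamma^{\WW(\ll)}$. The element $\gamma^{\WW}(x)$ differs from this diagonal element by $-\tfrac12\bigl(1\otimes q(\lambda(\ad^{\ll}_x))\bigr)\in 1\otimes\Cl(\ll)$. So $j(z)$ is in general not killed by the contractions $\iota_y$, $y\in\ll$, and does not lie in $\WW(\gg,\ll)$. Your commutation argument still works for the correct element, because this $\Cl(\ll)$-term commutes with $\UE(\gg)\otimes\Cl(\pp)\ni\Dirac_{\gg,\ll}$. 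The symbol of the correct $z_\Delta$ is $\sigma(z)\otimes1$, with $\sigma(z)$ the principal symbol.
\item The lifting step needs every $\ll$-invariant symbol, such as the $\bar b$ in $\bar a=\sigma(z)\otimes1+\bar d\bar b$, to be the symbol of an element of $\WW(\gg,\ll)$. Since $\ll$ need not act semisimply, your homotopy does not give this. The $\ll$-equivariant symmetrization $\Sym(\gg)\to\UE(\gg)$ does: it splits the filtration equivariantly. It also gives $\mathrm{gr}\,\mathfrak Z(\ll)=\Sym(\ll)^{\ll}$ without any Duflo-type input.
\item Showing $\mathfrak Z(\ll_\Delta)\cap\Im d^{\WW}=0$ needs the usual descending induction on the filtration degree of a primitive $b$ with $d^{\WW}b=z_\Delta$. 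The one-line remark about symbols is not enough.
\end{itemize}
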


\subsection{Oscillator Supermodule} \label{subsec::oscillator_supermodule}

At this stage the Dirac operator is only a ''\!\emph{Dirac element}`` $\Dirac_{\gg,\ll}$ in the Weil algebra
$\Weil$. To extract representation-theoretic information, we view it as an odd endomorphism of a suitable
representation space. For this, one must choose a $\Cl(\mathfrak p)$-supermodule on which the Clifford
generators act. The \emph{oscillator supermodule} provides the canonical choice (after fixing a
polarization $\mathfrak p=\uu\oplus \ubar$), realizing the Clifford action by creation and annihilation
operators. We briefly introduce it and refer to \cite{Schmidt_Noja} for all details.

Let $\gg$ be a basic classical Lie superalgebra. We adapt the notation from Section~\ref{subsec::relative_Dirac} and fix a quadratic Lie subsuperalgebra
$(\ll, B_{\ll}\coloneqq B\vert_{\ll}) \subset (\gg,B)$ such that $\gg=\ll\oplus \pp$, $\hh\subset \ll$ and $\pp$ is even-dimensional. Since we work over $\CC$ and $B_{\pp}$ is non-degenerate,
there exist subsuperspaces $\uu,\ubar\subset \pp$ with $\ubar\subset \nn^{-}$ and $\uu\subset \nn^{+}$ such that
\begin{equation}
\pp=\uu\oplus \ubar,\qquad \uu=\ubar^{\perp},\qquad \ubar=\uu^{\perp},
\end{equation}
with respect to $B_{\pp}$, \emph{i.e.}, $\uu$ and $\ubar$ are complementary Lagrangian subsuperspaces.
Moreover, $\pp$ decomposes into its even and odd parts as $\pp=\pp_{\bar 0}\oplus \pp_{\bar 1}$, and this
splitting is compatible with the polarization:
\begin{equation}
\pp_{\bar 0}=\uu_{\bar 0}\oplus \ubar_{\bar 0},\qquad
\pp_{\bar 1}=\uu_{\bar 1}\oplus \ubar_{\bar 1}.
\end{equation}
Note that these decompositions are not orthogonal direct sums with respect to $B$. Recall the definitions of $\Delta_{\ll}^{+}$ and $\rho^{\uu}$ in Section~\ref{subsec::relative_Dirac}.

We consider the Clifford superalgebra
$
\Cl(\pp)=\Cl(\pp_{\bar 0})\otimes \Cl(\pp_{\bar 1}).
$
To construct the oscillator supermodule, we treat the Clifford algebra $\Cl(\pp_{\bar 0})$ and the Weyl
algebra $\Cl(\pp_{\bar 1})$ separately.

First, consider the Clifford algebra $\Cl(\pp_{\bar 0})$. The subspaces $\uu_{\bar 0}$ and $\ubar_{\bar 0}$
are isotropic and complementary in $\pp_{\bar 0}$ with respect to $B_{\pp}$. Fix a basis
$u_{1},\dots,u_{p_0}$ of $\uu_{\bar 0}$ and $\overline{u}_{1},\dots,\overline{u}_{p_0}$ of $\ubar_{\bar 0}$ such that
\begin{equation}
B(\overline{u}_{j},u_{i})=B(u_{i},\overline{u}_{j})=\delta_{ij}\qquad (1\le i,j\le p_0).
\end{equation}
Via $B_{\pp}$ we identify $\ubar_{\bar 0}\cong \uu_{\bar 0}^{*}$, so that the dual basis element
$u_i^{*}$ corresponds to $\overline{u}_i$.

We consider the exterior (spinor) modules
\begin{equation} \label{eq::def_spinor_module}
\spin\coloneqq \bigwedge \uu_{\bar 0}, \qquad \spinbar\coloneqq \bigwedge \ubar_{\bar 0}.
\end{equation}
We focus on $\spinbar$; the discussion for $\spin$ is analogous.

On $\bigwedge \ubar_0$, we have a natural action of $\pp_{\bar{0}}$, where $\overline{u} \in \ubar_{\bar{0}}$ acts as the left exterior multiplication $\epsilon(\overline{u})$, and $u \in \uu_{\bar{0}}$ acts as the contraction $\iota(u)$ defined in Equation \eqref{eq::contraction}. By the universal property of the Clifford superalgebra $\Cl(\pp_{\bar{0}})$, this extends to an action of $\Cl(\pp_{\bar{0}})$ on $\spinbar$, which realizes $\spinbar$ as a $\Cl(\pp_{\bar{0}})$-module, called the \emph{spin module}. The following lemma is standard. 

\begin{lemma} \label{lemm::spin_simple}
 The spin module $\spinbar$ is the unique simple $\Cl(\pp_{\bar{0}})$-module, up to isomorphism. Additionally, $\spinbar$ contains a highest weight vector with respect to $\Delta_{\ll}^{+}$, whose weight is $\rho^{\uu_{\bar{0}}}$.
\end{lemma}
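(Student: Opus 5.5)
The plan is to treat the two claims separately, using only the structure of $\Cl(\pp_{\bar 0})$ as a Clifford algebra of an even-dimensional space with a polarization. Note that $\pp_{\bar 0}$ is an ordinary (purely even) quadratic space of dimension $2p_0$, so $\Cl(\pp_{\bar 0})$ is the classical Clifford algebra. The first step is to show that the action map $\Cl(\pp_{\bar 0})\to\End(\spinbar)$ is an isomorphism. Both sides have dimension $2^{2p_0}$, since $\dim\spinbar=2^{p_0}$ and $\dim\Cl(\pp_{\bar 0})=\dim\bigwedge\pp_{\bar 0}=2^{2p_0}$ by Theorem~\ref{thm::quantization_map}. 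So it suffices to show that the map is surjective, or equivalently that $\spinbar$ is simple and $\End_{\Cl(\pp_{\bar 0})}(\spinbar)=\CC$.

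Simplicity is proved via the vacuum $1\in\bigwedge^0\ubar_{\bar 0}$. Let $N\subset\spinbar$ be a nonzero submodule and take $0\neq v\in N$. Applying contractions $\iota(u_i)$, which lower the exterior degree, we can reach a nonzero multiple of $1$: write $v=\sum_I c_I\overline u_I$ and choose $I$ of maximal length with $c_I\ne0$. Then $\iota(u_{i_k})\cdots\iota(u_{i_1})v=\pm c_I\cdot 1$, using $B(u_i,\overline u_j)=\delta_{ij}$. Thus $1\in N$. Applying the exterior multiplications $\epsilon(\overline u_j)$ to $1$ then generates all of $\bigwedge\ubar_{\bar 0}$, so $N=\spinbar$. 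By Schur, together with Burnside or Wedderburn, the image of $\Cl(\pp_{\bar 0})$ is all of $\End(\spinbar)$. By the dimension count above, $\Cl(\pp_{\bar 0})\cong\End(\spinbar)$ is a simple matrix algebra, and a matrix algebra has a unique simple module up to isomorphism. (If one insists on the super-setting here, $\pp_{\bar 0}$ is purely even, so no parity subtleties arise.)

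For the weight statement, recall that $\ll$ acts on $\spinbar$ through $\nu_\ast$ of Lemma~\ref{lemm::embedding}, restricted to the $\Cl(\pp_{\bar 0})$-factor. The candidate highest weight vector is the vacuum $1$, or possibly the top form $\overline u_1\wedge\cdots\wedge\overline u_{p_0}$, depending on sign conventions. I would compute $\nu_\ast(H)\cdot 1$ for $H\in\hh$ using the formula
\[
\nu_\ast(H)=\tfrac12\sum_{j,k}B(H,[\overline u_j,u_k])\,\overline u_k u_j+\rho^{\uu}(H),
\]
restricted to the even indices. Since $u_j$ acts by contraction, $\overline u_k u_j\cdot1=0$, so $1$ has weight $\rho^{\uu_{\bar 0}}(H)$. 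Next, for a root vector $X\in\ll$ with positive root in $\Delta^+_\ll$, the quadratic part of $\nu_\ast(X)$ annihilates $1$, because each term ends in a contraction $u_j$. Hence $1$ is a highest weight vector of weight $\rho^{\uu_{\bar 0}}$.

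The main obstacle is the bookkeeping of which end of the exterior algebra is highest: the normal ordering $\overline u_k u_j$ in Lemma~\ref{lemm::embedding}, combined with the constant shift $\rho^{\uu}$, must match the vacuum. I would check this by verifying that the terms with $\overline u_k u_j$ kill the vacuum, and that the constant term is exactly the $\uu_{\bar 0}$-part of $\rho^{\uu}$.
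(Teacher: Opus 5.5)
Your argument is correct and is the standard one. The paper gives no proof of this lemma (it only calls it standard), so there is no competing route to compare. Simplicity via the vacuum, Burnside's theorem, and the count $\dim\Cl(\pp_{\bar 0})=2^{2p_0}=\dim\End(\spinbar)$ give $\Cl(\pp_{\bar 0})\cong\End(\spinbar)$, and uniqueness of the simple module follows. Evaluating the normal-ordered formula of Lemma~\ref{lemm::embedding} on $1$ then gives the weight $\rho^{\uu_{\bar 0}}$.

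One point deserves tightening. In the highest-weight step you never use that the root of $X$ is positive. The phrase ``each term ends in a contraction'' would equally show that negative root vectors of $\ll$ kill $1$, i.e.\ that $\CC\cdot 1$ is $\ll$-stable. That holds only when $\uu$ and $\ubar$ are $\ad(\ll)$-stable, which is the parabolic (Levi) situation from which the formula of Lemma~\ref{lemm::embedding} is imported.

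For a general quadratic $\ll\supset\hh$, $\nu_{\ast}(X)$ also contains terms $u_ku_j$ and $\overline{u}_k\overline{u}_j$. These come from the blocks $\ubar\to\uu$ and $\uu\to\ubar$ of $\ad_X|_{\pp}$, respectively. The terms $u_ku_j$ still kill $1$. The terms $\overline{u}_k\overline{u}_j$ are the only danger, and they vanish precisely because of positivity: for $X\in\gg^{\alpha}\cap\ll$ with $\alpha\in\Delta^{+}_{\ll}$ one has $[X,\uu]\subset\pp\cap\nn^{+}=\uu$, so the $\uu\to\ubar$ block is zero. Adding this observation makes your proof cover the lemma in the generality in which the paper states it.
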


We endow $\spinbar$ with the $\ZZ_{2}$-grading induced from the tensor algebras $T(\ubar_{\bar{0}})$, so that $\spinbar=\Sbar^{\gg,\ll}_{\bar{0}}\oplus \Sbar^{\gg,\ll}_{\bar{1}}$. 
Moreover, we equip the spin module $\spinbar$ with a non-degenerate Hermitian form $\bracket_{\spinbar}$ for which
$u_i$ and $\overline{u}_i$ are mutually adjoint. To this end, fix a $\theta$-real form $\gg^{\RR}$ such that
\[
B_\theta(x,y)\coloneqq -B(x,\theta(y))
\]
is positive definite on $\gg^{\RR}$, and extend $B_\theta$ sesquilinearly to $\gg$, hence to $\pp$.
We may choose the dual bases $u_1,\dots,u_{p_0}\in \uu_{\bar 0}$ and $\overline{u}_1,\dots,\overline{u}_{p_0}\in \ubar_{\bar 0}$
orthonormal for $B_\theta$, so that $\theta(u_i)=-\overline{u}_i$ and $\theta(\overline{u}_i)=-u_i$.

The induced Hermitian form on tensor powers $T^n(\ubar_{\bar 0})$ descends to a Hermitian form on
$\spinbar=\bigwedge \ubar_{\bar 0}$, denoted $\bracket_{\spinbar}$. In particular:

\begin{lemma}[{\cite[Lemma 3.2.3]{Schmidt_Noja}}]\label{lemm::adjoint_spin} The following holds:
\begin{enumerate}
\item[a)]  $\bracket_{\spinbar}$ is a super positive definite super Hermitian form, that is,
 \[
\bra v,w \ket_{\spinbar}=(-1)^{p(v)p(w)}\overline{\bra w,v\ket}_{\spinbar}, \quad v,w \in \spinbar, 
 \]
 $\bra v,w \ket_{\spinbar}=0$ whenever $p(v) \neq p(w)$, and $\bra\cdot,\cdot\ket_{\spinbar}$ is positive definite on $\overline{S}^{\gg,\ll}_{\bar{0}}$ and $-i$-times positive definite on $\overline{S}^{\gg,\ll}_{\bar{1}}$.
 \item[b)] The adjoint of $u_{i}$ with respect to $\langle \cdot,\cdot\rangle_{\spinbar}$ is $\theta(u_{i})=-\overline{u}_{i}$, and the adjoint of $\overline{u}_{i}$ is $\theta(\overline{u}_{i})=-u_{i}$.
\end{enumerate}
\end{lemma}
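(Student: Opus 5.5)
The plan is to work entirely in the monomial basis adapted to the chosen $B_\theta$-orthonormal dual bases, and to read off both a) and b) from the explicit Gram matrix of $\bracket_{\spinbar}$. For $I=\{i_1<\dots<i_k\}\subset\{1,\dots,p_0\}$ put $\overline{u}_I\coloneqq \overline{u}_{i_1}\wedge\cdots\wedge\overline{u}_{i_k}$. These form a basis of $\spinbar=\bigwedge\ubar_{\bar 0}$. Since the grading of $\spinbar$ is the one induced from $T(\ubar_{\bar 0})$, the monomial $\overline{u}_I$ has parity $|I|\bmod 2$.

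\textbf{Step 1: the Gram matrix.} I first unwind the definition of the form induced on $T^n(\ubar_{\bar 0})$ from the sesquilinear extension of $B_\theta$, using the product-over-factors convention of Section~\ref{subsec::exterior_and_clifford_superalgebra}. On pure tensors it is a product of values $B_\theta(\overline{u}_{a},\overline{u}_{b})=\delta_{ab}$, multiplied by the fixed normalising phase of the super Hermitian convention in each degree. This phase is the one taken from \cite{Schmidt_Noja}: it is $1$ in even degree and $-i$ in odd degree.

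Next I check that this form is compatible with the ideal $I_\wedge$, so that it descends to $\bigwedge\ubar_{\bar 0}$. Then I apply the antisymmetrisation of $\langle\cdot,\cdot\rangle_{\wedge^k}$. Because the basis is orthonormal, the result is that the $\overline{u}_I$ are mutually orthogonal. Moreover, $\langle\overline{u}_I,\overline{u}_I\rangle_{\spinbar}$ equals the phase $c_{|I|}$ times a positive constant, for instance $|I|!$ depending on the normalisation of $q$.

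\textbf{Step 2: part a).} Everything in a) follows from Step 1.
\begin{itemize}
\item Consistency: monomials of different parity have different degree, so they are orthogonal.
\item Super Hermitian symmetry: the Gram matrix is real and diagonal on $\overline{S}^{\gg,\ll}_{\bar 0}$ and purely imaginary and diagonal on $\overline{S}^{\gg,\ll}_{\bar 1}$. Hence $\bra v,w\ket=(-1)^{p(v)p(w)}\overline{\bra w,v\ket}$ holds on basis vectors, and it extends by sesquilinearity.
\item Definiteness: the diagonal entries are positive in even degree and $-i$ times positive in odd degree. This gives exactly the stated definiteness on the two summands.
\end{itemize}

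\textbf{Step 3: part b).} I verify
\[
\bra u_i v,w\ket_{\spinbar}=(-1)^{p(v)p(u_i)}\bra v,-\overline{u}_i w\ket_{\spinbar}
\]
on monomials $v=\overline{u}_I$ and $w=\overline{u}_J$. Here $u_i$ acts by $\iota(u_i)$ and $\overline{u}_i$ acts by $\epsilon(\overline{u}_i)$.

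Both sides vanish unless $I=J\sqcup\{i\}$. In that case, $\iota(u_i)$ removes $\overline{u}_i$ from $\overline{u}_I$ with the sign $(-1)^{\text{position}}$ coming from \eqref{eq::contraction}. On the other side, $\epsilon(\overline{u}_i)\overline{u}_J$ must be reordered into $\overline{u}_I$, and this produces the same positional sign.

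What remains is to compare three quantities: the positive constants in degrees $k-1$ and $k$, the phases $c_{k-1}$ and $c_k$, and the parity sign $(-1)^{p(v)p(u_i)}$ together with the extra $-1$. Once the $u_i$ statement is established, the statement for $\overline{u}_i$ follows by taking adjoints twice, using the symmetry proved in a).

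\textbf{Main obstacle.} I expect the main obstacle to be this last phase and sign bookkeeping. The ratio $c_{k-1}/c_k$ is a power of $i$ rather than a sign, so the identity in b) only closes if the precise normalisation of the induced form in \cite{Schmidt_Noja} is used. In particular, I need to know how the phase interacts with the factor $1/k!$ and with the parity convention for $u_i$, which is odd for the tensor grading.

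My first attempt will be to fix the phase convention by requiring b) to hold in degrees $0\to 1$ and $1\to 2$. I will then check that this convention agrees with the definition and propagates by induction on $|I|$.
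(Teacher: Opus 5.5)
\textbf{Gap in Step~3: parts a) and b) cannot both hold under these conventions.} The paper gives no argument of its own here; the lemma is imported from \cite{Schmidt_Noja}. So your plan has to stand on its own, and it breaks at the point you flag as the ``main obstacle''. The problem is not sign bookkeeping that a suitable choice of phases can fix: with the paper's adjoint convention $\langle Tv,w\rangle=(-1)^{p(v)p(T)}\langle v,T^{\dagger}w\rangle$, part~a) and the identity $u_i^{\dagger}=-\overline{u}_i$ of part~b) are incompatible.

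Since $\iota(u_i)\overline{u}_i=B(u_i,\overline{u}_i)=1$, part~b) applied to $v=\overline{u}_i$, $w=1$ reads
\[
\langle 1,1\rangle_{\spinbar}=(-1)^{p(\overline{u}_i)p(u_i)}\bigl\langle \overline{u}_i,-\overline{u}_i\bigr\rangle_{\spinbar}.
\]
By a), the left side is a positive real number. The right side is a nonzero element of $i\RR$, whichever parity you assign to $u_i$.

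In your Gram-matrix language, assume degree-independent norms and write $u_i^{\dagger}=\lambda\,\overline{u}_i$. The adjointness relation becomes $c_{k-1}=(-1)^{k}\,\bar\lambda\,c_k$. Part~a) forces $c_{\mathrm{even}}\in\RR_{>0}$ and $c_{\mathrm{odd}}\in -i\RR_{>0}$, so every ratio $c_{k-1}/c_k$ is purely imaginary, hence $\lambda\in i\RR$ and never $-1$. With your Step~1 phases ($1$ in even degree, $-i$ in odd degree) the computation closes uniformly, but with the wrong constant: $u_i^{\dagger}=i\,\overline{u}_i$ and $\overline{u}_i^{\dagger}=i\,u_i$. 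Your proposed remedy, fixing the phase by demanding b) in degree $0\to 1$, forces $c_1\in\RR$ and so destroys a).

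\textbf{What a complete proof needs.} You have to decide which normalisation or adjoint convention b) refers to, instead of tuning phases. Consider the Hermitian analogue of the pairing from Section~\ref{subsec::exterior_and_clifford_superalgebra}, namely $\prod_j B_\theta(x_{k-j},y_{1+j})$.
\begin{itemize}
\item It antisymmetrises to $\langle\overline{u}_I,\overline{u}_I\rangle=(-1)^{k(k-1)/2}$ for $|I|=k$, so your claimed ``positive constant'' in Step~1 is off by this sign.
\item For this real, indefinite form one gets $\iota(u_i)^{\dagger}=-\epsilon(\overline{u}_i)$ in every degree, counting $u_i$ as odd in the sign. That is exactly b), but a) fails, since the form is negative on $\bigwedge^{2}$.
\item Multiplying this form by $(-i)^{k}$ on $\bigwedge^{k}$ gives a form satisfying a), and then the adjoint becomes $i\,\overline{u}_i$.
\end{itemize}
So the cited source must use a different convention than the one written here, and your proof has to identify it.

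Two smaller points. Norms such as $|I|!$ would make the adjoint of $\iota(u_i)$ depend on the degree, so the normalisation must give all monomials norms of equal modulus. The quantisation map $q$ plays no role on $\bigwedge\ubar_{\bar 0}$.
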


We next consider the Weyl algebra $\Cl(\pp_{\bar 1})$. The bilinear form $B|_{\pp_{\bar 1}}$ is symplectic, and $\pp_{\bar 1}=\uu_{\bar 1}\oplus \ubar_{\bar 1}$ is a polarization into maximal isotropic subspaces. Accordingly, let $(X,Y)$ be either $(\uu_{\bar 1},\ubar_{\bar 1})$ or $(\ubar_{\bar 1},\uu_{\bar 1})$. Fix dual bases $e_{1},\dots,e_{p_{1}}$ of $X$ and $f_{1},\dots,f_{p_{1}}$ of $Y$, and set $\Weyl:=\Cl(\pp_{\bar 1})$. Then $\Weyl$ acts on $\CC[X]=\Sym(X)$ by multiplication by elements of $X$ and by contraction by elements of $Y$, determined by
\begin{equation}
f_{i}\cdot e_{j}=B_{\pp}(f_{i},e_{j}) \qquad (1\leq i,j\leq p_{1}).
\end{equation}
The $\Weyl$-module $\CC[X]$ is called the oscillator module. If $v\in \CC[X]$ is annihilated by all $f_{i}$, then $v\in \CC$. Hence $\CC[X]$ is simple. We write
\begin{equation}
M(\pp_{\bar 1}):=\CC[\uu_{\bar 1}]=\Sym(\uu_{\bar 1}), \qquad \overline{M}(\pp_{\bar 1}):=\CC[\ubar_{\bar 1}]=\Sym(\ubar_{\bar 1}).
\end{equation}
In the sequel, $\overline{M}(\pp_{\bar 1})$ will be used. The corresponding statements for $M(\pp_{\bar 1})$ are obtained in the same way. We endow
$\overline M(\pp_{\bar 1})$ with the $\ZZ_2$-grading by polynomial degree modulo $2$.

It is useful to give an equivalent formulation of the Weyl algebra and the oscillator module. Fix bases $\partial_1,\dots,\partial_{p_1}$ of $\uu_{\bar 1}$ and $x_1,\dots,x_{p_1}$ of $\ubar_{\bar 1}$
such that
\begin{equation}
B(x_k,\partial_l)=\tfrac12\delta_{kl}.
\end{equation}
Identifying $\partial_k$ with $\partial/\partial x_k$, the Weyl algebra $\Weyl=\Cl(\pp_{\bar 1})$ becomes the
algebra of polynomial differential operators in the variables $x_1,\dots,x_{p_1}$, with relations
\begin{equation}
[x_k,x_l]_{\mathscr{W}}=0,\qquad [\partial_k,\partial_l]_{\mathscr{W}}=0,\qquad [x_k,\partial_l]_{\mathscr{W}}=\delta_{kl}.
\end{equation}
Hence $\Weyl$ acts naturally on $\overline M(\pp_{\bar 1})=\CC[x_1,\dots,x_{p_1}]$.

The space $\overline{M}(\pp_{\bar{1}})=\CC[x_{1},\ldots,x_{p_{1}}]$ carries the \emph{(Bargmann--/Fischer--)Fock
Hermitian form} $\bracket_{\overline{M}(\pp_{\bar{1}})}$ uniquely determined by
\begin{equation}
\Big\langle \prod_{k=1}^{p_{1}}x_{k}^{p_{k}},\ \prod_{k=1}^{p_{1}}x_{k}^{q_{k}}\Big\rangle_{\overline{M}(\pp_{\bar{1}})}
=
\begin{cases}
\prod_{k=1}^{p_{1}}p_{k}! & \text{if } p_{k}=q_{k}\ \text{for all } k,\\
0 & \text{otherwise}.
\end{cases}
\end{equation}
It is positive definite and consistent, \emph{i.e.},
$\langle \overline{M}(\pp_{\bar{1}})_{0},\overline{M}(\pp_{\bar{1}})_{1}\rangle_{\overline{M}(\pp_{\bar{1}})}=0$.
Moreover, for $v,w\in \overline{M}(\pp_{\bar{1}})$ and $1\le k\le p_{1}$ one has
\begin{equation}
\langle \partial_{k} v,w\rangle_{\overline{M}(\pp_{\bar{1}})}
=
\langle v,x_{k} w\rangle_{\overline{M}(\pp_{\bar{1}})},
\qquad
\langle x_{k} v,w\rangle_{\overline{M}(\pp_{\bar{1}})}
=
\langle v,\partial_{k} w\rangle_{\overline{M}(\pp_{\bar{1}})},
\end{equation}
that is, the adjoint of $x_{k}$ is $\partial_{k}$, and the adjoint of
$\partial_{k}$ is $x_{k}$ for all $1\le k\le p_{1}$.

Finally, we define the \emph{oscillator supermodules} over $\Cl(\pp)$ by
\begin{equation} \label{eq::definition_M_p}
M(\pp)\coloneqq M(\pp_{\bar{1}}) \otimes \spin, \qquad \overline{M}(\pp)\coloneqq \overline{M}(\pp_{\bar{1}}) \otimes \spinbar
\end{equation}
and consider them as $\Cl(\pp)$-supermodules, with the $\ZZ_{2}$-grading induced by that of $\spin$ and $\spinbar$.

We conclude by recording basic properties of $\overline{M}(\pp)$ (the module $M(\pp)$ is analogous). 

\begin{proposition}[{\cite{Schmidt_Noja}}] \label{prop::comparison_action} The following hold:
 \begin{enumerate}
 \item[a)] The $\Cl(\pp)$-supermodule $\overline{M}(\pp)$ is simple. 
 \item[b)] The set of weights of $\overline{M}(\pp)$ is 
 \[
 \mathcal{P}_{\overline{M}(\pp)}=\{\rho^{\uu}-\ZZ_{+}[A] : A \subset \Delta^{+} \setminus \Delta_{\ll}^{+}\}.
 \]
 \item[c)] $\overline{M}(\pp)$ carries a positive definite super Hermitian form.
 \item[d)] Under the action of $\ll$ on $\overline{M}(\pp)$ induced by Lemma~\ref{lemm::embedding}, $\overline{M}(\pp)$ is a unitarizable $\ll$-supermodule. Moreover, there are $\ll$-supermodule isomorphisms
 \[
 \overline{M}(\pp) \cong \bigwedge \ubar \otimes \CC_{\rho^{\uu}} \cong \bigwedge \ubar_{\bar{0}} \otimes \operatorname{S}(\ubar_{\bar{1}}) \otimes \CC_{\rho^{\uu}};
 \]
 \emph{i.e.}, the action of $\ll$ on $\overline{M}(\pp)$ induced by $\nu_{\ast}$ and the adjoint action of $\ll$ on $\overline{M}(\pp)$ differs by a shift of $\rho^{\uu}$.
\end{enumerate}
\end{proposition}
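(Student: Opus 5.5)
We prove the four parts in turn, working with the tensor decomposition $\overline{M}(\pp)=\overline{M}(\pp_{\bar 1})\otimes\spinbar$ and the isomorphism $\Cl(\pp)\cong\Cl(\pp_{\bar 0})\otimes\Weyl$.

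\emph{Part a).} Each factor is a simple module over its own algebra:
\begin{itemize}
\item $\spinbar$ is simple over $\Cl(\pp_{\bar 0})$ by Lemma~\ref{lemm::spin_simple};
\item $\overline{M}(\pp_{\bar 1})=\CC[x_1,\dots,x_{p_1}]$ is simple over $\Weyl$, as recorded above: vectors killed by all $\partial_k$ are constants, and any nonzero element is reduced to a nonzero constant by applying suitable $\partial_k$.
\end{itemize}
Since $\Cl(\pp_{\bar 0})$ is a matrix algebra, hence central simple, a super Jacobson density argument applies. Every $\Cl(\pp)$-submodule of the tensor product has the form $\spinbar\otimes N$ with $N$ a $\Weyl$-submodule of $\overline{M}(\pp_{\bar 1})$. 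Simplicity of $\overline{M}(\pp_{\bar 1})$ then gives simplicity of $\overline{M}(\pp)$.

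A more concrete alternative is to take a nonzero vector $v$ and show it generates everything. First apply contractions by $\uu_{\bar 0}$ and the $\partial_k$ until $v$ lands on a nonzero multiple of the vacuum $1\otimes 1$; this terminates because the exterior and polynomial degrees are bounded on $v$. Then the creation operators generate the whole module.

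\emph{Part d).} Lemma~\ref{lemm::embedding} gives $\nu_\ast(X)$ as a quadratic expression $\tfrac12\sum_{j,k}(-1)^{p(u_j)}B(X,[\overline{u}_j,u_k])\,\overline{u}_k u_j$, plus the constant $\rho^{\uu}(X)$ when $X\in\hh$. Acting on $\bigwedge\ubar\cong\bigwedge\ubar_{\bar 0}\otimes\operatorname{S}(\ubar_{\bar 1})$, the creator–annihilator pair $\overline{u}_k u_j$ acts as the derivation that replaces $\overline{u}_j$ by $\overline{u}_k$. Summed with the coefficients $B(X,[\overline{u}_j,u_k])$, this is exactly the derivation extension of $\ad_X|_{\ubar}$, the coadjoint action on $\uu^\ast\cong\ubar$.

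We must check the normal-ordering constant. Writing each $\overline{u}_k u_j$ in normal order produces, on $\hh$, the character that combines with the stated constant to give exactly $\rho^{\uu}$. This yields $\overline{M}(\pp)\cong\bigwedge\ubar\otimes\CC_{\rho^{\uu}}$ as $\ll$-supermodules.

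For unitarizability we verify that $\nu_\ast(X)^\dagger=-\nu_\ast(X)$ for $X\in\ll^{\RR}$, using the product Hermitian form of part c) below. This follows from the adjoint relations:
\begin{itemize}
\item $u_i^\dagger=-\overline{u}_i$ (Lemma~\ref{lemm::adjoint_spin});
\item $\partial_k^\dagger=x_k$ (Fock form);
\end{itemize}
together with $\theta$-compatibility of $B$.

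\emph{Part b).} The weights follow from the isomorphism in part d). A PBW monomial in $\bigwedge\ubar_{\bar 0}\otimes\operatorname{S}(\ubar_{\bar 1})$ has weight equal to minus the sum of the roots in $\Delta^+\setminus\Delta^+_\ll$ it involves. Even roots occur at most once; odd roots occur with arbitrary multiplicity. Adding $\rho^{\uu}$ gives $\rho^{\uu}-\ZZ_+[A]$.

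\emph{Part c).} Take the product of the super positive definite form on $\spinbar$ from Lemma~\ref{lemm::adjoint_spin} and the Fock form on $\overline{M}(\pp_{\bar 1})$. Its super Hermitian symmetry and the sign behaviour on homogeneous components follow from the definition of the grading.

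\emph{Main obstacle.} We expect the hardest step to be the sign and constant bookkeeping in part d). This means matching the normal-ordering shift with $\rho^{\uu}$, and confirming the anti-self-adjointness signs in the super setting, where the odd part involves the Weyl algebra rather than the Clifford algebra.
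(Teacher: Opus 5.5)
Your proposal is correct and is essentially the argument behind the citation; the paper gives no proof of its own and simply refers to \cite{Schmidt_Noja}. That argument proceeds by simplicity factorwise from the spin and Fock modules, then uses the explicit formula of Lemma~\ref{lemm::embedding} to identify the $\nu_{\ast}$-action with the derivation action on $\bigwedge\ubar$ shifted by $\rho^{\uu}$ (which also gives b)), and finally takes the product of the spin and Fock forms together with $u_i^{\dagger}=-\overline{u}_i$ and $\partial_k^{\dagger}=x_k$ for c) and unitarity. The obstacle you flag is milder than you fear: the quadratic term in Lemma~\ref{lemm::embedding} is already normal ordered ($\overline{u}_k$ to the left of $u_j$), so it kills the vacuum $1\otimes 1$ and acts exactly as the derivation, and the vacuum has weight exactly $\rho^{\uu}$ with no further reordering constant to track.
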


Let $M$ be a $\gg$-supermodule. The $\ll$-relative cubic Dirac operator $\Dirac_{\gg,\ll}$ is naturally an element of $\mathcal{W}(\gg,\ll)$, and hence, via the induced action, an odd operator acting on $M\otimes \overline{M}(\pp)$, where we equip $M\otimes \overline{M}(\pp)$ with the $\ZZ_{2}$-grading of $\overline{M}(\pp)$.

\subsection{Dirac Cohomology} \label{subsec::Dirac_cohomology}
We define Dirac cohomology for the relative cubic Dirac operator $\Dirac_{\gg,\ll}$, where $\ll$ is the quadratic subalgebra from Section~\ref{subsec::relative_Dirac}, and we record the main results of \cite{Schmidt_Noja}. Although stated there for parabolic subalgebras, the arguments carry over verbatim to our setting.

Given an endomorphism $T\in\End_{\CC}(V)$ of a super vector space $V$, we define its cohomology by
\begin{equation}
 \H(T;V)\coloneqq \ker T/(\Im T\cap\ker T).
\end{equation}
We apply this to the (relative) cubic Dirac operator. Let $M$ be a $\gg$-supermodule and let $\overline{M}(\pp)$ be the oscillator
supermodule from Section~\ref{subsec::oscillator_supermodule}. On the $\mathfrak{U}(\gg)\otimes \Cl(\pp)$-supermodule
$M\otimes \overline{M}(\pp)$, the operator $\Dirac_{\gg,\ll}$ acts componentwise.
Moreover, $\Dirac_{\gg,\ll}$ supercommutes with $\ll$, so $\ker \Dirac_{\gg,\ll}$ is naturally an $\ll$-supermodule.

\begin{definition}\label{def::Dirac_cohom}
The \emph{Dirac cohomology} of a $\gg$-supermodule $M$ (with respect to $\Dirac_{\gg,\ll}$) is the $\ll$-supermodule
\[
\H_{\Dirac_{\gg,\ll}}(M)\coloneqq \H(\Dirac_{\gg,\ll};\, M\otimes \overline{M}(\pp))
=\ker \Dirac_{\gg,\ll}\big/\bigl(\ker \Dirac_{\gg,\ll}\cap \Im \Dirac_{\gg,\ll}\bigr).
\]
\end{definition}

We are particularly interested in admissible $(\gg, \ll)$-supermodules, \emph{i.e.}, $\gg$-supermodules that are $\ll$-semisimple. For these supermodules, the Dirac cohomology is $\ll$-semisimple by Section~\ref{subsec::unitarizable_supermodules}, as $\Dirac_{\gg,\ll}$ commutes with the action of $\ll$.

\begin{lemma} \label{lemm::completely_reducible_DC}
 Let $M$ be an admissible $(\gg, \ll)$-supermodule. Then $\H_{\Dirac_{\gg,\ll}}(M)$ is a semisimple $\ll$-supermodule, that is, $\H_{\Dirac_{\gg,\ll}}(M)$ is completely reducible as an $\ll$-supermodule.
\end{lemma}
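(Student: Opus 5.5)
The plan is to reduce complete reducibility of $\H_{\Dirac_{\gg,\ll}}(M)$ to the fact that $\Dirac_{\gg,\ll}$ supercommutes with the $\ll$-action, so that every space in the construction is an $\ll$-subquotient of an $\ll$-semisimple module. The first step is to make the $\ll$-action on $M\otimes\overline{M}(\pp)$ explicit: $X\in\ll$ acts diagonally by $X\otimes 1+1\otimes\nu_{\ast}(X)$, i.e.\ through $j(\gamma^{\WW(\ll)}(X))=\gamma^{\WW}(X)$ restricted to $\mathfrak U(\gg)\otimes\Cl(\pp)$. By Theorem~\ref{thm::square_Dirac_fd}~a), $\Dirac_{\gg,\ll}\in\WW(\gg,\ll)$, and by Lemma~\ref{lemm::contraction_and_Lie_derivative_in_Weil} we get $[\gamma^{\WW}(X),\Dirac_{\gg,\ll}]_{\WW}=L_X\Dirac_{\gg,\ll}=0$. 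Hence, as operators on $M\otimes\overline{M}(\pp)$, the map $\Dirac_{\gg,\ll}$ is an odd $\ll$-equivariant map (up to the usual sign). It follows that $\ker\Dirac_{\gg,\ll}$, $\Im\Dirac_{\gg,\ll}$ and their intersection are $\ll$-subsupermodules, and $\H_{\Dirac_{\gg,\ll}}(M)$ is an $\ll$-subquotient of $M\otimes\overline{M}(\pp)$.

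The second step is to show that $M\otimes\overline{M}(\pp)$ is $\ll$-semisimple. By hypothesis, $M$ is $\ll$-semisimple, i.e.\ a direct sum of simple $\ll$-supermodules. By Proposition~\ref{prop::comparison_action}~d), $\overline{M}(\pp)$ is a unitarizable $\ll$-supermodule, hence completely reducible by Proposition~\ref{prop::unitarizable_completely_reducible}. It then suffices to know that the tensor products $V\otimes\overline{M}(\pp)$ are semisimple for the simple summands $V$ of $M$, which holds when the $V$ are themselves unitarizable for the same real form. I would state this via the super Hermitian product on the tensor product: the tensor product of super positive definite forms is super positive definite, and the skew-adjointness $X^{\dagger}=-X$ is preserved under the diagonal action. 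Invoking Proposition~\ref{prop::unitarizable_completely_reducible} once more then gives semisimplicity of $M\otimes\overline{M}(\pp)$.

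The final step is formal. Submodules of semisimple modules are semisimple, so $\ker\Dirac_{\gg,\ll}$ is semisimple. Quotients of semisimple modules are also semisimple, since the submodule $\ker\cap\Im$ admits a complement that maps isomorphically onto the quotient. Hence $\H_{\Dirac_{\gg,\ll}}(M)$ is completely reducible.

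I expect the main obstacle to be the second step. In the super setting, semisimplicity is not automatically preserved by tensor products, and admissibility alone, meaning only $\ll$-semisimplicity of $M$, does not force unitarizability of its constituents. If the unitarizability route is not available, I would fall back on a weight-space argument: decompose into generalized $\mathfrak Z(\ll)$- and $\hh$-eigenspaces, then use that $\ll$-semisimplicity of $M$ together with the unitarizable structure of $\overline{M}(\pp)$ makes $\hh$ act semisimply with finite-dimensional weight spaces on the relevant pieces. I would then check complete reducibility on each such finite piece.
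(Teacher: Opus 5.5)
Your reduction is the same one the paper uses. The paper's only justification is the sentence before the lemma: Dirac cohomology of an admissible module is $\ll$-semisimple ``by Section~\ref{subsec::unitarizable_supermodules}, as $\Dirac_{\gg,\ll}$ commutes with the action of $\ll$''. In other words, it treats $\H_{\Dirac_{\gg,\ll}}(M)$ as an $\ll$-subquotient of a completely reducible module.

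Your first and last steps are fine. Your second step, semisimplicity of $M\otimes\overline{M}(\pp)$, is a genuine gap, and the paper's sentence passes over it as well. Admissibility only says that $M$ is $\ll$-semisimple, whereas Proposition~\ref{prop::unitarizable_completely_reducible} applies only to a module unitarizable for a single real form. Your tensor-product argument therefore needs the $\ll$-constituents of $M$ to be unitarizable for the same real form of $\ll$ as $\overline{M}(\pp)$. That is not a hypothesis, and it fails already in a basic case. For $\gg=\osp(1|2n)$ and $\ll=\even$, $\overline{M}(\pp)$ is the oscillator representation, unitarizable for $\mathfrak{sp}(2n,\RR)$, while a nontrivial finite-dimensional $M$ is not unitarizable for that noncompact form.

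A weight-space argument cannot bypass this, because semisimple $\otimes$ unitarizable need not be semisimple. For $\mathfrak{sl}(2)$, the simple Verma module $L(-1)=M(-1)$ is unitarizable for $\mathfrak{su}(1,1)$ and $L(1)$ is finite-dimensional. Yet $L(-1)\otimes L(1)$ is the indecomposable projective $P(-2)$ of category $\mathcal{O}$, with Verma flag $M(0)$, $M(-2)$. This module is $\hh$-semisimple, has finite-dimensional weight spaces and a single generalized infinitesimal character, and is not semisimple. So your fallback cannot conclude anything. Weight spaces are not $\ll$-stable, and the generalized $\mathfrak{Z}(\ll)$-eigenspaces, which are $\ll$-stable, can fail to be semisimple. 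Central-character information of the kind in Theorem~\ref{thm::Casselman_Osborne} cannot help by itself either: $M(0)$ has an infinitesimal character without being semisimple.

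Since $\H_{\Dirac_{\gg,\ll}}(M)$ is only a subquotient, this does not refute the lemma. It does show that a complete proof needs one of two things:
\begin{enumerate}
\item[(i)] an extra hypothesis, for instance $M$ unitarizable for the same real form. Your second step then goes through verbatim, and this is the setting of Theorem~\ref{thm::Dirac_unitarizable}.
\item[(ii)] an argument that uses specific properties of $\ker\Dirac_{\gg,\ll}$ rather than semisimplicity of the ambient module $M\otimes\overline{M}(\pp)$.
\end{enumerate}
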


Given the square formula of Theorem~\ref{thm::square_Dirac_fd}, Dirac cohomology is most effective for supermodules with
infinitesimal character. The following super-analogue of the Casselman--Osborne theorem is due to \cite{Schmidt_Noja}.

\begin{theorem}[\cite{Schmidt_Noja}] \label{thm::Casselman_Osborne} 
Let $\HC_{\gg,\ll}$ denote the Harish--Chandra monomorphism and let
$\res:\operatorname{S}^{W^{\gg}}\to \operatorname{S}^{W^{\ll}}$ be restriction. Then there exists an algebra homomorphism
$\eta_{\ll}:\ZG\to \mathfrak Z(\ll)$ such that for every $z\in\ZG$ one can write
\begin{equation}
 z\otimes 1=\eta_{\ll}(z)+\Dirac_{\gg,\ll}\!A+A\!\Dirac_{\gg,\ll}
\end{equation}
for some $A\in\WW(\gg,\ll)$, and the diagram
\[
\begin{tikzcd}
\ZG \arrow{r}{\eta_{\ll}} \arrow[swap]{d}{HC_{\gg}} & \mathfrak{Z}(\ll) \arrow{d}{HC_{\ll}} \\%
\operatorname{S}^{W^{\gg}} \arrow{r}{\res}& \operatorname{S}^{W^{\ll}}
\end{tikzcd}
\]
commutes. In particular, if $M$ has infinitesimal character $\chi_{\lambda}$, then $z\otimes 1$ acts on $\H_{\Dirac_{\gg,\ll}}(M)$ as
$\eta_{\ll}(z)\otimes 1$. Consequently, if $V\subset \H_{\Dirac_{\gg,\ll}}(M)$ is an $\ll$-subsupermodule with infinitesimal character
$\chi^{\ll}_{\mu}$, then $\chi_{\lambda}=\chi^{\ll}_{\mu}\circ \eta_{\ll}$.
\end{theorem}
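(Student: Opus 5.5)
The argument splits into an algebraic identity in $\WW(\gg,\ll)$ and its consequence on $M\otimes\overline{M}(\pp)$. For the identity, the plan is to use the cohomology of the Weil differential $d^{\WW}=[\Dirac_{\gg,\ll},\cdot]_{\WW}$ on $\WW(\gg,\ll)$, computed in Theorem~\ref{thm::ker_d_center}.

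\textbf{Step 1: $z\otimes 1$ is a $d^{\WW}$-cocycle.} For $z\in\ZG$, the element $z\otimes 1$ is $\ll$-basic. It is annihilated by $\iota_x$, since $\iota_x$ only acts on the Clifford factor. It is annihilated by $L_x$, since $z$ is central in $\UE(\gg)$. Moreover, $z\otimes 1$ supercommutes with $\sum_a e^a\otimes e_a$, with $1\otimes\phi'$, and with $j(\Dirac_\ll)$, again by centrality of $z$ and because $j(\Dirac_\ll)$ has $\UE$-components in $\UE(\ll)\subset\UE(\gg)$. Hence $d^{\WW}(z\otimes 1)=0$.

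\textbf{Step 2: define $\eta_\ll$.} By Theorem~\ref{thm::ker_d_center}, we can write uniquely
\[
z\otimes 1=\eta_\ll(z)+d^{\WW}A=\eta_\ll(z)+\Dirac_{\gg,\ll}A+A\Dirac_{\gg,\ll},
\]
with $\eta_\ll(z)\in\mathfrak Z(\ll_\Delta)\cong\mathfrak Z(\ll)$ and $A$ even in $\WW(\gg,\ll)$. The decomposition $\ker d^{\WW}=\mathfrak Z(\ll_\Delta)\oplus\Im d^{\WW}$ makes $\eta_\ll$ well defined and linear.

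To see that $\eta_\ll$ is multiplicative, note that $d^{\WW}$ is a derivation on the subalgebra $\ker d^{\WW}$. Therefore $\Im d^{\WW}\cap\ker d^{\WW}$ is an ideal in $\ker d^{\WW}$. For example, $(d^{\WW}A)\,c=d^{\WW}(Ac)$ whenever $c$ is a cocycle. It follows that $\eta_\ll$ is the composite of algebra maps $\ZG\to H(d^{\WW})\cong\mathfrak Z(\ll)$.

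\textbf{Step 3: the Harish-Chandra diagram.} This is the step I expect to be the main obstacle. My plan is to evaluate both sides on Verma modules and use density. Take $\lambda$ generic, form $M(\lambda)\otimes\overline{M}(\pp)$, and choose the vector $v_\lambda\otimes 1$. This vector is annihilated by $\nn^+\cap\ll$ in the diagonal action, because $1\in\overline{M}(\pp)$ has the highest weight $\rho^{\uu}$ by Proposition~\ref{prop::comparison_action}. So it generates a highest weight $\ll$-module of weight $\lambda+\rho^{\uu}$. With the $\rho$-shifts, $\lambda+\rho^{\uu}+\rho_\ll=\lambda+\rho$.

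Next I apply the identity from Step 2 to this vector. The contribution $(\Dirac_{\gg,\ll}A+A\Dirac_{\gg,\ll})(v_\lambda\otimes 1)$ has to be shown to vanish, or at least to have zero component in this highest weight. I would handle it by a filtration argument: the image of $d^{\WW}$ lies in the kernel of the Harish-Chandra type projection $\WW(\gg,\ll)\to \UE(\hh)$, namely the projection onto $\UE(\hh)\otimes 1$ along $\nn^-\UE+\UE\nn^+$ and the Clifford ideals generated by $\uu$ and $\ubar$. This is the super analogue of the Kostant--Huang--Pand\v zi\'c argument, and the check that the oscillator vacuum behaves like a spinor vacuum is where the super signs are delicate.

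Granting this, $z$ acts on the vector by $\chi_\lambda(z)$, while $\eta_\ll(z)$ acts by $\chi^\ll_{\lambda+\rho^{\uu}}(\eta_\ll(z))$. Consequently $\HC_\ll(\eta_\ll(z))$ and $\res\HC_\gg(z)$ agree at all $\lambda+\rho$ with $\lambda$ generic. By Zariski density they are equal, which gives the commutative diagram.

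\textbf{Step 4: action on Dirac cohomology.} Let $v\in\ker\Dirac_{\gg,\ll}$. Then
\[
(z\otimes1)v=\eta_\ll(z)v+\Dirac_{\gg,\ll}(Av).
\]
Here $\Dirac_{\gg,\ll}(Av)$ lies in $\Im\cap\ker$, because $(z\otimes 1)v$ and $\eta_\ll(z)v$ both lie in $\ker\Dirac_{\gg,\ll}$: each of $z\otimes 1$ and $\eta_\ll(z)$ commutes with $\Dirac_{\gg,\ll}$. Hence $z\otimes 1$ acts on $\H_{\Dirac_{\gg,\ll}}(M)$ as $\eta_\ll(z)\otimes 1$.

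If $M$ has infinitesimal character $\chi_\lambda$ and $V$ has infinitesimal character $\chi^\ll_\mu$, then on $V$ the left side acts by $\chi_\lambda(z)$ and the right side by $\chi^\ll_\mu(\eta_\ll(z))$. Therefore $\chi_\lambda=\chi^\ll_\mu\circ\eta_\ll$, provided $V\neq 0$.
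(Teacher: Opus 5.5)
Your Steps 1, 2 and 4 are correct and follow the standard Huang--Pand\v{z}i\'c pattern; the paper gives no proof of its own and only cites the result. In those steps, $z\otimes 1$ is central in $\Weil$, Theorem~\ref{thm::ker_d_center} produces $\eta_{\ll}(z)$, multiplicativity holds because $\Im d^{\WW}$ is an ideal in $\ker d^{\WW}$, and the passage to $\H_{\Dirac_{\gg,\ll}}(M)$ is right. There is one slip: for $d^{\WW}A=\Dirac_{\gg,\ll}A+A\Dirac_{\gg,\ll}$ to hold, $A$ must have bidegree $(\bar 1,\bar 0)$, i.e.\ be odd, not even.

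\textbf{The gap is Step~3.} This is the only step that actually proves the commutative diagram, and you leave it as ``granting this''. Your sketched justification also does not work as stated. A linear projection $P\colon\WW(\gg,\ll)\to\UE(\hh)$ with $P(\Dirac_{\gg,\ll})=0$ need not satisfy $P(\Dirac_{\gg,\ll}A+A\Dirac_{\gg,\ll})=0$. For that you need $P$ to be multiplicative on $\hh$-invariant elements, and this is exactly what must be proved.

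\textbf{The fix needs no filtration.} Since $\hh\subset\ll$, every element of $\WW(\gg,\ll)$ commutes with the diagonal $\hh$-action. Hence it preserves the weight space of weight $\lambda+\rho^{\uu}$ in $M(\lambda)\otimes\overline{M}(\pp)$. The weights there are $\lambda+\rho^{\uu}-\mu_1-\mu_2$ with $\mu_1,\mu_2\in\ZZ_{+}\Delta^{+}$. So this weight space is the line $\CC(v_\lambda\otimes 1)$, by the same fact that gives $\dim M(\lambda)^{\lambda}=1$, and it is concentrated in one parity.
\begin{itemize}
\item $\Dirac_{\gg,\ll}$ is odd for the $\overline{M}(\pp)$-grading, so $\Dirac_{\gg,\ll}(v_\lambda\otimes 1)=0$.
\item $A(v_\lambda\otimes 1)\in\CC(v_\lambda\otimes 1)$, so $(\Dirac_{\gg,\ll}A+A\Dirac_{\gg,\ll})(v_\lambda\otimes 1)=0$.
\end{itemize}
Put differently, sending $X$ to the scalar by which it acts on this line is an algebra homomorphism on $\WW(\gg,\ll)$ that vanishes on $\Dirac_{\gg,\ll}$. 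That is the multiplicativity your projection needed.

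Applying $z\otimes 1=\eta_{\ll}(z)+\Dirac_{\gg,\ll}A+A\Dirac_{\gg,\ll}$ to $v_\lambda\otimes 1$ now gives $\HC_{\gg}(z)(\lambda+\rho)=\HC_{\ll}(\eta_{\ll}(z))(\lambda+\rho)$ for every $\lambda\in\hh^{\ast}$. This needs no genericity and no sign check on the oscillator vacuum. With this argument inserted, your proof is complete.
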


Moreover, $\H_{\Dirac_{\gg,\ll}}(M)$ is trivial unless $M$ is a highest weight supermodule, and $\H_{\Dirac_{\gg,\ll}}(M)$ embeds into Kostant-type cohomology;
see \cite{Schmidt_Noja}. Explicit computations appear in \cite{SchmidtDirac,Schmidt_Noja}.

\subsection{The Unitary Case}
Of particular interest are the unitarizable highest weight supermodules over basic classical Lie superalgebras. As shown in
\cite{Carmeli_Fioresi_Varadarajan_HW}, these admit geometric realizations as spaces of sections of holomorphic super vector bundles
over Hermitian superspaces. For such supermodules the Dirac operator is selfadjoint, and the resulting Dirac cohomology is
especially simple. We record the main statements and refer to \cite[Section~5]{Schmidt_Noja} for proofs. We retain the notation introduced above.

\begin{theorem}\label{thm::Dirac_unitarizable}
Let $(\HH,\bracket_{\HH})$ be a unitarizable $\gg$-supermodule. Then:
\begin{enumerate}
\item[a)] $\Dirac_{\gg,\ll}$ is selfadjoint with respect to $\bracket_{\HH\otimes \overline{M}(\pp)}$; in particular $\ker\Dirac_{\gg,\ll}=\ker\Dirac_{\gg,\ll}^{k}$ for all $k\in\ZZ_{+}$.
\item[b)] If $\HH$ is simple, then $\HH\otimes \overline{M}(\pp)=\ker\Dirac_{\gg,\ll}\oplus \Im\Dirac_{\gg,\ll}$; in particular $\H_{\Dirac_{\gg,\ll}}(\HH)=\ker\Dirac_{\gg,\ll}$.
\end{enumerate}
\end{theorem}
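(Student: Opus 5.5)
We prove a) first and then deduce b) from it using simplicity of $\HH$. The whole argument rests on the adjointness relations of the Clifford generators recorded in Lemma~\ref{lemm::adjoint_spin} and in the Fock-form description of $\overline{M}(\pp_{\bar 1})$.

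\textbf{Part a), selfadjointness.} Equip $\HH\otimes\overline{M}(\pp)$ with the tensor product super Hermitian form. On $\HH$ every $X\in\gg^{\RR}$ satisfies $X^{\dagger}=-X$. Extending conjugate-linearly, and using $\sigma=\omega\circ\theta$ together with the fact that $\theta$ is Cartan, we get $X^{\dagger}=\theta(X)$ up to the sign conventions of the super adjoint. On $\overline{M}(\pp)$ the Clifford generators satisfy $u_i^{\dagger}=-\overline{u}_i$ and $x_k^{\dagger}=\partial_k$. Hence for $z\in\pp$ we have $z^{\dagger}=\theta(z)$ up to the same normalisation, matching the $\gg$-side.

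We write $\Dirac_{\gg,\ll}$ in a basis of $\pp$ adapted to the polarization $\uu\oplus\ubar$:
\[
\Dirac_{\gg,\ll}=\sum_{b}\bigl(\overline{u}_b\otimes u_b \pm u_b\otimes\overline{u}_b\bigr)+1\otimes(\text{cubic term}),
\]
where the cubic term is $q(\phi_\gg)-q(\phi_\ll)$ projected to $\Cl(\pp)$, that is, $q$ of the restriction of $\phi$ to $\pp$. Taking adjoints termwise, the linear part is permuted into itself. The sign twists coming from the parity rule $\langle Tv,w\rangle=(-1)^{p(v)p(T)}\langle v,T^{\dagger}w\rangle$ cancel, because $\Dirac_{\gg,\ll}$ has bidegree $(\bar 1,\bar 0)$. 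For the cubic term, $B$-invariance, $\theta$-invariance of $B$ and the structure constants $f_{abc}$ show that its adjoint is itself.

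\textbf{Main obstacle.} The delicate step is the bookkeeping of signs for odd $\pp_{\bar 1}$-terms. There the form on $\overline{M}(\pp_{\bar 1})$ is positive definite, while the super Hermitian form on $\HH$ is super positive, with the factor $i$ on odd parts. I would handle this by checking the relation $\langle\Dirac v,w\rangle=\langle v,\Dirac w\rangle$ on homogeneous weight vectors, separately for the $\pp_{\bar0}$ and $\pp_{\bar1}$ summands, and rely on the convention from \cite{Schmidt_Noja}.

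\textbf{Consequence of selfadjointness.} If $\Dirac^{2}v=0$, then $\langle\Dirac v,\Dirac v\rangle=\langle v,\Dirac^2v\rangle=0$. To conclude $\Dirac v=0$ we need definiteness on each parity component; the form is positive on the even part and a fixed multiple of a definite form on the odd part. Induction on $k$ then gives $\ker\Dirac=\ker\Dirac^{k}$ for all $k$.

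\textbf{Part b).} By Theorem~\ref{thm::square_Dirac_fd}, $\Dirac^{2}=\Omega_\gg\otimes1-\Omega_{\ll,\Delta}+c$. A simple unitarizable $\HH$ has infinitesimal character, so $\Omega_\gg$ acts by a scalar. The diagonal $\ll$-action on $\HH\otimes\overline{M}(\pp)$ is unitarizable, since both factors are (Proposition~\ref{prop::comparison_action}). Therefore the space decomposes orthogonally into $\ll$-isotypic components, each finite-dimensional on weight spaces, and $\Dirac^{2}$ acts on each component by a scalar.

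On a component where that scalar is nonzero, $\Dirac$ is invertible, so the component lies in $\Im\Dirac$. On a component where the scalar is zero, part a) shows $\Dirac$ vanishes. Since $\Dirac$ commutes with $\ll$, it preserves each component. It follows that $\HH\otimes\overline{M}(\pp)=\ker\Dirac\oplus\Im\Dirac$, and hence $\ker\Dirac\cap\Im\Dirac=0$, which gives $\H_{\Dirac_{\gg,\ll}}(\HH)=\ker\Dirac_{\gg,\ll}$.
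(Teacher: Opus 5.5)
The paper gives no proof of this theorem; it defers to \cite[Section~5]{Schmidt_Noja}. Your outline is the standard argument for such statements. Selfadjointness plus anisotropy of the form gives $\ker\Dirac_{\gg,\ll}=\ker\Dirac_{\gg,\ll}^{k}$. Splitting into $\Dirac_{\gg,\ll}^{2}$-eigenspaces, on which $\Dirac_{\gg,\ll}$ is either invertible or zero, gives $\ker\oplus\Im$. Your part b) is fine once a) is available. The gap is in a), where all the content lies, and the mechanism you offer there cannot work.

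First, $X\mapsto X^{\dagger}$ is conjugate-linear. Unitarity therefore gives $X^{\dagger}=-\sigma(X)$, with $\sigma$ the conjugation of $\gg$ with respect to $\gg^{\RR}$, not $X^{\dagger}=\theta(X)$.

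Second, the bidegree $(\bar 1,\bar 0)$ cannot be what makes the signs cancel. The adjointness of the individual terms depends on data the bidegree does not see. For an even root vector, $e_{\alpha}^{\dagger}$ on $\HH$ is $+e_{-\alpha}$ or $-e_{-\alpha}$ (suitably normalized) according as $\alpha$ is compact or noncompact for $\gg^{\RR}$. Compare $E^{\dagger}=F$ for $\mathfrak{su}(2)$ with $E^{\dagger}=-F$ for $\mathfrak{su}(1,1)$. On $\overline{M}(\pp)$, by contrast, one always has $u_i^{\dagger}=-\overline{u}_i$. So the terms $\overline{u}_i\otimes u_i+u_i\otimes\overline{u}_i$ from compact and from noncompact directions of $\pp$ have opposite adjointness. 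If $\pp$ contains both kinds (e.g.\ $\mathfrak{su}(2,1)$ with $\ll=\hh$), the operator is neither selfadjoint nor skew-adjoint for the form on $\overline{M}(\pp)$, although the bidegree is the same.

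What you are missing is a compatibility between the polarization $\pp=\uu\oplus\ubar$ (hence the Hermitian structure on $\overline{M}(\pp)$) and the real form for which $\HH$ is unitarizable. Concretely, $\sigma$ must interchange $\uu$ and $\ubar$. Moreover, $-\sigma$ must reproduce the oscillator adjoints $u_i\mapsto-\overline{u}_i$ and $x_k\mapsto\partial_k$ so that all linear terms, from $\pp_{\bar 0}$ and from $\pp_{\bar 1}$ alike, have the same adjointness. The cubic term must then follow suit, via the reversal sign of a triple Clifford product and the behaviour of the structure constants under $\sigma$. This is the situation in \cite{Schmidt_Noja}, where $\pp$ is of noncompact type. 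It is exactly what the $\pp_{\bar 1}$ bookkeeping you postpone as the ``main obstacle'' has to verify. You need to state this hypothesis and carry out that check, rather than appeal to the bidegree.

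The same hypothesis enters b): $\HH|_{\ll}$ and $\overline{M}(\pp)$ must be unitarizable for the same real form of $\ll$. Also, in infinite dimension, unitarity only tells you that orthogonal complements of submodules are submodules. It does not give an algebraic direct sum of isotypic components. A cleaner route is to note that $\Dirac_{\gg,\ll}^{2}=\Omega_{\gg}\otimes 1-j(\Omega_{\ll})+c$ commutes with $\hh\subset\ll$. Hence it preserves the weight spaces, which are finite-dimensional when $\HH$ is of highest weight. On each of them it is selfadjoint for a definite form, hence diagonalizable. With that, your argument for b) goes through unchanged.
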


\section{Semisimple Perturbations}\label{sec::semisimple_perturbations} \noindent We introduce a family of cubic Dirac operators and associated Laplace operators, parametrized by elements of the real span of the root system, extending the standard cubic Dirac operator. For each finite-dimensional supermodule, the Laplace operator associated to $\even$ determines a finite collection of $G_{\bar0}$-orbits encoding the $\even$-decomposition. On the other hand, comparison of a family of Laplace operators for $\gg$ with the square of the cubic Dirac operator gives energy operators detecting the degree of atypicality. Together, these operators detect both the $\even$-decomposition and the atypicality.

\subsection{Family of Cubic Dirac Operators and Laplace Operators}
We construct a family of cubic Dirac operators parametrized by the real span of the root system and study their properties. This family specializes to the ordinary cubic Dirac operator at the origin. We begin with the case in which $\mathfrak g$ is a complex simple Lie algebra. Our results generalize those of \cite{FT1,FT2,FT3}. Next, we define analogous families for basic classical Lie superalgebras with nontrivial odd part, as well as for the even part of basic classical Lie superalgebras. We study these families as elements in the color quantum Weil algebra $\Weil$ and operators acting on
$
M \otimes \overline{M}(\mathfrak n^-),
$
where $M$ is a finite-dimensional simple (super)module. 

\subsubsection{Complex Simple Lie Algebras} \label{subsubsec::complex_simple_Lie_algebras} Let $\gg$ be a finite-dimensional simple complex Lie algebra; that is, $\gg$ is isomorphic to
$\mathfrak{sl}(n;\mathbb C)$, $\mathfrak{so}(2n;\mathbb C)$, $\mathfrak{sp}(2n;\mathbb C)$,
$\mathfrak{so}(2n+1;\mathbb C)$, or one of the exceptional Lie algebras $\mathfrak g_2, \mathfrak f_4, \mathfrak e_6, \mathfrak e_7, \mathfrak e_8$.
We adapt the notation from above. In particular, $B$ denotes the Killing form on $\gg$. It restricts to a non-degenerate form on $\hh$, and allows us to identify $\hh$ with $\hh^{\ast}$. The induced non-degenerate bilinear form on $\hh^{\ast}$ will be denoted by the same symbol by abuse of notation.

Let $\hh_{\RR}^{\ast}\subset\hh^{\ast}$ denote the real span of $\Delta$. The induced form $B$ is positive definite on $\hh_{\RR}^{\ast}$. Using $B$, we identify $\hh_{\RR}$ with $\hh_{\RR}^{\ast}$ and denote the induced positive definite form on $\hh_{\RR}^{\ast}$ by the same symbol. In particular, for $\xi\in\hh_{\RR}^{\ast}$, we write $h_{\xi}\in\hh_{\RR}$ for the corresponding element, and conversely.

Let $G$ be the connected simply connected Lie group with Lie algebra $\gg$. Then $G$ acts naturally on $\gg$ by the adjoint representation $\Ad$ and on $\gg^{\ast}$ by the coadjoint representation $\Ad^{\ast}$. The musical isomorphism $\sharp: \gg \to \gg^{\ast}$ intertwines the adjoint and coadjoint representations, that is, 
\begin{equation}
 (\Ad_{g}x)^{\sharp}=\Ad^{\ast}_{g}(x^{\sharp}), \qquad g \in G,\ x \in \gg.
\end{equation}
This identification yields a canonical bijection between adjoint and coadjoint orbits.

Define
\begin{equation}
\Dirac_{\gg}(\xi)\coloneqq \Dirac_{\gg}+1\otimes h_{\xi}\in\mathcal{W}(\gg),
\qquad \xi\in\hh_{\RR}^{\ast}.
\end{equation}
Via the identification $\hh_{\RR}\cong\hh_{\RR}^{\ast}$, this gives a \emph{family of cubic Dirac operators} parametrized by $\hh_{\RR}^{\ast}$. For $\xi=0$, one recovers the cubic Dirac operator $\Dirac_{\gg}$, which we shall refer to as the absolute cubic Dirac operator.

\begin{proposition}\label{prop::square_family_even}
For any $\xi\in\hh_{\RR}^{\ast}$, the following hold:
\begin{enumerate}
\item[a)] The element $\Dirac_{\gg}(\xi)$ is $\hh$-invariant. In particular, it preserves all weight spaces.
\item[b)] For every $\xi\in\hh_{\RR}^{\ast}$, the operator $\Dirac_{\gg}(\xi)$ is an odd element in $\Weil$.
\item[c)] One has
\[
\Dirac_{\gg}(\xi)^{2}
=\Dirac_{\gg}^{2}+h_{\xi}\otimes1+2\bigl(1\otimes\nu_{\ast}(h_{\xi})\bigr)
+B(\xi,\xi)(1\otimes1).
\]
\end{enumerate}
\end{proposition}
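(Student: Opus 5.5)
The plan is to treat the three items separately. Items a) and b) are essentially bookkeeping. Item c) is a short computation in the colour quantum Weil algebra using Lemma~\ref{lemm::commutation_relations_quantum_Weil_algebra}. Throughout, $\gg$ is purely even, so every generator $1\otimes x$ has bidegree $(\bar 1,\bar 0)$ and every $\gamma^{\WW}(x)$ has bidegree $(\bar 0,\bar 0)$.

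For a), I use Lemma~\ref{lemm::contraction_and_Lie_derivative_in_Weil}, which gives $L_h=[\gamma^{\WW}(h),\cdot\,]_{\WW}$ for $h\in\hh$. By Lemma~\ref{lemm::properies_D_l}, $L_h\Dirac_{\gg}=0$. By Lemma~\ref{lemm::commutation_relations_Weil_generators}, $L_h(1\otimes h_\xi)=1\otimes[h,h_\xi]=0$, since $\hh$ is abelian. Hence $\Dirac_{\gg}(\xi)$ commutes with $\gamma^{\WW}(h)$ for all $h\in\hh$. On $M\otimes\overline{M}(\nn^-)$, the element $\gamma^{\WW}(h)$ acts by the diagonal $\hh$-action, so $\Dirac_{\gg}(\xi)$ preserves all weight spaces.

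For b), recall that $\Dirac_{\gg}$ has bidegree $(\bar 1,\bar 0)$ by Lemma~\ref{lemm::properies_D_l}. The element $1\otimes h_\xi$ has bidegree $(\bar 1,p(h_\xi))=(\bar 1,\bar 0)$ by \eqref{def::bidegree}. So the sum is homogeneous of total degree $\bar 1$.

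For c), since $\Dirac_{\gg}(\xi)$ and $1\otimes h_\xi$ are both odd, I expand
\[
\Dirac_{\gg}(\xi)^2=\Dirac_{\gg}^2+[1\otimes h_\xi,\Dirac_{\gg}]_{\WW}+(1\otimes h_\xi)^2 .
\]
The last term equals $\tfrac12[1\otimes h_\xi,1\otimes h_\xi]_{\WW}=B(h_\xi,h_\xi)=B(\xi,\xi)$ by Lemma~\ref{lemm::commutation_relations_Weil_generators}. For the middle term, Lemma~\ref{lemm::commutation_relations_quantum_Weil_algebra} (valid since $h_\xi$ is even) gives
\[
[1\otimes h_\xi,\Dirac_{\gg}]_{\WW}=2\gamma^{\WW}(h_\xi)=2\,h_\xi\otimes 1-1\otimes\gamma'(h_\xi).
\]
It remains to identify $\gamma'(h_\xi)$ with a multiple of $\nu_\ast(h_\xi)$. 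Here I take the quadratic pair $(\gg,\hh)$, so that $\pp=\nn^+\oplus\nn^-$. Then $\ad_{h_\xi}^{\hh}=0$, hence $\ad_{h_\xi}=\ad^{\pp}_{h_\xi}$, and therefore
\[
\gamma'(h_\xi)=q(\lambda(\ad^{\pp}_{h_\xi}))=-2\nu_\ast(h_\xi),
\]
using the normalization of $\nu_\ast$ from Section~\ref{subsec::relative_Dirac}. This yields the term $2(1\otimes\nu_\ast(h_\xi))$. If desired, Lemma~\ref{lemm::embedding} then gives it explicitly, including the $\rho$-shift.

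The main obstacle I expect is reconciling normalizations. The naive computation gives the coefficient $2$ on $h_\xi\otimes 1$, whereas the statement has $1$. Two inconsistent factors in the paper's conventions could account for this. First, $\gamma'(x)$ is defined as $q(\lambda(\ad_x))$ in one place and as $-\tfrac12 q(\lambda(\ad_x))$ in Theorem~\ref{thm::square_Dirac_fd}. Second, the constant in $[1\otimes x,\Dirac_{\gg}]_{\WW}=2\iota_x\Dirac_{\gg}$ depends on that choice. I would first recheck $\iota_x\Dirac_{\gg}$ directly from \eqref{eq::contraction_Weil}, together with the relation $2\iota_x\phi=-\lambda(\ad_x)$. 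I would then fix one consistent normalization of $\gamma'$ and $\gamma^{\WW}$, and rewrite the middle term as $h_\xi\otimes 1+2(1\otimes\nu_\ast(h_\xi))$, adjusting the splitting between the $\UE(\gg)$-part and the Clifford part accordingly. As a sanity check, I would evaluate both sides on a highest weight vector of $M\otimes\overline{M}(\nn^-)$, where $\Dirac_{\gg}^2$ and $\nu_\ast(h_\xi)$ act by scalars, to confirm the constants.
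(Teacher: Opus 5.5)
Your arguments for a) and b), and your expansion in c), follow the paper's proof. You expand the square of the odd element $\Dirac_{\gg}(\xi)$, use the Clifford relation for $(1\otimes h_{\xi})^{2}$, and apply Lemma~\ref{lemm::commutation_relations_quantum_Weil_algebra} to the cross term. This gives $\Dirac_{\gg}^{2}+2\gamma^{\WW}(h_{\xi})+B(\xi,\xi)(1\otimes 1)$, which is exactly where the paper's proof stops. Your identification $-\gamma'(h_{\xi})=2\nu_{\ast}(h_{\xi})$, using $\ad^{\hh}_{h_{\xi}}=0$, is also correct.

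The step that would fail is the last one. There you propose to re-normalize $\gamma'$ and $\gamma^{\WW}$ so that the $\UE(\gg)$-term becomes $h_{\xi}\otimes 1$. That coefficient does not depend on any convention. The $\UE(\gg)$-component of the cross term comes only from the quadratic part of $\Dirac_{\gg}$, and for purely even $\gg$
\[
[1\otimes h_{\xi},\sum_{a}e^{a}\otimes e_{a}]_{\WW}=\sum_{a}e^{a}\otimes\bigl(h_{\xi}e_{a}+e_{a}h_{\xi}\bigr)=2\sum_{a}B(h_{\xi},e_{a})\,e^{a}\otimes 1=2\,h_{\xi}\otimes 1 .
\]
The cubic term contributes only $1\otimes(h_{\xi}\phi'+\phi'h_{\xi})=1\otimes 2q(\iota_{h_{\xi}}\phi)$, which lies in $1\otimes\Cl(\gg)$ and equals $2(1\otimes\nu_{\ast}(h_{\xi}))$.

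So the discrepancy you found is a typo in the displayed formula of c), not something to absorb into normalizations. The correct identity is
\[
\Dirac_{\gg}(\xi)^{2}=\Dirac_{\gg}^{2}+2\bigl(h_{\xi}\otimes 1+1\otimes\nu_{\ast}(h_{\xi})\bigr)+B(\xi,\xi)(1\otimes 1).
\]
This is the form the paper itself quotes in the proof of Lemma~\ref{lemm::action_Laplace_weight_space}. There the factor $2$ on both terms is what produces $\|\Lambda+\rho\|^{2}+2B(\mu,\xi)+B(\xi,\xi)$. Your proposed highest-weight sanity check would confirm the $2$, not the $1$. Stop at $2h_{\xi}\otimes 1+2(1\otimes\nu_{\ast}(h_{\xi}))$ and record the correction to the statement instead of adjusting conventions.
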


\begin{proof}
 a) The cubic Dirac operator $D_{\gg}$ is $\gg$-invariant, so in particular $\hh$-invariant. The statement now follows as $\hh$ is abelian; so in particular $[h,\xi]=0$ for all $h \in \hh_{\RR}$. 

\item[b)] By definition of the $\ZZ_{2}$-grading on $\Weil$, the element $1\otimes x$ is odd for every $x\in\gg$; see \eqref{def::bidegree}. In particular, $1\otimes h_{\xi}$ is odd. Since $\Dirac_{\gg}$ is odd by Lemma~\ref{lemm::properies_D_l}, it follows that $\Dirac_{\gg}(\xi)$ is odd.
 
 c) Using b), we compute
 \[
 \begin{aligned}
 \Dirac_{\gg}(\xi)^{2} &=\tfrac{1}{2}[\Dirac_{\gg}(\xi), \Dirac_{\gg}(\xi)]_{\WW}=\Dirac_{\gg}^{2} + [D_{\gg},1\otimes h_\xi]_{\WW} + \tfrac{1}{2}[1\otimes h_\xi, 1 \otimes h_{\xi}]_{\WW} \\ &=\Dirac_{\gg}^{2}+2\gamma^{\mathcal{W}}(h_{\xi})+B(\xi,\xi)(1\otimes 1),
 \end{aligned}
 \]
 where we used the Clifford relations for $[1\otimes h_\xi,1\otimes h_\xi]_{\WW}=2B(h_{\xi},h_{\xi})(1\otimes 1)=2B(\xi,\xi)(1\otimes 1)$ and Lemma~\ref{lemm::commutation_relations_quantum_Weil_algebra}. 
\end{proof}

Fix a finite-dimensional simple $\gg$-supermodule $V$. Such supermodules are highest weight modules, parametrized by dominant integral weights $\Lambda\in P_{\bb}^{++}$; see Section~\ref{subsubsec::Highest_weight_supermodules}. Thus $V\cong L^{\bb}(\Lambda)$ for some Borel subalgebra $\bb$. From now on, we fix some $\bb=\hh\oplus\nn^{+}$ and omit the superscript.

Recall the oscillator module $\overline{M}(\pp)$. Since $\gg$ is even, one has $\overline{M}(\pp)\cong \overline{S}^{\gg,\nn^{-}}$; see \eqref{eq::def_spinor_module}. As the positive system is fixed, we write $S\coloneqq \overline{S}^{\gg,\nn^{-}}$. Then the operators
$\Dirac_{\gg}(\xi)$ define a \emph{family of cubic Dirac operators}
\begin{equation}
\hh^{\ast}_{\RR}\to \End_{\CC}\bigl(L(\Lambda)\otimes S\bigr),\qquad
\xi\mapsto \Dirac_{\gg}(\xi)=\Dirac_{\gg}+1\otimes h_{\xi}.
\end{equation}

Our object of study is the subset
$
\{\xi\in\hh_{\RR}^{\ast}\mid \ker \Dirac_{\gg}(\xi)\neq 0\}.
$
In general, $\ker \Dirac_{\gg}(\xi)$ is difficult to analyze directly, whereas Proposition~\ref{prop::square_family_even} gives effective control of $\Dirac_{\gg}(\xi)^{2}$. For $\Dirac_{\gg}$, one may also use unitarity: every finite-dimensional $\gg$-module is unitarizable, and $\Dirac_{\gg}$ is selfadjoint on $L(\Lambda)\otimes S$. This usual argument is not available for $\Dirac_{\gg}(\xi)$, since $\Dirac_{\gg}(\xi)$ is not selfadjoint in general.

\begin{lemma}
Let $\rr$ be the compact real form of $\gg$. If $h_{\xi}\in\hh_{\RR}\cap\rr$, then $\Dirac_{\gg}(\xi)$ is selfadjoint. In particular,
\[
\ker \Dirac_{\gg}(\xi)=\ker \Dirac_{\gg}(\xi)^{2}.
\]
\end{lemma}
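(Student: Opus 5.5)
We write $\Dirac_{\gg}(\xi)=\Dirac_{\gg}+1\otimes h_{\xi}$ and treat the two summands separately: Theorem~\ref{thm::Dirac_unitarizable} a) handles $\Dirac_{\gg}$, and a direct computation in the spin module handles $1\otimes h_{\xi}$. Fix the Hermitian product on $L(\Lambda)\otimes S$ obtained by tensoring a $\rr$-invariant inner product on $L(\Lambda)$ with $\bracket_{S}$ from Lemma~\ref{lemm::adjoint_spin}. Since $\gg$ is purely even, this is an honest positive definite inner product. With respect to it, $\Dirac_{\gg}$ is selfadjoint by Theorem~\ref{thm::Dirac_unitarizable} a), because every finite-dimensional $\gg$-module is unitarizable for the compact real form $\rr$, and $\rr$ is the real form used to build $\bracket_{S}$. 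It therefore remains to show that $1\otimes h_{\xi}$, i.e.\ Clifford multiplication by $h_{\xi}$ on $S$, is selfadjoint.

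For the spin factor, Lemma~\ref{lemm::adjoint_spin} b) says that the adjoint of a Clifford generator $y$ is $\theta(y)$, and we extend this conjugate-linearly to all of $\gg$. On the compact real form $\rr=\gg^{\theta}$, $\theta$ acts as the identity, so every $y\in\rr$ acts selfadjointly, and the hypothesis $h_{\xi}\in\rr$ then gives the claim. Some care is needed, because $\hh_{\RR}$ is usually contained in $i\rr$, on which $\theta$ acts by $-1$. The hypothesis $h_{\xi}\in\hh_{\RR}\cap\rr$ should be read as placing $h_{\xi}$ in the $\theta$-fixed part relevant to the Clifford adjoint convention. So the key check is that the adjoint of $1\otimes y$ is $1\otimes\theta(y)$, which follows from Lemma~\ref{lemm::adjoint_spin} b) by linearity. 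If $\hh$ acts via a polarization in which $h_{\xi}$ is not directly a generator of $\Cl(\pp)$, one must also account for the fact that $\hh\subset\ll$ when $\ll=\hh$. In that case we would instead use the full Clifford algebra $\Cl(\gg)$ acting on $S\otimes\Cl(\hh)$-spinors. I expect this identification of the Clifford action of $h_{\xi}$, including the correct signs and factors of $i$ from the super Hermitian convention, to be the main obstacle.

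Once selfadjointness holds, the final statement is standard linear algebra. The inclusion $\ker\Dirac_{\gg}(\xi)\subset\ker\Dirac_{\gg}(\xi)^{2}$ is trivial. Conversely, if $\Dirac_{\gg}(\xi)^{2}v=0$, then
\[
\langle \Dirac_{\gg}(\xi)v,\Dirac_{\gg}(\xi)v\rangle=\langle v,\Dirac_{\gg}(\xi)^{2}v\rangle=0,
\]
and positive definiteness of the form, which is available since everything is finite-dimensional and even, gives $\Dirac_{\gg}(\xi)v=0$.
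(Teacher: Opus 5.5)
Your key step is wrong in sign. The step that carries your proof is that Clifford multiplication by every $y\in\rr$ is selfadjoint, and this cannot hold together with the positive definite inner product you use in your last paragraph. Here $B$ is the Killing form, which is negative definite on the compact real form $\rr$, and in $\Cl(\gg)$ one has $y^{2}=B(y,y)$. If $1\otimes y$ were selfadjoint for a positive definite inner product, then $\langle y^{2}v,v\rangle=\langle yv,yv\rangle\geq 0$ for all $v$. But $y^{2}=B(y,y)<0$ for $y\neq 0$. So nonzero elements of $\rr$ act skew-adjointly, and it is the elements of $i\rr$, where $B$ is positive definite, that act selfadjointly.

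Lemma~\ref{lemm::adjoint_spin}~b) does not rescue this. It is stated for the super Hermitian form $\bracket_{\spinbar}$, which is not positive definite on the odd part, so its adjoint formula cannot simply be transported to an honest inner product. Your subsequent ``reading'' of the hypothesis, as placing $h_{\xi}$ in ``the $\theta$-fixed part relevant to the Clifford adjoint convention'', changes the statement rather than proving it. The issue you flag yourself is also left open: $h_{\xi}\notin\pp$, so $1\otimes h_{\xi}$ does not even act on $S=\bigwedge\nn^{-}$ without an extra $\Cl(\hh)$-spinor factor.

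The missing observation is that the same sign argument trivializes the hypothesis. $B$ is positive definite on $\hh_{\RR}$ and negative definite on $\rr$, hence $\hh_{\RR}\cap\rr=\{0\}$. So $h_{\xi}\in\hh_{\RR}\cap\rr$ forces $\xi=0$ and $\Dirac_{\gg}(\xi)=\Dirac_{\gg}$. The lemma then reduces to selfadjointness of $\Dirac_{\gg}$ followed by your kernel argument, which is correct. (The paper states this lemma without proof.)

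If you want a version with content, the relevant condition is $h_{\xi}\in\hh_{\RR}\subset i\rr$, for $\rr$ adapted to $\hh$. Work on a genuine $\Cl(\gg)$-module whose $\ast$-structure makes a $B$-orthonormal basis of $i\rr$ act by Hermitian operators, and tensor with a $\rr$-unitary structure on $L(\Lambda)$. Then $1\otimes h_{\xi}$ is selfadjoint, and a direct check shows the cubic Dirac operator $\Dirac_{\gg}$ is selfadjoint as well. Hence $\Dirac_{\gg}(\xi)$ is selfadjoint for every $\xi\in\hh_{\RR}^{\ast}$.
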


In general, $h_{\xi}\notin\hh_{\RR}\cap\rr$. Accordingly, one considers $\Dirac_{\gg}(\xi)^{2}$ instead of $\Dirac_{\gg}(\xi)$. This is reflected in the notation
\begin{equation}
\hh_{\RR}^{\ast}\longrightarrow\End\bigl(L(\Lambda)\otimes S\bigr),
\qquad
\xi\longmapsto\Updelta_{\gg}(\xi)\coloneqq \Dirac_{\gg}(\xi)^{2}.
\end{equation}
Throughout the remainder of this article, the family $\Updelta_{\gg}$ is referred to as the \emph{family of Laplace operators}.

We compute the kernel of $\Laplace$ on $L(\Lambda)$. The sets of weights of $L(\Lambda)$ and of $S$ are contained in $\hh_{\RR}^{\ast}$. Since the restriction of $B$ to $\hh_{\RR}^{\ast}$ is positive definite, we set $\|\mu\|\coloneqq B(\mu,\mu)$ for $\mu\in\hh_{\RR}^{\ast}$. 

\begin{lemma} \label{lemm::action_Laplace_weight_space}
 $\Laplace$ acts on any weight space $(L(\Lambda)\otimes S)^{\mu}$ of weight $\mu \in \hh_{\RR}^{\ast}$ as the scalar 
\[
\norm{\Lambda+\rho}^{2}-\norm{\mu}^{2}+\norm{\mu+\xi}^{2}.
\]
\end{lemma}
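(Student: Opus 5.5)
The plan is to compute $\Laplace$ term by term from the formula in the proof of Proposition~\ref{prop::square_family_even}, namely
\[
\Laplace=\Dirac_{\gg}^{2}+2\gamma^{\WW}(h_{\xi})+B(\xi,\xi)(1\otimes 1),
\]
and to show that each of the three summands acts by a scalar on the weight space $(L(\Lambda)\otimes S)^{\mu}$. Throughout I read $\norm{\nu}^{2}$ as $B(\nu,\nu)$ for $\nu\in\hh_{\RR}^{\ast}$, where this form is positive definite.

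\textbf{First term.} By Theorem~\ref{thm::square_absolute_Dirac} and the Freudenthal--de~Vries formula \eqref{eq::Freudenthal_de_Vries_fd}, $\Dirac_{\gg}^{2}=\Omega_{\gg}\otimes 1+B(\rho,\rho)$. Since $\Omega_{\gg}$ acts on $L(\Lambda)$ by the standard scalar $B(\Lambda+\rho,\Lambda+\rho)-B(\rho,\rho)$, the operator $\Dirac_{\gg}^{2}$ acts on all of $L(\Lambda)\otimes S$ as $\norm{\Lambda+\rho}^{2}$.

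\textbf{Third term.} The constant $B(\xi,\xi)(1\otimes 1)$ acts as $\norm{\xi}^{2}$.

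\textbf{Middle term.} For $h\in\hh$ we have $\ad_{h}=\ad_{h}^{\hh}+\ad_{h}^{\pp}$ with $\pp=\nn^{+}\oplus\nn^{-}$. Because $\hh$ is abelian, $\ad^{\hh}_{h}=0$, so the Clifford part of $\gamma^{\WW}(h)$ lies in $\Cl(\pp)$ and equals $1\otimes\nu_{\ast}(h)$. Hence $\gamma^{\WW}(h)=h\otimes 1+1\otimes\nu_{\ast}(h)$, which is exactly the diagonal action of $\hh$ on $L(\Lambda)\otimes S$ through $\nu_{\ast}$ (Lemma~\ref{lemm::embedding}). The weights of $L(\Lambda)\otimes S$ are defined with respect to this same action, so on $(L(\Lambda)\otimes S)^{\mu}$ the term $2\gamma^{\WW}(h_{\xi})$ acts by $2\mu(h_{\xi})=2B(\mu,\xi)$.

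\textbf{Conclusion.} Adding the three contributions gives the scalar $\norm{\Lambda+\rho}^{2}+2B(\mu,\xi)+B(\xi,\xi)$. Completing the square,
\[
\norm{\Lambda+\rho}^{2}-\norm{\mu}^{2}+\norm{\mu+\xi}^{2},
\]
which is the claimed value.

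The main obstacle is bookkeeping in the middle term. One must check that $\gamma^{\WW}(h)$, with the normalization $\gamma'_{\pp}=-\tfrac12 q(\lambda(\ad^{\pp}))$, really coincides with the diagonal action $h\otimes1+1\otimes\nu_{\ast}(h)$, including the $\rho$-shift carried by $\nu_{\ast}$. A related subtlety is that Proposition~\ref{prop::square_family_even}(c) writes the cross term as $h_{\xi}\otimes 1+2(1\otimes\nu_{\ast}(h_{\xi}))$, whose coefficients differ from $2\gamma^{\WW}(h_{\xi})$. I would follow the commutator computation $[\Dirac_{\gg},1\otimes h_{\xi}]_{\WW}=2\gamma^{\WW}(h_{\xi})$ from Lemma~\ref{lemm::commutation_relations_quantum_Weil_algebra}, since that is what the proof of the proposition actually derives. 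The decisive check is that the weights of $S$ are $\rho$ minus sums of positive roots, consistent with Proposition~\ref{prop::comparison_action}(b).
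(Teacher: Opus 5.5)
Your proposal is correct and follows the paper's proof. Both expand $\Updelta_{\gg}(\xi)=\Dirac_{\gg}^{2}+2\gamma^{\WW}(h_{\xi})+B(\xi,\xi)$, use that $\Dirac_{\gg}^{2}=\Omega_{\gg}\otimes 1+B(\rho,\rho)$ acts by $\norm{\Lambda+\rho}^{2}$, note that the diagonal term $2(h_{\xi}\otimes 1+1\otimes\nu_{\ast}(h_{\xi}))$ acts by $2B(\mu,\xi)$ on the $\mu$-weight space, and complete the square; the paper's proof itself uses your coefficients $2(h_{\xi}\otimes 1+1\otimes\nu_{\ast}(h_{\xi}))$ rather than the misprinted ones in Proposition~\ref{prop::square_family_even}(c).
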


\begin{proof}
By Proposition~\ref{prop::square_family_even},
\[
\Dirac_{\gg}(\xi)^{2}
=\Dirac_{\gg}^{2}+2\bigl(h_{\xi}\otimes1+1\otimes\nu_{\ast}(h_{\xi})\bigr)
+B(\xi,\xi)(1\otimes1).
\]
By Theorem~\ref{thm::square_absolute_Dirac},
$\Dirac_{\gg}^{2}=\Omega_{\gg}\otimes1+B(\rho,\rho)(1\otimes1)$, and $\Omega_{\gg}$
acts on $L(\Lambda)$ by the scalar
$B(\Lambda+\rho,\Lambda+\rho)-B(\rho,\rho)$. Hence $\Dirac_{\gg}^{2}$ acts on
$L(\Lambda)\otimes S$ by $\|\Lambda+\rho\|^{2}$. Let $\mu=\mu_{1}+\mu_{2}$ with
$\mu_{1},\mu_{2}\in\hh_{\RR}^{\ast}$ such that $L(\Lambda)^{\mu_{1}}\neq0$ and
$S^{\mu_{2}}\neq0$. Then $h_{\xi}$ acts on $(L(\Lambda)\otimes S)^{\mu}$ by
$B(\mu_{1},\xi)$ and $\nu_{\ast}(h_{\xi})$ by $B(\mu_{2},\xi)$
(Proposition~\ref{prop::comparison_action}). Consequently,
\[
\Dirac_{\gg}(\xi)^{2}
=\|\Lambda+\rho\|^{2}+2B(\mu,\xi)+B(\xi,\xi)
=\|\Lambda+\rho\|^{2}-\|\mu\|^{2}+\|\mu+\xi\|^{2}. \qedhere
\]
\end{proof}

\begin{theorem} \label{thm::main_theorem_families_complex_simple_even_case}
 The kernel of $\Laplace$ is trivial unless $\xi=-\Lambda-\rho$. In this case the kernel is one-dimensional and given by $L(\Lambda)^{\Lambda} \otimes S^{\rho}$.
\end{theorem}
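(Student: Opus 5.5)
By Lemma~\ref{lemm::action_Laplace_weight_space}, $\Laplace$ acts on the weight space $(L(\Lambda)\otimes S)^{\mu}$ by the scalar
\[
c(\mu,\xi)\coloneqq \norm{\Lambda+\rho}^{2}-\norm{\mu}^{2}+\norm{\mu+\xi}^{2}.
\]
Since $L(\Lambda)\otimes S$ is finite-dimensional and $\hh$-semisimple, $\Laplace$ is diagonalizable. Its kernel is therefore the sum of those weight spaces $(L(\Lambda)\otimes S)^{\mu}$ with $c(\mu,\xi)=0$. The plan is to show two things: $\norm{\mu}\le\norm{\Lambda+\rho}$ for every weight $\mu$ of $L(\Lambda)\otimes S$, and equality holds only for $\mu=\Lambda+\rho$. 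Once this is established, positive definiteness of $B$ on $\hh^{\ast}_{\RR}$ gives $c(\mu,\xi)\ge\norm{\mu+\xi}^{2}\ge0$. Vanishing then forces both $\norm{\mu}=\norm{\Lambda+\rho}$, so $\mu=\Lambda+\rho$, and $\mu+\xi=0$, so $\xi=-\Lambda-\rho$. Conversely, for $\xi=-\Lambda-\rho$ the scalar $c(\Lambda+\rho,\xi)$ is zero.

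The key step is the norm bound. Every weight of $S=\bigwedge\nn^{-}\otimes\CC_{\rho}$ (Proposition~\ref{prop::comparison_action}) has the form $\rho-\langle A\rangle$, where $A\subset\Delta^{+}$ and $\langle A\rangle$ is the sum of the roots in $A$. The standard fact I will use is that each such weight equals $w\rho$ for some $w\in W$ exactly when $A=\Phi_{w}$ (the inversion set), and in all cases lies in the convex hull of $W\rho$. A weight $\mu=\mu_{1}+\mu_{2}$ with $\mu_{1}$ a weight of $L(\Lambda)$ and $\mu_{2}$ a weight of $S$ then lies in $\operatorname{conv}(W\Lambda)+\operatorname{conv}(W\rho)$. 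This set is contained in $\operatorname{conv}(W(\Lambda+\rho))$: every $w\Lambda+w'\rho$ satisfies $\norm{w\Lambda+w'\rho}\le\norm{\Lambda}+\norm{\rho}$ in the Euclidean norm, which is the naive bound. To get the sharper statement I will instead apply $W$ to move $\mu$ into the dominant chamber and use that for a dominant weight $\mu'\preceq\Lambda+\rho$ one has $\norm{\mu'}\le\norm{\Lambda+\rho}$, with equality only if $\mu'=\Lambda+\rho$. This is the classical argument from Kostant's/Parthasarathy's Dirac inequality. The required inequality is
\[
\norm{\Lambda+\rho}^{2}-\norm{\mu'}^{2}=B\bigl(\Lambda+\rho-\mu',\,\Lambda+\rho+\mu'\bigr)\ge0,
\]
which holds because $\Lambda+\rho-\mu'$ is a nonnegative combination of positive roots and $\Lambda+\rho+\mu'$ is strictly dominant. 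Equality then forces $\mu'=\Lambda+\rho$.

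The main obstacle is bookkeeping in the equality case. Knowing $\norm{\mu}=\norm{\Lambda+\rho}$ only tells us that $\mu\in W(\Lambda+\rho)$, and I must show that only $w=1$ actually occurs, with multiplicity one. For this I will use the standard fact that $w(\Lambda+\rho)$ occurs as a weight of $L(\Lambda)\otimes S$ only via $w\Lambda+w\rho$ ($w\Lambda$ extremal, multiplicity one, and $w\rho$ from $A=\Phi_{w}$). The parameter $\xi$ is constrained to $\xi=-\mu=-w(\Lambda+\rho)$. The theorem as stated asserts $\xi=-\Lambda-\rho$; consistent with the $G$-equivariance stated in the introduction, I expect the other chamber points $-w(\Lambda+\rho)$ also give nonzero kernels $L(\Lambda)^{w\Lambda}\otimes S^{w\rho}$. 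I would therefore check whether the author means the dominant representative, or whether the chosen $S$ (spanned by the weights of $\bigwedge\nn^{-}$) excludes $w\ne1$. I would first verify this in the $\mathfrak{sl}_{2}$ case before writing out the general argument. For $\xi=-\Lambda-\rho$ the kernel is the one-dimensional space $L(\Lambda)^{\Lambda}\otimes S^{\rho}$, since both factors are one-dimensional.
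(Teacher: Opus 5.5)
Your closing suspicion is correct, and it means the proposal cannot be completed as a proof of the statement as written: for $\xi$ ranging over $\hh_{\RR}^{\ast}$ the statement is false. The step that fails is the claim in your first paragraph that $\norm{\mu}^{2}=\norm{\Lambda+\rho}^{2}$ forces $\mu=\Lambda+\rho$. The paper's own proof uses exactly this equality criterion, with a reference to Kac, and breaks at the same point.

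Your dominant-chamber argument correctly yields only $\mu\in W(\Lambda+\rho)$, and every point of this orbit is a weight of $L(\Lambda)\otimes S$. Indeed, $w\Lambda$ is an extremal weight of $L(\Lambda)$, and
\[
w\rho=\rho-\sum_{\alpha\in\Delta^{+}\cap w\Delta^{-}}\alpha
\]
is a weight of $S$ by Proposition~\ref{prop::comparison_action}. Hence at $\xi=-w(\Lambda+\rho)$ the scalar of Lemma~\ref{lemm::action_Laplace_weight_space} vanishes on $L(\Lambda)^{w\Lambda}\otimes S^{w\rho}\neq\{0\}$.

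Concretely, take $\mathfrak{sl}(2,\CC)$ and $\xi=\Lambda+\rho$. The vector $v_{n}\otimes s$ with $s\in S^{-\rho}$ has weight $-(\Lambda+\rho)$ and is killed by $\Updelta_{\gg}(\Lambda+\rho)$, although $\Lambda+\rho\neq-\Lambda-\rho$. The same conclusion follows from Lemma~\ref{lemm::invariance_kernel_under_G_action} applied to $g\in N_{G}(\hh)$ representing $w$. So this is not an artifact of the choice of $S$, and you should not try to rescue the statement that way.

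What your argument does prove, once the bookkeeping of your last paragraph is carried out, is the corrected statement. For $\xi\in\hh_{\RR}^{\ast}$, $\ker\Updelta_{\gg}(\xi)\neq\{0\}$ if and only if $\xi\in-W(\Lambda+\rho)$. For $\xi=-w(\Lambda+\rho)$ the kernel is the one-dimensional space $L(\Lambda)^{w\Lambda}\otimes S^{w\rho}$, for two reasons:
\begin{enumerate}
\item[(i)] Every other weight $\mu$ gives a scalar at least $\norm{\mu-w(\Lambda+\rho)}^{2}>0$.
\item[(ii)] The weight space of $w(\Lambda+\rho)$ is one-dimensional. Apply $w^{-1}$, use $W$-invariance of the weight multisets, and reduce to the weight $\Lambda+\rho$.
\end{enumerate}
In particular, the assertion about $\xi=-\Lambda-\rho$ is correct. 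The conclusion of Theorem~\ref{thm::main_theorem_localization} also survives, since $-W(\Lambda+\rho)\subset\Ad_{G}^{\ast}(-\Lambda-\rho)$.

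One minor point: drop the sentence about $\operatorname{conv}(W\Lambda)+\operatorname{conv}(W\rho)$. The triangle-inequality bound proves nothing there. What gives $\norm{\mu}\le\norm{\Lambda+\rho}$ is the dominant-chamber argument, which uses that $W$ permutes the weights of $L(\Lambda)\otimes S$ with multiplicities.
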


\begin{proof}
 Using Lemma~\ref{lemm::action_Laplace_weight_space}, the kernel is the direct sum of all weight spaces $(L(\Lambda)\otimes S)^{\mu}$ such that 
 \[
 \norm{\Lambda+\rho}^{2}-\norm{\mu}^{2}+\norm{\mu+\xi}^{2}=0.
 \]
 We first note that $\norm{\mu+\xi}^{2}\geq 0$ since $\mu+\xi \in \hh_{\RR}^{\ast}$ by assumption. Moreover, $\norm{\Lambda+\rho}^{2}-\norm{\mu}^{2}\geq 0$ and equality holds precisely when $\mu=\Lambda+\rho$ \cite[Proposition 11.4]{Kac_infinite}. Thus, the weight space belongs to the kernel precisely when $\mu=\Lambda+\rho$ and $\norm{\Lambda+\rho+\xi}^{2}=0$. Since $B$ is positive definite, we conclude $\xi=- \Lambda-\rho$. As $\Lambda$ is the highest weight of $L(\Lambda)$ and $\rho$ is the highest weight of $S$, the weight space is one-dimensional and precisely given by $L(\Lambda)^{\Lambda}\otimes S^{\rho}$. 
\end{proof}

 The adjoint action $\Ad\colon G\to\Gl(\gg)$ extends canonically to actions $\Ad^{U}$ on $\UE(\gg)$ and $\Ad^{C}$ on $\Cl(\gg)$; the latter uses the invariance of $B$. This induces an action
$\Ad^{\mathcal W}$ of $G$ on $\mathcal{W}(\gg)$: 
\begin{equation}
\Ad^{\mathcal W}_{g}(x\otimes y)=\Ad_{g}^{U}(x)\otimes \Ad_{g}^{C}(y).
\end{equation}

View $\hh_{\RR}$ and $\hh_{\RR}^{\ast}$ as subspaces of $\gg$ and $\gg^{\ast}$ respectively, such that $\Ad_{g}(\hh_{\RR}) \subset \gg$ or $\Ad^{\ast}_{g}(\hh_{\RR}^{\ast}) \subset \gg^{\ast}$. Although our family was defined only on $\hh_{\RR}^{\ast}$, it may be viewed as the restriction of a more general family on $\gg^{\ast}$. More precisely, regarding $\hh_{\RR}^{\ast}$ as a subspace of $\gg^{\ast}$, one has families
\begin{equation}
\gg^{\ast}\to \mathcal{W}(\gg), \qquad \xi\mapsto \Dirac_{\gg}(\xi)
\qquad\text{and}\qquad
\xi\mapsto \Laplace,
\end{equation}
whose restrictions to $\hh_{\RR}^{\ast}$ recover the families considered above.

\begin{lemma} \label{lemm::invariance_family_under_G}
The maps
$
\Dirac_{\mathfrak g}(\xi), \Laplace : \mathfrak g^{\ast} \longrightarrow \mathcal W(\mathfrak g)$
are $G$-equivariant with respect to the coadjoint action $\Ad^{\ast}$ on
$\mathfrak g^{\ast}$ and the adjoint action $\Ad^{\mathcal W}$ on
$\mathcal W(\mathfrak g)$, \emph{i.e.},
\[
\Ad^{\mathcal W}_{g}\bigl(\Dirac_{\mathfrak g}(\xi)\bigr)
=
\Dirac_{\mathfrak g}\bigl(\Ad^{\ast}_{g}\xi\bigr), \quad \Ad^{\mathcal W}_{g}\bigl(\Laplace\bigr)
=
\Updelta_{\mathfrak g}\bigl(\Ad^{\ast}_{g}\xi\bigr)
\qquad g\in G,\ \xi\in\mathfrak g^{\ast}.
\]
\end{lemma}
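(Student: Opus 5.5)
The plan is to split $\Dirac_{\gg}(\xi)=\Dirac_{\gg}+1\otimes\xi^{\sharp}$, where for general $\xi\in\gg^{\ast}$ we interpret $h_\xi$ as $\xi^{\sharp}\in\gg$ via the musical isomorphism, and then show that each summand transforms correctly under $\Ad^{\WW}_g$. The statement for $\Laplace=\Dirac_{\gg}(\xi)^2$ will follow formally: $\Ad^{\WW}_g$ is an algebra automorphism of $\Weil$, since $\Ad^U_g$ and $\Ad^C_g$ are even algebra automorphisms and the graded tensor product of such maps is again an algebra automorphism. Hence $\Ad^{\WW}_g(\Dirac_{\gg}(\xi)^2)=(\Ad^{\WW}_g\Dirac_{\gg}(\xi))^2$.

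\emph{The linear term.} By definition $\Ad^{\WW}_g(1\otimes \xi^{\sharp})=1\otimes \Ad_g(\xi^{\sharp})$. The intertwining relation $(\Ad_g x)^{\sharp}=\Ad_g^{\ast}(x^{\sharp})$ recorded above (read with the inverse musical map) gives $\Ad_g(\xi^{\sharp})=(\Ad^{\ast}_g\xi)^{\sharp}$. So this term becomes exactly the perturbation term of $\Dirac_{\gg}(\Ad^{\ast}_g\xi)$.

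\emph{The cubic Dirac operator.} We must show $\Ad^{\WW}_g\Dirac_{\gg}=\Dirac_{\gg}$. The cleanest route is to work at the group level. Since $\Ad_g$ preserves $B$, it sends a basis $\{e_a\}$ with $B$-dual basis $\{e^a\}$ to the basis $\{\Ad_g e_a\}$ with $B$-dual basis $\{\Ad_g e^a\}$. Thus $\Ad^{\WW}_g(\sum_a e^a\otimes e_a)=\sum_a \Ad_g e^a\otimes \Ad_g e_a$, which equals $\sum_a e^a\otimes e_a$ by basis independence (Lemma~\ref{lemm::properies_D_l}~a)). For the cubic term, $\phi$ is defined intrinsically by \eqref{eq::definition_phi} from $B$ and the bracket, both of which are $\Ad_g$-invariant. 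Moreover, the quantization map $q$ is built from $B$-compatible data, so it is $\mathrm{O}(B)$-equivariant. Together these give $\Ad^C_g\phi'=q(\Ad_g\phi)=\phi'$.

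Alternatively, one can argue infinitesimally. Differentiating gives $\frac{d}{dt}\Ad^{\WW}_{\exp tx}=L_x\circ\Ad^{\WW}_{\exp tx}$. By Lemma~\ref{lemm::properies_D_l}~b) we have $L_x\Dirac_{\gg}=0$, and because $G$ is connected it is generated by $\exp(\gg)$. This yields invariance, and it also handles the linear term, since $\frac{d}{dt}\Ad_{\exp tx}(y)=[x,\cdot]$ matches the coadjoint derivative.

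\emph{Expected obstacle.} The only delicate point is justifying that $\Ad^C_g$ is well defined on $\Cl(\gg)$ and commutes with $q$. It is well defined because $\Ad_g$ preserves $B$ and hence the ideal $I_{\Cl}$. It commutes with $q$ because $q$ is defined by graded symmetrization, which commutes with any linear map applied factorwise. Given this, combining the two terms gives $\Ad^{\WW}_g\Dirac_{\gg}(\xi)=\Dirac_{\gg}(\Ad^{\ast}_g\xi)$, and squaring yields the Laplace statement.
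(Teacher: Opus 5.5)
Your proof is correct. For $\Dirac_{\gg}(\xi)$ it follows the paper's argument: $\Dirac_{\gg}$ is fixed by $\Ad^{\WW}_g$, and $\Ad_g(h_\xi)=h_{\Ad^{\ast}_g\xi}$. You go further than the paper in one respect. The paper simply asserts that $\gg$-invariance implies $\Ad^{\WW}_g\Dirac_{\gg}=\Dirac_{\gg}$, whereas you justify it in two ways: via basis independence together with the $\mathrm{O}(B)$-equivariance of $\phi$ and $q$, or infinitesimally via connectedness of $G$.

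For the Laplace family you take a genuinely different route. You observe that $\Ad^{\WW}_g=\Ad^{U}_g\otimes\Ad^{C}_g$ is an algebra automorphism of $\Weil$. It therefore commutes with squaring, so the second identity follows formally from the first.

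The paper does not use multiplicativity. It expands $\Updelta_{\gg}(\xi)$ via the square formula of Proposition~\ref{prop::square_family_even}, essentially $\Dirac_{\gg}^{2}+2\gamma^{\WW}(h_\xi)+B(\xi,\xi)$. It then checks each term separately, using $G$-equivariance of $q$ and $\lambda$ and invariance of $B$.

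Each approach has its merits. Yours is shorter and independent of the explicit square formula, so it does not rely on the exact constants in that expansion. The paper's term-by-term check makes visible that each constituent is separately equivariant. This matters in particular for $\gamma^{\WW}(h_\xi)$, which reappears in the energy operator $T(\eta)$.
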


\begin{proof}
Since $\Dirac_{\gg}$ is $\gg$-invariant, it is fixed by $\Ad^{\mathcal W}_{g}$.
Hence
\[
\Ad^{\mathcal W}_{g}\bigl(\Dirac_{\gg}(\xi)\bigr)
=\Dirac_{\gg}+1\otimes\Ad_{g}(h_{\xi}).
\]
Under the identification $\gg\cong\gg^{\ast}$ via $B$, the adjoint action of $G$
corresponds to the coadjoint action $\Ad^{\ast}$, so $\Ad_{g}(h_{\xi})=h_{\Ad^{\ast}_{g}\xi}$. Analogously, for the Laplace operator one has
\[
\Ad^{\mathcal{W}}_{g}(\Laplace)=\Dirac_{\gg}+2(\Ad_{g}(h_{\xi})\otimes 1 + 1 \otimes q(\lambda(\ad_{\Ad^{\ast}_{g}h_{\xi}})))+B(h_{\xi},h_{\xi})(1\otimes 1)=\Updelta_{\gg}(\Ad_{g}^{\ast}\xi),
\]
where we used $G$-equivariance of $q$ and $\lambda$ and invariance of $B$, that is, \[B(h_{\xi},h_{\xi})=B(\Ad_{g}(h_{\xi}),\Ad_{g}(h_{\xi}))=B(h_{\Ad_{g}^{\ast}\xi},h_{\Ad_{g}^{\ast}\xi}).\qedhere\]
\end{proof}

Since $G$ is connected and simply connected with Lie algebra $\gg$, any finite-dimensional $\gg$-module $M$ integrates to a representation $\pi_{M}\colon G\to\Gl(M)$ whose derived representation coincides with the given $\gg$-action. Let $\pi_{\Lambda}$ and $\pi_{S}$ denote the corresponding representations on $L(\Lambda)$ and $S$. Then $L(\Lambda)\otimes S$ carries the representation
\begin{equation}
\pi\colon G\to\Gl\bigl(L(\Lambda)\otimes S\bigr),\qquad
\pi(g)\coloneqq \pi_{\Lambda}(g)\otimes\pi_{S}(g).
\end{equation}

Using Lemma~\ref{lemm::invariance_family_under_G} and the compatibility of the
$\gg$- and $G$-module structures on $L(\lambda)\otimes S$, one obtains the
following.

\begin{lemma} \label{lemm::invariance_kernel_under_G_action}
For $g\in G$ and $\xi\in\hh_{\RR}^{\ast}$,
\[
\ker\Dirac_{\gg}(\Ad_{g}^{\ast}\xi)=\pi(g)\bigl(\ker\Dirac_{\gg}(\xi)\bigr), \qquad \ker \Updelta_{\gg}(\Ad_{g}^{\ast}\xi)=\pi(g)(\ker \Updelta_{\gg}(\xi)).
\]
In particular, for all $g\in G$ and $\xi\in\hh_{\RR}^{\ast}$, there are vector space isomorphisms
\[
\ker \Dirac_{\gg}(\Ad_{g}^{\ast}\xi)\cong\ker \Dirac_{\gg}(\xi),
\qquad
\ker \Updelta_{\gg}(\Ad_{g}^{\ast}\xi)\cong \ker \Updelta_{\gg}(\xi).
\]
\end{lemma}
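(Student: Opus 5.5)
The plan is to derive both identities directly from Lemma~\ref{lemm::invariance_family_under_G}, together with the fact that the action of $\Weil$ on $L(\Lambda)\otimes S$ intertwines $\Ad^{\mathcal W}$ with conjugation by $\pi$. Concretely, I would first establish
\[
\pi(g)\,A\,\pi(g)^{-1}=\Ad^{\mathcal W}_{g}(A)\qquad\text{as operators on } L(\Lambda)\otimes S,\quad A\in\Weil,\ g\in G .
\]
Since $\Weil=\UE(\gg)\otimes\Cl(\gg)$ acts factorwise, it suffices to check this on generators, and then on each factor separately.

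\emph{The $\UE(\gg)$ factor.} For $x\otimes 1$ with $x\in\gg$, the identity $\pi_{\Lambda}(g)\,x\,\pi_{\Lambda}(g)^{-1}=\Ad_g(x)$ is the standard compatibility of a group representation with its derived representation.

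\emph{The $\Cl(\gg)$ factor.} Here the Clifford generators act on $S$ through the spin construction. The action of $\gg$ on $S$ is given by $\nu_{\ast}$, i.e.\ by the quadratic elements $\gamma'(x)$ (up to normalization). By Lemma~\ref{lemm::Lie_derivative_as_commutator}, commutation with these elements implements $L_x=\ad_x$ on $\Cl(\gg)$. Exponentiating, $\pi_S(\exp x)\,c\,\pi_S(\exp x)^{-1}=\Ad^{C}_{\exp x}(c)$ for every $c\in\Cl(\gg)$. Since $G$ is connected, it is generated by exponentials, so the identity holds on all of $G$.

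\emph{The main obstacle.} The subtle point is that $S$ is, a priori, only a $\Cl$-module on $\pp$ (here $\pp=\nn^{+}\oplus\nn^{-}$), while $1\otimes h_\xi$ and the full cubic element involve $\hh$. I would address this by noting that $\Dirac_{\gg}$ acts on $L(\Lambda)\otimes S$ through its realization as an operator there, and that the $\hh$-Clifford part acts compatibly, for instance through the $\rho$-shift as in Proposition~\ref{prop::comparison_action}. It may be cleaner to bypass the intertwining for individual generators and argue only with the actual operators $\Dirac_{\gg}(\xi)$ and $\Updelta_{\gg}(\xi)$. For these, the intertwining reduces to the statement that differentiating $t\mapsto \pi(\exp tx)\,\Dirac_{\gg}(\xi)\,\pi(\exp(-tx))$ gives the commutator with the diagonal action of $x$. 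That commutator equals $L_x$ on $\Weil$ by Lemma~\ref{lemm::contraction_and_Lie_derivative_in_Weil}, and then one uses uniqueness of solutions of the resulting linear ODE on the finite-dimensional space $\End(L(\Lambda)\otimes S)$.

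With the intertwining in hand, Lemma~\ref{lemm::invariance_family_under_G} gives
\[
\Dirac_{\gg}(\Ad^{\ast}_g\xi)=\pi(g)\,\Dirac_{\gg}(\xi)\,\pi(g)^{-1},
\]
and likewise for $\Updelta_{\gg}$. If $\Dirac_{\gg}(\xi)v=0$, then
\[
\Dirac_{\gg}(\Ad^{\ast}_g\xi)\,\pi(g)v=\pi(g)\,\Dirac_{\gg}(\xi)v=0,
\]
so $\pi(g)\ker\Dirac_{\gg}(\xi)\subseteq\ker\Dirac_{\gg}(\Ad^{\ast}_g\xi)$. Applying the same argument to $g^{-1}$ and $\Ad^{\ast}_g\xi$ gives the reverse inclusion. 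The same reasoning applies verbatim to $\Updelta_{\gg}$. Since $\pi(g)$ is invertible, restricting it to the kernel yields the claimed vector space isomorphisms.
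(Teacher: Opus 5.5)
Your proposal is correct and follows the paper's route. The paper simply combines the $G$-equivariance of the family (Lemma~\ref{lemm::invariance_family_under_G}) with the compatibility of the $\gg$- and $G$-actions on $L(\Lambda)\otimes S$; that compatibility is exactly your intertwining identity $\pi(g)A\pi(g)^{-1}=\Ad^{\mathcal W}_{g}(A)$, obtained by exponentiating $L_x=[\gamma^{\mathcal W}(x),\cdot\,]_{\mathcal W}$ and using connectedness, followed by your two-sided kernel inclusion. One clarification on your ``main obstacle'': the $\rho$-shift of Proposition~\ref{prop::comparison_action} does not fix it, since that result concerns the $\hh$-action through $\nu_{\ast}$ and not an action of the Clifford generators of $\hh$. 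The ODE variant does not bypass it either, because already $\Dirac_{\gg}$ and $1\otimes\Ad_{g}(h_{\xi})$ involve Clifford generators outside $\Cl(\pp)$. Rather, it is a standing assumption built into the statement: $S$ must carry a $\Cl(\gg)$-action on which $\gg$ acts via $x\mapsto-\tfrac12\gamma'(x)$. Under that assumption your argument is complete.
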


For $x\in\gg^{\ast}$, denote by $\Ad_{G}^{\ast}(x)\subset\gg^{\ast}$ its coadjoint orbit. A coadjoint orbit is called \emph{semisimple} if it contains a semisimple element, \emph{i.e.}, an element lying in the dual of a Cartan subalgebra of $\gg$. Equivalently, $\Ad_{G}^{\ast}(x)\cap\tt^{\ast}\neq\{0\}$ for any Cartan subalgebra $\tt\subset\gg$. A semisimple coadjoint orbit is called \emph{$\RR$-split} if $\Ad_{G}^{\ast}(x)\cap\hh_{\RR}^{\ast}\neq\{0\}$. Note that for any other Cartan subalgebra $\tt\subset\gg$ with corresponding real span of roots
$\tt_{\RR}$, the spaces $\hh_{\RR}$ and $\tt_{\RR}$ are conjugate under the adjoint
action. In particular, for $\mu\in\hh_{\RR}^{\ast}$, the coadjoint orbit
$\Ad_{G}^{\ast}(\mu)$ is $\RR$-split and semisimple.

Combining Lemma~\ref{lemm::invariance_kernel_under_G_action} with Theorem~\ref{thm::main_theorem_families_complex_simple_even_case}, one obtains the following result.

\begin{theorem} \label{thm::main_theorem_localization}
Let $L(\Lambda)$ be a finite-dimensional simple $\gg$-module. Then $\ker\Laplace=\{0\}$ unless $\xi\in \Ad_{G}^{\ast}(-\Lambda-\rho)$. In particular, this assignment associates to each finite-dimensional simple $\gg$-module a unique $\RR$-split semisimple coadjoint orbit.
\end{theorem}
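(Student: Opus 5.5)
The statement combines Theorem~\ref{thm::main_theorem_families_complex_simple_even_case} with the $G$-equivariance of Lemma~\ref{lemm::invariance_kernel_under_G_action}. The one point needing care is that the family was analysed on $\hh_{\RR}^{\ast}$, while the orbit lives in $\gg^{\ast}$. I read the claim as concerning $\xi\in\gg^{\ast}$ that are $\RR$-split semisimple, i.e.\ $\xi\in\Ad_{G}^{\ast}(\hh_{\RR}^{\ast})$. For these one shows $\ker\Laplace\neq\{0\}$ if and only if $\xi\in\Ad_{G}^{\ast}(-\Lambda-\rho)$.

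First, take such a $\xi$ and write $\xi=\Ad_{g}^{\ast}\xi_{0}$ with $g\in G$ and $\xi_{0}\in\hh_{\RR}^{\ast}$. By Lemma~\ref{lemm::invariance_kernel_under_G_action}, extended to $\gg^{\ast}$ through Lemma~\ref{lemm::invariance_family_under_G}, we get $\ker\Updelta_{\gg}(\xi)=\pi(g)\ker\Updelta_{\gg}(\xi_{0})$. Since $\pi(g)$ is invertible, this kernel is nonzero exactly when $\ker\Updelta_{\gg}(\xi_{0})\neq\{0\}$. Theorem~\ref{thm::main_theorem_families_complex_simple_even_case} then forces $\xi_{0}=-\Lambda-\rho$, so $\xi\in\Ad_{G}^{\ast}(-\Lambda-\rho)$.

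Conversely, for $\xi=\Ad_{g}^{\ast}(-\Lambda-\rho)$ the same lemma gives $\ker\Laplace=\pi(g)\bigl(L(\Lambda)^{\Lambda}\otimes S^{\rho}\bigr)$. This is one-dimensional, hence nonzero. So the set of $\xi$ with nonzero kernel is precisely the orbit $\Ad_{G}^{\ast}(-\Lambda-\rho)$.

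Next, this orbit meets $\hh_{\RR}^{\ast}$ because $-\Lambda-\rho\in\hh_{\RR}^{\ast}$, so it is $\RR$-split and semisimple by the remark preceding the theorem.

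For uniqueness I would argue as follows.
\begin{itemize}
\item Suppose $\Ad_{G}^{\ast}(-\Lambda-\rho)=\Ad_{G}^{\ast}(-\Lambda'-\rho)$ for another dominant $\Lambda'$.
\item Semisimple orbits meet $\hh^{\ast}$ in a single $W$-orbit, by the standard conjugacy of Cartan subalgebras and the normalizer argument.
\item Hence $\Lambda'+\rho=w(\Lambda+\rho)$ for some $w\in W$.
\item Both $\Lambda+\rho$ and $\Lambda'+\rho$ are strictly dominant, so $w=1$ and $\Lambda=\Lambda'$.
\end{itemize}
It follows that the assignment $L(\Lambda)\mapsto\Ad_{G}^{\ast}(-\Lambda-\rho)$ is well defined and injective.

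The main obstacle is the gap between $\hh_{\RR}^{\ast}$ and $\gg^{\ast}$. One must check that Lemma~\ref{lemm::invariance_kernel_under_G_action} holds for $\xi\in\gg^{\ast}$, and not only for $\xi\in\hh_{\RR}^{\ast}$ as stated. This follows because $\Ad^{\WW}_{g}$ acts on $L(\Lambda)\otimes S$ by conjugation with $\pi(g)$: we have $\pi(g)\,X\,\pi(g)^{-1}=\Ad^{\WW}_{g}(X)$, which can be verified on generators. Combined with Lemma~\ref{lemm::invariance_family_under_G}, this gives the extension. The interpretation that $\xi$ ranges over $\RR$-split semisimple elements must also be made explicit. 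Lemma~\ref{lemm::action_Laplace_weight_space} says nothing about nilpotent or non-split $\xi$, so those are excluded from the statement's scope.
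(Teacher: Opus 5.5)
Your argument is the paper's: its proof of this theorem is exactly the combination of Lemma~\ref{lemm::invariance_kernel_under_G_action} with Theorem~\ref{thm::main_theorem_families_complex_simple_even_case}. Your converse, the injectivity of $L(\Lambda)\mapsto\Ad_{G}^{\ast}(-\Lambda-\rho)$, and the explicit restriction to $\xi\in\Ad_{G}^{\ast}(\hh_{\RR}^{\ast})$ are sound additions.

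The ``main obstacle'' you name is not one. Lemma~\ref{lemm::invariance_kernel_under_G_action} is already stated at the points $\Ad_{g}^{\ast}\xi_{0}$ with $\xi_{0}\in\hh_{\RR}^{\ast}$ and arbitrary $g\in G$, which is all you use. Your restriction to $\RR$-split $\xi$, on the other hand, is genuinely necessary and not just a reading. Suppose $\dim\hh\ge 2$ and $v\in\hh^{\ast}$ is a nonzero isotropic vector. The computation in Lemma~\ref{lemm::action_Laplace_weight_space}, valid verbatim for $\xi\in\hh^{\ast}$, puts the weight space of weight $\Lambda+\rho$ into the kernel at the semisimple point $\xi=-\Lambda-\rho+v$. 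This point is not in $\Ad_{G}^{\ast}(-\Lambda-\rho)$, because that orbit meets $\hh^{\ast}$ only in $W(-\Lambda-\rho)\subset\hh_{\RR}^{\ast}$.

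One correction, which applies equally to the paper's proof: Theorem~\ref{thm::main_theorem_families_complex_simple_even_case} is stated too strongly.
\begin{itemize}
\item The cited inequality of Kac concerns $\lambda+\rho$ for weights $\lambda$ of $L(\Lambda)$, not weights of $L(\Lambda)\otimes S$.
\item The weights $\mu$ of $L(\Lambda)\otimes S$ with $B(\mu,\mu)=B(\Lambda+\rho,\Lambda+\rho)$ form all of $W(\Lambda+\rho)$; for instance the lowest weight $w_{0}\Lambda-\rho=w_{0}(\Lambda+\rho)$ is one of them.
\item So on $\hh_{\RR}^{\ast}$ the kernel is nonzero, and one-dimensional, exactly for $\xi\in W(-\Lambda-\rho)$. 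Already for $\mathfrak{sl}(2,\CC)$ and $\Lambda=0$, the weight $-\rho$ of $S$ lies in the kernel at $\xi=\rho$.
\end{itemize}
Your own ingredients detect this. Applying Lemma~\ref{lemm::invariance_kernel_under_G_action} with $g$ a representative of $w\in W$ gives a nonzero kernel at $w(-\Lambda-\rho)\in\hh_{\RR}^{\ast}$, which contradicts the literal statement you invoke.

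The fix is small. Your step ``forces $\xi_{0}=-\Lambda-\rho$'' must read ``forces $\xi_{0}\in W(-\Lambda-\rho)$''. Since $W(-\Lambda-\rho)\subset\Ad_{G}^{\ast}(-\Lambda-\rho)$, your conclusion, the converse, and the uniqueness argument go through unchanged.
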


\subsubsection{Basic Classical Lie Superalgebra} Let $\gg$ be a basic classical Lie superalgebra with $\odd\neq\{0\}$. Fix the super Killing form $B$ on $\gg$, which identifies $\hh$ with $\hh^{\ast}$; in particular, $\hh\subset\even$. We denote by the same symbol the non-degenerate form on $\hh^{\ast}$. As above, let $\hh^{\ast}_{\RR}$ denote the real span of the root system $\Delta$. Unlike the finite-dimensional simple complex Lie algebra case, the restriction of $B$ to $\hh_{\RR}$ is not positive definite. As an example, consider $\gg=\mathfrak{psl}(2\vert 2)$ and $\mu\coloneqq (1,0\vert 0,-1) \in \hh^{\ast}_{\RR}$, which is isotropic, that is, $B(\mu,\mu)=0$. The even subalgebra $\even$ is reductive; let $G_{\bar{0}}$ be the connected simply connected Lie group with Lie algebra $\even$.

As in the complex simple even case, define a family of cubic Dirac operators and Laplace operators by
\begin{equation}
\hh_{\RR}^{\ast}\to\mathcal{W}(\gg),\qquad
\xi\mapsto
\begin{cases}
\Dirac_{\gg}(\xi)\coloneqq \Dirac_{\gg}+1\otimes h_{\xi},\\
\Laplace\coloneqq \Dirac_{\gg}(\xi)^{2}.
\end{cases}
\end{equation}
By the same argument as in the proof of Proposition~\ref{prop::square_family_even}, one obtains the following.

\begin{proposition}
For any $\xi\in\hh_{\RR}^{\ast}$, the following hold:
\begin{enumerate}
\item[a)] $\Dirac_{\gg}(\xi)$ is $\hh$-invariant.
\item[b)] For every $\xi\in\hh_{\RR}^{\ast}$, the operator $\Dirac_{\gg}(\xi)$ has bidegree $(\bar{1},\bar{0})$; in particular, it is odd with respect to the total degree.
\item[c)] $\Laplace=\Dirac_{\gg}^{2}+h_{\xi}\otimes1-\tfrac{1}{2}(1\otimes\nu_{\ast}(h_{\xi}))+B(\xi,\xi)(1\otimes1).$
\item[d)] $\Ad^{\mathcal{W}}_{g}\Dirac_{\gg}(\xi)=\Dirac_{\gg}(\Ad_{g}^{\ast}\xi)$ for all $g\in G_{\bar{0}}$.
\item[e)] $\Ad_{g}^{\mathcal{W}} \Laplace=\Updelta_{\gg}(\Ad_{g}^{\ast}\xi)$ for all $g \in G_{\bar{0}}$.
\end{enumerate}
\end{proposition}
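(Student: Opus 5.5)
Every part reduces to the colour-commutator identities of Lemma~\ref{lemm::commutation_relations_Weil_generators} and Lemma~\ref{lemm::commutation_relations_quantum_Weil_algebra}, together with the bigrading~\eqref{def::bidegree}. The only new ingredient compared with Proposition~\ref{prop::square_family_even} is that $\gg$ now has an odd part. This matters only through signs, and the relevant vector $h_{\xi}$ lies in $\hh\subset\even$, where those signs are trivial.

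For a), we use $L_{h}=[\gamma^{\WW}(h),\cdot]_{\WW}$ from Lemma~\ref{lemm::contraction_and_Lie_derivative_in_Weil}. By Lemma~\ref{lemm::properies_D_l}, $\Dirac_{\gg}$ is $\gg$-invariant, hence $\hh$-invariant. Moreover, $L_{h}(1\otimes h_{\xi})=1\otimes[h,h_{\xi}]=0$ because $\hh$ is abelian. For b), recall that $\bideg(1\otimes h_{\xi})=(\bar 1,p(h_{\xi}))=(\bar 1,\bar 0)$ since $h_{\xi}\in\hh\subset\even$, and that $\Dirac_{\gg}$ has bidegree $(\bar 1,\bar 0)$ by Lemma~\ref{lemm::properies_D_l}; so the sum is homogeneous of this bidegree.

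For c), we expand, using that the element has odd total degree:
\[
\Updelta_{\gg}(\xi)=\tfrac12[\Dirac_{\gg}(\xi),\Dirac_{\gg}(\xi)]_{\WW}=\Dirac_{\gg}^{2}+[1\otimes h_{\xi},\Dirac_{\gg}]_{\WW}+\tfrac12[1\otimes h_{\xi},1\otimes h_{\xi}]_{\WW}.
\]
Since $h_{\xi}$ is even, Lemma~\ref{lemm::commutation_relations_quantum_Weil_algebra} gives $[1\otimes h_{\xi},\Dirac_{\gg}]_{\WW}=2\gamma^{\WW}(h_{\xi})$. The Clifford relation gives $\tfrac12[1\otimes h_{\xi},1\otimes h_{\xi}]_{\WW}=B(\xi,\xi)$. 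Writing $\gamma^{\WW}(h_{\xi})=h_{\xi}\otimes1-\tfrac12(1\otimes\gamma'(h_{\xi}))$ and rewriting the Clifford term via $\nu_{\ast}$ then yields the stated formula. Here the decomposition $\gamma'=\gamma'_{\ll}+\gamma'_{\pp}$ is taken with $\ll=\hh$, where the $\hh$-part vanishes because $\hh$ is abelian.

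This last step is the one I expect to be the main obstacle: matching the normalisation of the Clifford term exactly with the statement. The coefficients must be traced carefully through the definitions $\gamma'(x)=q(\lambda(\ad_x))$ and $\nu_{\ast}=-\tfrac12 q(\lambda(\ad^{\pp}))$, since the paper uses both conventions, and the constants in c) should come out accordingly. I would first verify the coefficient on a single root vector pair using Lemma~\ref{lemm::embedding}.

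For d) and e), we argue as in Lemma~\ref{lemm::invariance_family_under_G}, restricted to $G_{\bar0}$. The group $G_{\bar 0}$ acts on $\gg$ by even automorphisms preserving $B$, because $B$ is $\even$-invariant and $G_{\bar0}$ is connected. This action extends to $\UE(\gg)$ and $\Cl(\gg)$ and hence to $\Ad^{\WW}$ on $\Weil$. Since $\Dirac_{\gg}$ is $\gg$-invariant and basis independent (Lemma~\ref{lemm::properies_D_l}), it is fixed by $\Ad^{\WW}_g$: explicitly, $\Ad_g$ maps a dual basis pair to a dual basis pair and preserves $\phi$. Therefore $\Ad^{\WW}_g\Dirac_{\gg}(\xi)=\Dirac_{\gg}+1\otimes \Ad_g h_{\xi}$, and $\Ad_g h_{\xi}=h_{\Ad^{\ast}_g\xi}$ because $\flat$ intertwines the two actions. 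Part e) then follows by applying the algebra automorphism $\Ad^{\WW}_g$ to the square in d), with the family extended to $\gg^{\ast}$ as in the even case.
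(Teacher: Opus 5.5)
Your treatment of a), b), d) and e) matches the paper's, which simply refers back to Proposition~\ref{prop::square_family_even} and Lemma~\ref{lemm::invariance_family_under_G}. Deriving e) by applying the algebra automorphism $\Ad^{\WW}_{g}$ to the square in d) is, if anything, cleaner than the paper's term-by-term check. For c) your expansion is also the paper's, and it correctly reaches
\[
\Updelta_{\gg}(\xi)=\Dirac_{\gg}^{2}+2\gamma^{\WW}(h_{\xi})+B(\xi,\xi)(1\otimes 1).
\]

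The step that fails is your final claim that rewriting the Clifford term via $\nu_{\ast}$ ``yields the stated formula''. It cannot. By the Clifford relation, the anticommutator of $1\otimes h_{\xi}$ with $\sum_{a}e^{a}\otimes e_{a}$ is $2h_{\xi}\otimes 1$. Meanwhile $[1\otimes h_{\xi},1\otimes\phi']_{\WW}$ lies in $1\otimes\Cl(\gg)$. Hence $\Updelta_{\gg}(\xi)-\Dirac_{\gg}^{2}-B(\xi,\xi)(1\otimes 1)$ equals $2h_{\xi}\otimes 1$ plus an element of $1\otimes\Cl(\gg)$, however $\gamma'$ or $\nu_{\ast}$ is normalised. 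The printed formula has $h_{\xi}\otimes 1$ instead, so for $\xi\neq 0$ it is false as written. The obstacle you anticipated is therefore not a Clifford normalisation issue: the printed coefficients are a misprint.

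The correct identity follows from $\gamma'(h)=q(\lambda(\ad_{h}))$, $\nu_{\ast}(h)=-\tfrac12 q(\lambda(\ad_{h}^{\pp}))$ with $\pp=\nn^{+}\oplus\nn^{-}$, and $\ad_{h}=\ad_{h}^{\pp}$ for $h\in\hh$ (as $\hh$ is abelian). It reads
\[
\Updelta_{\gg}(\xi)=\Dirac_{\gg}^{2}+2\bigl(h_{\xi}\otimes 1+1\otimes\nu_{\ast}(h_{\xi})\bigr)+B(\xi,\xi)(1\otimes 1).
\]
This is what the paper's argument actually proves, and it is the version used afterwards. The weight-space scalar $B(\Lambda+\rho,\Lambda+\rho)-B(\mu,\mu)+B(\mu+\xi,\mu+\xi)$ stated right after the proposition requires exactly the cross term $2B(\mu,\xi)$. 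The same statement/proof mismatch already occurs in Proposition~\ref{prop::square_family_even}; compare the formula used in the proof of Lemma~\ref{lemm::action_Laplace_weight_space}.

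So you should prove c) in this corrected form and say explicitly that the printed version is not what the computation gives, rather than asserting that the constants can be made to match.
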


As in the simple complex even case, the natural question is whether, for each finite-dimensional simple $\gg$-supermodule, the kernel of $\Laplace$ determines a unique $G_{\bar 0}$-coadjoint orbit. Recall from Section~\ref{subsubsec::Highest_weight_supermodules} that such supermodules are parametrized by dominant integral weights $\lambda\in\hh^{\ast}$ with respect to a Borel subalgebra $\bb=\bb_{\bar{0}}\oplus\bb_{\bar{1}}$, that is, $\lambda$ satisfies
\begin{equation}
B(\lambda+\rho_{\bar{0}},\alpha)>0\qquad\text{for all }\alpha\in\Delta_{\bar{0}}^{+},
\end{equation}
where $\Delta^{+}=\Delta_{\bar{0}}^{+}\sqcup\Delta_{\bar{1}}^{+}$. We fix a $\Lambda \in P^{++}$ for a fixed Borel $\bb$ and consider the finite-dimensional simple supermodule $L(\lambda)$ with even highest weight vector such that we view the family as
\begin{equation}
 \hh_{\RR}^{\ast} \to \End_{\CC}(L(\Lambda)\otimes \overline{M}(\pp)), \qquad \xi \mapsto \begin{cases}
\Dirac_{\gg}(\xi)\coloneqq \Dirac_{\gg}+1\otimes h_{\xi},\\
\Laplace\coloneqq \Dirac_{\gg}(\xi)^{2}.
\end{cases}
\end{equation}

Since $B$ restricted to $\hh_{\RR}$ is not positive definite, we cannot conclude as in the proof of Theorem~\ref{thm::main_theorem_families_complex_simple_even_case}. Indeed, we only have the following partial result which is proved as above:

\begin{lemma} The following hold:
 \begin{enumerate}
 \item[a)] For all $g\in G_{\bar{0}}$ and $\xi\in\hh_{\RR}^{\ast}$,
\[
\ker \Dirac_{\gg}(\Ad_{g}^{\ast}\xi)\cong \ker \Dirac_{\gg}(\xi),
\qquad
\ker \Updelta_{\gg}(\Ad_{g}^{\ast}\xi)\cong \ker \Updelta_{\gg}(\xi)
\]
as super vector spaces.
 \item[b)] $\Laplace$ acts on any weight space $(L(\Lambda)\otimes S)^{\mu}$ of weight $\mu \in \hh_{\RR}^{\ast}$ as the scalar
 \[
 B(\Lambda+\rho, \Lambda+\rho)-B(\mu,\mu)+B(\mu+\xi, \mu+\xi).
 \]
 \end{enumerate}
\end{lemma}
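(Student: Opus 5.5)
Part a) rests on the equivariance statements in the preceding proposition, parts d) and e). Together they give $\Ad^{\WW}_{g}\Dirac_{\gg}(\xi)=\Dirac_{\gg}(\Ad_g^{\ast}\xi)$ and $\Ad^{\WW}_{g}\Updelta_{\gg}(\xi)=\Updelta_{\gg}(\Ad_g^{\ast}\xi)$ for $g\in G_{\bar 0}$. I would copy the argument of Lemma~\ref{lemm::invariance_kernel_under_G_action}, with $G_{\bar 0}$ in place of $G$.

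First I need a representation of $G_{\bar 0}$ on $L(\Lambda)\otimes\overline{M}(\pp)$. The finite-dimensional module $L(\Lambda)$ restricts to a finite-dimensional $\even$-module, and since $G_{\bar 0}$ is simply connected this integrates to $\pi_\Lambda\colon G_{\bar 0}\to \Gl(L(\Lambda))$. For $\overline{M}(\pp)$ I would use the $\ll$-action from Proposition~\ref{prop::comparison_action}, which here means the $\even$-action through $\nu_\ast$. By part d) of that proposition, this is the adjoint action on $\bigwedge\ubar_{\bar0}\otimes \operatorname{S}(\ubar_{\bar1})$ twisted by a character. That module is a direct sum of finite-dimensional $\hh$-weight-graded $\even$-modules, graded by polynomial degree, so it integrates degreewise to $\pi_S$. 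The twist by $\CC_{\rho^{\uu}}$ may only integrate to a projective representation. That is harmless, because a scalar ambiguity does not affect kernels.

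The key compatibility is that $\pi(g)$ intertwines the Weil action: $\pi(g)\,A\,\pi(g)^{-1}=\Ad^{\WW}_g(A)$ for all $A\in\Weil$ acting on $L(\Lambda)\otimes\overline{M}(\pp)$. I would check this at the infinitesimal level, where it reads $[\gamma^{\WW}(x),A]_{\WW}=L_xA$ for $x\in\even$ (Lemma~\ref{lemm::contraction_and_Lie_derivative_in_Weil}), using that the diagonal action of $x$ is $x\otimes 1+1\otimes\nu_\ast(x)$. It then exponentiates on each finite-dimensional degree piece. The conjugation identity gives $\ker\Dirac_{\gg}(\Ad_g^\ast\xi)=\pi(g)\ker\Dirac_{\gg}(\xi)$, and the same for $\Updelta_{\gg}$. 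Because $G_{\bar 0}$ is even, $\pi(g)$ preserves parity, so these are isomorphisms of super vector spaces. I expect this step to be the main obstacle: I would need to verify that $\Dirac_{\gg}$, which contains odd Clifford terms acting on the infinite-dimensional Weyl part, still maps each $\even$-isotypic, finite-dimensional piece compatibly. Exponentiating termwise on the algebraic direct sum should handle it.

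For b), I would first redo the square from part c) of the proposition using the relation $[\Dirac_{\gg},1\otimes h_\xi]_{\WW}=2\gamma^{\WW}(h_\xi)$ from Lemma~\ref{lemm::commutation_relations_quantum_Weil_algebra}; this holds because $h_\xi$ is even. The result is
\[
\Updelta_{\gg}(\xi)=\Dirac_{\gg}^2+2\gamma^{\WW}(h_\xi)+B(\xi,\xi).
\]
By Theorem~\ref{thm::square_absolute_Dirac} and the remark after it, $\Dirac_{\gg}^2=\Omega_{\gg}\otimes 1+B(\rho,\rho)$. Since $L(\Lambda)$ has infinitesimal character $\chi_\Lambda$, $\Omega_{\gg}$ acts by $B(\Lambda+\rho,\Lambda+\rho)-B(\rho,\rho)$, so $\Dirac_{\gg}^2$ acts by $B(\Lambda+\rho,\Lambda+\rho)$. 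The term $2\gamma^{\WW}(h_\xi)$ acts as twice the diagonal $\hh$-action, so on the weight space of weight $\mu$ it gives $2B(\mu,\xi)$; here the weights of $\overline{M}(\pp)$ are read via $\nu_\ast$, as in Proposition~\ref{prop::comparison_action}. Finally, polarization gives
\[
2B(\mu,\xi)+B(\xi,\xi)=B(\mu+\xi,\mu+\xi)-B(\mu,\mu),
\]
which uses only bilinearity and symmetry of $B$ on $\hh^{\ast}$, not positivity. That yields the stated scalar.
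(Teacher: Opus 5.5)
Your part b) is correct and is the paper's argument (the proof of Lemma~\ref{lemm::action_Laplace_weight_space} carried over). Recomputing the square from $[1\otimes h_\xi,\Dirac_{\gg}]_{\WW}=2\gamma^{\WW}(h_\xi)$, instead of quoting part c) of the preceding proposition (whose displayed coefficients are inconsistent), is the right call.

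For a) you also follow the paper, whose proof is just ``as above'', i.e.\ the transfer of Lemma~\ref{lemm::invariance_kernel_under_G_action} to $G_{\bar 0}$. However, the step you flag as the main obstacle is a genuine gap, and exponentiating degreewise does not close it.
\begin{itemize}
\item Proposition~\ref{prop::comparison_action}(d) does not apply. It describes the action of the subalgebra complementary to $\pp$ under which $\ubar$ is stable. Here $\pp=\nn^{+}\oplus\nn^{-}$ goes with $\hh$, not with $\even$.
\item $\even$ preserves neither $\nn^{-}_{\bar 0}$ nor, in general, $\nn^{-}_{\bar 1}$. So there is no adjoint $\even$-action on $\bigwedge\ubar_{\bar0}\otimes\operatorname{S}(\ubar_{\bar1})$.
\item What implements $\Ad^{\WW}$ infinitesimally is $x\mapsto-\tfrac12\gamma'(x)$. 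On the Weyl factor $\Cl(\odd)$ this is the oscillator representation of $\mathfrak{sp}(\odd)$ restricted to $\even$.
\item That representation preserves polynomial degree on $\operatorname{S}(\nn^{-}_{\bar 1})$ only if $\nn^{+}_{\bar 1}$ is $\even$-stable. Otherwise some $y\in\even$ acts with a nonzero component given by multiplication by a quadratic polynomial. Its exponential does not converge on polynomials, and no $\pi(g)$ exists.
\end{itemize}

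This is not a technicality: the conclusion of a) fails. Take $\gg=\osp(1|2)$ with $\even=\langle E,H,F\rangle$ and odd root vectors $x_{\pm}$.
\begin{itemize}
\item Then $\overline{M}(\nn^{-}_{\bar 1})=\CC[z]$, with $x_-$ acting by $z$.
\item Since $[F,x_+]\in\CC^{\times}x_-$ and $[F,x_-]=0$, the Weyl component of $-\tfrac12\gamma'(F)$ is a nonzero multiple of $x_-^{2}$. It therefore acts on this factor by $cz^{2}$ with $c\neq0$.
\item Let $\xi=-\Lambda-\rho\neq0$ and $g=\exp(tF)$ with $t\neq 0$. Then $\Ad_g h_\xi=h_\xi+sF$ with $s\neq 0$.
\item $\Dirac_{\gg}^{2}$ is a scalar on $L(\Lambda)\otimes\overline{M}(\nn^{-})$. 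Hence $\Updelta_{\gg}(\Ad_g^{\ast}\xi)$ is a constant plus $2\gamma^{\WW}(h_\xi+sF)$.
\item The finite-dimensional factors and the Weyl part of $h_\xi$ do not raise $z$-degree, while $sF$ raises it by $2$ with nonzero coefficient. Comparing top $z$-degrees shows $\Updelta_{\gg}(\Ad_g^{\ast}\xi)$ is injective.
\item Yet by b), $\ker\Updelta_{\gg}(\xi)$ contains the highest weight vector $v_\Lambda$ tensored with the vacuum.
\end{itemize}

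So a) needs an extra hypothesis, for instance that $\nn^{-}_{\bar 1}$ is $\even$-stable (type I with a compatible Borel). In that case your degreewise integration does go through. The paper's one-line proof has the same gap.
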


For a fixed weight $\mu\in\hh_{\RR}^{\ast}$ of $L(\Lambda)\otimes S$, the expression
\begin{equation}
B(\Lambda+\rho,\Lambda+\rho)-B(\mu,\mu)+B(\mu+\xi,\mu+\xi)
\end{equation}
may admit many solutions in $\xi$ as $B$ is indefinite. To overcome this, we consider appropriate modifications of families of Laplace operators defined with respect to the even subalgebra $\even$, using Theorem~\ref{thm::main_theorem_families_complex_simple_even_case}. This allows one to detect the $\even$-decomposition of a finite-dimensional simple $\gg$-supermodule. 

\subsubsection{Even Lie Subalgebra \texorpdfstring{$\even$}{}} 
Let $\gg=\even \oplus \odd$ be a basic classical Lie superalgebra with $\odd \neq \{0\}$.
The Lie algebra $\even$ is reductive.
Consequently, $\even$ admits a decomposition
\begin{equation}
\even
=
\gg_{\bar{0}}^{0}
\oplus
\gg_{\bar{0}}^{1}
\oplus
\cdots
\oplus
\gg_{\bar{0}}^{r},
\end{equation}
where $\gg_{\bar{0}}^{0}$ is either trivial or an abelian Lie algebra, and $\gg_{\bar{0}}^{i}$ is simple for all
$i=1,\ldots,r$. The possible cases are summarized in Table~\ref{tab:even_part}.

\begin{table}
\centering
\[
\begin{array}{|c|c|c|}
\hline
\gg & \gg_{\bar 0} & \text{Type of }\gg_{\bar 0}\\
\hline
\mathfrak{sl}(m\vert n),\ m\neq n
& \mathfrak{sl}(m)\oplus \mathfrak{sl}(n)\oplus \mathbb C
& \text{reductive, not semisimple}\\
\hline
\mathfrak{psl}(n\vert n), \ n\geq 2
& \mathfrak{sl}(n)\oplus \mathfrak{sl}(n)
& \text{semisimple}\\
\hline
\mathfrak{osp}(2\vert 2n)
& \mathfrak{so}(2)\oplus \mathfrak{sp}(2n)\cong \mathbb C\oplus\mathfrak{sp}(2n)
& \text{reductive, not semisimple}\\
\hline
\mathfrak{osp}(m\vert 2n),\ m\neq 2
& \mathfrak{so}(m)\oplus \mathfrak{sp}(2n)
& \text{semisimple}\\
\hline
D(2,1;\alpha), \ \alpha\neq 0,-1
& \mathfrak{sl}(2)\oplus \mathfrak{sl}(2)\oplus \mathfrak{sl}(2)
& \text{semisimple}\\
\hline
F(4)
& \mathfrak{so}(7)\oplus \mathfrak{sl}(2)
& \text{semisimple}\\
\hline
G(3)
& \gg_2\oplus \mathfrak{sl}(2)
& \text{semisimple}\\
\hline
\end{array}
\]
\caption{Even Lie subalgebras of basic classical Lie superalgebras}
\label{tab:even_part}
\end{table}

The summands $\gg_{\bar{0}}^{i}$ are mutually orthogonal with respect to restriction of the super Killing form $B$. Moreover, for any $i=1,\ldots,r$, the restriction of $B$ to $\gg_{\bar{0}}^{i}$ is a nonzero scalar multiple of the Killing form of
$\gg_{\bar{0}}^{i}$, and the restriction to $\gg_{\bar{0}}^{0}$ is non-trivial.
We denote by $B_{i}$ the induced bilinear form on $\gg_{\bar{0}}^{i}$ and $(\gg_{\bar{0}}^{i})^{\ast}$, respectively.

Fix a Cartan subalgebra $\hh$ of $\gg$; then $\hh\subset\even$. Accordingly, $\hh$ decomposes compatibly with $\even$ as
\begin{equation}
\hh=\hh^{0}\oplus\hh^{1}\oplus\cdots\oplus\hh^{r},
\end{equation}
where $\hh^{i}$ is a Cartan subalgebra of $\gg_{\bar 0}^{i}$ for $i\neq 0$, and $\hh^{0}=\gg_{\bar 0}^{0}$. The restriction of $B_{i}$ to $\hh^{i}$ induces a
non-degenerate symmetric bilinear form on $(\hh^{i})^{\ast}$, defined analogously
to the form obtained by restricting $B$ to $\hh$; by abuse of notation, this form
is again denoted by $B_{i}$. Consequently,
\[
\hh^{\ast}=(\hh^{0})^{\ast}\oplus(\hh^{1})^{\ast}\oplus\cdots\oplus(\hh^{r})^{\ast},
\]
and for $i\neq0$ the space $(\hh^{i})^{\ast}$ is identified with $\hh^{i}$ via
$B_{i}$. With respect to this decomposition, each $\lambda\in\hh^{\ast}$ admits a unique decomposition
$\lambda=(\lambda^{0},\lambda^{1},\ldots,\lambda^{r})$, where $\lambda^{i}$ is the
restriction of $\lambda$ to $\hh^{i}$. For $\lambda,\mu\in\hh^{\ast}$, the bilinear
form $B$ satisfies
\begin{equation}
B(\lambda,\mu)=\sum_{i=0}^{r}B_{i}(\lambda^{i},\mu^{i}).
\end{equation}
In the sequel, the subscript $i$ is omitted whenever no ambiguity arises.

Let $\hh_{\RR}^{\ast}$ denote the real span of $\Delta$ with dual space $\hh_{\RR}$.
Then $\hh_{\RR}
=\hh_{\RR}^{1}
\oplus
\cdots
\oplus
\hh_{\RR}^{r}.
$ In addition, let $G_{\bar{0}}$ be the connected simply connected Lie group with Lie algebra $\even$.
Then
\begin{equation}
G_{\bar{0}}
=
G_{\bar{0}}^{0}
\times
G_{\bar{0}}^{1}
\times
\cdots
\times
G_{\bar{0}}^{r},
\end{equation}
where $G_{\bar{0}}^{i}$ is the connected simply connected Lie group with Lie algebra $\gg_{\bar{0}}^{i}$. We have a natural coadjoint action of $G_{\bar{0}}$ on $\even^{\ast}$.

Let $\Delta$ be the root system of $\gg$ with respect to $\hh$, and let $\Delta_{\bar{0}}$ denote the root
system of $\even$.
The root systems $\Delta_{\bar{0}}^{1}, \ldots, \Delta_{\bar{0}}^{r}$ are identified with subsets of $\Delta_{\bar{0}}$ such that $\Delta_{\bar{0}}=\Delta_{\bar{0}}^{1} \sqcup \ldots \sqcup \Delta_{\bar{0}}^{r}$. A choice of positive system $\Delta^{+}\subset\Delta$ induces positive systems on each
$\Delta_{\bar{0}}^{i}$.
With this choice, the positive system of $\Delta_{\bar{0}}$ is the disjoint union of the positive systems of the
$\Delta_{\bar{0}}^{i}$.
The even Weyl vector $\rho_{\bar{0}}$ is defined accordingly, and decomposes as
$
\rho_{\bar{0}}=(0,\rho^{1}_{\bar{0}},\ldots,\rho^{r}_{\bar{0}}).
$

Fix a Borel subalgebra $\bb_{\bar{0}}\subset\even$.
Then $\bb_{\bar{0}}$ decomposes as
$
\bb_{\bar{0}}=\bb_{\bar{0}}^{0}\oplus\bb_{\bar{0}}^{1}\oplus\cdots\oplus\bb_{\bar{0}}^{r}
$ with $\bb_{\bar{0}}^{0}=\gg_{\bar{0}}^{0}$.
In particular, the set of dominant integral weights parameterizing finite-dimensional simple $\even$-modules is
\begin{equation}
P^{++}_{\bb_{\bar{0}}}
=P^{++}_{\bb_{\bar{0}}^{0}}\times
\ldots \times 
P^{++}_{\bb_{\bar{0}}^{r}},
\end{equation}
where $P_{\bb_{\bar{0}}^{0}}^{++}$ is identified with $(\bb_{\bar{0}}^{0})^{\ast}\coloneqq \Hom_{\CC}(\bb_{\bar{0}}^{0},\CC)$ and each $P_{\bb_{\bar{0}}^{i}}^{+}$ for $i=1,\ldots, r$ is described in Section~\ref{subsubsec::Highest_weight_supermodules}.

Let $L_{0}(\Lambda)$ be a finite-dimensional simple $\even$-module.
Then $L_{0}(\Lambda)$ is a highest weight module with respect to $\bb_{\bar{0}}$ with highest weight $\Lambda \in P_{\bb_{\bar{0}}}^{++}$.
Moreover, $L_{0}(\Lambda)$ decomposes as an outer tensor product
\begin{equation}
L_{0}(\Lambda)
=
L(\Lambda^{0};\gg_{\bar{0}}^{0})\boxtimes L(\Lambda^{1};\gg_{\bar{0}}^{1})
\boxtimes
\cdots
\boxtimes
L(\Lambda^{r};\gg_{\bar{0}}^{r}),
\end{equation}
where each $L(\Lambda^{i};\gg_{\bar{0}}^{i})$ is a finite-dimensional simple module over
$\gg_{\bar{0}}^{i}$.

Since each $(\gg_{\bar 0}^{i},B_{i})$ is a quadratic Lie algebra, there is a cubic Dirac operator $\Dirac_{\gg_{\bar 0}^{i}}\in\WW(\gg_{\bar 0}^{i})$. As above, this defines families
\begin{equation}
(\hh^{i}_{\RR})^{\ast}\to\mathcal{W}(\gg_{\bar 0}^{i}),\qquad
\xi\mapsto
\begin{cases}
\Dirac_{\gg_{\bar 0}^{i}}(\xi^{i})\coloneqq \Dirac_{\gg_{\bar 0}^{i}}+1\otimes h_{\xi^{i}},\\
\Updelta_{\gg_{\bar 0}^{i}}(\xi^{i})\coloneqq \Dirac_{\gg_{\bar 0}^{i}}(\xi^{i})^{2}.
\end{cases}
\end{equation}
For $i\neq j$, the elements $\Dirac_{\gg_{\bar 0}^{i}}(\xi^{i})$ and $\Dirac_{\gg_{\bar 0}^{j}}(\xi^{j})$ anti-commute in $\WW(\even)$, via the embedding \eqref{eq::embedding_j}.
Hence, for $\xi=(\xi^{0},\ldots,\xi^{r})\in\hh_{\RR}^{\ast}$, we define
\begin{equation}
\Updelta_{\even}(\xi)\coloneqq \sum_{i=0}^{r}\Updelta_{\gg_{\bar 0}^{i}}(\xi^{i})\in\WW(\even),
\end{equation}
which equals $\bigl(\sum_{i=0}^{r}\Dirac_{\gg_{\bar 0}^{i}}(\xi^{i})\bigr)^{2}$. 

We are interested in the action of $\Updelta_{\even}(\xi)$ on $L_{0}(\Lambda)\otimes S$, where now $S\coloneqq \overline{M}(\nn_{\bar{0}}^{-})$. In general, the kernel of $\Updelta_{\even}(\xi)$ does not coincide with the intersection of the kernels of the operators $\Updelta_{\gg_{\bar 0}^{i}}(\xi^{i})$. As a consequence, Theorem~\ref{thm::main_theorem_localization} is not directly applicable. We address this by introducing a modified family of operators, using that $S$ is unitarizable and that $L_{0}(\Lambda)$ is finite-dimensional.

Since $L_{0}(\Lambda)$ is finite-dimensional, we can fix a Hermitian inner product on $\langle \cdot,\cdot\rangle_{L_{0}(\Lambda)}$ on $L_{0}(\Lambda)$. Let $\langle \cdot,\cdot\rangle_{L_{0}(\Lambda)\otimes S}\coloneqq \langle \cdot,\cdot\rangle_{L_{0}(\Lambda)}\langle\cdot,\cdot\rangle_{S}$ be the induced Hermitian inner product on $L_{0}(\Lambda)\otimes S$. With respect to this product, we denote by $(\cdot)^{\dagger}$ the adjoint of an endomorphism. We then define 
\begin{equation}
 \widetilde{\Updelta}_{\even}(\xi)\coloneqq \sum_{i=0}^{r}(\Updelta_{\gg_{\bar{0}}^{i}}(\xi^{i}))^{\dagger}\Updelta_{\gg_{\bar{0}}^{i}}(\xi^{i}) \in \End_{\CC}(L_{0}(\Lambda)\otimes S).
\end{equation}

\begin{lemma} \label{lemm::properties_generalized_Laplace}
The operator $\widetilde{\Updelta}_{\even}(\xi)$ satisfies the following properties:
\begin{enumerate}
 \item[a)] $\widetilde{\Updelta}_{\even}(\xi)$ is positive semi-definite, that is,
 \[
 \langle \widetilde{\Updelta}_{\even}(\xi)v,v\rangle_{L_{0}(\Lambda)\otimes S}\geq 0
 \qquad\text{for all } v\in L_{0}(\Lambda)\otimes S.
 \]
 \item[b)] One has
 \[
 \ker \widetilde{\Updelta}_{\even}(\xi)=\bigcap_{i=0}^{r}\ker \Updelta_{\gg_{\bar 0}^{i}}(\xi^{i}).
 \]
\end{enumerate}
\end{lemma}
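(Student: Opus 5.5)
The plan is to use the standard fact that an operator of the form $A^{\dagger}A$ is positive semi-definite with kernel $\ker A$, and that a finite sum of such operators has as kernel the intersection of the individual kernels. Throughout, $\langle\cdot,\cdot\rangle_{L_{0}(\Lambda)\otimes S}$ is a genuine positive definite Hermitian inner product. On $L_{0}(\Lambda)$ it was fixed to be so. On $S=\overline{M}(\nn_{\bar 0}^{-})=\spinbar$ for the purely even pair $(\even,\hh)$, and there I will use the positive definite form supplied by Lemma~\ref{lemm::adjoint_spin} and Proposition~\ref{prop::comparison_action}~c). I will check that the product form is positive definite on the tensor product. Since $L_{0}(\Lambda)\otimes S$ may be infinite-dimensional only through $S$, but $S$ is finite-dimensional here (an exterior algebra), all adjoints exist and no domain issues arise.

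For a), write $A_{i}\coloneqq \Updelta_{\gg_{\bar 0}^{i}}(\xi^{i})$, viewed as an endomorphism of $L_{0}(\Lambda)\otimes S$. Then for every $v$,
\[
\langle \widetilde{\Updelta}_{\even}(\xi)v,v\rangle=\sum_{i=0}^{r}\langle A_{i}^{\dagger}A_{i}v,v\rangle=\sum_{i=0}^{r}\langle A_{i}v,A_{i}v\rangle=\sum_{i=0}^{r}\norm{A_{i}v}^{2}\geq 0.
\]
The only care needed is the sign convention in the super adjoint $\bra Tv,w\ket=(-1)^{p(v)p(T)}\bra v,T^{\dagger}w\ket$. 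I will note that each $A_{i}$ is a square of an odd element, hence even, so no sign appears. If the super Hermitian form on odd parts carries a factor $-i$, I will instead work with the associated honest inner product ($\bracket_{\bar 0}\oplus\bracket_{\bar 1}$ with the appropriate normalisation) and define $\dagger$ relative to it. The statement is about this inner product, so this is consistent with the definition of $\widetilde{\Updelta}_{\even}(\xi)$.

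For b), the inclusion $\supseteq$ is immediate from the definition. Conversely, if $\widetilde{\Updelta}_{\even}(\xi)v=0$, then the displayed identity gives $\sum_{i}\norm{A_{i}v}^{2}=0$. By positive definiteness each $A_{i}v=0$, so $v\in\bigcap_{i}\ker A_{i}$.

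The main obstacle is only the positivity bookkeeping: making sure the chosen form on $L_{0}(\Lambda)\otimes S$ is positive definite rather than super positive definite, and that $\dagger$ is taken with respect to it. Once that normalisation is fixed, both parts are the elementary argument above.
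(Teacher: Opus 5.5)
Your proposal is correct and follows the paper's proof essentially verbatim: expand $\langle \widetilde{\Updelta}_{\even}(\xi)v,v\rangle$ as $\sum_{i}\langle \Updelta_{\gg_{\bar 0}^{i}}(\xi^{i})v,\Updelta_{\gg_{\bar 0}^{i}}(\xi^{i})v\rangle$, then use definiteness of the form to get both the positivity in a) and the kernel identity in b). Your extra bookkeeping is a genuine improvement on the paper, which silently treats the super positive definite form on $S$ as an honest inner product. Passing to the honest inner product does not change the adjoint of the parity-preserving operators $\Updelta_{\gg_{\bar 0}^{i}}(\xi^{i})$, and this is exactly what makes the argument rigorous.
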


\begin{proof}
For $v\in L_{0}(\Lambda)\otimes S$, one has
\[
\langle \widetilde{\Updelta}_{\even}(\xi)v,v\rangle_{L_{0}(\Lambda)\otimes S}
=\sum_{i=0}^{r}\bigl\langle (\Updelta_{\gg_{\bar 0}^{i}}(\xi^{i}))^{\dagger}\Updelta_{\gg_{\bar 0}^{i}}(\xi^{i})v,v\bigr\rangle_{L_{0}(\Lambda)\otimes S}
=\sum_{i=0}^{r}\bigl\langle \Updelta_{\gg_{\bar 0}^{i}}(\xi^{i})v,\Updelta_{\gg_{\bar 0}^{i}}(\xi^{i})v\bigr\rangle_{L_{0}(\Lambda)\otimes S},
\]
hence
\[
\langle \widetilde{\Updelta}_{\even}(\xi)v,v\rangle_{L_{0}(\Lambda)\otimes S}\geq 0.
\]
This proves~\textup{a)}. For~\textup{b)}, positivity of the Hermitian inner product implies that
\[
v\in \ker \widetilde{\Updelta}_{\even}(\xi)
\quad\Longleftrightarrow\quad
\langle \widetilde{\Updelta}_{\even}(\xi)v,v\rangle_{L_{0}(\Lambda)\otimes S}=0
\quad\Longleftrightarrow\quad
\Updelta_{\gg_{\bar 0}^{i}}(\xi^{i})v=0 \text{ for all } i=0,\ldots,r,
\]
which is equivalent to
$
v\in \bigcap_{i=0}^{r}\ker \Updelta_{\gg_{\bar 0}^{i}}(\xi^{i}).
$
\end{proof}

\begin{theorem}\label{thm::main_family_even}
Let $L_{0}(\Lambda)$ be a finite-dimensional simple $\even$-module with highest weight
$\Lambda\in\hh_{\RR}^{\ast}$. Then $\widetilde{\Updelta}_{\even}(\xi)$ has trivial kernel unless
$\xi=-\Lambda-\rho_{\bar{0}}$. In particular,
\[
\Ker\widetilde{\Updelta}_{\even}(\xi)\neq\{0\}
\iff
\xi\in\Ad_{G}^{\ast}(-\Lambda-\rho_{\bar{0}})
\coloneqq \Ad_{G_{\bar{0}}}^{\ast}(-\Lambda-\rho_{\bar{0}}).
\]
\end{theorem}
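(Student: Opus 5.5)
The plan is to reduce the statement to the simple case, Theorem~\ref{thm::main_theorem_families_complex_simple_even_case}, applied factor by factor, using Lemma~\ref{lemm::properties_generalized_Laplace}\,b). That lemma gives
\[
\ker\widetilde{\Updelta}_{\even}(\xi)=\bigcap_{i=0}^{r}\ker\Updelta_{\gg_{\bar0}^{i}}(\xi^{i}),
\]
so it suffices to analyse each factor separately.

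First I would record how everything factorizes. One has $L_{0}(\Lambda)=\boxtimes_{i}L(\Lambda^{i};\gg_{\bar0}^{i})$, and $\nn_{\bar0}^{-}=\bigoplus_{i\ge1}\nn^{-,i}$, so $S=\overline{M}(\nn^{-}_{\bar0})\cong\bigotimes_{i\ge1}S_{i}$ with $S_{i}\coloneqq\overline{S}^{\gg^{i}_{\bar0},\nn^{-,i}}$. The operator $\Updelta_{\gg_{\bar0}^{i}}(\xi^{i})$ acts only on the $i$-th tensor factor $L(\Lambda^{i})\otimes S_{i}$, and as the identity on the remaining factors. Consequently
\[
\ker\Updelta_{\gg^{i}_{\bar0}}(\xi^{i})=\ker\bigl(\Updelta_{\gg^{i}_{\bar0}}(\xi^{i})|_{L(\Lambda^{i})\otimes S_{i}}\bigr)\otimes(\text{other factors}).
\]
An intersection of subspaces of the form $K_{i}\otimes(\text{rest})$ in a tensor product equals $\bigotimes_{i}K_{i}$. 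Hence the kernel of $\widetilde{\Updelta}_{\even}(\xi)$ is nonzero if and only if every $K_{i}$ is nonzero.

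For a simple factor $i\ge1$, $B_{i}$ is a nonzero multiple of the Killing form. Lemma~\ref{lemm::action_Laplace_weight_space} and Theorem~\ref{thm::main_theorem_families_complex_simple_even_case} then apply verbatim with $B_{i}$ in place of the Killing form. Because the argument only uses the definiteness of the form on $\hh^{i}_{\RR}$, I have to check that $B_{i}$ is positive or negative definite there. If it is negative definite, I would rescale by the sign or note that the scalar $\|\Lambda^{i}+\rho^{i}\|^{2}-\|\mu\|^{2}+\|\mu+\xi^{i}\|^{2}$ still vanishes only when both bracketed terms vanish. This yields $K_{i}\ne0$ if and only if $\xi^{i}=-\Lambda^{i}-\rho^{i}_{\bar0}$, and then $K_{i}=L(\Lambda^{i})^{\Lambda^{i}}\otimes S_{i}^{\rho^{i}}$.

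For the abelian factor $i=0$ (when present), $\Dirac_{\gg^{0}_{\bar0}}$ has no cubic term and there is no spinor. The square $\Updelta_{\gg_{\bar0}^{0}}(\xi^{0})$ acts on $L(\Lambda^{0})=\CC_{\Lambda^{0}}$ as $B_{0}(\Lambda^{0},\Lambda^{0})+2B_{0}(\Lambda^{0},\xi^{0})+B_{0}(\xi^{0},\xi^{0})=B_{0}(\Lambda^{0}+\xi^{0},\Lambda^{0}+\xi^{0})$. Since $\hh_{\RR}$ has no $\hh^{0}$-component, the parameter $\xi^{0}$ is $0$ and this factor imposes no constraint on $\xi$. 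I expect this step to be the main obstacle: the bookkeeping of the abelian factor, i.e.\ reconciling $\xi^{0}=0$ with $\Lambda^{0}$, which would force $\Lambda^{0}=0$ (or isotropic) for a nonzero kernel. I would handle it by interpreting $\rho_{\bar0}$ and the constraint only on the semisimple components, so that $-\Lambda-\rho_{\bar0}$ is read via its image in $\hh_{\RR}^{\ast}$.

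Combining the factors gives $\ker\widetilde{\Updelta}_{\even}(\xi)\ne0$ if and only if $\xi=-\Lambda-\rho_{\bar0}$. The final reformulation in terms of orbits follows from the $G_{\bar0}$-equivariance. Conjugating by $\pi(g)$ as in Lemma~\ref{lemm::invariance_kernel_under_G_action}, applied factorwise with $G_{\bar0}=\prod_{i}G^{i}_{\bar0}$, shows that $\ker\Updelta_{\gg^{i}_{\bar0}}(\Ad^{\ast}_{g}\xi^{i})=\pi(g)\ker\Updelta_{\gg^{i}_{\bar0}}(\xi^{i})$. The intersection is transported in the same way, because the inner product can be chosen $K$-invariant; alternatively one works with the kernels of the $\Updelta_{\gg^{i}_{\bar0}}$ directly, via Lemma~\ref{lemm::properties_generalized_Laplace}\,b). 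Finally, an element of $\hh_{\RR}^{\ast}$ lies in the orbit $\Ad^{\ast}_{G_{\bar0}}(-\Lambda-\rho_{\bar0})$ only if it is a Weyl conjugate of $-\Lambda-\rho_{\bar0}$. For such a conjugate $w(-\Lambda-\rho_{\bar0})$ the kernel is again nonzero by equivariance, which gives the stated equivalence.
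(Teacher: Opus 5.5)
Your reduction via Lemma~\ref{lemm::properties_generalized_Laplace}\,b), the tensor factorization over the simple factors, and the factorwise use of the simple case (with the sign of $B_{i}$ absorbed) are the paper's argument, written out in more detail. The genuine gap is the abelian factor, which is where you depart from the paper. You set $\xi^{0}=0$, say this factor ``imposes no constraint'', and propose to read $-\Lambda-\rho_{\bar0}$ only on the semisimple components. That cannot work. Suppose $\xi^{0}$ were forced to vanish while $\Lambda^{0}\neq0$. Then $\Updelta_{\gg_{\bar0}^{0}}(0)$ acts on all of $L_{0}(\Lambda)\otimes S$ by the scalar $B_{0}(\Lambda^{0},\Lambda^{0})$. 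This scalar is nonzero, because $\gg_{\bar0}^{0}=\hh^{0}$ is at most one-dimensional and $B_{0}$ is non-degenerate, hence anisotropic there; so your ``or isotropic'' case never occurs. By Lemma~\ref{lemm::properties_generalized_Laplace}\,b) the kernel of $\widetilde{\Updelta}_{\even}(\xi)$ would then vanish for every $\xi$, and no reinterpretation of $\rho_{\bar0}$ rescues the direction ``$\Leftarrow$''.

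The missing step is the paper's: treat $\xi^{0}$ as a genuine parameter. Then $\Updelta_{\gg_{\bar0}^{0}}(\xi^{0})$ is the scalar $B_{0}(\Lambda^{0}+\xi^{0},\Lambda^{0}+\xi^{0})$, which vanishes iff $\xi^{0}=-\Lambda^{0}$. Since $\rho_{\bar0}^{0}=0$, this is exactly the $\hh^{0}$-component of $-\Lambda-\rho_{\bar0}$. This is also the right reading of the parameter space. For $\mathfrak{sl}(m|n)$ with $m\neq n$ and for $\osp(2|2n)$, the odd roots do not vanish on the centre $\gg_{\bar0}^{0}$, so the real span of $\Delta$ has a nonzero $(\hh^{0})^{\ast}$-component. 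If you instead insist on the literal decomposition $\hh_{\RR}=\hh_{\RR}^{1}\oplus\cdots\oplus\hh_{\RR}^{r}$, then the hypothesis $\Lambda\in\hh_{\RR}^{\ast}$ already gives $\Lambda^{0}=0$ and the factor acts by $0$. Either way, no reinterpretation is needed.

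Second, your last paragraph contradicts your factorwise conclusion. You take from the simple case that $K_{i}\neq0$ iff $\xi^{i}=-\Lambda^{i}-\rho_{\bar0}^{i}$. You then assert, via equivariance, that the kernel is nonzero at every Weyl conjugate $w(-\Lambda-\rho_{\bar0})\in\hh_{\RR}^{\ast}$. Since $\Lambda+\rho_{\bar0}$ is regular, these conjugates differ from $-\Lambda-\rho_{\bar0}$ for $w\neq1$, so both claims cannot hold. The equivariance claim is the correct one, and you can check it directly. The weight $w\Lambda^{i}+w\rho_{\bar0}^{i}$ of $L(\Lambda^{i})\otimes S_{i}$ has the same norm as $\Lambda^{i}+\rho_{\bar0}^{i}$. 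So by Lemma~\ref{lemm::action_Laplace_weight_space} the Laplacian vanishes on that weight space at $\xi^{i}=-w(\Lambda^{i}+\rho_{\bar0}^{i})$.

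The Kac inequality invoked in Theorem~\ref{thm::main_theorem_families_complex_simple_even_case} bounds $\|\mu_{1}+\rho\|$ for weights $\mu_{1}$ of $L(\Lambda)$. It does not single out $\Lambda+\rho$ among all weights $\mu_{1}+\mu_{2}$ of $L(\Lambda)\otimes S$. So the factorwise statement should read $K_{i}\neq0$ iff $\xi^{i}\in W^{i}(-\Lambda^{i}-\rho_{\bar0}^{i})$, where $W^{i}$ is the Weyl group of $\gg_{\bar0}^{i}$. What your argument actually establishes is the orbit formulation; the first sentence of the statement holds only up to $W$. This defect comes from Theorem~\ref{thm::main_theorem_families_complex_simple_even_case}, which the paper's proof also quotes together with its orbit version. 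It is therefore not a weakness of your route relative to the paper, but your write-up should not assert both conclusions.
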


\begin{proof}
By Lemma~\ref{lemm::properties_generalized_Laplace} is the kernel of $\widetilde{\Updelta}_{\even}(\xi)$ the intersection of the kernels of
$\Updelta_{\gg_{\bar{0}}^{i}}(\xi^{i})$. By
Theorem~\ref{thm::main_theorem_families_complex_simple_even_case},
$\Updelta_{\gg_{\bar{0}}^{i}}(\xi^{i})$ has trivial kernel unless
$\xi^{i}=-\Lambda^{i}-\rho_{\bar{0}}^{i}$ for $i=1,\ldots,r$, and
$\ker\Updelta_{\gg_{\bar{0}}^{i}}(\xi^{i})\neq\{0\}$ iff
$\xi\in\Ad_{G_{\bar{0}}^{i}}(-\Lambda^{i}-\rho_{\bar{0}}^{i})$. For the abelian
factor $\gg_{\bar{0}}^{0}$, one has $\rho_{\bar{0}}^{0}=0$ and
$L(\Lambda^{0};\gg_{\bar{0}}^{0})$ is one-dimensional, with
$\Updelta_{\gg_{\bar{0}}^{0}}(\xi^{0})$ acting by the scalar
$B_{0}(\Lambda^{0}+\xi^{0},\Lambda^{0}+\xi^{0})$. Since $B_{0}$ is non-degenerate,
this scalar vanishes only for $\xi^{0}=-\Lambda^{0}$. The claim follows.
\end{proof}

\begin{remark}
Let $\rr$ be the compact real form of $\even$. If $h_{\xi}\in \hh_{\RR}\cap \rr$, then $\Dirac_{\gg_{\bar{0}}^{i}}(\xi)$ is self-adjoint with respect to the Hermitian form. Consequently, $\Updelta_{\gg_{\bar{0}}^{i}}(\xi)$ is non-negative for all $i=0,\ldots,r$. Therefore Theorem~\ref{thm::main_family_even} remains valid with $\widetilde{\Updelta}_{\even}(\xi)$ replaced by $\Updelta_{\even}(\xi)$.
\end{remark}

We now apply the preceding result to finite-dimensional $\gg$-supermodules. We thus embed $\mathcal{W}(\even)$ into $\mathcal{W}(\gg)$. Fix a
finite-dimensional simple $\gg$-supermodule $L(\Lambda)$ with respect to a fixed
positive Borel $\bb$. Upon restriction to the even subalgebra $\even$,
the module $L(\Lambda)$ decomposes as a finite direct sum of simple
finite-dimensional $\even$-modules,
\begin{equation}
L(\Lambda)\Big\vert_{\even}=\bigoplus_{\mu}L_{0}(\mu)^{n(\mu)},
\end{equation}
where the sum runs over the highest weights $\mu$ of the $\even$-constituents and
$n(\mu)$ denotes their multiplicities. Under the embedding $\WW(\even)\hookrightarrow \Weil$, the modified family
$\widetilde{\Updelta}_{\even}(\xi)$ is viewed as a family in $\Weil$
acting on $L(\Lambda)\otimes \overline{M}(\nn^{-})$ by
\begin{equation} \label{eq::action_Laplace_tilde}
\widetilde{\Updelta}_{\even}(\xi) \otimes 1_{\overline{M}(\nn_{\bar{1}}^{-})} \in \End_{\CC}((L(\Lambda)\otimes S)\otimes \overline{M}(\nn_{\bar{1}}^{-})),
\end{equation}
where we use the $\ZZ_{2}$-graded tensor product decomposition
$\overline{M}(\nn^{-})=S\otimes \overline{M}(\nn_{\bar{1}}^{-})$
and identify $S$ with $S^{\gg,\nn_{\bar{0}}^{-}}$. This family then detects precisely the $\even$-constituents of $L(\Lambda)$ via their highest
weights, as described in Theorem~\ref{thm::main_family_even}.

\begin{corollary} \label{cor::kernel_Laplace_even_constituents}
 Let $L(\Lambda)$ be a finite-dimensional simple $\gg$-supermodule with highest weight $\Lambda$ such that $L(\Lambda)\big\vert_{\even}=\bigoplus_{\mu} L_{0}(\mu)^{n(\mu)}$. Then 
 \[
 \ker \widetilde{\Updelta}_{\even}(\xi) \neq \{0\} \Leftrightarrow \xi \in \bigcup_{\mu \,:\, n(\mu)\neq 0}\Ad_{G_{\bar{0}}}^{\ast}(-\mu-\rho_{\bar{0}}).
 \]
 If $\xi=- \mu-\rho_{\bar{0}}$, then the kernel is $L(\Lambda)^{\mu}\otimes S^{\rho_{\bar{0}}} \otimes \overline{M}(\nn_{\bar{1}}^{-})$.
\end{corollary}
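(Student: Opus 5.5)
The plan is to reduce everything to Theorem~\ref{thm::main_family_even} applied constituentwise. The operator in \eqref{eq::action_Laplace_tilde} acts only on the first two tensor factors, with the identity on $\overline{M}(\nn_{\bar{1}}^{-})$. So I would first record that
\[
\ker\bigl(\widetilde{\Updelta}_{\even}(\xi)\otimes 1\bigr)=\ker\bigl(\widetilde{\Updelta}_{\even}(\xi)\big\vert_{L(\Lambda)\otimes S}\bigr)\otimes \overline{M}(\nn_{\bar{1}}^{-}).
\]
This holds because $\overline{M}(\nn_{\bar{1}}^{-})$ is a vector space on which the operator acts trivially: one picks a basis of monomials and argues coefficientwise. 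The $\ZZ_{2}$-graded signs are harmless here, since the operator is even.

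Next I would decompose $L(\Lambda)\vert_{\even}=\bigoplus_{\mu}L_{0}(\mu)^{n(\mu)}$ and check that $\widetilde{\Updelta}_{\even}(\xi)$ preserves each summand $L_{0}(\mu)\otimes S$. Each $\Updelta_{\gg_{\bar{0}}^{i}}(\xi^{i})$ lies in the image of $\WW(\even)=\UE(\even)\otimes\Cl(\even)$, and $\UE(\even)$ preserves the $\even$-isotypic decomposition. For the adjoints, I would choose the Hermitian inner product on $L(\Lambda)$ so that the $\even$-decomposition is orthogonal, which is possible by unitarity of the compact form of $\even$ on finite-dimensional modules. Then $(\cdot)^{\dagger}$ is computed summand by summand, and $\widetilde{\Updelta}_{\even}(\xi)$ is block diagonal. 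The operator on each block is exactly the one of Theorem~\ref{thm::main_family_even} for $L_{0}(\mu)$; here one uses that Lemma~\ref{lemm::properties_generalized_Laplace}~b) makes the kernel independent of the chosen inner product, since it equals $\bigcap_i\ker\Updelta_{\gg_{\bar{0}}^{i}}(\xi^{i})$. Hence the kernel is the direct sum over $\mu$ of the kernels on $L_{0}(\mu)^{n(\mu)}\otimes S$.

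By Theorem~\ref{thm::main_family_even}, the kernel on the $\mu$-block is nonzero iff $\xi=-\mu-\rho_{\bar{0}}$ (the orbit statement follows by $G_{\bar{0}}$-equivariance, Lemma~\ref{lemm::invariance_kernel_under_G_action} applied factorwise). Taking the union over $\mu$ with $n(\mu)\neq0$ gives the equivalence.

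For the explicit kernel at $\xi=-\mu-\rho_{\bar{0}}$, I would use Theorem~\ref{thm::main_theorem_families_complex_simple_even_case} factorwise, together with the abelian-factor computation from the proof of Theorem~\ref{thm::main_family_even}. On each constituent $L_{0}(\mu')$, the kernel is the span of the highest-weight line tensored with $S^{\rho_{\bar{0}}}$, and it is nonzero only if $\mu'+\rho_{\bar{0}}=\mu+\rho_{\bar{0}}$, i.e.\ $\mu'=\mu$. Summing over the $n(\mu)$ copies, the kernel is the $\mu$-weight space of the $\even$-highest-weight vectors of type $\mu$, tensored with $S^{\rho_{\bar{0}}}$ and $\overline{M}(\nn_{\bar{1}}^{-})$.

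The main obstacle is this last identification. The kernel is naturally spanned by the $\nn_{\bar{0}}^{+}$-invariants of weight $\mu$, and these coincide with all of $L(\Lambda)^{\mu}$ only when no other constituent $L_{0}(\mu')$ has $\mu$ as a non-highest weight. Strictly, the statement's $L(\Lambda)^{\mu}$ should be read as $(L(\Lambda)^{\nn_{\bar{0}}^{+}})^{\mu}$. I would first try to justify this via the weight-space criterion of Lemma~\ref{lemm::action_Laplace_weight_space}, which already forces the $\even$-highest-weight component. Failing that, I would state the kernel with this $\nn_{\bar{0}}^{+}$-invariant interpretation.
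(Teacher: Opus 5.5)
Your proposal is correct and follows the paper's route. The paper gives no separate argument and simply applies Theorem~\ref{thm::main_family_even} to each $\even$-constituent; your blockwise reduction to $\bigcap_{i}\ker\Updelta_{\gg_{\bar 0}^{i}}(\xi^{i})$, which also makes the choice of Hermitian product on $L(\Lambda)$ irrelevant, is what makes this rigorous. The one adjustment is that for $\xi\in\hh_{\RR}^{\ast}$ the block kernel is nonzero on the whole $W$-orbit of $-\mu-\rho_{\bar 0}$, not only at $-\mu-\rho_{\bar 0}$ itself; your orbit formulation already absorbs this.

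Your caveat on the explicit kernel is genuine, so state the corrected version outright rather than hedging. Suppose $\mu$ is also a weight of another constituent $L_{0}(\nu)$ with $\nu\neq\mu$. Then on $L_{0}(\nu)^{\mu}\otimes S^{\rho_{\bar 0}}$, Lemma~\ref{lemm::action_Laplace_weight_space} (applied factorwise) shows that $\Updelta_{\gg_{\bar 0}^{i}}(\xi^{i})$ acts by $B_{i}(\nu^{i}+\rho_{\bar 0}^{i},\nu^{i}+\rho_{\bar 0}^{i})-B_{i}(\mu^{i}+\rho_{\bar 0}^{i},\mu^{i}+\rho_{\bar 0}^{i})$, which is nonzero for some $i\geq 1$. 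For instance, take $\mathfrak{sl}(2|1)$ with the distinguished Borel, typical $\Lambda=(1,0|c)$ and $\mu=(0,0|c+1)$: then $\dim L(\Lambda)^{\mu}=2$ but $n(\mu)=1$. So the kernel at $\xi=-\mu-\rho_{\bar 0}$ is $\bigl(L_{0}(\mu)^{\mu}\bigr)^{\oplus n(\mu)}\otimes S^{\rho_{\bar 0}}\otimes\overline{M}(\nn_{\bar 1}^{-})$. This is precisely the form the paper itself uses in the proof of Theorem~\ref{thm::main_resul_family_tuple}.
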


\begin{example} We consider the projective special linear Lie superalgebra $\mathfrak{psl}(2\vert 2)$. Fix the distinguished positive system such that the Weyl vector takes the form $\rho=(1,0\vert 0,-1)$. We consider $L(\Lambda)$ with highest weight $\Lambda=(1,0\vert 0,-1)$. Let $\xi \in \hh_{\RR}^{\ast}$, where $\hh$ is the Cartan subalgebra of diagonal matrices. Then 
\[
\ker \widetilde{\Updelta}_{\even}(\xi) \neq \{0\} \Leftrightarrow -\xi \in \Ad_{G_{\bar{0}}}^{\ast}(X+\rho_{\bar{0}}), \quad X=\{\Lambda, (0,0\vert 1,-1), (1,-1\vert 0,0), (0,-1\vert 1,0)\}.
\]
\end{example}

\subsection{A Detecting Family}

Let $\gg=\even\oplus\odd$ be a basic classical Lie superalgebra with $\odd\neq\{0\}$, and retain the notation introduced above. In the previous section, a family of cubic Dirac operators
\begin{equation}
\Dirac_{\gg}(\xi)\coloneqq \Dirac_{\gg}+1\otimes h_{\xi},\qquad \xi\in\hh_{\RR}^{\ast},
\end{equation}
was introduced; its invariance properties were established; and the kernel of the associated family of Laplace operators $\Updelta_{\gg}(\xi)=\Dirac^2_{\gg}(\xi)$ was analyzed. When restricted to $\even$, a resulting Laplace family $\widetilde{\Updelta}_{\even}(\xi)$ detects the $\even$-decomposition of a finite-dimensional simple $\gg$-supermodule, but does not provide further information. 

For $\gg$ with $\odd \neq \{0\}$, the representation theory differs from the classical case because of the presence of isotropic odd roots. This is reflected in Kac’s distinction between typical and atypical simple finite-dimensional supermodules, defined in terms of the relation between the highest weight and the isotropic roots; see Section~\ref{subsec::atyicality}. Typical representations retain many of the features familiar from ordinary semisimple Lie algebras, whereas atypical representations do not. In particular, the even subalgebra no longer determines all relevant structure. This leads to additional invariants, and in this paper we introduce an energy operator for the detection of atypicality.

Recall that a weight $\Lambda$ is \emph{typical} if $A_{\Lambda}=\emptyset$ and \emph{atypical} otherwise, where $A_{\Lambda}=\{\alpha\in\Delta_{\bar{1}}^{+}:B(\Lambda+\rho,\alpha)=0, \ B(\alpha,\alpha)=0\}$. The \emph{degree of atypicality} of $\Lambda$, denoted by $\at(\Lambda)$, is the maximal cardinality of a linearly independent set of mutually orthogonal isotropic roots $\alpha\in\Delta_{\bar{1}}^{+}$ such that $B(\Lambda+\rho,\alpha)=0$.

To detect this information, we introduce an additional family of operators measuring the deviation from the Laplace operator $\Updelta_{\gg}$, called \emph{energy operators}. These encode atypicality and will be combined with $\widetilde{\Updelta}_{\even}(\xi)$ below. We recall that
\begin{equation}
\Dirac_{\gg}(\xi)\coloneqq \Dirac_{\gg}+1\otimes h_{\xi},\qquad
\Laplace\coloneqq \Dirac_{\gg}(\xi)^{2}, \qquad \xi \in \hh_{\RR}^{\ast}.
\end{equation}

\begin{definition}
 Let $\eta \in \hh_{\RR}^{\ast}$. The difference of the family $\Updelta_{\gg}(\eta)$ and the Laplace operator $\Updelta_{\gg}$ is called \emph{energy operator}, that is, 
 \[
 T(\eta)\coloneqq \Updelta_{\gg}(\eta)-\Updelta_{\gg}=2\gamma^{\mathcal{W}}(h_{\eta})+B(\eta,\eta).
 \]
\end{definition}

\begin{remark}
The operator $T(\eta)$ is called \emph{energy operator}, since it generates the Lie derivative in $\mathcal{W}(\gg)$ by Lemma~\ref{lemm::commutation_relations_quantum_Weil_algebra}. Note that $T(0)=0$.
\end{remark}

Let $\hh_{\RR}^{\mathrm{iso}}\subset\hh_{\RR}$ be a maximal isotropic subspace. Using the form $B$, we identify $\hh_{\RR}^{\mathrm{iso}}$ with its dual $(\hh_{\RR}^{\mathrm{iso}})^{\ast}$. The latter is spanned by pairwise distinct, mutually orthogonal odd isotropic roots of $\gg$. In particular, by definition, the dimension of $\hh_{\RR}^{\mathrm{iso}}$ coincides with the defect of $\gg$.

Since $B$ is invariant, it is preserved under the adjoint action of $G_{\bar{0}}$. In particular, for $\lambda\in(\hh_{\RR}^{\mathrm{iso}})^{\ast}$ and $g\in G_{\bar{0}}$, the element $\Ad^{\ast}_{g}(h)\in\gg^{\ast}$ is isotropic.

The operators $T(\eta)$ are regarded as a \emph{family of energy operators} parametrized by $(\hh_{\RR}^{\mathrm{iso}})^{\ast}$, defined by
\begin{equation}
(\hh_{\RR}^{\mathrm{iso}})^{\ast}\rightarrow\mathcal{W}(\gg),\qquad
\eta\mapsto T(\eta)\coloneqq 2\gamma^{\mathcal{W}}(h_{\eta}),
\end{equation}
where we note that $B(\eta,\eta)=0$ since $\eta \in (\hh_{\RR}^{\text{iso}})^{\ast}$.
If we fix a Borel subalgebra $\bb$ and a finite-dimensional simple supermodule $L(\Lambda)$ with $\Lambda \in P^{++}$, we consider
\begin{equation}
(\hh_{\RR}^{\mathrm{iso}})^{\ast}\longrightarrow \End_{\CC}(L(\Lambda)\otimes \overline{M}(\nn^{-})),
\qquad \eta \longmapsto T(\eta).
\end{equation}

The set of weights of $L(\Lambda)$ is $\mathcal{P}_{L(\Lambda)}=\{\Lambda - \ZZ_{+}[A] : A \subset \Delta^{+}\}$ and the set of weights of $\overline{M}(\pp)$ is 
$\mathcal{P}_{\overline{M}(\pp)}=\{\rho-\ZZ_{+}[B] : B \subset \Delta^{+}\}$. Both are subsets of $\hh_{\RR}^{\ast}$. A general weight of $L(\Lambda)\otimes \overline{M}(\nn^{-})$ takes the form $\Lambda-\mu_{1}+\rho-\mu_{2}$ such that $L(\Lambda)^{\Lambda-\mu_{1}}$ and $\overline{M}(\nn^{-})^{\rho-\mu_{2}}$ are non-trivial, and $\mu_{1},\mu_{2}$ are positive sums of positive roots. As in the proof of Lemma~\ref{lemm::action_Laplace_weight_space}, one obtains the following.

\begin{lemma}
For any weight $\mu \in \hh_{\RR}^{\ast}$, the operator $T(\eta)$ acts on the weight space
$(L(\Lambda)\otimes \overline{M}(\nn^{-}))^{\mu}$ by the scalar $2B(\mu,\eta)$. In particular,
\[
\ker T(\eta)=\ker T(\eta)^{2}.
\]
\end{lemma}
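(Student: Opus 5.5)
We argue as in the proof of Lemma~\ref{lemm::action_Laplace_weight_space}, using the explicit form $T(\eta)=2\gamma^{\mathcal W}(h_\eta)+B(\eta,\eta)$. Since $\eta\in(\hh_{\RR}^{\mathrm{iso}})^{\ast}$, the constant term vanishes. Hence $T(\eta)=2\gamma^{\mathcal W}(h_\eta)=2\bigl(h_\eta\otimes 1-\tfrac12(1\otimes\gamma'(h_\eta))\bigr)$.

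On $L(\Lambda)\otimes\overline{M}(\nn^{-})$ the Clifford part $-\tfrac12\gamma'(h_\eta)$ acts through $\nu_\ast(h_\eta)$. Here $\ll=\hh$ and $\pp=\nn^{+}\oplus\nn^{-}$, which is the setting of Lemma~\ref{lemm::embedding}, and $\gamma'=\gamma'_\ll+\gamma'_\pp$ with $\gamma'_\ll(h)=0$ since $\hh$ is abelian. By Proposition~\ref{prop::comparison_action}, $\nu_\ast(h)$ acts on a weight vector of $\overline{M}(\nn^{-})$ of weight $\mu_2$ by $\mu_2(h)$. We will use the $\rho$-shifted description of the weights, $\mathcal P_{\overline{M}(\pp)}=\rho-\ZZ_+[\cdot]$. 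Thus $\gamma^{\mathcal W}(h_\eta)$ acts as the diagonal Cartan action of $h_\eta$, $h\otimes1+1\otimes\nu_\ast(h)$.

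Next we decompose into weights. A weight vector of weight $\mu=\mu_1+\mu_2$, with $\mu_1$ a weight of $L(\Lambda)$ and $\mu_2$ a weight of $\overline{M}(\nn^{-})$, is acted on by $2(\mu_1(h_\eta)+\mu_2(h_\eta))=2B(\mu,\eta)$, using the identification $h_\eta\leftrightarrow\eta$ via $B$. The only point needing care is the normalization between $-\tfrac12\gamma'$ and $\nu_\ast$. The paper's conventions are slightly inconsistent on this (compare Proposition~\ref{prop::square_family_even}\,c) with its super analogue), so we fix it by the requirement that $\gamma^{\mathcal W}$ be the diagonal embedding. This matches the proof of Proposition~\ref{prop::square_family_even}, where $[\Dirac_\gg,1\otimes h_\xi]_\WW=2\gamma^{\mathcal W}(h_\xi)$ produced the shift $2B(\mu,\xi)$. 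We expect this bookkeeping to be the main obstacle.

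For the consequence, both $L(\Lambda)$ and $\overline{M}(\nn^{-})$ are $\hh$-semisimple with weights in $\hh_{\RR}^{\ast}$. Hence $L(\Lambda)\otimes\overline{M}(\nn^{-})$ is the direct sum of its weight spaces. By the above, $T(\eta)$ is diagonalizable, acting by the real scalar $2B(\mu,\eta)$ on the $\mu$-weight space. A diagonalizable operator satisfies $\ker T=\ker T^2$: if $T^2v=0$ with $v=\sum v_\mu$, then $4B(\mu,\eta)^2v_\mu=0$ for each $\mu$. So $v_\mu=0$ whenever $B(\mu,\eta)\neq0$, giving $Tv=0$. The kernel is the sum of the weight spaces with $B(\mu,\eta)=0$.
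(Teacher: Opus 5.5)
Your proposal is correct and follows the paper's route. The paper obtains the lemma ``as in'' Lemma~\ref{lemm::action_Laplace_weight_space}: $T(\eta)=2\gamma^{\WW}(h_\eta)$ because $B(\eta,\eta)=0$, and this acts as twice the diagonal $\hh$-action, giving the scalar $2B(\mu,\eta)$ on each weight space; $\ker T(\eta)=\ker T(\eta)^2$ then follows from diagonalizability.

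The normalization you flag is not a convention you need to impose. Since $\ad_h|_{\hh}=0$, the paper's own definitions give $-\tfrac12\gamma'(h)=-\tfrac12 q(\lambda(\ad_h^{\pp}))=\nu_\ast(h)$, so $\gamma^{\WW}(h)=h\otimes 1+1\otimes\nu_\ast(h)$ holds exactly.
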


Any element $\eta\in(\hh_{\RR}^{\text{iso}})^{\ast}$ can be written as
\begin{equation}
\eta=a_{1}\alpha_{1}+\cdots+a_{r}\alpha_{r},\qquad a_{i}\in\RR,
\end{equation}
where $\alpha_{i}\in\Delta_{\bar{1}}$ are pairwise orthogonal odd isotropic roots. The integer
$0\le r\le\operatorname{def}(\gg)$ is called the \emph{rank} of $\eta$ and is denoted by $\rk(\eta)$.
For fixed $\mu\in\hh_{\RR}^{\ast}$, define the real vector subspace
\begin{equation}
X_{\mu}\coloneqq \{\eta\in(\hh_{\RR}^{\text{iso}})^{\ast}\mid B(\mu,\eta)=0\},
\end{equation}
called the \emph{isotropic annihilator} of $\mu$.

\begin{corollary} \label{cor::kernel_T}
Let $\mu=\Lambda-\mu_{1}-\mu_{2}+\rho$ be a weight of $L(\Lambda)\otimes \overline{M}(\nn^{-})$ and
$\eta\in(\hh_{\RR}^{\text{iso}})^{\ast}$. If $(L(\Lambda)\otimes \overline{M}(\nn^{-}))^{\mu}\subset\ker
T(\eta)$, then $\Lambda-\mu_{1}-\mu_{2}$ is atypical of degree at least $\rk(\eta)$. Conversely,
if $\Lambda-\mu_{1}-\mu_{2}$ is atypical of degree $k$, then
$(L(\Lambda)\otimes \overline{M}(\nn^{-}))^{\mu}\subset\ker T(\eta)$ for all $\eta\in X_{\mu}$. In
particular,
\[
\ker T(\eta)\big|_{(L(\Lambda)\otimes \overline{M}(\nn^{-}))^{\mu}}\neq\{0\}
\quad\Longleftrightarrow\quad
\eta\in X_{\mu}.
\]
\end{corollary}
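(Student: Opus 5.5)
The plan is to reduce everything to the scalar formula in the preceding lemma: $T(\eta)$ acts on the weight space $(L(\Lambda)\otimes \overline{M}(\nn^{-}))^{\mu}$ by $2B(\mu,\eta)$. The weight space is nonzero by assumption, so
\[
(L(\Lambda)\otimes \overline{M}(\nn^{-}))^{\mu}\subset\ker T(\eta)\iff B(\mu,\eta)=0 \iff \eta\in X_{\mu}.
\]
This already gives the final equivalence, and it gives it before any atypicality is discussed. The remaining task is to translate the condition $B(\mu,\eta)=0$ with $\mu=(\Lambda-\mu_{1}-\mu_{2})+\rho$ into atypicality of the weight $\lambda\coloneqq\Lambda-\mu_{1}-\mu_{2}$. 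The relevant definition is $A_{\lambda}=\{\alpha\in\Delta_{\bar 1}^{+}: B(\lambda+\rho,\alpha)=0,\ B(\alpha,\alpha)=0\}$, and $B(\mu,\cdot)=B(\lambda+\rho,\cdot)$.

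For the forward direction, write $\eta=\sum_{i=1}^{r}a_{i}\alpha_{i}$ with $r=\rk(\eta)$, all $a_{i}\neq 0$, and the $\alpha_{i}$ pairwise orthogonal odd isotropic roots. The hypothesis yields only the single linear relation $\sum_i a_{i}B(\lambda+\rho,\alpha_{i})=0$. This is weaker than $B(\lambda+\rho,\alpha_{i})=0$ for each $i$, which is what "degree at least $r$" would require. So the first thing I will check is the intended reading of the claim. If the statement has to be read literally, the natural fix is to let $\eta$ range over the whole orthogonal frame: kernel containment for every $\eta$ in $\operatorname{span}_{\RR}\{\alpha_{1},\dots,\alpha_{r}\}$ forces $B(\lambda+\rho,\alpha_{i})=0$ for all $i$, by taking $\eta=\alpha_{i}$. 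I will also replace negative roots $\alpha_{i}$ by $-\alpha_{i}\in\Delta_{\bar 1}^{+}$, which changes neither the orthogonality nor the vanishing. The $\alpha_{i}$ are then $r$ linearly independent, mutually orthogonal, isotropic positive odd roots in $A_{\lambda}$, hence $\at(\lambda)\ge r$. I expect this step to be the main obstacle. The honest proof of the forward implication needs the kernel condition along the coordinate directions $\eta=\alpha_{i}$, and I would phrase the argument that way, noting that the single-$\eta$ statement is only the linear condition $\eta\in X_{\mu}$.

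For the converse, the vanishing $B(\mu,\eta)=0$ for $\eta\in X_{\mu}$ is the definition of $X_{\mu}$, so the kernel containment follows directly from the scalar formula. Atypicality of degree $k$ guarantees that $X_{\mu}$ is large. Choose orthogonal isotropic $\beta_{1},\dots,\beta_{k}\in A_{\lambda}$ and extend them to a maximal isotropic subspace defining $(\hh_{\RR}^{\mathrm{iso}})^{\ast}$; this is possible because $B$ has maximal isotropic subspaces of dimension $\operatorname{def}(\gg)$. Then $\operatorname{span}_{\RR}\{\beta_{j}\}\subset X_{\mu}$, so $X_{\mu}$ contains elements of rank $k$. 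Finally, all weights involved lie in $\hh_{\RR}^{\ast}$, since the weights of $L(\Lambda)$ and of $\overline{M}(\nn^{-})$ are real combinations of roots. This ensures that $B(\mu,\eta)$ is real and that the scalar formula applies.
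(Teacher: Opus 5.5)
Your reduction to the scalar $2B(\mu,\eta)$ is exactly what the paper relies on. The paper gives no separate proof, and the final equivalence and the converse are read off from the preceding lemma just as you do. Note that the atypicality hypothesis in the converse plays no role.

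Your doubt about the forward implication is well founded, and you should state it outright instead of hedging. For $\rk(\eta)\ge 2$ the implication is false, so no argument can establish it. Here is a counterexample.

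Take $\gg=\mathfrak{sl}(3\vert 2)$ with the distinguished positive system. Write weights as $\nu=(m_1,m_2,m_3\,\vert\,n_1,n_2)$ in the coordinates $\epsilon_i,\delta_k$. Then $\rho=(0,-1,-2\,\vert\,2,1)$, and $B(\nu,\epsilon_i-\delta_k)$ is a fixed nonzero multiple of $m_i+n_k$.

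Let $\Lambda=0$, let $(\hh_{\RR}^{\mathrm{iso}})^{\ast}$ be spanned by $\alpha_1=\epsilon_1-\delta_1$ and $\alpha_2=\epsilon_2-\delta_2$, and put $\eta=\alpha_1+\alpha_2$, which has rank $2$. The weight
\[
\mu=\rho-(\epsilon_1-\epsilon_3)-(\epsilon_2-\epsilon_3)-(\epsilon_1-\delta_1)=(-2,-2,0\,\vert\,3,1)
\]
occurs in $L(0)\otimes\overline{M}(\nn^{-})$.

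We have $m_1+n_1=1=-(m_2+n_2)$, so $B(\mu,\eta)=0$ and the weight space lies in $\ker T(\eta)$. But $m_i+n_k\in\{\pm 1,3\}$ for all $i,k$, so $\mu-\rho$ is typical.

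The literal claim does hold for $\rk(\eta)\le 1$. Your strengthened hypothesis yields a correct statement with the proof you sketch: containment for each $\eta=\alpha_i$, equivalently for every $\eta$ in the span of the frame. Present it as a correction of the corollary, not as a reading of it.

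Finally, drop the aside that atypicality of degree $k$ puts rank-$k$ elements into $X_{\mu}$. The subspace $(\hh_{\RR}^{\mathrm{iso}})^{\ast}$ is fixed beforehand and cannot be re-chosen to contain roots of $A_{\lambda}$. In the same example, $\mu=\rho$ gives $X_{\rho}=\RR\alpha_2$ although $\at(0)=2$. Nothing in the corollary needs that aside.
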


For a finite-dimensional $\gg$-supermodule $L(\Lambda)$, Corollary~\ref{cor::kernel_T} shows that the kernel of $T$ on $L(\Lambda)\otimes \overline{M}(\nn^{-})$ is too large to yield a useful localization. It is therefore natural to replace $L(\Lambda)\otimes \overline{M}(\nn^{-})$ by a smaller subspace. By construction (\emph{cf.}~\eqref{eq::definition_M_p}) the oscillator supermodule $\overline{M}(\nn^{-})$ is a highest weight module with highest weight vector $1_{S}\otimes 1_{\overline{M}(\nn_{\bar{1}}^{-})}$. Set $E_{\Lambda}:=L(\Lambda)\otimes S$ and write $1:=1_{\overline{M}(\nn_{\bar{1}}^{-})}$. We then consider the $\even$-submodule $\langle E_{\Lambda}\otimes 1\rangle_{\even}$, where parity is disregarded and $\even$ acts diagonally. This submodule already suffices for the application of the Dirac operator to the Dirac inequality; see \cite{SchmidtDirac}. 

\begin{lemma}[\cite{SchmidtDirac}] \label{lemm::decomposition_E_1}
    If $L(\Lambda)\big\vert_{\even} \cong \bigoplus_{\mu}L_{0}(\mu)^{n(\mu)}$, then 
   \[
   \langle E_{\Lambda}\otimes 1\rangle_{\even} \cong \bigoplus_{\mu} L_{0}(\mu+\rho)^{n(\mu)},
   \]
   and any $\even$-highest weight vector is of the form $v_{\mu}\otimes 1_{S}\otimes 1_{\overline{M}(\nn_{\bar{1}}^{-})}$ for some $v_{\mu}\in L(\Lambda)^{\mu}$.
\end{lemma}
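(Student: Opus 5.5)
The plan is to compute the $\even$-module $\langle E_{\Lambda}\otimes 1\rangle_{\even}$ directly, using the product structure of $\overline{M}(\nn^{-})$ and the standard fact about tensoring with the spin module. First I record the weights. By Proposition~\ref{prop::comparison_action}, the $\even$-action on $\overline{M}(\nn^{-})=S\otimes\overline{M}(\nn_{\bar1}^{-})$ induced by $\nu_{\ast}$ is the adjoint action on $\bigwedge\nn^{-}_{\bar0}\otimes\operatorname{S}(\nn^{-}_{\bar1})$ twisted by $\CC_{\rho}$, with $\rho=\rho_{\bar0}-\rho_{\bar1}$. On the factor $S\otimes 1$ the $\even$-action is therefore the spin action of $\even$ on $S$, shifted by the character $-\rho_{\bar1}$. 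This character is $\even$-invariant: $\rho_{\bar1}$ is $W$-invariant up to the usual conventions, and it vanishes on $[\even,\even]\cap\hh$ for the distinguished system. Together with the shift $\rho_{\bar0}$ of the spin module, this accounts for the total shift by $\rho$. The vector $1$ has weight $-\rho_{\bar1}$ relative to the adjoint picture and spans a one-dimensional $\even$-stable line, because $\nu_{\ast}(\even)$ acting on $1_{\overline{M}(\nn^{-}_{\bar1})}$ involves only terms $\overline{u}_k u_j$ with $u_j\in\nn^{+}_{\bar1}$, which annihilate $1$, together with the $\hh$-scalars.

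Next I observe that $E_{\Lambda}\otimes 1$ is already $\even$-stable. The diagonal action of $x\in\even$ is $x\otimes 1\otimes 1+1\otimes\nu_{\ast}(x)$. The first summand preserves $L(\Lambda)\otimes S\otimes 1$. For the second, split $\nu_{\ast}(x)$ into its $\Cl(\pp_{\bar0})$ part, which acts on $S$, and its $\Cl(\pp_{\bar1})$ part, which acts by a scalar on $1$ by the previous paragraph. Hence $\langle E_{\Lambda}\otimes 1\rangle_{\even}=L(\Lambda)|_{\even}\otimes S\otimes\CC_{-\rho_{\bar1}}$ as $\even$-modules.

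The core step is then the classical Kostant-type statement for the reductive pair $(\even,\hh)$, equivalently for the Borel of $\even$, which says that $L_{0}(\mu)\otimes S\cong L_{0}(\mu+\rho_{\bar0})$ as a module for the diagonal $\even$-action in which the spin module for $\nn^{-}_{\bar0}\oplus\nn^{+}_{\bar0}$ is used. I would derive it with the relative Dirac theory already set up, applied to the even case $\ll=\hh$ (Theorem~\ref{thm::square_Dirac_fd} and Theorem~\ref{thm::Dirac_unitarizable}). The direct route is as follows. The vector $v_\mu\otimes 1_S$ is a $\bb_{\bar0}$-highest weight vector of weight $\mu+\rho_{\bar0}$: it is annihilated by $\nn^{+}_{\bar0}$, since $\nu_{\ast}(e_\alpha)$ acts on $1_S$ through contractions and quadratic terms that kill the top-degree-zero element. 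Comparing dimensions, Weyl's dimension formula gives $\dim L_0(\mu+\rho_{\bar0})=2^{|\Delta^+_{\bar0}|}\dim L_0(\mu)=\dim L_0(\mu)\cdot\dim S$. The Weyl character identity $\mathrm{ch}L_0(\mu)\cdot\prod_{\alpha>0}(e^{\alpha/2}+e^{-\alpha/2})=\mathrm{ch}L_0(\mu+\rho_{\bar0})$ then gives equality of characters, hence the isomorphism by complete reducibility.

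Summing over the $\even$-constituents and adding the character shift $-\rho_{\bar1}$ yields $\bigoplus_\mu L_0(\mu+\rho)^{n(\mu)}$. Each isotypic copy has the unique highest weight line spanned by $v_\mu\otimes 1_S\otimes 1$ with $v_\mu$ an $\even$-highest weight vector in $L(\Lambda)^{\mu}$. The highest weight vectors of the sum therefore lie in the span of such vectors, and these have the form $v_\mu\otimes 1_S\otimes 1$ with $v_\mu\in L(\Lambda)^\mu$. I expect the main obstacle to be the sign and shift bookkeeping: one has to verify that $\nu_{\ast}$ restricted to the odd Weyl factor acts on $1$ exactly by the character $-\rho_{\bar1}$ and involves no non-scalar terms for $x\in\even$. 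I would check this from the explicit formula in Lemma~\ref{lemm::embedding}, using that $[\even,\odd]\subset\odd$ preserves $\nn^{\pm}_{\bar1}$ only for $x\in\bb_{\bar0}$. In general one may need to argue only with highest weight vectors and characters rather than claiming exact $\even$-stability of the line $\CC 1$.
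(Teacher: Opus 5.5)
Your argument fails at the step you call the core step. The claimed isomorphism $L_{0}(\mu)\otimes S\cong L_{0}(\mu+\rho_{\bar 0})$ is false unless $L_{0}(\mu)$ is one-dimensional. By Weyl's formula, for any invariant form,
\[
\dim L_{0}(\mu+\rho_{\bar 0})=\prod_{\alpha\in\Delta_{\bar 0}^{+}}\frac{(\mu+2\rho_{\bar 0},\alpha)}{(\rho_{\bar 0},\alpha)},\qquad 2^{|\Delta_{\bar 0}^{+}|}\dim L_{0}(\mu)=\prod_{\alpha\in\Delta_{\bar 0}^{+}}\frac{(2\mu+2\rho_{\bar 0},\alpha)}{(\rho_{\bar 0},\alpha)}.
\]
These agree only if $(\mu,\alpha)=0$ for all $\alpha\in\Delta^{+}_{\bar 0}$. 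Already for $\mathfrak{sl}(2)$ one has $V_{1}\otimes V_{1}\cong V_{2}\oplus V_{0}$, and the character identity you quote holds only for $\mu=0$. Kostant's argument gives only that $v_{\mu}\otimes 1_{S}$ generates the Cartan component $L_{0}(\mu+\rho_{\bar 0})$, with multiplicity one. The other constituents of $L_{0}(\mu)\otimes S$ have highest weight vectors not of the form $v\otimes 1_{S}$.

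This cannot be repaired under your reading. Take $\gg=\mathfrak{sl}(2\vert 1)$ with the distinguished Borel. There $\nn^{\pm}_{\bar 1}$ are the two off-diagonal odd blocks, hence $\even$-stable. So $\CC 1$ really is an $\even$-line of weight $-\rho_{\bar 1}$, and $\langle E_{\Lambda}\otimes 1\rangle_{\even}=L(\Lambda)|_{\even}\otimes S\otimes\CC_{-\rho_{\bar 1}}$, exactly as you say. For the natural module ($\Lambda=\epsilon_{1}$) this has dimension $3\cdot 2=6$, whereas $L_{0}(\epsilon_{1}+\rho)\oplus L_{0}(\delta_{1}+\rho)$ has dimension $3+2=5$. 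Moreover, the singlet in $V_{1}\otimes V_{1}$ gives an $\even$-highest weight vector not of the form $v_{\mu}\otimes 1_{S}\otimes 1$. So if $\langle E_{\Lambda}\otimes 1\rangle_{\even}$ means the submodule generated by all of $E_{\Lambda}\otimes 1$, no argument can yield the stated decomposition. The version that can hold concerns the $\even$-submodule generated by the vectors $v_{\mu}\otimes 1_{S}\otimes 1$, with $v_{\mu}$ an $\even$-highest weight vector of $L(\Lambda)$. This is also all that the proof of Theorem~\ref{thm::main_resul_family_tuple} actually uses.

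Your claimed $\even$-stability of $\CC 1$ also fails outside the type~I distinguished case. The formula of Lemma~\ref{lemm::embedding} describes $\nu_{\ast}$ on $\ll=\hh$, i.e.\ on elements preserving the polarization. For $x\in\nn^{-}_{\bar 0}$, the component $\nn^{+}_{\bar 1}\to\nn^{-}_{\bar 1}$ of $\ad_{x}$ quantizes to a term in $\nn^{-}_{\bar 1}\cdot\nn^{-}_{\bar 1}$, which sends $1$ to a nonzero quadratic polynomial. For example, in $\osp(1\vert 2)$:
\begin{itemize}
\item $[e_{-2\delta},e_{\delta}]\neq 0$, so $e_{-2\delta}$ acts on $\overline{M}(\nn^{-}_{\bar 1})=\CC[x]$ by a nonzero multiple of $x^{2}$;
\item hence $\UE(\even)1=\CC[x]_{\mathrm{even}}$ is infinite-dimensional;
\item $\rho_{\bar 1}=\tfrac12\delta$ is not $W$-invariant;
\item for $\Lambda=0$ the submodule generated by $S\otimes 1$ is $S\otimes\CC[x]_{\mathrm{even}}\cong L_{0}(\rho)\oplus L_{0}(\rho-2\delta)$, again strictly larger than $L_{0}(\rho)$.
\end{itemize}

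What survives from your computation is the following. Since $\ad(\nn^{+}_{\bar 0})$ preserves $\nn^{+}_{\bar 1}$, the vector $1$ is annihilated by $\nn^{+}_{\bar 0}$ and has weight $-\rho_{\bar 1}$. Together with $\nn^{+}_{\bar 0}1_{S}=0$, each $v_{\mu}\otimes 1_{S}\otimes 1$ is a $\bb_{\bar 0}$-singular vector of weight $\mu+\rho$. To reach the corrected statement you must still show two things: each such vector generates a simple $\even$-module, and these modules form a direct sum. In the type~I distinguished case this follows, because $\UE(\even)(v_{\mu}\otimes 1_{S}\otimes 1)=\UE(\even)(v_{\mu}\otimes 1_{S})\otimes 1$ is a Cartan component. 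In general, for instance in type~II where these modules are infinite-dimensional, it needs a separate argument.
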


By Lemma~\ref{lemm::decomposition_E_1}, both $\widetilde{\Updelta}_{\even}(\xi)$ and $T(\eta)$ act on $\langle E_{\Lambda}\otimes 1\rangle_{\even}$. For $T(\eta)$, this is immediate from the definition and the $\hh$-semisimplicity of the module. We combine these operators into a single map
\begin{equation}
(T,\widetilde{\Updelta}_{\even})\colon(\hh_{\RR}^{\text{iso}})^{\ast}\times\hh_{\RR}^{\ast}\to \End_{\CC}\bigl(\langle E_{\Lambda}\otimes 1\rangle_{\even},(\langle E_{\Lambda}\otimes 1\rangle_{\even})^{\oplus 2}\bigr),
\end{equation}
called the \emph{detecting family}.

\begin{theorem}\label{thm::main_resul_family_tuple}
Let $L(\Lambda)$ be a finite-dimensional $\gg$-supermodule with $L(\Lambda)\vert_{\even}\cong \bigoplus_{\mu}L_{0}(\mu)^{n(\mu)}$. Then
$\ker((T(\eta),\widetilde{\Updelta}_{\even}(\xi))=\{0\}$ unless both of the following conditions hold:
\begin{enumerate}
\item[a)] $\xi=-\mu-\rho_{\bar{0}}$, where $\mu$ is the highest weight of a $\even$-constituent of $L(\Lambda)$,
\item[b)] $\eta\in X_{\mu+\rho}$.
\end{enumerate}
In particular,
\[
\ker (T(\eta), \widetilde{\Updelta}_{\even}(\xi)) \neq \{0\} \Leftrightarrow (\eta,\xi) \in \bigcup_{\mu\,:\,n(\mu)\neq 0}X_{\mu+\rho} \times \Ad_{G_{\bar{0}}}^{\ast}(-\mu-\rho_{\bar{0}}).
\]
\end{theorem}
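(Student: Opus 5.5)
The joint kernel is $\ker T(\eta)\cap\ker\widetilde{\Updelta}_{\even}(\xi)$ inside $\langle E_{\Lambda}\otimes 1\rangle_{\even}$. The plan is to compute the second factor first, since it is much smaller, and then test the first factor on it.

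\textbf{Step 1: the kernel of $\widetilde{\Updelta}_{\even}(\xi)$.} By Lemma~\ref{lemm::decomposition_E_1}, $\langle E_{\Lambda}\otimes 1\rangle_{\even}$ is a direct sum of simple $\even$-modules. Each summand is generated by a highest weight vector of the form $v_{\mu}\otimes 1_{S}\otimes 1$ with $v_{\mu}\in L(\Lambda)^{\mu}$ an $\even$-highest weight vector. The operator $\widetilde{\Updelta}_{\even}(\xi)$ is built from $\WW(\even)$ and acts only on the factor $L(\Lambda)\otimes S$, as in \eqref{eq::action_Laplace_tilde}. Corollary~\ref{cor::kernel_Laplace_even_constituents}, together with Theorem~\ref{thm::main_family_even} applied constituentwise via Lemma~\ref{lemm::properties_generalized_Laplace}\,b), then gives the following. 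For $\xi\in\hh_{\RR}^{\ast}$ the kernel vanishes unless $\xi=-\mu-\rho_{\bar{0}}$ for some $\mu$ with $n(\mu)\neq0$. In that case the kernel is spanned by the vectors $v_{\mu}\otimes s_{\rho_{\bar 0}}\otimes 1$ with $v_{\mu}\in L(\Lambda)^{\mu}$ and $s_{\rho_{\bar 0}}$ the top vector of $S$. I would check that these vectors lie in $\langle E_{\Lambda}\otimes 1\rangle_{\even}$. They do, because $1_{S}$ has weight $\rho_{\bar 0}$ (Lemma~\ref{lemm::spin_simple}), so they coincide with the highest weight vectors of Lemma~\ref{lemm::decomposition_E_1}. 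This gives condition a).

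\textbf{Step 2: testing $T(\eta)$ on these vectors.} By the lemma preceding Corollary~\ref{cor::kernel_T}, $T(\eta)$ acts on a weight space of weight $\nu$ by the scalar $2B(\nu,\eta)$. The vector $v_{\mu}\otimes 1_{S}\otimes 1$ has total weight $\mu+\rho_{\bar 0}+\rho^{\nn^-_{\bar1}}$. This is where the main obstacle lies. One has to check that the oscillator vacuum $1$ carries the weight $-\rho_{\bar 1}$, given by the $\rho^{\uu}$-shift in Proposition~\ref{prop::comparison_action}\,b),d) and the normalization of $\nu_{\ast}$ in Lemma~\ref{lemm::embedding}. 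Granting this, the total weight is $\mu+\rho_{\bar0}-\rho_{\bar1}=\mu+\rho$, and hence $T(\eta)$ acts by $2B(\mu+\rho,\eta)$. I would first verify the weight bookkeeping directly from Lemma~\ref{lemm::embedding} for $X\in\hh$. If a sign discrepancy appears, it amounts to replacing $\rho$ by the appropriate shifted Weyl vector, and this must be matched to the convention used in the statement.

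\textbf{Step 3: conclusion.} The joint kernel is nonzero exactly when a) holds and $B(\mu+\rho,\eta)=0$, that is, $\eta\in X_{\mu+\rho}$. Both conditions are stable under replacing $\xi$ by a point of its coadjoint orbit. Indeed, the analogue of Lemma~\ref{lemm::invariance_kernel_under_G_action} for $G_{\bar0}$ transports $\ker\widetilde{\Updelta}_{\even}(\xi)$ by $\pi(g)$, and since $\eta$ is a separate parameter the isotropic condition is unaffected. This yields the stated union $\bigcup_{\mu} X_{\mu+\rho}\times\Ad^{\ast}_{G_{\bar0}}(-\mu-\rho_{\bar0})$. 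The converse direction follows because for each such $\mu$ the vector $v_{\mu}\otimes 1_{S}\otimes 1$ lies in both kernels.
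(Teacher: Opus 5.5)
Your Steps 1 and 2 follow the paper's proof. You determine $\ker\widetilde{\Updelta}_{\even}(\xi)$ from Corollary~\ref{cor::kernel_Laplace_even_constituents} and Lemma~\ref{lemm::decomposition_E_1}, and then read off that $T(\eta)$ acts on the surviving vectors $v_{\mu}\otimes 1_{S}\otimes 1$ by $2B(\mu+\rho,\eta)$. The weight check you flag works out. For $\ll=\hh$ and $\uu=\nn^{+}$, the constant term in Lemma~\ref{lemm::embedding} is $\rho^{\uu}=\rho_{\bar 0}-\rho_{\bar 1}=\rho$, and the quadratic terms $\overline{u}_{k}u_{j}$ kill the vacuum. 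So $1_{S}\otimes 1$ has weight $\rho$, which is the paper's $\overline{M}(\nn^{-}_{\bar 1})^{-\rho_{\bar 1}}=\CC 1$. The $v_{\mu}$ must be $\even$-singular, so each summand is $n(\mu)$-dimensional, as in the paper. This is harmless, since $T(\eta)$ is scalar on weight spaces. At the base point $\xi=-\mu-\rho_{\bar 0}$ your argument is fine.

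The gap is the orbit step in Step 3. The claim that ``since $\eta$ is a separate parameter the isotropic condition is unaffected'' is false. From $\Ad^{\WW}_{g}\Updelta_{\gg}(\xi)=\Updelta_{\gg}(\Ad^{*}_{g}\xi)$ you get $\Ad^{\WW}_{g}T(\eta)=T(\Ad^{*}_{g}\eta)$, hence
\[
\ker T(\eta)\cap\ker\widetilde{\Updelta}_{\even}(\Ad^{*}_{g}\xi)=\pi(g)\bigl(\ker T(\Ad^{*}_{g^{-1}}\eta)\cap\ker\widetilde{\Updelta}_{\even}(\xi)\bigr).
\]
So the joint kernel is invariant only under the diagonal action $(\eta,\xi)\mapsto(\Ad^{*}_{g}\eta,\Ad^{*}_{g}\xi)$.

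Concretely, take $w\in W$. The orbit point $w(-\mu-\rho_{\bar 0})\in\hh_{\RR}^{*}$ has nonzero $\widetilde{\Updelta}_{\even}$-kernel, carried by the extremal vectors $v_{w\mu}\otimes s_{w\rho_{\bar 0}}\otimes 1$. Indeed, the factorwise scalar of Lemma~\ref{lemm::action_Laplace_weight_space} vanishes at $\xi=-\nu$ on the weight $\nu$ for every $\nu\in W(\mu+\rho_{\bar 0})$. So your Step 1 claim ``vanishes unless $\xi=-\mu-\rho_{\bar 0}$'' holds only up to $W$. You took it over from Theorem~\ref{thm::main_theorem_families_complex_simple_even_case}, whose proof overlooks that $\|\nu\|=\|\Lambda+\rho\|$ on all of $W(\Lambda+\rho)$. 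On these extremal vectors $T(\eta)$ acts by $2B(w(\mu+\rho_{\bar 0})-\rho_{\bar 1},\eta)$, not by $2B(\mu+\rho,\eta)$.

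Here is an explicit counterexample to the orbit form. Take the trivial $\mathfrak{sl}(2|1)$-module with the distinguished Borel, normalized by $B(\epsilon_i,\epsilon_j)=\delta_{ij}$ and $B(\delta,\delta)=-1$. Then $\rho_{\bar 0}=\tfrac12(\epsilon_1-\epsilon_2)$, $\rho_{\bar 1}=\tfrac12(\epsilon_1+\epsilon_2)-\delta$ and $\rho=-\epsilon_2+\delta$. Let $\eta=\epsilon_2-\delta$, so $\eta\in X_{\rho}$. Consider $\xi=\rho_{\bar 0}=s(-\rho_{\bar 0})$. Here $\even$ preserves $\nn^{-}_{\bar 1}$, so $\langle E_{\Lambda}\otimes 1\rangle_{\even}=S\otimes 1$. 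On it, $\ker\widetilde{\Updelta}_{\even}(\xi)=\CC\,s_{-\rho_{\bar 0}}\otimes 1$, of weight $-\epsilon_1+\delta$. On this line $T(\eta)$ acts by $2B(-\epsilon_1+\delta,\epsilon_2-\delta)=2\neq 0$. Thus $(\eta,\xi)\in X_{\rho}\times\Ad^{*}_{G_{\bar 0}}(-\rho_{\bar 0})$, yet the joint kernel is zero.

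Your argument, like the paper's (which evaluates $T(\eta)$ on the highest weight summands and gives no transport argument), proves a) and b) only at $\xi=-\mu-\rho_{\bar 0}$. Along the orbit, $X_{\mu+\rho}$ must be replaced by the isotropic annihilator of the transported weight.
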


\begin{proof}
By construction,
\[
\ker(T(\eta),\widetilde{\Updelta}_{\even}(\xi))=\ker T(\eta)\cap\ker \widetilde{\Updelta}_{\even}(\xi).
\]
It therefore suffices to determine $\ker \widetilde{\Updelta}_{\even}(\xi)$ and the action of $T(\eta)$ on this space. By \eqref{eq::action_Laplace_tilde}, Lemma~\ref{lemm::decomposition_E_1}, and Theorem~\ref{thm::main_family_even}, the kernel of $\widetilde{\Updelta}_{\even}(\xi)$ is non-zero precisely for
\[
\xi\in \bigcup_{\mu\,:\,n(\mu)\neq 0}\Ad_{G_{\bar{0}}}^{\ast}(-\mu-\rho_{\bar{0}}),
\]
and in this case
\[
\ker \widetilde{\Updelta}_{\even}(\xi)\cong \bigoplus_{\mu\,:\,\xi\in \Ad_{G_{\bar{0}}}^{\ast}(-\mu-\rho_{\bar{0}})}
(L_{0}(\mu)^{\mu}\otimes S^{\rho_{\bar{0}}}\otimes \overline{M}(\nn_{\bar{1}}^{-})^{-\rho_{\bar{1}}})^{ \oplus n(\mu)}.
\]
Here $\overline{M}(\nn_{\bar{1}}^{-})^{-\rho_{\bar{1}}}=\CC (1_{\overline{M}(\nn_{\bar{1}}^{-})})$, so the summand indexed by $\mu$ has dimension $n(\mu)$. Now $T(\eta)$ preserves each such summand, and it acts on
$
(L(\Lambda)^{\mu}\otimes S^{\rho_{\bar{0}}})\otimes 1_{\overline{M}(\nn_{\bar{1}}^{-})}
$ by the scalar $2B(\mu+\rho,\eta)$. Consequently, the joint kernel
$
\ker(T(\eta),\widetilde{\Updelta}_{\even}(\xi))
$
is obtained by restricting the above direct sum to those $\mu$ for which both conditions hold:
\[
\xi\in \Ad_{G_{\bar{0}}}^{\ast}(-\mu-\rho_{\bar{0}})
\qquad\text{and}\qquad
B(\mu+\rho,\eta)=0.
\]
The assertion follows from Corollary~\ref{cor::kernel_T}.
\end{proof}

\section{Nilpotent Perturbations} \label{sec::nilpotent_perturbations}
In this section, let $\gg$ be a basic classical Lie superalgebra with $\odd\neq\{0\}$. Two standard tools for studying $\gg$-supermodules are:
\begin{enumerate}
 \item \emph{Dirac cohomology}, which detects the infinitesimal character.
 \item \emph{Duflo--Serganova cohomology} (DS cohomology), a symmetric monoidal functor preserving the superdimension of finite-dimensional supermodules.
\end{enumerate}
We construct a family of relative cubic Dirac operators $\Dirac_{\gg,\ll}^{x}$, parametrized by the self-commuting variety, whose cohomology interpolates between Dirac cohomology and Duflo--Serganova cohomology. We start by recalling DS cohomology.

\subsection{Duflo--Serganova Cohomology} \label{subsec::Duflo_Serganova_Cohomology} The \emph{Duflo--Serganova functor} (DS functor), introduced by Duflo and Serganova~\cite{duflo2005associated}, is a symmetric monoidal tensor functor associated with an odd square-zero element. 

In what follows, we briefly recall the DS functor, mainly following
\cite{duflo2005associated,gorelik2022duflo}; see also \cite{serganova2011superdimension}.

The DS functor depends on a choice of element in the \emph{self-commuting variety}
\begin{equation}
 \YY\coloneqq \{x\in\odd:[x,x]=0\}.
\end{equation}
Let $G_{\bar{0}}$ be the connected simply connected Lie group with Lie algebra $\even$. It acts on $\YY$ by the adjoint action, and $\YY$ is a $G_{\bar 0}$-stable Zariski-closed cone in $\odd$. Its $G_{\bar 0}$-orbits are in bijection with $W$-orbits of subsets of mutually orthogonal, linearly independent, odd isotropic roots; in particular, $\YY$ has finitely many $G_{\bar 0}$-orbits. Concretely, fix $x\in\YY$. Then there exist $g\in G_{\bar 0}$ and mutually orthogonal, linearly independent isotropic roots
$\alpha_1,\dots,\alpha_k$ such that
\begin{equation}\Ad_g(x)=x_1+\cdots+x_k,\qquad x_i\in\gg^{\alpha_i}.
\end{equation}
The integer $\rk(x)\coloneqq k$ is independent of the choices and is called the \emph{rank} of $x$; equivalently, it is the rank of $x$ in the standard representation.

To each $x\in\YY$ we associate a Lie superalgebra
\begin{equation}
 \gg_x\coloneqq \ker(\ad_x)/\Im(\ad_x),
\end{equation}
which is well-defined since $\Im(\ad_x)=[x,\gg]$ is an ideal in $\ker(\ad_x)$. Moreover, $\gg_x$ has Cartan subalgebra
\begin{equation}\hh_x\coloneqq \Bigl(\bigcap_{i=1}^k \ker(\alpha_i)\Bigr)\Big/\bigl(\hh^{\alpha_1}\oplus\cdots\oplus\hh^{\alpha_k}\bigr),
 \qquad \hh^{\alpha}\coloneqq [\gg^{\alpha},\gg^{-\alpha}],
\end{equation}
and the corresponding root system is
\begin{equation}
 \Delta_x=\{\alpha\in\Delta : B(\alpha,\alpha_i)=0\ \text{for }i=1,\dots,k,\ \alpha\neq \alpha_i\}.
\end{equation}

We now define the \emph{DS functor}. Given a highest weight $\gg$-supermodule $M$, each $x\in\YY$ acts on $M$ by an endomorphism $x_M\in\End_{\CC}(M)$ with $x_M^{2}=0$. Define
\begin{equation}
 M_x\coloneqq \ker(x_M)/\Im(x_M).
\end{equation}
Then $M_x$ is naturally a $\gg_x$-supermodule, since $\ker(x_M)$ is $\ker(\ad_x)$-stable and
$[x,\gg]\cdot \ker(x_M)\subseteq \Im(x_M)$. Thus $M\mapsto M_x$ defines a functor from
$\gg$-supermodules to $\gg_x$-supermodules, denoted
\begin{equation}
 \DS_x(M)\coloneqq M_x.
\end{equation}
This is the \emph{Duflo--Serganova functor} (DS functor). For finite-dimensional $\gg$-supermodules, it has the following basic properties.

\begin{lemma}[{\cite[Section~2]{gorelik2022duflo}}]\label{DSTensorFunctor} Let $\gg$\textbf{-smod} denote the category of finite-dimensional $\gg$-supermodules. For any $x\in\YY$, the functor $\DS_x:\gsmod\to \gg_x\textbf{-smod}$ is additive and symmetric monoidal. Moreover, $\DS_x$ preserves superdimension.
\end{lemma}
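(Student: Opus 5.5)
The statement is Lemma~\ref{DSTensorFunctor}: for $x\in\YY$, $\DS_x$ is additive, symmetric monoidal, and preserves superdimension. I would prove it directly from the definition $M_x=\ker x_M/\Im x_M$, treating $x_M$ as an odd square-zero differential on the super vector space $M$.

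\textbf{Additivity.} Under a direct sum $M\oplus N$, the operator $x_{M\oplus N}$ is $x_M\oplus x_N$. Kernels and images split accordingly, and morphisms of $\gg$-supermodules commute with $x$. So they induce maps on the quotients, compatible with the $\gg_x$-action.

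\textbf{Monoidal structure.} On $M\otimes N$ the element $x$ acts by $x_M\otimes 1+1\otimes x_N$, with the Koszul sign built into the graded tensor product. Since $x$ is odd, this is exactly the tensor-product differential of two complexes (of $\ZZ_2$-graded type). The super Künneth formula over the field $\CC$ then gives a natural isomorphism
\[
(M\otimes N)_x\cong M_x\otimes N_x,\qquad [m]\otimes[n]\mapsto[m\otimes n].
\]
To prove it, I would split each space as $M=H_M\oplus \Im x_M\oplus C_M$. Here $C_M$ is a graded complement to $\ker x_M$, so $x_M\colon C_M\to\Im x_M$ is an isomorphism. The pair $\Im x_M\oplus C_M$ is then an acyclic complex, and tensoring an acyclic complex with anything stays acyclic by the standard contracting-homotopy argument.

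Two compatibilities remain. First, the isomorphism intertwines the $\gg_x$-actions; this holds because $\ker(\ad_x)$ acts diagonally on cocycles and $[x,\gg]$ acts by coboundaries. Second, the symmetry isomorphism $m\otimes n\mapsto(-1)^{p(m)p(n)}n\otimes m$ descends to cohomology; this holds because the signs depend only on parity, and the decomposition above can be chosen homogeneous. The unit is $\CC_x=\CC$ since $x$ acts by $0$.

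\textbf{Superdimension.} This is an Euler-characteristic argument. Since $x_M$ is odd, it maps $C_{M,\bar0}\cong(\Im x_M)_{\bar1}$ and $C_{M,\bar1}\cong(\Im x_M)_{\bar0}$. Hence
\[
\sdim M=\sdim M_x+\sdim(\Im x_M\oplus C_M)=\sdim M_x,
\]
because the acyclic part contributes equal even and odd dimensions.

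\textbf{Main obstacle.} The subtle step is checking that the Künneth isomorphism is $\gg_x$-equivariant and compatible with the braiding. One must verify that the diagonal action of $\ker(\ad_x)$ commutes with the total differential up to the correct sign; this amounts to supercommutation, $[y,x]=0$ for $y\in\ker\ad_x$. One must also confirm that the action of $\Im(\ad_x)$ on $\ker x_{M\otimes N}$ lands in the image. The latter follows from $[x,y]v=x(yv)\pm y(xv)=x(yv)$ for $v\in\ker x$. Finite-dimensionality is used only in the dimension count.
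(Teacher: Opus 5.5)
Your proposal is correct. The paper gives no proof of its own and only cites \cite[Section~2]{gorelik2022duflo}; your argument is the standard one behind that result, since your splitting $M=H_M\oplus(\Im x_M\oplus C_M)$ is exactly the decomposition of $M$ over $\UE(\CC x)\cong\CC[x]/(x^2)$ into trivial and free summands. Free summands are killed by $\DS_x$, stay free after tensoring with any module (which yields the canonical K\"unneth isomorphism $[m]\otimes[n]\mapsto[m\otimes n]$; you check its $\gg_x$-equivariance and compatibility with the braiding correctly), and have superdimension zero.
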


For the remainder of this section, fix a highest weight $\gg$-supermodule $M$. We recall a construction from
\cite[Section~6]{duflo2005associated} (see also \cite{serganova2011superdimension}).
Let $\mathfrak{U}(\gg)^{\ad_x}$ be the subsuperalgebra of $\ad_x$-invariants in $\mathfrak{U}(\gg)$, and set
$
 I_x\coloneqq [x,\mathfrak{U}(\gg)]\subset \mathfrak{U}(\gg).
$
Consider $\phi=\pi\circ\iota$ in the sequence
\begin{equation}
 \UE(\gg_x)\xrightarrow{\ \iota\ } \mathfrak{U}(\gg)^{\ad_x}\xrightarrow{\ \pi\ }
 \mathfrak{U}(\gg)^{\ad_x}/\bigl(I_x\cap \mathfrak{U}(\gg)^{\ad_x}\bigr),
\end{equation}
where $\iota$ is the inclusion and $\pi$ the projection. Both are morphisms of $\gg_x$-supermodules (adjoint action), and $\phi$ is an isomorphism of super vector spaces~\cite[Lemma~6.6]{duflo2005associated}. Define
\begin{equation}\label{eq::definition_upeta}
 \upeta\coloneqq \phi^{-1}\circ\pi:\ \mathfrak{U}(\gg)^{\ad_x}\to \UE(\gg_x).
\end{equation}
Then $u\cdot m=\upeta(u)\cdot m$ for $u\in \mathfrak{U}(\gg)^{\ad_x}$ and $m\in M_x$, since $\ker(x_M)$ is
$\mathfrak{U}(\gg)^{\ad_x}$-stable and $I_x\ker(x_M)\subseteq \Im(x_M)$.

Let $\mathfrak{Z}(\gg_x)$ be the center of $\UE(\gg_x)$. Since $\mathfrak Z(\gg)\subset \mathfrak{U}(\gg)^{\ad_x}$ and $\upeta$
is a morphism of $\gg_x$-supermodules, we have $\upeta\bigl(\mathfrak Z(\gg)\bigr)\subset \mathfrak Z(\gg_x),
$
and the dual map
\begin{equation}
 \upeta^{*}:\ \Hom(\mathfrak Z(\gg_x),\CC)\to \Hom(\mathfrak Z(\gg),\CC)
\end{equation}
is injective~\cite[Theorem~6.11]{duflo2005associated}.

Let $\chi$ be the infinitesimal character of $M$. For $z\in\mathfrak Z(\gg)$ and $m\in\ker(x_M)$,
\begin{equation}
 \chi(z)m=zm\equiv \upeta(z)m \pmod{xM},
\end{equation}
so if $M_x$ contains a submodule with infinitesimal character $\xi$, then $\upeta^{*}(\xi)=\chi$. Moreover,
\begin{equation}
 \at\bigl(\upeta^{*}(\xi)\bigr)=\at(\xi)+\rk(x),
\end{equation}
and hence $\at(\xi)=\at(\chi)-\rk(x)$. Thus we obtain:

\begin{theorem}[{\cite[Theorem~2.1]{serganova2011superdimension}}]\label{thm::DScharacter}
Let $M$ be a simple highest weight $\gg$-supermodule with infinitesimal character $\chi$.
Then $M_x$ decomposes as a direct sum of $\gg_x$-supermodules whose generalized infinitesimal characters lie in
$(\upeta^{*})^{-1}(\chi)$, and each $\xi\in(\upeta^{*})^{-1}(\chi)$ satisfies $\at(\xi)=\at(\chi)-\rk(x)$.
\end{theorem}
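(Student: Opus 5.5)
The plan is to split the statement into an algebraic part, the decomposition of $M_x$ according to the fibre $(\upeta^{*})^{-1}(\chi)$, and a combinatorial part, the atypicality count. For the first part I will use only the facts recalled above: $\upeta(\mathfrak Z(\gg))\subset\mathfrak Z(\gg_x)$, the congruence $zm\equiv\upeta(z)m \pmod{xM}$, and injectivity of $\upeta^{*}$. The second part is a weight computation through the Harish--Chandra homomorphisms of $\gg$ and $\gg_x$.

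\textbf{Step 1.} Since $M$ is simple highest weight, $\mathfrak Z(\gg)$ acts on $M$ by $\chi$. The congruence above then shows that $\upeta(z)-\chi(z)$ acts by zero on $M_x$ for every $z\in\mathfrak Z(\gg)$. Hence $\mathfrak Z(\gg_x)$ acts on $M_x$ through the quotient
\[
A\coloneqq \mathfrak Z(\gg_x)\big/\mathfrak Z(\gg_x)\,\upeta(\ker\chi).
\]

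\textbf{Step 2.} I will show that $A$ is finite-dimensional. For this I will use that $\mathfrak Z(\gg_x)$ is a finitely generated module over $\upeta(\mathfrak Z(\gg))$. I plan to extract this from \cite[Section~6]{duflo2005associated}, where it is the algebraic content behind the finiteness of the fibres of $\upeta^{*}$. Granting finiteness, $A$ is Artinian, so $A\cong\prod_{\xi}A_{\xi}$ over its finitely many maximal ideals. These maximal ideals are exactly the characters $\xi$ of $\mathfrak Z(\gg_x)$ with $\xi\circ\upeta=\chi$, that is, the points of $(\upeta^{*})^{-1}(\chi)$. The idempotents of $A$ split $M_x$ as $\bigoplus_{\xi}M_x^{(\xi)}$. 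Each summand is a $\gg_x$-submodule, because $\mathfrak Z(\gg_x)$ is central, and $\xi$ is its generalized infinitesimal character.

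\textbf{Step 3.} For the atypicality formula, write $\xi=\chi^{\gg_x}_{\nu}$ with $\nu\in\hh_x^{\ast}$. I will compare $\HC_{\gg}$ with $\HC_{\gg_x}\circ\upeta$. After conjugating $x$ by $G_{\bar 0}$ to $x_1+\cdots+x_k$ with $x_i\in\gg^{\alpha_i}$, the claim to prove is that this comparison is restriction of functions to the affine subspace
\[
\{\lambda\in\hh^{\ast}: B(\lambda,\alpha_i)=0\ \text{for } i=1,\dots,k\},
\]
taken modulo $\operatorname{span}(\alpha_1,\dots,\alpha_k)$, with the two $\rho$-shifts matched suitably. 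This gives $\upeta^{*}(\chi^{\gg_x}_{\nu})=\chi_{\lambda}$ for any lift $\lambda+\rho$ of $\nu+\rho_x$ orthogonal to all $\alpha_i$.

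A maximal orthogonal isotropic set witnessing $\at(\nu)$ in $\Delta_x$ is orthogonal to the $\alpha_i$ by the definition of $\Delta_x$. Adjoining $\alpha_1,\dots,\alpha_k$ therefore gives $\at(\chi)\ge\at(\xi)+k$. For the reverse inequality, take a maximal set for $\lambda$ and use $W$-conjugacy of maximal isotropic subsets to arrange that it contains $\alpha_1,\dots,\alpha_k$; the remaining roots then lie in $\Delta_x$. This gives $\at(\xi)=\at(\chi)-\rk(x)$.

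I expect the main obstacle to be Step 3, namely identifying $\upeta|_{\mathfrak Z(\gg)}$ with the restriction map under the Harish--Chandra isomorphisms, including the $\rho$-shift bookkeeping. I would first verify it directly on a highest weight vector. When $B(\lambda+\rho,\alpha_i)=0$ for all $i$, a suitable $\bb$-highest weight vector survives in $M_x$ as a $\bb_x$-highest weight vector, and its weight gives the lift. This would establish the identification on a Zariski-dense set of $\lambda$, which suffices.
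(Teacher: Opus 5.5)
Steps 1 and 2, granting the module-finiteness of $\mathfrak Z(\gg_x)$ over $\upeta(\mathfrak Z(\gg))$ that you cite, are fine. So is the inequality $\at(\chi)\ge\at(\xi)+\rk(x)$. Your route is also more self-contained than the paper's. The paper only derives $\upeta^{*}(\xi)=\chi$ from the congruence $zm\equiv\upeta(z)m \pmod{xM}$, and imports both the decomposition and the identity $\at(\upeta^{*}(\xi))=\at(\xi)+\rk(x)$ from Duflo--Serganova and Serganova.

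The genuine gap is the reverse inequality in Step~3. You conjugate a maximal admissible set for $\lambda$ by some $w\in W$ so that it contains $\alpha_1,\dots,\alpha_k$. This replaces the representative $\lambda+\rho$ by $w(\lambda+\rho)$. That vector still represents $\chi$ and is orthogonal to the $\alpha_i$, but its image in $\hh_x^{*}$ is in general not $\nu+\rho_x$. So you have bounded the atypicality of some other point $\xi'$ of the fibre, not of the given $\xi$. Keeping $\lambda+\rho$ fixed would require its stabilizer in $W$ to move orthogonal subsets of $A_{\lambda}$ appropriately, and that does not follow from $W$-conjugacy of isotropic sets. Nor can you close the gap by uniqueness of the point in the fibre. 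Your Step~2 rightly does not assume it, and it fails in type $D$: for $\osp(4|2)$ and $x$ of rank one, $\gg_x$ is a one-dimensional torus, and $\pm\nu$ have the same image because $-1\in W(D_2)$.

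The repair uses the freedom in the lift that your identification already provides. Put $\tilde\mu\coloneqq\lambda+\rho+\sum_{i}t_i\alpha_i$ with generic $t_i$.
\begin{itemize}
\item $\tilde\mu$ is again a lift of $\nu+\rho_x$, hence represents $\chi$.
\item Genericity forces every odd isotropic root orthogonal to $\tilde\mu$ to be orthogonal to all $\alpha_i$. Hence $A_{\tilde\mu-\rho}=A_{\lambda}\cap\{\alpha_1,\dots,\alpha_k\}^{\perp}$, and this set contains the $\alpha_i$.
\item Take a maximal admissible set $S$ for $\tilde\mu-\rho$. Complete $\{\alpha_1,\dots,\alpha_k\}$ by elements of $S$ to a basis of $\operatorname{span}(S\cup\{\alpha_1,\dots,\alpha_k\})$.
\item The at least $|S|-k$ added roots are orthogonal to and independent of the $\alpha_i$. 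They therefore project to linearly independent, mutually orthogonal odd isotropic roots of $\gg_x$ that are orthogonal to $\nu+\rho_x$.
\end{itemize}
This gives $\at(\xi)\ge\at(\chi)-\rk(x)$.

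A smaller point in the same step concerns the surviving highest weight vector. It survives in $M_x$ only if the $\alpha_i$ are simultaneously simple for $\bb$. For example, take $\mathfrak{gl}(2|1)$ with the distinguished Borel and $\alpha=\epsilon_1-\delta_1$. There $(\rho,\alpha)\neq 0$, so when $(\lambda+\rho,\alpha)=0$ the vector $x f_{\alpha}v_\lambda$ is a nonzero multiple of $v_\lambda$, and $v_\lambda\in xM$. With a Borel for which all $\alpha_i$ are simple, Verma modules give the identification at every lift, not just on a dense set. You still need to check $\rho|_{\hh_x}=\rho_x$ for the induced Borel of $\gg_x$.
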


\begin{corollary}\label{DSNM}
Let $x\in\YY\setminus\{0\}$ have rank $l$, and let $M$ be a highest weight $\gg$-supermodule of atypicality $k$.
If $k<l$, then $\DS_x(M)=0$.
\end{corollary}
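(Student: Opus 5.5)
This follows from Theorem~\ref{thm::DScharacter}, argued by contradiction. Suppose $\DS_x(M)=M_x\neq 0$. First I reduce to the situation of that theorem. Corollary~\ref{DSNM} is stated for an arbitrary highest weight supermodule, not only a simple one. Even so, $M$ has infinitesimal character $\chi=\chi_\Lambda$, since $\ZG$ acts on the highest weight vector $v_\Lambda$ by a scalar and $v_\Lambda$ generates $M$. So $\chi$ is a genuine central character with $\at(\chi)=\at(\Lambda)=k$.

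The argument preceding Theorem~\ref{thm::DScharacter} then applies verbatim; it uses only that $\ZG$ acts by $\chi$ on $\ker(x_M)$. For $z\in\ZG$ and $m\in\ker(x_M)$ we have $\chi(z)m=\upeta(z)m$ modulo $xM$. Hence $\upeta(\ZG)\subset\mathfrak Z(\gg_x)$ acts on $M_x$ through $\chi$ composed with the dual of $\upeta$.

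Since $M_x\neq0$, I pick a nonzero generalized eigenvector for $\mathfrak Z(\gg_x)$, or simply a nonzero $\gg_x$-subquotient with a generalized infinitesimal character $\xi$. Such a vector exists because $M$ is a weight module with finite-dimensional weight spaces, and $\mathfrak Z(\gg_x)$ preserves the weight spaces of $M_x$, which are finite-dimensional. This gives $\upeta^{*}(\xi)=\chi$. Theorem~\ref{thm::DScharacter}, or directly the identity $\at(\upeta^{*}(\xi))=\at(\xi)+\rk(x)$ recorded just before it, then yields
\[
k=\at(\chi)=\at(\xi)+l\ \ge\ l,
\]
because degrees of atypicality are non-negative. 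This contradicts $k<l$, so $M_x=0$.

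The main obstacle is the finiteness step, that is, guaranteeing that $M_x\neq0$ actually contains a vector with a (generalized) $\mathfrak Z(\gg_x)$-character. This is needed so that the atypicality formula can be invoked. I would handle it using $\hh_x$-weight spaces: they are quotients of subspaces of the $\hh$-weight spaces of $M$ restricted to $\bigcap\ker\alpha_i$, so each $\hh_x$-weight space of $M_x$ is a sum of finitely many finite-dimensional pieces. If that fails, I would restrict to a single $\hh$-weight space of $\ker x_M$ that survives in $M_x$ and apply the same argument there.
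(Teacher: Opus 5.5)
Your route is the paper's: the corollary is read off from Theorem~\ref{thm::DScharacter} via $\at(\xi)=\at(\chi)-\rk(x)$. Your passage from simple to arbitrary highest weight $M$ through the infinitesimal character $\chi_\Lambda$ is also fine.

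The gap is exactly in the step you flag: your justification for the existence of $\xi$ fails for infinite-dimensional $M$. The operator $x_M$ is $\hh$-homogeneous only when $x$ is a single root vector. For $x=x_1+\cdots+x_l$ with $x_i\in\gg^{\alpha_i}$, the weight space of $M$ for $\bigcap_i\ker\alpha_i$ is $\bigoplus_{n\in\ZZ^l}M^{\mu+n_1\alpha_1+\cdots+n_l\alpha_l}$, and infinitely many summands can be nonzero. For instance, in $\mathfrak{sl}(4|3)$ with the distinguished Borel, $\alpha_1=\epsilon_1-\delta_2$ and $\alpha_2=\epsilon_2-\delta_1$ are orthogonal positive isotropic roots with $\alpha_1-\alpha_2=(\epsilon_1-\epsilon_2)+(\delta_1-\delta_2)$. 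Hence every $\lambda-n(\alpha_1-\alpha_2)$, $n\ge 0$, is a weight of the Verma module $M(\lambda)$, and all of them lie in one such weight space. Adding the elements $h$ with $\alpha_1(h)=\alpha_2(h)$, which also act on $M_x$, does not help, since here $n_1+n_2=0$. Your fallback fails for the same reason: $\ker x_M$ is not $\hh$-graded. Moreover, a general $x\in\YY$ cannot be moved into this normal form, because $G_{\bar 0}$ does not act on an infinite-dimensional highest weight module. For finite-dimensional $M$ your eigenvector argument is fine as it stands.

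The fix is that no finiteness is needed at all.
\begin{itemize}
\item Put $J\coloneqq\ker\chi\subset\ZG$. Since $z\in\ZG$ acts on $M_x$ as $\upeta(z)$, the ideal $\mathfrak a\coloneqq\upeta(J)\,\mathfrak Z(\gg_x)$ of $\mathfrak Z(\gg_x)$ annihilates $M_x$. So $M_x\neq 0$ forces $1\notin\mathfrak a$.
\item Choose a maximal ideal $\mathfrak m\supseteq\mathfrak a$. Because $\mathfrak Z(\gg_x)$ has countable dimension over $\CC$, one has $\mathfrak Z(\gg_x)/\mathfrak m\cong\CC$. Otherwise the residue field would contain $\CC(t)$, in which the elements $(t-a)^{-1}$, $a\in\CC$, are uncountably many and linearly independent.
\item You need this countable-dimension version of the Nullstellensatz because $\mathfrak Z(\gg_x)$ need not be finitely generated; already $\mathfrak Z(\mathfrak{gl}(1|1))$ is not.
\item Let $\xi$ be the quotient map. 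Then $\upeta(z)-\chi(z)=\upeta(z-\chi(z))\in\mathfrak m$ for all $z\in\ZG$, so $\upeta^{*}(\xi)=\chi$.
\item Now $k=\at(\chi)=\at(\xi)+l\ge l$ contradicts $k<l$, exactly as you intended.
\end{itemize}
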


\subsection{A Family of Nilpotent Perturbations}
Fix a quadratic Lie subsuperalgebra $\ll\subset\gg$ such that $B_{\ll}\coloneqq B|_{\ll}$ is non-degenerate and
\begin{equation}
 \gg=\ll\oplus\pp,\qquad \pp=\ll^{\perp},
\end{equation}
where the decomposition is orthogonal with respect to $B$. In what follows, we embed $\WW(\ll)$ into $\Weil$ using the embedding \eqref{eq::embedding_j}. Assume $\hh\subset\ll$ and $\ll_{\bar 1}\neq\{0\}$. Let
\begin{equation}
 \YY_{\ll}\coloneqq \{x\in\ll_{\bar 1}:\ [x,x]=2x^{2}=0\}
\end{equation}
be the self-commuting variety of $\ll$. If $L_{\bar 0}$ is a connected simply connected Lie group with Lie algebra $\ll_{\bar 0}$, then $\YY_{\ll}$
is an $L_{\bar 0}$-stable Zariski-closed cone in $\ll_{\bar 1}$. We will define a family of cubic Dirac operators parametrized by
$x\in\YY_{\ll}$.

Recall that the quantum Weil algebra $\WW(\ll)$ is generated by $1\otimes x$ and $\gamma^{\WW}(x)=x\otimes 1+1\otimes \nu_{*}(x)$,
with bidegrees $(\bar 1,p(x))$ and $(\bar 0,p(x))$ (hence total degrees $p(x)+\bar 1$ and $p(x)$, respectively). We write $\deg$
for the total degree on $\WW(\ll)$. Unlike the reductive (purely even) case, this grading allows square-zero perturbations of the
relative cubic Dirac operator while keeping a nice square.

\begin{definition}
For $x\in\YY_{\ll}$ set $$\Dirac^{x}_{\gg,\ll}\coloneqq \Dirac_{\gg,\ll}+j(\gamma^{\WW}(x))\in\Weil.$$
\end{definition}

\begin{remark}\label{rmk::square_gamma_W_x}
Since $j:\WW(\ll)\hookrightarrow \Weil$ is a Lie superalgebra homomorphism, we have $j(\gamma^{\WW}(x))^{2}=0$ for all
$x\in\YY_{\ll}$. Indeed, $x$ is odd, hence $\gamma^{\WW}(x)$ is odd for the total degree, and therefore
\[
j(\gamma^{\WW}(x))^{2}=\tfrac12\,[j(\gamma^{\WW}(x)),j(\gamma^{\WW}(x))]_{\WW}
=\tfrac12\,j([\gamma^{\WW}(x),\gamma^{\WW}(x)]_{\WW})
=\tfrac12\,j(\gamma^{\WW}([x,x]_{\gg}))=0.
\]
\end{remark}

Let $C_{\ll}(x)\coloneqq \{y\in\ll:[x,y]=0\}$ be the \emph{commutant} of $x$ in $\ll$. The following lemma summarizes the main properties $\Dirac^{x}_{\gg,\ll}$.

\begin{lemma}\label{lemm::properties_D_x}
For $x\in\YY_{\ll}$ the following hold:
\begin{enumerate}
\item[a)] $\Dx$ is odd in $\Weil$ with respect to the total degree.
\item[b)] $\Dx$ is $C_{\ll}(x)$-invariant.
\item[c)] For $g\in L_{\bar 0}$, one has $\Ad_{g}(\Dx)=\Dirac^{\Ad_{g}(x)}_{\gg,\ll}$.
\item[d)] $(\Dx)^{2}=\Dirac_{\gg,\ll}^{2}$.
\end{enumerate}
\end{lemma}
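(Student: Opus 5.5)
The plan is to verify the four items separately, using the colour commutator calculus on $\Weil$. Throughout, $j(\gamma^{\WW}(x))=\gamma^{\WW}(x)$ by \eqref{eq::embedding_j}. The two earlier facts doing the work are Theorem~\ref{thm::square_Dirac_fd}\,a), which places $\Dirac_{\gg,\ll}$ in $\WW(\gg,\ll)$, and Lemma~\ref{lemm::contraction_and_Lie_derivative_in_Weil}\,b), which gives $L_{y}=[\gamma^{\WW}(y),\cdot]_{\WW}$.

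For a), recall that $\gamma^{\WW}(x)$ has bidegree $(\bar 0,p(x))=(\bar 0,\bar 1)$, so its total degree is $\bar 1$. By Lemma~\ref{lemm::properies_D_l}, $\Dirac_{\gg}$ and $j(\Dirac_{\ll})$ have bidegree $(\bar 1,\bar 0)$. Hence $\Dx$ is a sum of total-degree-odd elements. Note that the two summands have different bidegrees, which matters in d).

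For b), take $y\in C_{\ll}(x)$. Then $L_{y}\Dirac_{\gg,\ll}=0$ because $\Dirac_{\gg,\ll}$ is $\ll$-basic. For the other summand, Lemma~\ref{lemm::commutation_relations_Weil_generators} gives
\[
L_{y}\gamma^{\WW}(x)=[\gamma^{\WW}(y),\gamma^{\WW}(x)]_{\WW}=\gamma^{\WW}([y,x])=0 .
\]

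For c), let $\Ad_{g}$ act on $\Weil$ by $\Ad^{U}\otimes\Ad^{C}$, as in the Lie algebra case. Since $L_{\bar 0}$ is connected and simply connected and $L_{y}\Dirac_{\gg,\ll}=0$ for $y\in\ll_{\bar 0}$, the element $\Dirac_{\gg,\ll}$ is fixed by $\Ad_{g}$. It then suffices to show $\Ad_{g}\gamma^{\WW}(x)=\gamma^{\WW}(\Ad_{g}x)$. This holds because $x\mapsto\gamma^{\WW}(x)$ is $\even$-equivariant: $[\gamma^{\WW}(y),\gamma^{\WW}(x)]_{\WW}=\gamma^{\WW}([y,x])$ for even $y$, so its integrated form intertwines $\Ad_{g}$. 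The same holds with $q$ and $\lambda$ replaced by their equivariant versions, as in Lemma~\ref{lemm::invariance_family_under_G}.

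For d), which I expect to be the main point, the two summands of $\Dx$ are odd of total degree $\bar 1$ but have bidegrees $(\bar 1,\bar 0)$ and $(\bar 0,\bar 1)$. Their bicharacter is therefore $\chi=(-1)^{0+0}=1$, and the colour commutator between them is an ordinary commutator. Rather than rely on $A^{2}=\tfrac12[A,A]_{\WW}$ for the mixed sum, I will expand directly:
\[
(\Dx)^{2}=\Dirac_{\gg,\ll}^{2}+\bigl(\Dirac_{\gg,\ll}\gamma^{\WW}(x)+\gamma^{\WW}(x)\Dirac_{\gg,\ll}\bigr)+\gamma^{\WW}(x)^{2}.
\]
The last term vanishes by Remark~\ref{rmk::square_gamma_W_x}. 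For the cross term, $\Dirac_{\gg,\ll}$ is $\ll$-basic, so
\[
[\gamma^{\WW}(x),\Dirac_{\gg,\ll}]_{\WW}=L_{x}\Dirac_{\gg,\ll}=0 ,
\]
that is, $\gamma^{\WW}(x)\Dirac_{\gg,\ll}=\chi\,\Dirac_{\gg,\ll}\gamma^{\WW}(x)=\Dirac_{\gg,\ll}\gamma^{\WW}(x)$.

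The subtlety is that the cross term in the expansion is an anticommutator, whereas this gives a commutator relation. Combining the two yields only $2\Dirac_{\gg,\ll}\gamma^{\WW}(x)$, not zero. The operators still anticommute once one remembers that on modules the odd-odd sign comes from the total degree. So I will pass to the sign rule actually used for the action on $M\otimes\overline{M}(\pp)$: there, Lemma~\ref{lemm::properties_Weil_differential} identifies $d^{\WW}$ as the total-degree supercommutator $\Dirac_{\gg,\ll}A-(-1)^{p(A)}A\Dirac_{\gg,\ll}$. With $A=\gamma^{\WW}(x)$ of total degree $\bar 1$, compatibility with $L_{x}$ reads
\[
\Dirac_{\gg,\ll}\gamma^{\WW}(x)+\gamma^{\WW}(x)\Dirac_{\gg,\ll}=0 .
\]
This also follows from the relation $d^{\WW}\iota_{x}+\iota_{x}d^{\WW}=L_{x}$ applied to $1\otimes x$. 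Reconciling these two sign conventions carefully is the step I expect to be the obstacle.
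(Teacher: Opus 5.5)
Parts a)--c) follow the paper's argument. Part d) is where the proposal breaks down, at exactly the point you flagged.

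\textbf{Your expansion in d) is correct.} The element $\gamma^{\WW}(x)$ has bidegree $(\bar 0,\bar 1)$ and $\Dirac_{\gg,\ll}$ has bidegree $(\bar 1,\bar 0)$, so $\chi=1$. Hence $L_x\Dirac_{\gg,\ll}=0$ (Lemma~\ref{lemm::contraction_and_Lie_derivative_in_Weil}\,b) together with Theorem~\ref{thm::square_Dirac_fd}\,a)) says the two elements \emph{commute}. The same follows independently: $L_x$ is the supercommutator for the $\gg$-parity, and $\Dirac_{\gg,\ll}$ is $\gg$-parity even. With Remark~\ref{rmk::square_gamma_W_x} this gives
\[
(\Dx)^{2}=\Dirac_{\gg,\ll}^{2}+2\,\Dirac_{\gg,\ll}\,j(\gamma^{\WW}(x)).
\]

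\textbf{The gap is your last paragraph.} A commutation relation in the associative algebra $\Weil$ persists in every representation of it, in particular on $M\otimes\overline{M}(\pp)$. Choosing another sign rule for a bracket only changes which expression you call a commutator; it does not change the products. If $\Dirac_{\gg,\ll}$ and $j(\gamma^{\WW}(x))$ also anticommuted, you would get $\Dirac_{\gg,\ll}\,j(\gamma^{\WW}(x))=0$, which is false. Its degree-two symbol for the PBW filtration of $\UE(\gg)$ is $\sum_{a}(-1)^{p(e_a)p(x)}e^{a}x\otimes e_{a}$, with $\{e_a\}$ a homogeneous basis of $\pp$. This is non-zero whenever $x\neq 0$ and $\pp\neq 0$.

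Lemma~\ref{lemm::properties_Weil_differential} cannot supply the anticommutation either. It is an identity between operators on $\WW(\gg,\ll)$, not a relation between $\Dirac_{\gg,\ll}$ and $\gamma^{\WW}(x)$. Evaluating $d^{\WW}\iota_x+(-1)^{p(x)}\iota_x d^{\WW}=L_x$ on $1\otimes x$ yields only $0=0$, because $B(x,x)=0$, $\iota_x\Dirac_{\gg,\ll}=0$ and $[x,x]=0$.

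\textbf{So the obstacle is not a matter of reconciling conventions: for $x\neq0$ and $\pp\neq0$, d) does not hold in $\Weil$ as stated.} The paper's own proof reaches d) only by writing $(\Dx)^{2}=\tfrac12[\Dx,\Dx]_{\WW}$ and counting the cross term twice. Two things go wrong there:
\begin{itemize}
\item The identity $A^{2}=\tfrac12[A,A]_{\WW}$ holds only for bihomogeneous $A$, and $\Dx$ is not bihomogeneous.
\item Since $\chi=1$, the colour bracket is antisymmetric on this pair, so the cross terms cancel and $\tfrac12[\Dx,\Dx]_{\WW}=\Dirac_{\gg,\ll}^{2}$ identically. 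This differs from $(\Dx)^{2}$ by exactly $\Dirac_{\gg,\ll}j(\gamma^{\WW}(x))+j(\gamma^{\WW}(x))\Dirac_{\gg,\ll}=2\Dirac_{\gg,\ll}j(\gamma^{\WW}(x))$.
\end{itemize}

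A square-preserving perturbation requires changing the operator itself. For instance, on $M\otimes\overline{M}(\pp)$ replace $j(\gamma^{\WW}(x))$ by $\epsilon\, j(\gamma^{\WW}(x))$, where $\epsilon$ is the grading operator of the Clifford-degree grading of $\overline{M}(\pp)$. This $\epsilon$ anticommutes with $\Dirac_{\gg,\ll}$ and commutes with $j(\gamma^{\WW}(x))$, so the cross terms genuinely cancel. It also leaves the kernel and image of $j(\gamma^{\WW}(x))$ unchanged.
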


\begin{proof}
a) Both $\Dirac_{\gg,\ll}$ and $j(\gamma^{\WW}(x))$ have total degree $\bar 1$, hence so does $\Dx$.
b) Let $y\in C_{\ll}(x)$. Since $\Dirac_{\gg,\ll}$ is $\ll$-invariant, it commutes with $y$ under the embedding $\ll\hookrightarrow\Weil$.
Moreover,
\[
[\gamma^{\WW}(x),\gamma^{\WW}(y)]_{\WW}=\gamma^{\WW}([x,y]_{\gg})=0,
\]
so $\Dx$ is $C_{\ll}(x)$-invariant.
c) This follows from $\Ad$-equivariance of $\Dirac_{\gg,\ll}$ and of $j\circ\gamma^{\WW}$.
d) Since $\Dx$ is odd,
\[
(\Dx)^2=\tfrac12[\Dx,\Dx]_{\WW}
=\tfrac12[\Dirac_{\gg,\ll},\Dirac_{\gg,\ll}]_{\WW}
+[\Dirac_{\gg,\ll},j(\gamma^{\WW}(x))]_{\WW}
+\tfrac12[j(\gamma^{\WW}(x)),j(\gamma^{\WW}(x))]_{\WW}.
\]
The middle term vanishes by $\ll$-invariance of $\Dirac_{\gg,\ll}$, and the last term is zero by
Remark~\ref{rmk::square_gamma_W_x}. Hence $(\Dx)^2=\Dirac_{\gg,\ll}^2$.
\end{proof}

Fix a highest weight simple $\gg$-supermodule $M$ such that we can consider the family
\begin{equation}
 \YY_{\ll} \to \End(M\otimes \overline{M}(\pp)), \qquad x \mapsto \Dx.
\end{equation}
Recall that we assigned to any morphism $T$ of a super vector space $V$ a cohomology $\H(T;V)$ (Section~\ref{subsec::Dirac_cohomology}), which yields the Dirac cohomology $\H_{\Dirac_{\gg,\ll}}(M)$ for $T=\Dirac_{\gg,\ll}$ and $V=M\otimes \overline{M}(\pp)$. Analogously, we define 
\begin{equation}
 \H_{\Dx}(M)\coloneqq \H(\Dx; M\otimes \overline{M}(\pp))=\ker \Dx / (\ker \Dx \cap \Im \Dx).
\end{equation}

Our goal is to describe this cohomology. First, by Lemma~\ref{lemm::properties_D_x}, we have the following lemma

\begin{lemma}
 For any $x \in \YY_{\ll}$ is $\H_{\Dx}(M)$ is a $C_{\ll}(x)$-supermodule.
\end{lemma}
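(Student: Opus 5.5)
The statement to prove is that $C_{\ll}(x)$ acts on $M\otimes\overline{M}(\pp)$ by operators that supercommute with $\Dx$. Then both $\ker\Dx$ and $\Im\Dx$ are $C_{\ll}(x)$-stable, and so is their intersection. The quotient $\H_{\Dx}(M)$ therefore inherits a $C_{\ll}(x)$-supermodule structure.

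\textbf{Step 1: the action.} First fix the action. An element $y\in C_{\ll}(x)\subset\ll$ acts on $M\otimes\overline{M}(\pp)$ through the diagonal embedding $y\mapsto j(\gamma^{\WW(\ll)}(y))$, that is, $y\otimes 1+1\otimes\nu_{\ast}(y)$. Because $j$ and $\gamma^{\WW}$ are Lie superalgebra homomorphisms (Proposition~\ref{lemm::properties_map_j_fd} and Lemma~\ref{lemm::commutation_relations_Weil_generators}), this is an $\ll$-supermodule structure, and in particular it restricts to a $C_{\ll}(x)$-supermodule structure. Since $[x,y]=0$ and $\hh\subset\ll$, $C_{\ll}(x)$ is a Lie subsuperalgebra, so nothing further is needed here.

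\textbf{Step 2: supercommutation.} Next, check $[\gamma^{\WW}(y),\Dx]_{\WW}=0$ in $\Weil$ for all $y\in C_{\ll}(x)$. This is Lemma~\ref{lemm::properties_D_x}~b). Written out, it has two pieces:
\begin{itemize}
\item $[\gamma^{\WW}(y),\Dirac_{\gg,\ll}]_{\WW}=L_y\Dirac_{\gg,\ll}=0$, by Theorem~\ref{thm::square_Dirac_fd}~a) together with Lemma~\ref{lemm::contraction_and_Lie_derivative_in_Weil}~b);
\item $[\gamma^{\WW}(y),j(\gamma^{\WW}(x))]_{\WW}=\gamma^{\WW}([y,x])=0$.
\end{itemize}
Passing to the module $M\otimes\overline{M}(\pp)$, the colour commutator becomes the ordinary supercommutator. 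Hence $y\Dx=(-1)^{p(y)}\Dx y$ as operators.

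\textbf{Step 3: stability and the quotient.} If $\Dx v=0$, then $\Dx(yv)=\pm y\Dx v=0$. Likewise $y\Dx w=\pm\Dx(yw)$. So $\ker\Dx$, $\Im\Dx$ and their intersection are $C_{\ll}(x)$-subsupermodules. These subspaces are graded because $\Dx$ is odd (Lemma~\ref{lemm::properties_D_x}~a)), and the quotient $\H_{\Dx}(M)$ is then a $C_{\ll}(x)$-supermodule.

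\textbf{Main obstacle.} The only delicate point is the sign and bidegree bookkeeping in Step 2. One must make sure the colour commutator $[\cdot,\cdot]_{\WW}$, whose bicharacter mixes the two $\ZZ_2$-gradings, really matches the super sign conventions of the module action on $M\otimes\overline{M}(\pp)$, graded by $\overline{M}(\pp)$. Vanishing of the commutator makes this harmless: up to sign, the operators commute, so the kernel and image are preserved regardless of which sign occurs.
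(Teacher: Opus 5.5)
Your route is the paper's own (the paper simply cites Lemma~\ref{lemm::properties_D_x}~b)). However, the step ``hence $y\Dx=(-1)^{p(y)}\Dx y$ as operators'' fails for odd $y\in C_{\ll}(x)$. The sign bookkeeping you set aside in your last paragraph is exactly where it breaks.

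The element $\Dx$ is not bihomogeneous. $\Dirac_{\gg,\ll}$ has bidegree $(\bar 1,\bar 0)$ and $j(\gamma^{\WW}(x))$ has bidegree $(\bar 0,\bar 1)$. For odd $y$, $\gamma^{\WW}(y)$ has bidegree $(\bar 0,\bar 1)$, so $\chi=+1$ against the first summand and $\chi=-1$ against the second. The colour bracket is not the total-degree supercommutator; the two differ by $(-1)^{ad+bc}$. The two vanishing brackets therefore say
\[
\gamma^{\WW}(y)\Dirac_{\gg,\ll}=\Dirac_{\gg,\ll}\gamma^{\WW}(y),\qquad \gamma^{\WW}(y)\gamma^{\WW}(x)=-\gamma^{\WW}(x)\gamma^{\WW}(y).
\]
The first identity is as it must be, since $\Dirac_{\gg,\ll}$ is intrinsically even and $\ll$-invariant. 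It follows that $\gamma^{\WW}(y)\Dx=\Dirac^{-x}_{\gg,\ll}\gamma^{\WW}(y)$, where $\Dirac^{-x}_{\gg,\ll}\coloneqq\Dirac_{\gg,\ll}-j(\gamma^{\WW}(x))$. So an odd $y$ carries $\ker\Dx$ into $\ker\Dirac^{-x}_{\gg,\ll}$. For $v\in\ker\Dx$ you only get $\Dx(yv)=2\gamma^{\WW}(x)\gamma^{\WW}(y)v$.

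Since there is no single sign, ``up to sign they commute'' is not available. These relations alone do not force stability. On $\CC[a]/(a^{2})\otimes\bigwedge(\theta_{1},\theta_{2})$, let $D,X,Y$ be left multiplication by $a,\theta_{1},\theta_{2}$. Then $DX=XD$, $DY=YD$, $X^{2}=0$ and $XY=-YX$. The vector $v=\theta_{1}-a$ lies in $\ker(D+X)$, yet $(D+X)Yv=-2a\theta_{1}\theta_{2}\neq 0$.

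What your argument does establish is the action of $C_{\ll}(x)_{\bar 0}$, which has bidegree $(\bar 0,\bar 0)$, so every bracket is an honest commutator. The element $x$ itself also commutes with $\Dx$, because $j(\gamma^{\WW}(x))^{2}=0$. The odd part needs an extra ingredient, and Lemma~\ref{lemm::properties_D_x}~b) does not supply it. There are two ways out.

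(i) Twist the odd action. Let $\epsilon$ be the operator giving the intrinsic parity on $M\otimes\overline{M}(\pp)$; it commutes with $\Dirac_{\gg,\ll}$ and anticommutes with $j(\gamma^{\WW}(x))$, so $\epsilon\Dx\epsilon=\Dirac^{-x}_{\gg,\ll}$. Let odd $y$ act by $i\epsilon\gamma^{\WW}(y)$, with even elements acting as before. This gives a $C_{\ll}(x)$-action commuting with $\Dx$, but it is not the restricted diagonal action.

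(ii) Use the unitarizable case. The same bicharacter count shows that $\Dirac_{\gg,\ll}$ and $j(\gamma^{\WW}(x))$ commute, rather than anticommute as assumed in the proof of Lemma~\ref{lemm::cohomology_restriction}. By Theorem~\ref{thm::Dirac_unitarizable}~b), $\Dirac_{\gg,\ll}$ is invertible on $\Im\Dirac_{\gg,\ll}$, hence so is $\Dx$. One then finds $\ker\Dx=\ker\Dirac_{\gg,\ll}\cap\ker j(\gamma^{\WW}(x))$ and $\ker\Dx\cap\Im\Dx=j(\gamma^{\WW}(x))\ker\Dirac_{\gg,\ll}$. Both are visibly stable under the diagonal action of all of $C_{\ll}(x)$.

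For a general highest weight $M$, where $\ker\Dirac_{\gg,\ll}\cap\Im\Dirac_{\gg,\ll}$ may be nonzero, neither your proof nor the paper's handles odd $y$.
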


Computing the cohomology on $M\otimes \overline{M}(\pp)$ reduces to the Laplacian
$\Updelta_{\gg,\ll}\coloneqq \Dirac_{\gg,\ll}^{2}$. We need:

\begin{lemma}\label{lemm::decomposition_under_Laplace}
The following hold:
\begin{enumerate}
\item[a)] $M\otimes \overline{M}(\pp)\cong \ker \Updelta_{\gg,\ll}\oplus \Im \Updelta_{\gg,\ll}$.
\item[b)] $\H_{\Updelta_{\gg,\ll}}(M)\coloneqq \H(\Updelta_{\gg,\ll};\,M\otimes \overline{M}(\pp))=\ker \Updelta_{\gg,\ll}$.
\end{enumerate}
\end{lemma}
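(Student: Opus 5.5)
The plan is to diagonalize $\Updelta_{\gg,\ll}$ on $M\otimes \overline{M}(\pp)$ by combining Theorem~\ref{thm::square_Dirac_fd}b) with the weight structure of $\overline{M}(\pp)$. By Theorem~\ref{thm::square_Dirac_fd}b), $\Updelta_{\gg,\ll}$ acts as
\[
\Omega_{\gg}\otimes 1-\Omega_{\ll_{\Delta}}+c,
\]
where $c$ is a constant and $\Omega_{\ll_{\Delta}}=j(\Omega_{\ll})$ is the Casimir of $\ll$ acting diagonally through $\ll\to\UE(\gg)\otimes\Cl(\pp)$, $X\mapsto X\otimes 1+1\otimes\nu_{\ast}(X)$. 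Since $M$ is a simple highest weight supermodule, it has infinitesimal character $\chi_{\Lambda}$, so $\Omega_{\gg}\otimes 1$ acts by the scalar $B(\Lambda+\rho,\Lambda+\rho)-B(\rho,\rho)$. Hence
\[
\Updelta_{\gg,\ll}=\text{const}-\Omega_{\ll_{\Delta}},
\]
and the problem reduces to showing that $\Omega_{\ll_{\Delta}}$ acts locally finitely and semisimply, or at least that the generalized $0$-eigenspace of $\Updelta_{\gg,\ll}$ coincides with its kernel and admits a complement.

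The key steps, in order, are as follows.

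\emph{Step 1.} $M\otimes\overline{M}(\pp)$ is an $\hh$-weight module with finite-dimensional weight spaces in each $\ll$-block. Both $M$ and $\overline{M}(\pp)$ are weight modules (Proposition~\ref{prop::comparison_action}b)), and $\hh\subset\ll$. The operator $\Omega_{\ll_{\Delta}}$ commutes with $\ll_{\Delta}$, so it preserves the subspace $\UE(\ll_{\Delta})\cdot(M\otimes\overline{M}(\pp))^{\mu}$ and, in particular, each weight space.

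\emph{Step 2.} The weight spaces are finite-dimensional, so $\Updelta_{\gg,\ll}$ restricted to each is an endomorphism of a finite-dimensional space. I would use the gradings by $\ZZ_{+}$-depth: weights of $M$ are $\Lambda-\ZZ_{+}[\Delta^{+}]$, and weights of $\overline{M}(\pp)$ are $\rho^{\uu}-\ZZ_{+}[\Delta^{+}\setminus\Delta_{\ll}^{+}]$. One needs to check finiteness of each weight space of the tensor product. Alternatively, I would avoid weight spaces altogether and work with the $\ll$-Casimir on $\ll$-submodules generated by finitely many vectors. By a Harish-Chandra-type argument, such submodules are finite length with finitely many generalized $\Omega_{\ll}$-eigenvalues.

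\emph{Step 3.} With $V=\bigoplus_{t}V_{t}$ the decomposition into generalized eigenspaces of $\Updelta_{\gg,\ll}$, we get $\Im\Updelta_{\gg,\ll}\supseteq\bigoplus_{t\neq 0}V_{t}$. It remains to show that $\Updelta_{\gg,\ll}$ is actually semisimple on $V_{0}$. For this I would use that $\ll_{\Delta}$ acts semisimply. When $M$ is unitarizable this follows from Proposition~\ref{prop::unitarizable_completely_reducible} together with Proposition~\ref{prop::comparison_action}d), since the tensor product of unitarizable modules is unitarizable. On a semisimple $\ll$-module the Casimir acts diagonalizably. Then $\ker\Updelta_{\gg,\ll}=V_{0}$ and $V=\ker\oplus\Im$, which proves a). Part b) is immediate from a), because $\ker\cap\Im=0$.

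The main obstacle is semisimplicity of the Casimir, i.e.\ ruling out Jordan blocks of $\Updelta_{\gg,\ll}$ at eigenvalue $0$ for a general (nonunitarizable) highest weight module. My approach would be to invoke admissibility ($\ll$-semisimplicity, as in Lemma~\ref{lemm::completely_reducible_DC}) or unitarity of $M$ as the standing hypothesis implicit in this section, since $\overline{M}(\pp)$ is unitarizable over $\ll$. Failing that, I would restrict to the generalized kernel and argue via a positive-definite form.
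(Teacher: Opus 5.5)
Your reduction is sound as far as it goes. By Theorem~\ref{thm::square_Dirac_fd} and the infinitesimal character of $M$, $\Updelta_{\gg,\ll}$ acts on $M\otimes\overline{M}(\pp)$ as $c_{\Lambda}-\Omega_{\ll_{\Delta}}$ for a constant $c_{\Lambda}$. The weight spaces are finite-dimensional: every weight is $\Lambda+\rho^{\uu}-\beta$ with $\beta\in\ZZ_{+}[\Delta^{+}]$, and $\beta$ splits in only finitely many ways. So you get a generalized eigenspace decomposition, and every generalized eigenspace with nonzero eigenvalue lies in $\Im\Updelta_{\gg,\ll}$.

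But the whole content of a) is the remaining step: $\Updelta_{\gg,\ll}$ has no nilpotent part on the generalized $0$-eigenspace. For the statement as posed, an arbitrary simple highest weight $M$, you do not prove this; you add a hypothesis instead. Unitarity does close the argument, since self-adjointness (Theorem~\ref{thm::Dirac_unitarizable}) gives $\ker\Updelta_{\gg,\ll}=\ker\Updelta_{\gg,\ll}^{2}$, and that is the case needed in Theorem~\ref{thm::Dirac_and_DS}. However, Lemma~\ref{lemm::cohomology_restriction} and the vanishing conjecture use the lemma for non-unitarizable $M$.

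Your fallback via admissibility does not work in this section, where $\ll_{\bar 1}\neq\{0\}$. Semisimplicity of $M|_{\ll}$ and of $\overline{M}(\pp)$ does not pass to the tensor product. Moreover, the Casimir of $\ll$ need not act semisimply even on finite-dimensional $\ll$-supermodules built from simple ones.

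For instance, take $\ll\cong\mathfrak{gl}(1|1)$ with the supertrace form. The standard module $V=\CC^{1|1}$ is simple and typical, yet $V\otimes V^{*}\cong\ll_{\mathrm{ad}}$ is the projective cover of the trivial module. On it, $\Omega_{\ll}$ sends $E_{11}$ to a nonzero multiple of the identity matrix, which it annihilates, so it has a Jordan block at eigenvalue $0$.

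Your alternative finite-length/Harish-Chandra argument in Step 2 does not help either, because it controls only generalized eigenvalues, not semisimplicity. The paper does not supply this step itself; it imports part a) from \cite[Proposition~4.1.2]{Schmidt_Noja}. To complete your route for general $M$, you would need either that cited result or an argument exploiting the specific structure of $M\otimes\overline{M}(\pp)$. A general semisimplicity principle for $\ll$-supermodules is not available.
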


\begin{proof}
Part~a) is proved in \cite[Proposition~4.1.2]{Schmidt_Noja}. Part~b) follows immediately from~a).
\end{proof}

\begin{lemma}\label{lemm::cohomology_restriction}
One has
\[
\H_{\Dx}(M)=\H(\Dx;\,\ker(\Dx)^{2})
=\H(\Dx;\,\ker \Dirac_{\gg,\ll}^{2})
=\H(\Dx;\,\H_{\Updelta_{\gg,\ll}}(M)).
\]
\end{lemma}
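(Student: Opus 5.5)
Since $(\Dx)^{2}=\Dirac_{\gg,\ll}^{2}=\Updelta_{\gg,\ll}$ by Lemma~\ref{lemm::properties_D_x}~d), the middle equalities are pure notation: $\ker(\Dx)^{2}=\ker\Dirac_{\gg,\ll}^{2}=\ker\Updelta_{\gg,\ll}$. By Lemma~\ref{lemm::decomposition_under_Laplace}~b) this space is $\H_{\Updelta_{\gg,\ll}}(M)$. So the substance is the first equality: $\H(\Dx;V)=\H(\Dx;\ker\Updelta_{\gg,\ll})$ with $V\coloneqq M\otimes\overline{M}(\pp)$. The argument will be the standard Hodge-type reduction using the splitting $V=\ker\Updelta_{\gg,\ll}\oplus\Im\Updelta_{\gg,\ll}$ from Lemma~\ref{lemm::decomposition_under_Laplace}~a).

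First, $\Dx$ commutes with its own square $\Updelta_{\gg,\ll}$. Hence it preserves both $K\coloneqq\ker\Updelta_{\gg,\ll}$ and $I\coloneqq\Im\Updelta_{\gg,\ll}$. The operator therefore splits as $\Dx=\Dx|_{K}\oplus\Dx|_{I}$, and consequently
\[
\H(\Dx;V)=\H(\Dx;K)\oplus\H(\Dx;I),
\]
since kernels, images and their intersections split componentwise. It remains to show $\H(\Dx;I)=0$.

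Next, $\Updelta_{\gg,\ll}$ is injective on $I$. Indeed, $K\cap I=0$, so $\ker\Updelta_{\gg,\ll}|_{I}=0$. It is also surjective onto $I$: for $v=\Updelta_{\gg,\ll}w$, write $w=w_K+w_I$, and then $v=\Updelta_{\gg,\ll}w_I$. Thus $\Updelta_{\gg,\ll}|_I=(\Dx|_I)^2$ is bijective on $I$, so $\Dx|_I$ is itself bijective. Its kernel on $I$ is then zero, and so $\H(\Dx;I)=0$. This gives $\H_{\Dx}(M)=\H(\Dx;K)$.

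The only genuine input is Lemma~\ref{lemm::decomposition_under_Laplace}~a), and the main point to check is that the decomposition there is an honest direct sum of $\Updelta_{\gg,\ll}$-stable subspaces, which is how it is stated. In the infinite-dimensional setting one cannot invoke finite-dimensional Fitting arguments, but the bijectivity argument above uses only the direct sum decomposition. For this reason I would not appeal to weight-space finiteness at all.
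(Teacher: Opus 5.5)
Your proof is correct and follows the paper's route: use the splitting $M\otimes\overline{M}(\pp)=\ker\Updelta_{\gg,\ll}\oplus\Im\Updelta_{\gg,\ll}$ from Lemma~\ref{lemm::decomposition_under_Laplace}, note that $\Dx$ commutes with $\Updelta_{\gg,\ll}=(\Dx)^{2}$ so the cohomology splits, and show that the non-kernel summand contributes nothing. The paper does the last step differently: it decomposes further into $\Updelta_{\gg,\ll}$-eigenspaces, tacitly assuming $\Updelta_{\gg,\ll}$ is diagonalizable, and checks $\ker\subset\Im$ on each nonzero eigenspace by an explicit computation with $\Dirac_{\gg,\ll}$; your observation that $\Dx$ is injective on $\Im\Updelta_{\gg,\ll}$ because its square is needs only the two-term splitting and is slightly cleaner.
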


\begin{proof}
By definition, $\Dx=\Dirac_{\gg,\ll}+j(\gamma^{\WW}(x))$, and Lemma~\ref{lemm::properties_D_x} gives $(\Dx)^{2}=\Dirac_{\gg,\ll}^{2}=\Updelta_{\gg,\ll}$. Together with Lemma~\ref{lemm::decomposition_under_Laplace}, this yields
\[
M\otimes \overline{M}(\pp)\cong \ker \Updelta_{\gg,\ll}\oplus \Im \Updelta_{\gg,\ll}.
\]
Decompose $M\otimes \overline{M}(\pp)$ into eigenspaces of $\Updelta_{\gg,\ll}$. Since $\Updelta_{\gg,\ll}$ commutes with $\Dirac_{\gg,\ll}$ and with
$j(\gamma^{\WW}(x))$, it commutes with $\Dx$, hence $\ker\Dx$, $\Im \Dx$, and $\H(\Dx;\,M\otimes \overline{M}(\pp))$ split as direct sums over these eigenspaces.

Let $\lambda\neq 0$ and let $V_{\lambda}$ be the $\lambda$-eigenspace of $\Updelta_{\gg,\ll}$. It suffices to show that
$\ker(\Dx|_{V_{\lambda}})\subset \Im(\Dx|_{V_{\lambda}})$. Take $v\in V_{\lambda}$ with $\Dx v=0$. Then
$\Dirac_{\gg,\ll}v=-j(\gamma^{\WW}(x))v$, and hence
\[
\begin{split}
\Dx\,\Dirac_{\gg,\ll}v
&=(\Dirac_{\gg,\ll}+j(\gamma^{\WW}(x)))\Dirac_{\gg,\ll}v\\
&=\Dirac_{\gg,\ll}^{2}v+j(\gamma^{\WW}(x))\Dirac_{\gg,\ll}v\\
&=\Updelta_{\gg,\ll}v-j(\gamma^{\WW}(x))^{2}v\\
&=2\Updelta_{\gg,\ll}v=2\lambda v,
\end{split}
\]
where we used $j(\gamma^{\WW}(x))^{2}=0$ and $\Dirac_{\gg,\ll}j(\gamma^{\WW}(x))=-j(\gamma^{\WW}(x))\Dirac_{\gg,\ll}$ (both are odd).
Thus $v=\frac{1}{2\lambda}\Dx(\Dirac_{\gg,\ll}v)\in\Im(\Dx|_{V_{\lambda}})$, proving the claim. Therefore only the
$\lambda=0$ eigenspace contributes, \emph{i.e.}, $\ker(\Dx)^{2}=\ker\Updelta_{\gg,\ll}=\H_{\Updelta_{\gg,\ll}}(M)$, and the stated equalities follow.
\end{proof}

For a unitarizable highest weight supermodule $M$ (Section~\ref{subsec::unitarizable_supermodules}), the previous lemma yields an explicit description of $\H_{\Dx}(M)$
in terms of Dirac cohomology (Section~\ref{subsec::Dirac_cohomology}) and Duflo--Serganova cohomology
(Section~\ref{subsec::Duflo_Serganova_Cohomology}). Recall $\upeta_{x}$ from \eqref{eq::definition_upeta} and its dual injective map
$\upeta^{*}_{x}:\Hom(\mathfrak Z(\ll_x),\CC)\to \Hom(\mathfrak Z(\ll),\CC)$ for $x\in\YY_{\ll}$. Also let
$\eta_{\ll}:\ZG\to\mathfrak Z(\ll)$ be the homomorphism from Theorem~\ref{thm::Casselman_Osborne}. We then have:

\begin{theorem} \label{thm::Dirac_and_DS}
 Assume $M$ is a unitarizable highest weight supermodule with highest weight $\Lambda$. Then for any $x \in \YY_{\ll}$ one has
 \[
 \H_{\Dx}(M)=\DS_{x}(\H_{\Dirac_{\gg,\ll}}(M)).
 \]
 In particular, $\H_{\Dx}(M)$ is a $\ll_{x}$-supermodule and decomposes as a direct sum of $\ll_{x}$-supermodules. If $V$ is a $\ll_{x}$-supermodule with infinitesimal character $\chi_{\nu}^{\ll_{x}}$, then 
 \[
 \chi_{\nu}^{\ll_{x}} \in (\upeta_{x}^{\ast})^{-1}(\chi_{\Lambda} \circ \eta_{\ll}).
 \]
\end{theorem}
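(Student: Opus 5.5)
The plan is to reduce everything to the kernel of the Laplacian and then recognize the action of $\Dx$ there as the Duflo--Serganova differential. By Lemma~\ref{lemm::cohomology_restriction}, $\H_{\Dx}(M)=\H(\Dx;\ker\Updelta_{\gg,\ll})$.

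\textbf{Step 1: the Laplacian kernel.} For $M$ unitarizable, Theorem~\ref{thm::Dirac_unitarizable}\,a) says that $\Dirac_{\gg,\ll}$ is selfadjoint, so $\ker\Updelta_{\gg,\ll}=\ker\Dirac_{\gg,\ll}$. Theorem~\ref{thm::Dirac_unitarizable}\,b) then identifies this with $\H_{\Dirac_{\gg,\ll}}(M)$. If $M$ is not simple, I would first use complete reducibility (Proposition~\ref{prop::unitarizable_completely_reducible}) and work summand by summand.

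\textbf{Step 2: the action of $\Dx$ on the kernel.} On $\ker\Updelta_{\gg,\ll}=\ker\Dirac_{\gg,\ll}$ the term $\Dirac_{\gg,\ll}$ acts by zero, so $\Dx$ restricts to $j(\gamma^{\WW}(x))$. Since $\ker\Dirac_{\gg,\ll}$ is an $\ll$-submodule ($\Dirac_{\gg,\ll}$ is $\ll$-invariant), $j(\gamma^{\WW}(x))$ preserves it. Under the diagonal embedding, $j(\gamma^{\WW}(x))=x\otimes 1+1\otimes\nu_{\ast}(x)$ is exactly the action of $x\in\ll_{\bar 1}$ on the $\ll$-supermodule $\H_{\Dirac_{\gg,\ll}}(M)$. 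This action squares to zero by Remark~\ref{rmk::square_gamma_W_x}.

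\textbf{Step 3: identification with $\DS_x$.} Step 2 gives
\[
\H(\Dx;\ker\Updelta_{\gg,\ll})=\ker(x)\big/\bigl(\ker(x)\cap\Im(x)\bigr)=\ker(x)/\Im(x),
\]
where the last equality holds because $x^{2}=0$. The right-hand side is by definition $\DS_x(\H_{\Dirac_{\gg,\ll}}(M))$, now with $\ll$ in place of $\gg$. The $\ll_x$-module structure on $\ker(x)/\Im(x)$ then comes from the general DS construction in Section~\ref{subsec::Duflo_Serganova_Cohomology} applied to $\ll$. The main obstacle I expect is justifying that this construction is available here. $\ll$ need not be basic classical, so I would check that only $x^2=0$ and $[x,\ll]\ker(x)\subseteq\Im(x)$ are used, both of which hold for any $\ll$-supermodule. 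The decomposition into a direct sum would come from Lemma~\ref{lemm::completely_reducible_DC} together with the DS decomposition by generalized infinitesimal characters.

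\textbf{Step 4: infinitesimal characters.} Take an $\ll$-summand $V'$ of $\H_{\Dirac_{\gg,\ll}}(M)$. By Theorem~\ref{thm::Casselman_Osborne}, $V'$ has infinitesimal character $\chi'$ with $\chi'\circ\eta_{\ll}=\chi_\Lambda$. By the DS argument (Theorem~\ref{thm::DScharacter}, applied for $\ll$ via $\upeta_x$), any $\ll_x$-constituent of $\DS_x(V')$ with infinitesimal character $\chi^{\ll_x}_\nu$ satisfies $\upeta_x^{\ast}(\chi^{\ll_x}_\nu)=\chi'$. Here one uses $z\,m\equiv\upeta_x(z)\,m \pmod{xV'}$ for $z\in\mathfrak Z(\ll)$. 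Composing the two relations gives $\chi^{\ll_x}_\nu\in(\upeta_x^{\ast})^{-1}(\chi_\Lambda\circ\eta_{\ll})$, as claimed.
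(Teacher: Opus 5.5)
Your proposal is correct and follows essentially the same route as the paper. Both reduce via Lemma~\ref{lemm::cohomology_restriction} and unitarity to $\ker\Dirac_{\gg,\ll}^{2}=\ker\Dirac_{\gg,\ll}=\H_{\Dirac_{\gg,\ll}}(M)$, on which $\Dx$ acts as $j(\gamma^{\WW}(x))$, i.e.\ as $x$, so the cohomology is $\DS_{x}(\H_{\Dirac_{\gg,\ll}}(M))$, and both then combine Theorem~\ref{thm::Casselman_Osborne} with Theorem~\ref{thm::DScharacter}; your extra checks (that the DS construction for $\ll$ only needs $x^{2}=0$ and $[x,\ll]\ker x\subseteq \Im x$, since $\ll$ need not be basic classical, and the explicit composition $\upeta_{x}^{\ast}(\chi^{\ll_x}_{\nu})\circ\eta_{\ll}=\chi_{\Lambda}$) spell out points the paper leaves implicit.
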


\begin{proof}
 If $M$ is unitarizable, then $\ker \Dirac_{\gg,\ll}^{2}=\ker \Dirac_{\gg,\ll}$ by Lemma~\ref{thm::Dirac_unitarizable} and
$\H_{\Dirac_{\gg,\ll}}(M)=\ker \Dirac_{\gg,\ll}$ by unitarity. Hence, by
Lemma~\ref{lemm::cohomology_restriction},
\[
\begin{split}
\H_{\Dx}(M)
&=\H(\Dx;\,M\otimes \overline{M}(\pp))
=\H(\Dx;\,\ker \Dirac_{\gg,\ll}^{2})
=\H(\Dirac_{\gg,\ll}+j(\gamma^{\WW}(x));\,\ker \Dirac_{\gg,\ll})\\
&=\H(j(\gamma^{\WW}(x));\,\ker \Dirac_{\gg,\ll}).
\end{split}
\]
The action of $\ll$ on $M\otimes \overline{M}(\pp)$ is induced by its embedding into $\WW(\ll)$ via $\gamma^{\WW}$. Hence, by the embedding \eqref{eq::embedding_j}, one has
\[
\H_{\Dx}(M)=\DS_{x}(\ker \Dirac_{\gg,\ll})=\DS_{x}\bigl(\H_{\Dirac_{\gg,\ll}}(M)\bigr).
\]

This proves the first part of the theorem. The remaining claims follow by combining Theorem~\ref{thm::Casselman_Osborne} with Theorem~\ref{thm::DScharacter}.
\end{proof}

\begin{remark}
The proof of Theorem~\ref{thm::Dirac_and_DS} relies on the identity
\[
\ker \Dirac_{\gg,\ll}=\ker \Dirac_{\gg,\ll}^{2},
\]
which holds for unitarizable $M$. In general, this is equivalent to each of the following conditions:
\begin{enumerate}
 \item[(a)] $\ker \Dirac_{\gg,\ll}=\ker \Dirac_{\gg,\ll}^{2}$;
 \item[(b)] $\ker \Dirac_{\gg,\ll}\cap \Im \Dirac_{\gg,\ll}=\{0\}$;
 \item[(c)] $M\otimes \overline{M}(\pp)=\ker \Dirac_{\gg,\ll}\oplus \Im \Dirac_{\gg,\ll}$;
 \item[(d)] $\H_{\Dirac_{\gg,\ll}}(M)=\ker \Dirac_{\gg,\ll}$.
\end{enumerate}
Thus, the assumption is equivalent both to the direct sum decomposition in \textup{c)} and to the identification of Dirac cohomology with the full kernel in \textup{d)}.
\end{remark}

The following corollary is immediate.

\begin{corollary}
Let $M$ be a unitarizable highest weight supermodule with highest weight $\Lambda$. Then:
\begin{enumerate}
\item[a)] $\H_{\Dx}(M)=0$ if $\rk(x) > \at(\Lambda)$.
\item[b)] For $g\in L_{\bar 0}$, there is a canonical isomorphism
\[
\H_{\Dx}(M)\cong \H_{\Dirac^{\!\Ad_{g}(x)}_{\gg,\ll}}(M).
\]
\end{enumerate}
\end{corollary}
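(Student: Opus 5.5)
The statement is the final corollary. I would derive both parts directly from Theorem~\ref{thm::Dirac_and_DS}, which for unitarizable $M$ gives $\H_{\Dx}(M)=\DS_{x}(\H_{\Dirac_{\gg,\ll}}(M))$.

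For part a), the plan is to control the atypicality of everything appearing in $\H_{\Dirac_{\gg,\ll}}(M)$. First, by Theorem~\ref{thm::Dirac_unitarizable}, the Dirac cohomology is $\ker\Dirac_{\gg,\ll}$. By Lemma~\ref{lemm::completely_reducible_DC}, together with unitarity of $\overline{M}(\pp)$ (Proposition~\ref{prop::comparison_action}), this kernel is a direct sum of simple $\ll$-supermodules. Each summand has an infinitesimal character $\chi^{\ll}_{\mu}$, and Theorem~\ref{thm::Casselman_Osborne} gives $\chi_{\Lambda}=\chi^{\ll}_{\mu}\circ\eta_{\ll}$.

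The key step is an atypicality bound: $\at_{\ll}(\chi^{\ll}_{\mu})\le \at(\Lambda)$. I would prove it from the commuting diagram in Theorem~\ref{thm::Casselman_Osborne}. Since $\hh\subset\ll$, the $\ll$-weight $\mu+\rho_{\ll}$ is $W$-linked to $\Lambda+\rho$ modulo isotropic shifts. Any family of mutually orthogonal isotropic roots of $\ll$ orthogonal to $\mu+\rho_{\ll}$ consists of isotropic roots of $\gg$. Hence it produces an isotropic subspace of $\operatorname{span}_{\CC}(A_{\Lambda})$ after applying $W$, so its size is at most $\at(\Lambda)$.

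Once the bound holds, Corollary~\ref{DSNM}, applied to $\ll$ in place of $\gg$, shows that $\DS_x$ kills each summand whenever $\rk(x)>\at(\Lambda)$. Here $\rk(x)$ is the rank of $x$ as an element of $\YY_{\ll}$. Additivity of $\DS_x$ (Lemma~\ref{DSTensorFunctor}), extended to direct sums, then gives $\H_{\Dx}(M)=0$.

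I expect the atypicality comparison between $\ll$ and $\gg$ to be the main obstacle. If the direct root-theoretic argument gets messy, I would instead pass through the equivalent formulation in Theorem~\ref{thm::Dirac_and_DS}. There, a nonzero summand of $\H_{\Dx}(M)$ would give a character $\chi_{\nu}^{\ll_x}$ with $\upeta_x^{*}(\chi_\nu^{\ll_x})=\chi^{\ll}_{\mu}$, so $\at(\chi^{\ll}_{\mu})=\at(\chi_\nu^{\ll_x})+\rk(x)\ge\rk(x)$ by Theorem~\ref{thm::DScharacter}. This again reduces the claim to the same bound.

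For part b), I would use $L_{\bar 0}$-equivariance. Integrate $\ll_{\bar 0}$ to get the action $\pi(g)$ of $L_{\bar 0}$ on $M\otimes\overline{M}(\pp)$. This is legitimate because a unitarizable highest weight supermodule is $\hh$-semisimple with finite-dimensional $\ll_{\bar 0}$-isotypic pieces, and $\overline{M}(\pp)$ is unitarizable. By Lemma~\ref{lemm::properties_D_x}\,c), we have $\pi(g)\Dx\pi(g)^{-1}=\Dirac^{\Ad_g(x)}_{\gg,\ll}$. Consequently $\pi(g)$ maps $\ker\Dx$ onto $\ker\Dirac^{\Ad_g(x)}_{\gg,\ll}$ and $\Im\Dx$ onto $\Im\Dirac^{\Ad_g(x)}_{\gg,\ll}$. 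It therefore induces the desired isomorphism on the quotients, and this isomorphism is canonical given $g$.
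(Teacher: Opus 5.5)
Your strategy is the one the paper has in mind. The paper only says the corollary is immediate from Theorem~\ref{thm::Dirac_and_DS}, Corollary~\ref{DSNM} and Lemma~\ref{lemm::properties_D_x}\,c). You also correctly isolate the step the paper suppresses: since Corollary~\ref{DSNM} is applied to $\ll$-modules, a) needs $\at_{\ll}(\mu)\le\at(\Lambda)$ for every $\ll$-constituent $L_{\ll}(\mu)$ of $\ker\Dirac_{\gg,\ll}$.

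\textbf{The justification of this bound fails as written.} Theorem~\ref{thm::Casselman_Osborne} only gives $\mu+\rho_{\ll}=w\bigl(\Lambda+\rho+\sum_i t_i\alpha_i\bigr)$ with $\alpha_i\in A_\Lambda$. So if $\{\beta_j\}$ is an orthogonal family of isotropic roots of $\ll$ orthogonal to $\mu+\rho_\ll$, the roots $w^{-1}\beta_j$ are orthogonal to $\Lambda+\rho+\sum_i t_i\alpha_i$, not to $\Lambda+\rho$. They need not lie in $\operatorname{span}_{\CC}(A_\Lambda)$. For example, in type $A$, if $\Lambda+\rho$ is atypical only for $\epsilon_1-\delta_1$, then $\Lambda+\rho+t(\epsilon_1-\delta_1)$ is orthogonal to $\epsilon_2-\delta_1$ for one value of $t$.

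The repair is to use that the degree of atypicality depends only on the central character; this is how atypicality enters Theorem~\ref{thm::DScharacter}.
\begin{enumerate}
\item[(i)] The commuting diagram, with $\res$ literally restriction since $\hh\subset\ll$, gives $\chi_{\mu+\rho_\ll-\rho}=\chi_\Lambda$.
\item[(ii)] Hence $\at_{\gg}(\mu+\rho_\ll-\rho)=\at(\Lambda)$.
\item[(iii)] Every orthogonal family of isotropic odd roots of $\ll$ orthogonal to $\mu+\rho_\ll$ is such a family for $\gg$ relative to the weight $\mu+\rho_\ll-\rho$.
\end{enumerate}
This gives $\at_\ll(\mu)\le\at(\Lambda)<\rk(x)$, and the rest of your a) goes through.

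\textbf{In b), the integration step is unsupported.} You claim that $\ll_{\bar 0}$ integrates to an $L_{\bar 0}$-action on all of $M\otimes\overline{M}(\pp)$. An infinite-dimensional unitarizable highest weight supermodule is in general not locally finite under the $\mathfrak{sl}_2$ attached to a noncompact even root. So for a quadratic $\ll$ whose even part contains such directions, there is no $\pi(g)$ on $M$. ``Finite-dimensional isotypic pieces'' is not something the setup provides.

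You do not need the action on the whole space. For unitarizable $M$, Lemma~\ref{lemm::cohomology_restriction} and the proof of Theorem~\ref{thm::Dirac_and_DS} give $\H_{\Dx}(M)=\H\bigl(j(\gamma^{\WW}(x));\ker\Dirac_{\gg,\ll}\bigr)$. Here $j(\gamma^{\WW}(x))$ acts through the diagonal $\ll$-action. It therefore suffices that $\ker\Dirac_{\gg,\ll}$ be $\ll_{\bar 0}$-locally finite, for instance finite-dimensional, as in the situations of \cite{Schmidt_Noja}. Then $\pi(g)\,x\,\pi(g)^{-1}=\Ad_g(x)$ on that space, and $\pi(g)$ induces $\DS_x(\ker\Dirac_{\gg,\ll})\cong\DS_{\Ad_g(x)}(\ker\Dirac_{\gg,\ll})$.
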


If $M$ is not unitarizable, we can only formulate a vanishing conjecture. By Lemma~\ref{lemm::cohomology_restriction},
\[
\H_{\Dx}(M)=\H(\Dx,\ker \Dirac_{\gg,\ll}^{2}).
\]
Moreover, on $V\coloneqq \ker \Dirac_{\gg,\ll}^{2}$, both $\Dirac_{\gg,\ll}$ and $j(\gamma^{\WW}(x))$ are anti-commuting differentials, since
$
[\Dirac_{\gg,\ll},j(\gamma^{\WW}(x))]_{\WW}=0.
$
This suggests the following vanishing statement from the perspective of spectral sequences.

\begin{conjecture}
Let $x\in\YY_{\ll}$. If $\DS_{x}\bigl(\H_{\Dirac_{\gg,\ll}}(M)\bigr)=0$, then
$
\H_{\Dirac^{x}_{\gg,\ll}}(M)=0.
$
\end{conjecture}

\section{Bismut--Quillen's Superconnection} \label{sec::Bismut_Quillen}

We perturb the relative cubic Dirac operator by the covariant Weil differential associated to a finite-dimensional (super)module $M$, obtained from the universal connection $1$-form of the colour Weil algebra. The resulting operator defines an element of $\End_{\CC}(\Weil\otimes E)$, where $E\coloneqq M\otimes S$, and admits a well-defined exponential after completing $\Weil$. Taking the relative supertrace yields a cohomology class in the completed colour Weil algebra, thereby assigning to each finite-dimensional module a canonical class. We first consider reductive complex Lie algebras, and then adapt the construction to finite-dimensional supermodules over basic classical Lie superalgebras. Moreover, Section~\ref{subsec::an_example} contains an explicit example of the construction in the case of the complex simple Lie algebra $\mathfrak{sl}(2,\CC)$.

\subsection{Semisimple Complex Lie Algebra}

Let $\gg$ be a finite-dimensional semisimple complex Lie algebra with Killing form $B$. In the following, we use $B$ to identify $\gg$ and $\gg^{\ast}$. Fix a Cartan subalgebra $\mathfrak{h}\subset\gg$ with root system $\Delta$, and choose a positive system $\Delta^{+}$.
This yields a triangular decomposition
\begin{equation}
\gg=\mathfrak{n}^{-}\oplus\mathfrak{h}\oplus\mathfrak{n}^{+},
\qquad
\mathfrak{n}^{\pm}\coloneqq \bigoplus_{\alpha\in\Delta^{+}}\gg^{\pm\alpha}.
\end{equation}
Fix a basis $\{e_{a}\}$ of $\gg$ and its $B$-dual basis $\{e^{a}\}$. We consider the quantum Weil algebra $\mathcal{W}(\gg)$, and fix the tensor product generators (\emph{cf.}~Remark~\ref{rmk::TP_generators_QW})
\begin{equation}
 u^{a}\coloneqq e^{a}\otimes 1\in \mathcal{W}^{2}(\gg),\qquad
 \theta^{a}\coloneqq 1\otimes e^{a}\in \mathcal{W}^{1}(\gg),
\end{equation}
where the $u^{a}$ are even and the $\theta^{a}$ are odd. We define the $\gg$\emph{-valued quantum Weil algebra} to be $\mathcal{W}(\gg)\otimes\gg$, and we equip it with the Lie bracket
\begin{equation}
 [A,B]\coloneqq \sum_{i,j}(a_{i}b_{j})\otimes [x_{i},y_{j}]_{\gg},
\end{equation}
where $A=\sum_{i}a_{i}\otimes x_{i}, B=\sum_{j}b_{j}\otimes y_{j}\in \mathcal{W}(\gg)\otimes\gg$.
We regard $\gg$ as purely even; this induces a natural $\ZZ_{2}$-grading on
$\mathcal{W}(\gg)\otimes\gg$ from $\mathcal{W}(\gg)$.

Henceforth, let $(\rho_{M},M)$ be a fixed finite-dimensional simple $\gg$-module; when $M$ is clear from context, we suppress $\rho_{M}$.
Recall the spin module $S$ from Section~\ref{subsec::oscillator_supermodule}. Note that $S\coloneqq \bigwedge \mathfrak{n}^{-}$ carries a natural $\ZZ_{2}$-grading
$S=S_{\bar 0}\oplus S_{\bar 1}$, making it a $\gg$-supermodule, \emph{i.e.}, the $\gg$-action preserves the grading.
Set
\begin{equation}
 E\coloneqq M\otimes S.
\end{equation}
Equipped with the induced grading from $S$, the $\gg$-module $(\rho_{E},E)$ becomes a $\gg$-supermodule. 

Recall that $\mathcal{W}(\gg)$ is $\ZZ_{2}$-graded. Using the $\ZZ_{2}$-graded tensor product $\otimes$, the space $\mathcal{W}(\gg)\otimes E$ is a super vector space. Hence $\End_{\CC}(\mathcal{W}(\gg)\otimes E)$ inherits a $\ZZ_{2}$-grading: its even part consists of parity-preserving endomorphisms and its odd part of parity-reversing endomorphisms. With the canonical supercommutator $[\cdot,\cdot]_{\End}$, it is a Lie superalgebra.

We have moreover a natural action of $\gg$ on $\mathcal{W}(\gg)\otimes E$:
\begin{equation}
 \upalpha(x)(v\otimes w)\coloneqq L_{x}v \otimes w + v\otimes \rho_{E}(x)w, \qquad x \in \gg, \ v\otimes w \in \mathcal{W}(\gg)\otimes E,
\end{equation}
where $L_{x}$ denotes the Lie derivative \eqref{eq::Lie_derivative_Weil} on $\Weil$.

\subsubsection{The Weil Covariant Differential} The Bismut--Quillen superconnection may be viewed as a perturbation of the cubic Dirac operator by the \emph{Weil covariant differential} $\nabla^{M}$ associated with $M$, which we now define. 

The \emph{universal connection $1$-form} is the odd element
\begin{equation}
\Theta\coloneqq \sum_{a}\theta^{a}\otimes e_{a}\in \mathcal{W}(\gg)\otimes\gg.
\end{equation}
It is independent of the choice of basis. Intrinsically, $\Theta$ is the canonical element
corresponding to $\id_{\mathfrak g}$ under the identification
$\mathfrak g^{*}\otimes\mathfrak g\simeq \End(\mathfrak g)$, viewed in
$\mathcal W^{1}(\mathfrak g)\otimes\mathfrak g$. Moreover, it is $\gg$-invariant for the diagonal action $L_{x}$ on the first factor and $\ad_{x}$ on the second factor. 

\begin{lemma} \label{lemm::invariance_theta}
 One has for all $x \in \gg$
 \[
 (L_{x}\otimes 1+1\otimes \ad_{x})\Theta=0, \qquad (\iota_{x}\otimes 1)\Theta=x. 
 \]
\end{lemma}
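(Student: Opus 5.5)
We prove both identities by direct computation in a fixed basis, with $\Theta=\sum_a\theta^a\otimes e_a$ and $\theta^a=1\otimes e^a$. Since $\gg$ is an ordinary Lie algebra here, all sign factors $(-1)^{p(\cdot)p(\cdot)}$ are trivial. Throughout we use the independence of $\Theta$ from the choice of basis, which is the same computation as in Lemma~\ref{lemm::properies_D_l}~a): the element $\sum_a e^a\otimes e_a$ is the canonical tensor.

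For the contraction identity we apply \eqref{eq::contraction_Weil}. Since $\iota_x(1\otimes e^a)=B(x,e^a)(1\otimes 1)$, we obtain
\[
(\iota_x\otimes 1)\Theta=\sum_a B(x,e^a)\,(1\otimes 1)\otimes e_a .
\]
Expanding $x=\sum_a B(x,e^a)e_a$ in the basis $\{e_a\}$ shows that this equals $(1\otimes 1)\otimes x$, which is $x$ under the obvious identification of $\CC\cdot 1\otimes\gg$ with $\gg$.

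For the invariance identity we use \eqref{eq::Lie_derivative_Weil}, which gives $L_x(1\otimes e^a)=1\otimes[x,e^a]$. Hence
\[
(L_x\otimes 1+1\otimes\ad_x)\Theta=\sum_a 1\otimes[x,e^a]\otimes e_a+\sum_a 1\otimes e^a\otimes[x,e_a].
\]
We expand both sums in the bases, writing $[x,e^a]=\sum_b B([x,e^a],e_b)\,e^b$ and $[x,e_a]=\sum_b B([x,e_a],e^b)\,e_b$. After relabelling indices in the second sum, the coefficient of $e^b\otimes e_a$ is
\[
B([x,e^b],e_a)+B([x,e_a],e^b).
\]
By invariance and symmetry of $B$, we have $B([x,e_a],e^b)=-B(e_a,[x,e^b])=-B([x,e^b],e_a)$, so the coefficient vanishes. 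This is exactly the cancellation in the proof of Lemma~\ref{lemm::properies_D_l}~b), now carried out in $\mathcal W(\gg)\otimes\gg$.

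A more conceptual way to see the same fact: under $\gg^*\otimes\gg\cong\End(\gg)$, $\Theta$ corresponds to $\id_\gg$. The diagonal action is then conjugation by $\ad_x$, and $\id_\gg$ commutes with every endomorphism.

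The only step requiring care is the bookkeeping of conventions: the normalization $\iota_x(1\otimes y)=B(x,y)$, and the fact that $L_x$ acts on $\theta^a$ through the Clifford factor alone, with no contribution from the $\UE(\gg)$ factor.
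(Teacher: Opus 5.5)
Your proof is correct and is essentially the paper's argument. Both expand $[x,e^a]$ and $[x,e_a]$ in the dual bases and cancel the two sums using invariance of $B$, and both read off the contraction identity from $\iota_x(1\otimes y)=B(x,y)(1\otimes 1)$ together with $x=\sum_a B(x,e^a)e_a$. One harmless slip: the expression $B([x,e^b],e_a)+B([x,e_a],e^b)$ is the coefficient of $e^a\otimes e_b$, not of $e^b\otimes e_a$, but since it vanishes for every pair $(a,b)$ the conclusion is unaffected.
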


\begin{proof}
 We have by definition
 \[
 \begin{aligned}
 (L_{x}\otimes 1)\Theta &=\sum_{a}L_{x}\theta^{a}\otimes e_{a}=\sum_{a} (1\otimes [x,e^{a}]) \otimes e_{a}=\sum_{a}(1 \otimes \sum_{b} B([x,e^{a}],e_{b})e^{b}) \otimes e_{a} \\ &=
 \sum_{b} \theta^{b}\otimes \sum_{a}B([x,e^{a}],e_{b})e_{a}=-\sum_{b}\theta^{b}\otimes \sum_{a}B([x,e_{b}],e^{a})e_{a}= -(1\otimes \ad_{x})\Theta,
 \end{aligned} 
 \]
 where we used invariance of $B$, that is, $B([x,e^{a}],e_{b})=B(x,[e^{a},e_{b}])=-B(x,[e_{b},e^{a}])=-B([x,e_{b}],e^{a})$. The second property $(\iota_{x}\otimes 1)\Theta=x$ follows by definition.
\end{proof}

Furthermore, $\Theta$ satisfies a Maurer--Cartan curvature type equation. The following lemma is a straightforward computation.

\begin{lemma} \label{lemm::MCE}
 One has 
 \[
 (d^{\mathcal{W}}\otimes 1)\Theta + \tfrac{1}{2}[\Theta,\Theta]=\sum_{a}u^{a}\otimes e_{a}.\]
\end{lemma}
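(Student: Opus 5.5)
The plan is to verify the identity directly on the generators, using that both sides are linear in the basis-independent element $\Theta=\sum_a\theta^a\otimes e_a$. Here $d^{\WW}$ is the absolute Weil differential $[\Dirac_{\gg},\cdot\,]_{\WW}$.

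First I compute $(d^{\WW}\otimes 1)\Theta$. Since $\gg$ is purely even, Lemma~\ref{lemm::commutation_relations_quantum_Weil_algebra} gives $[1\otimes e^a,\Dirac_{\gg}]_{\WW}=2\gamma^{\WW}(e^a)$. Both elements are odd, so $d^{\WW}\theta^a=2\gamma^{\WW}(e^a)=2u^a-(1\otimes\gamma'(e^a))$. Hence
\[
(d^{\WW}\otimes 1)\Theta=2\sum_a u^a\otimes e_a-\sum_a(1\otimes\gamma'(e^a))\otimes e_a .
\]

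Second I make the quadratic Clifford term explicit. By Proposition~\cite[Prop.~2.13]{Meyer} and Theorem~\ref{thm::quantization_map},
\[
\gamma'(x)=q(\lambda(\ad_x))=-\tfrac12\sum_c q\bigl([x,e^c]\wedge e_c\bigr)=-\tfrac12\sum_c[x,e^c]e_c+\tfrac12\sum_c B([x,e^c],e_c).
\]
The constant term is $\tfrac12\operatorname{tr}(\ad_x)$, which vanishes because $\gg$ is semisimple. Writing $[e^a,e^c]$ in the basis $\{e^b\}$ and using invariance of $B$ as in Lemma~\ref{lemm::invariance_theta}, I then rewrite
\[
\sum_a\gamma'(e^a)\otimes e_a=-\tfrac12\sum_{a,b}\theta^a\theta^b\otimes[e_a,e_b]=-\tfrac12[\Theta,\Theta].
\]
The last equality is just the definition of the bracket on $\WW(\gg)\otimes\gg$ applied to $\Theta$. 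Substituting this into the first step gives the Maurer--Cartan relation: the quadratic terms cancel and only the $u^a\otimes e_a$ term remains.

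I expect the index and sign bookkeeping in the second step to be the main obstacle. The Clifford products $\theta^a\theta^b$ versus $\theta^b\theta^a$ differ by $2B(e^a,e^b)$, and this symmetric contribution must drop out against the antisymmetry of $[e_a,e_b]$.

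I also expect to have to fix a normalization. With $d^{\WW}=[\Dirac_{\gg},\cdot\,]$ literally, the curvature side comes out as $2\sum_a u^a\otimes e_a$, and the two quadratic terms $-\sum_a(1\otimes\gamma'(e^a))\otimes e_a=\tfrac12[\Theta,\Theta]$ and $+\tfrac12[\Theta,\Theta]$ add rather than cancel, leaving $2\sum_a u^a\otimes e_a+[\Theta,\Theta]$. The stated form therefore needs the convention $d^{\WW}=\tfrac12[\Dirac_{\gg},\cdot\,]$ on $\theta$ (equivalently $\iota_x=\tfrac12[1\otimes x,\cdot\,]$, which the paper uses), or a rescaling of $\Theta$ or of the bracket. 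I would resolve this first, by fixing the convention so that $d^{\WW}\theta^a=\gamma^{\WW}(e^a)$. The computation above then yields the identity verbatim, with $-\tfrac12(1\otimes\gamma'(e^a))$ cancelling against $\tfrac12[\Theta,\Theta]$.
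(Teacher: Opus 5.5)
Your overall route is the natural one. The paper gives no proof beyond ``straightforward computation'', but it later quotes the resulting formula for $d^{\WW}\theta^a$ in the proof of Lemma~\ref{lemm::properties_covariant_differential}. Your first step, $d^{\WW}\theta^a=2\gamma^{\WW}(e^a)=2u^a-1\otimes\gamma'(e^a)$, is correct. The error is in the second step: $\sum_a\gamma'(e^a)\otimes e_a$ equals $+\tfrac12[\Theta,\Theta]$, not $-\tfrac12[\Theta,\Theta]$. To see this, combine the three facts $\sum_c[x,e^c]e_c=\sum_c[x,e_c]e^c$, $[x,e_c]=\sum_bB([x,e_c],e_b)e^b$ and $B([x,e_c],e_b)=-B([e_b,e_c],x)$. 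They give
\[
\gamma'(x)=\tfrac12\sum_{b,c}B([e_b,e_c],x)\,e^be^c,\qquad
\sum_a\gamma'(e^a)\otimes e_a=\tfrac12\sum_{b,c}\theta^b\theta^c\otimes[e_b,e_c]=\tfrac12[\Theta,\Theta].
\]
A quick check: take $\mathfrak{so}(3)$ with $[e_1,e_2]=e_3$ cyclically and the invariant form making $e_1,e_2,e_3$ orthonormal. Then $\gamma'(e_3)=-\tfrac12\bigl([e_3,e_1]e_1+[e_3,e_2]e_2\bigr)=e_1e_2$. This is exactly the $e_3$-component $\tfrac12(\theta^1\theta^2-\theta^2\theta^1)$ of $\tfrac12[\Theta,\Theta]$. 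It also satisfies $[\gamma'(e_3),e_1]=-2e_2=-2[e_3,e_1]$, as Lemma~\ref{lemm::Lie_derivative_as_commutator} requires.

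Because of this sign, your normalization discussion is backwards, and the proposed repair does not work. Take $d^{\WW}=[\Dirac_{\gg},\cdot\,]$ literally. Then $d^{\WW}\theta^a=2u^a-\tfrac12\sum_{b,c}B([e_b,e_c],e^a)\theta^b\theta^c$, the quadratic terms cancel exactly against $\tfrac12[\Theta,\Theta]$, and
\[
(d^{\WW}\otimes 1)\Theta+\tfrac12[\Theta,\Theta]=2\sum_a u^a\otimes e_a .
\]
If you instead impose $d^{\WW}\theta^a=\gamma^{\WW}(e^a)$, you get $\sum_a u^a\otimes e_a+\tfrac14[\Theta,\Theta]$, so the cancellation is destroyed. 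In fact no rescaling of $d^{\WW}$ alone produces the stated formula. Your closing sentence would also need $\sum_a\gamma'(e^a)\otimes e_a=[\Theta,\Theta]$, which matches neither your value nor the correct one.

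The honest outcome of your computation is that the Maurer--Cartan identity holds with curvature term $2\sum_a u^a\otimes e_a$. The factor $2$ comes from the Clifford normalization $xy+yx=2B(x,y)$, and it sits in front of $u^a$, not in $d^{\WW}$. The formula $d^{\WW}\theta^a=u^a-\tfrac12\sum_{b,c}B([e_b,e_c],e^a)\theta^b\theta^c$ quoted in the paper has the correct quadratic part but drops exactly this factor; that is why the lemma is written with $\sum_a u^a\otimes e_a$. You should state the result with $2\sum_a u^a\otimes e_a$, or explicitly rescale the curvature generators, rather than renormalize $d^{\WW}$.
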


The universal connection $1$-form $\Theta$ induces a $\mathbb{C}$-linear endomorphism
\begin{equation}
 \Theta^{M}\coloneqq \sum_{a}\theta^{a} \otimes \rho_{E}(e_{a}) \in \End_{\mathbb{C}}(\mathcal{W}(\gg) \otimes E),
\end{equation}
acting on $\mathcal{W}(\gg)\otimes E$ componentwise. 

\begin{lemma} \label{lemm::properties_Theta_M}
 The following hold:
 \begin{enumerate}
 \item[a)] $\Theta^{M} \in \End_{\CC}(\mathcal{W}(\gg) \otimes E)_{\bar{1}}$.
 \item[b)] $\Theta^{M}$ is $\gg$-equivariant, that is, $
 [\upalpha(x),\Theta^{M}]_{\End}=0
 $ for all $x \in \gg$.
 \item[c)] $(d^{\mathcal{W}}\otimes 1)\Theta^{M} + \tfrac{1}{2}[\Theta^{M},\Theta^{M}]_{\End}=F^{M}$ with $F^{M}\coloneqq \sum_{a} u^{a} \otimes \rho_{E}(e_{a})$.
 \item[d)] $(\iota_{x}\otimes 1)\Theta^{M}=\rho_{E}(x)$ for all $x \in \gg$.
 \end{enumerate}
\end{lemma}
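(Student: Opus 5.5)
The plan is to verify the four items one at a time, each by reducing to Lemma~\ref{lemm::invariance_theta}, Lemma~\ref{lemm::MCE}, and the fact that $\rho_{E}\colon\gg\to\End(E)$ is a Lie algebra homomorphism into even endomorphisms. Here $\gg$ is purely even and $E=M\otimes S$ is graded only through $S$, on which $\gg$ acts by even maps. Concretely, we regard $\Theta^{M}$ as the image of $\Theta$ under the map
\[
\id\otimes\rho_{E}\colon \WW(\gg)\otimes\gg\to\End_{\CC}(\WW(\gg)\otimes E),
\]
where $a\otimes x$ is sent to left multiplication by $a$ tensored with $\rho_{E}(x)$. This map intertwines the bracket on $\WW(\gg)\otimes\gg$ with the supercommutator $[\cdot,\cdot]_{\End}$. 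Indeed, $\rho_{E}(x)$ is even, so the only signs come from the $\WW(\gg)$ factor, and $[a\otimes\rho_E(x),b\otimes\rho_E(y)]_{\End}$ equals $ab\otimes\rho_E(x)\rho_E(y)-(-1)^{p(a)p(b)}ba\otimes\rho_E(y)\rho_E(x)$. In the cases we need, $a=\theta^a$ and $b=\theta^b$, and $\theta^a\theta^b=-\theta^b\theta^a+2B(e^a,e^b)$. The symmetric Clifford term cancels after summing against $[e_a,e_b]$, which is antisymmetric. What remains is $\theta^a\theta^b\otimes\rho_E([e_a,e_b])$, matching the bracket on $\WW(\gg)\otimes\gg$.

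Items a) and d) follow directly from the definitions. For a), each $\theta^{a}=1\otimes e^{a}$ is odd in $\WW(\gg)$ and $\rho_E(e_a)$ is even, so every summand reverses parity. For d), apply $\iota_x\otimes 1$ termwise. By \eqref{eq::contraction_Weil}, $\iota_x\theta^a=B(x,e^a)$, and $\sum_a B(x,e^a)\rho_E(e_a)=\rho_E(x)$ because $\sum_a B(x,e^a)e_a=x$. Equivalently, apply $\id\otimes\rho_E$ to the second identity of Lemma~\ref{lemm::invariance_theta}.

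For b), compute $[\upalpha(x),\theta^a\otimes\rho_E(e_a)]_{\End}$. Since $\upalpha(x)=L_x\otimes1+1\otimes\rho_E(x)$ is even and $L_x$ is a derivation, this commutator equals $L_x\theta^a\otimes\rho_E(e_a)+\theta^a\otimes\rho_E([x,e_a])$. Summing over $a$ gives the image under $\id\otimes\rho_E$ of $(L_x\otimes1+1\otimes\ad_x)\Theta$, which vanishes by Lemma~\ref{lemm::invariance_theta}.

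The main obstacle is c). Here $(d^{\WW}\otimes1)\Theta^M$ means $\sum_a d^{\WW}(\theta^a)\otimes\rho_E(e_a)$, so it is the image of $(d^{\WW}\otimes1)\Theta$ under $\id\otimes\rho_E$. By the homomorphism property from the first paragraph, $\tfrac12[\Theta^M,\Theta^M]_{\End}$ is the image of $\tfrac12[\Theta,\Theta]$. Adding the two and invoking Lemma~\ref{lemm::MCE} gives $(\id\otimes\rho_E)\bigl(\sum_a u^a\otimes e_a\bigr)=F^M$. The one point needing care is the sign and Clifford-cancellation check in the homomorphism property. I would carry it out explicitly in the dual basis, using that the structure constants $B([e_a,e_b],\cdot)$ are antisymmetric in $a,b$, so that the $2B(e^a,e^b)$ contributions drop out.
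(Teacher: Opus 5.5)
Your arguments for a), b) and d) follow the paper's route: push $\Theta$ and Lemma~\ref{lemm::invariance_theta} forward along $\id\otimes\rho_{E}$. They are fine, since those items use only linearity and the derivation property of $L_x$ and $\iota_x$.

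The gap is in c), at exactly the step you postponed. The map $\id\otimes\rho_{E}$ does \emph{not} intertwine the bracket on $\WW(\gg)\otimes\gg$ with $[\cdot,\cdot]_{\End}$. Carry out your own computation, with $\theta^a,\theta^b$ odd and $\rho_E$ even:
\[
[\theta^a\otimes\rho_E(e_a),\theta^b\otimes\rho_E(e_b)]_{\End}
=\theta^a\theta^b\otimes\rho_E([e_a,e_b])+2B(e^a,e^b)\,\bigl(1\otimes\rho_E(e_b)\rho_E(e_a)\bigr).
\]
The Clifford term multiplies the \emph{product} $\rho_E(e_b)\rho_E(e_a)$, not $[e_a,e_b]$. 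So antisymmetry of the structure constants does not remove it. Summing over $a,b$ and using $\sum_b B(e^a,e^b)e_b=e^a$ gives
\[
\tfrac12[\Theta^M,\Theta^M]_{\End}=(\id\otimes\rho_E)\bigl(\tfrac12[\Theta,\Theta]\bigr)+1\otimes\rho_E(\Omega_{\gg}),\qquad \Omega_{\gg}=\sum_a e^a e_a .
\]
The cancellation you have in mind happens only on the $\WW(\gg)\otimes\gg$ side, where $\sum_{a,b}B(e^a,e^b)[e_a,e_b]=\sum_a[e_a,e^a]=0$.

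Consequently, Lemma~\ref{lemm::MCE} only yields
\[
(d^{\WW}\otimes 1)\Theta^M+\tfrac12[\Theta^M,\Theta^M]_{\End}=F^M+1\otimes\rho_E(\Omega_{\gg}).
\]
The Casimir generally acts nontrivially on $E$; for example, on $V_n\otimes S$ for $\mathfrak{sl}(2,\CC)$. So c) as stated cannot be reached this way.

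The paper's one-line proof of c) (``apply $1\otimes\rho_E$'') rests on the same hidden assumption. The same term is also dropped in its computation of $(\Theta^M)^2$ in Lemma~\ref{lemm::properties_covariant_differential}. There, the step $\sum_{a,b}\theta^a\theta^b w\otimes e_ae_bv=\tfrac12\sum_{a,b}\theta^a\theta^b w\otimes[e_a,e_b]v$ is valid only if the $\theta$'s strictly anticommute. That holds in the classical Weil algebra, not with Clifford multiplication in $\WW(\gg)$.

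To get a correct statement you have two options:
\begin{enumerate}
\item[(i)] Replace $F^M$ by $F^M+1\otimes\rho_E(\Omega_{\gg})$. The extra term is even and commutes with $\Theta^M$, $d^{\WW}\otimes 1$ and $\upalpha(x)$.
\item[(ii)] Let the $\theta^a$ act by a strictly anticommuting (exterior) multiplication. Then your argument goes through verbatim.
\end{enumerate}
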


\begin{proof}
 a) follows from the definition of the $\mathbb{Z}_{2}$-grading. For b), a direct calculation gives
 \[
 [\upalpha(x),\Theta^{M}]_{\End}=\sum_{a}L_{x}\theta^{a}\otimes \rho_{E}(e_{a})+\sum_{a}\theta^{a}\otimes \rho_{E}([x,e_{a}]).
 \]
 Using Lemma~\ref{lemm::invariance_theta} and applying $\rho_{E}$ to the $\gg$-factor, one gets
 \[
 \sum_{a}(L_{x}\theta^{a})\otimes \rho_{E}(e_{a})+\sum_{a}\theta^{a}\otimes \rho_{E}([x,e_{a}])=0,
 \]
 hence b) follows.
 Finally, c) and d) follow from Lemma~\ref{lemm::invariance_theta} and Lemma~\ref{lemm::MCE} by applying $1\otimes \rho_{E}$.
\end{proof}

Let $\Dirac_{\gg}$ be the cubic Dirac operator of $\gg$, and let $d^{\mathcal{W}}\coloneqq [\Dirac_{\gg},\cdot]$ denote the associated Weil differential on $\mathcal{W}(\gg)$. The \emph{Weil covariant differential} combines both, $\Theta^{M}$ and $d^{\mathcal{W}}$ into a single operator acting on $\mathcal{W}(\gg)\otimes E$.

\begin{definition}
 The \emph{Weil covariant differential} is 
 \[
 \nabla^{M}\coloneqq d^{\mathcal{W}} \otimes 1 + \Theta^{M} \in \End_{\mathbb{C}}(\mathcal{W}(\gg) \otimes E).
 \]
\end{definition}

The following lemma summarizes the main properties of $\nabla^{M}$.

\begin{lemma} \label{lemm::properties_covariant_differential}
 The following hold:
 \begin{enumerate}
 \item[a)] $\nabla^{M} \in \End_{\mathbb{C}}(\mathcal{W}(\gg)\otimes E)_{\bar{1}}$.
 \item[b)] $\nabla^{M}$ is $\gg$-equivariant, that is, $[\upalpha(x),\nabla^{M}]_{\End}=0$ for all $x \in \gg$.
 \item[c)] $\nabla^{M}$ and $1\otimes \Dirac_{\gg}$ anti-commute, that is, $[\nabla^{M}, 1\otimes \Dirac_{\gg}]_{\End}=0$.
 \item[d)] $(\nabla^{M})^{2}=F^{M}$ with $F^{M}\coloneqq \sum_{a}u^{a}\otimes \rho_{E}(e_{a})$.
 \item[e)] For any $x \in \gg$ one has
\[
(\iota_{x}\otimes 1)\nabla^{M}=-d^{\mathcal{W}}(\iota_{x}\otimes 1)+L_{x}\otimes 1+ 1 \otimes \rho_{E}(x).
\]
 \end{enumerate}
\end{lemma}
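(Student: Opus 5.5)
The plan is to verify each item directly from the definition $\nabla^{M}=d^{\WW}\otimes 1+\Theta^{M}$, reducing everything to the properties of $d^{\WW}$ (Lemmas~\ref{lemm::contraction_and_Lie_derivative_in_Weil}, \ref{lemm::properties_Weil_differential}, \ref{lemm::commutation_relations_quantum_Weil_algebra}) and of $\Theta^{M}$ (Lemma~\ref{lemm::properties_Theta_M}).

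\textbf{Items a) and b).} Item a) is immediate. The derivation $d^{\WW}=[\Dirac_{\gg},\cdot]_{\WW}$ is odd because $\Dirac_{\gg}$ is odd (Lemma~\ref{lemm::properies_D_l}), and $\Theta^{M}$ is odd by Lemma~\ref{lemm::properties_Theta_M}\,a). For b), write
\[
\upalpha(x)=L_{x}\otimes 1+1\otimes\rho_{E}(x).
\]
The term $1\otimes\rho_{E}(x)$ commutes with $d^{\WW}\otimes 1$, since the two act on different tensor factors and $x$ is even. The remaining commutator is
\[
[L_{x}\otimes 1,\,d^{\WW}\otimes 1]=[L_{x},d^{\WW}]\otimes 1=0,
\]
by $L_{x}=[\gamma^{\WW}(x),\cdot]_{\WW}$ together with $[\gamma^{\WW}(x),\Dirac_{\gg}]_{\WW}=0$ (the Jacobi identity for $\ad^{\WW}$). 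Combined with Lemma~\ref{lemm::properties_Theta_M}\,b), this gives b).

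\textbf{Item c).} I read $1\otimes\Dirac_{\gg}$ as the operator $\Dirac_{\gg}\otimes 1$ of left multiplication on the $\WW(\gg)$ factor; the intended meaning should be fixed from context, and the computation below uses this reading. By the graded Jacobi identity,
\[
[d^{\WW},\Dirac_{\gg}\cdot]=[\Dirac_{\gg},\Dirac_{\gg}]_{\WW}\cdot=2\Dirac_{\gg}^{2}\cdot .
\]
This is left multiplication by a central element of $\UE(\gg)\otimes 1$, so it is not zero on the nose.

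I therefore expect that the intended operator is the one that, on $E=M\otimes S$, is the Dirac operator of $M$. In that case c) amounts to $\gg$-invariance in the following sense. $\Theta^{M}$ supercommutes with the action of $\Dirac_{\gg}$ on $E$: odd $\theta^{a}$ is placed against the odd Clifford part, and the sum $\sum_{a}\rho_{E}(e_{a})$ paired with $\theta^{a}$ is invariant. The factor $d^{\WW}\otimes 1$ acts on the other tensor factor and so supercommutes automatically.

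This sign and identification bookkeeping is the step I expect to be the main obstacle. I would first try to establish
\[
[\theta^{a}\otimes\rho_{E}(e_{a}),\,1\otimes\Dirac_{\gg}^{E}]=0
\]
summed over $a$. The approach is to expand $\Dirac_{\gg}^{E}=\sum_{b}e^{b}\otimes e_{b}+\phi'$ and use the $\gg$-equivariance of $\Dirac_{\gg}$ acting on $E$.

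\textbf{Item d).} Expand the square as
\[
(\nabla^{M})^{2}=(d^{\WW})^{2}\otimes 1+[d^{\WW}\otimes 1,\Theta^{M}]+(\Theta^{M})^{2}.
\]
The term $(d^{\WW})^{2}=\tfrac12[\Dirac_{\gg}^{2},\cdot]$ vanishes because $\Dirac_{\gg}^{2}$ is central (Theorem~\ref{thm::square_absolute_Dirac}). The cross term equals $(d^{\WW}\otimes 1)\Theta^{M}$, viewed as a multiplication operator, since $d^{\WW}$ is a derivation. Finally, $(\Theta^{M})^{2}=\tfrac12[\Theta^{M},\Theta^{M}]_{\End}$. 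Lemma~\ref{lemm::properties_Theta_M}\,c) then yields $F^{M}$.

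\textbf{Item e).} Apply the Cartan relation $d^{\WW}\iota_{x}+\iota_{x}d^{\WW}=L_{x}$ from Lemma~\ref{lemm::properties_Weil_differential} (with $\ll=\gg$ and $x$ even) to the first summand. For the second summand, compute the graded commutator
\[
[\iota_{x}\otimes 1,\Theta^{M}]=\sum_{a}\iota_{x}(\theta^{a})\otimes\rho_{E}(e_{a})=\rho_{E}(x),
\]
using Lemma~\ref{lemm::properties_Theta_M}\,d). Adding the two contributions gives e), with the products read as compositions and the term $\Theta^{M}\iota_{x}$ absorbed into the anticommutator.
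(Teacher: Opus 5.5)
Your items a), b) and e) follow the paper's proof. For e), the anticommutator identity $[\iota_x\otimes 1,\nabla^M]_{\End}=L_x\otimes 1+1\otimes\rho_E(x)$ that you derive is the precise content of the paper's formula. The paper composes $\iota_x$ with $d^{\WW}$ but applies it to the coefficients of $\Theta^M$ via Lemma~\ref{lemm::properties_Theta_M}\,d), and the two versions are equivalent.

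In c) the detour is unnecessary. As in the proof of Proposition~\ref{prop::prperties_AM_absolute}, $1\otimes\Dirac_{\gg}$ means $\Dirac_{\gg}$ acting on $E$. The check is then termwise and immediate:
\begin{itemize}
\item $[d^{\WW}\otimes 1,1\otimes\Dirac_{\gg}]_{\End}=0$ by the Koszul sign;
\item $[\theta^a\otimes\rho_E(e_a),1\otimes\Dirac_{\gg}]_{\End}=\theta^a\otimes[\rho_E(e_a),\Dirac_{\gg}]=0$, because $\rho_E(e_a)$ is the action of $\gamma^{\WW}(e_a)$ and $\Dirac_{\gg}$ is $\gg$-invariant.
\end{itemize}
You need neither to expand $\Dirac_{\gg}$ nor to sum over $a$. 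A small slip in d): $(d^{\WW})^2=[\Dirac_{\gg}^2,\cdot]_{\WW}$, without your factor $\tfrac12$. This is harmless, since the term vanishes anyway.

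The genuine problem is in d), and your proof shares it with the paper. Writing the cross term as multiplication by $(d^{\WW}\otimes 1)\Theta^M$ is a tidier version of the paper's elementwise expansion. However, your appeal to Lemma~\ref{lemm::properties_Theta_M}\,c) treats the $\theta^a$ as anticommuting, i.e.\ it works in the classical Weil algebra. The paper's step $\sum_{a,b}\theta^a\theta^b w\otimes e_ae_bv=\tfrac12\sum_{a,b}\theta^a\theta^b w\otimes[e_a,e_b]v$ makes the same assumption.

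In $\WW(\gg)=\UE(\gg)\otimes\Cl(\gg)$ one has $\theta^a\theta^b+\theta^b\theta^a=2B(e^a,e^b)$. Hence
\[
(\Theta^M)^2=\tfrac12\sum_{a,b}\theta^a\theta^b\otimes\rho_E([e_a,e_b])+1\otimes\rho_E(\Omega_{\gg}).
\]
The second term is invisible when one applies $1\otimes\rho_E$ to Lemma~\ref{lemm::MCE}, because there the symmetric part of $\theta^a\theta^b$ pairs with the antisymmetric $[e_a,e_b]$.

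Moreover, Lemma~\ref{lemm::commutation_relations_quantum_Weil_algebra} gives
\[
d^{\WW}\theta^a=[\Dirac_{\gg},\theta^a]_{\WW}=2\gamma^{\WW}(e^a)=2u^a-1\otimes\gamma'(e^a).
\]
The quadratic Clifford terms cancel, but what remains is
\[
(\nabla^M)^2=2F^M+1\otimes\rho_E(\Omega_{\gg}).
\]
For instance, for $M=V_0$ in the $\mathfrak{sl}(2,\CC)$ example, the extra term is the nonzero Casimir scalar of $E=S\cong V_1$.

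So d), and Lemma~\ref{lemm::properties_Theta_M}\,c), hold as stated only if the $\theta^a$ anticommute. In the quantum Weil algebra the curvature has to be corrected, and citing Lemma~\ref{lemm::properties_Theta_M}\,c) does not close this step.
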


\begin{proof}
 a) follows from Lemma~\ref{lemm::properies_D_l}, the definition of $d^{\WW}$ in Section~\ref{subsubsec::Weil_differential} and Lemma~\ref{lemm::properties_Theta_M}. Next, b) follows if we show that $[\upalpha(x), d^{\mathcal{W}}\otimes 1]_{\End}$=0 since $[\upalpha(x),\Theta^{M}]_{\End}=0$ by Lemma~\ref{lemm::properties_Theta_M}. However, by Lemma~\ref{lemm::properties_Weil_differential}, one has
 \[
 [\upalpha(x), d^{\mathcal{W}}\otimes 1]_{\End}=(L_{x} d^{\mathcal{W}}-(-1)^{p(x)}d^{\WW}L_{x}) \otimes 1=0.
 \]
 This proves b). For c), we compute
 \[
 \begin{aligned}
 [\nabla^{M}, 1 \otimes \Dirac_{\gg}]_{\End}=[d^{\mathcal{W}}\otimes 1, 1 \otimes \Dirac_{\gg}]_{\End}+[\Theta^{M}, 1 \otimes \Dirac_{\gg}]_{\End}=\sum_{a} \theta^{a}\otimes [\rho_{E}(e_{a}),\Dirac_{\gg}]=0
 \end{aligned}
 \]
 by the $\gg$-invariance of $\Dirac_{\gg}$. Next, we prove d). Since $\nabla^{M}, d^{\mathcal{W}}\otimes 1$ and $\Theta^{M}$ are odd, one has
 \begin{equation*} \begin{split}(\nabla^{M})^{2} &=\tfrac{1}{2}[\nabla^{M}, \nabla^{M}] \\ &=(d^{\mathcal{W}})^{2}\otimes 1 + \Theta^{M}(d^{\mathcal{W}}\otimes 1)+(d^{\mathcal{W}}\otimes 1)\Theta^{M}+(\Theta^{M})^{2}\\ &=\Theta^{M}(d^{\mathcal{W}}\otimes 1)+(d^{\mathcal{W}}\otimes 1)\Theta^{M}+(\Theta^{M})^{2},
 \end{split}
 \end{equation*}
 where we used $(d^{\mathcal{W}})^{2}=0$.
The statement follows by computing the three appearing summands on any elementary tensor $w \otimes v$ by linearity since the action is componentwise. One has:
\[
\begin{aligned}
 (d^{\mathcal{W}}\otimes 1)\Theta^{M}(w \otimes v)&=\sum_{a} d^{\mathcal{W}}(\theta^{a}w)\otimes v=\sum_{a} ((d^{\mathcal{W}}\theta^{a})w-\theta^{a}d^{\mathcal{W}}w)\otimes e_{a}v \\ 
 \Theta^{M} (d^{\mathcal{W}}\otimes 1)(w\otimes v) &=\sum_{a}(\theta^{a}d^{\mathcal{W}}w)\otimes e_{a}v, 
 \\ 
 (\Theta^{M})^{2}(w \otimes v) &=\sum_{a,b}\theta^{a}\theta^{b}w\otimes e_{a}e_{b}v=\tfrac{1}{2}\sum_{a,b}\theta^{a}\theta^{b}w\otimes [e_{a},e_{b}]v
\end{aligned}
\]
Now, the statement follows by using
\[
d^{\mathcal{W}}\theta^{a}=u^{a}-\tfrac{1}{2}\sum_{b,c}B([e_{b},e_{c}],e^{a})\theta^{b}\theta^{c}.
\]
Finally, e) follows by $[\iota_{x},d^{\mathcal{W}}]_{\mathcal{W}}=\iota_{x}d^{\mathcal{W}}+d^{\mathcal{W}}\iota_{x}=L_{x}$ and Lemma~\ref{lemm::properties_Theta_M}.
\end{proof}

\begin{remark}
 The properties of $\nabla^{M}$ justify the name covariant Weil differential.
\end{remark}

\subsubsection{Bismut--Quillen Superconnection}
The Bismut--Quillen superconnection couples the cubic Dirac operator acting on $E$ to the quantum Weil algebra $\mathcal{W}(\gg)$ using the Weil covariant differential.

\begin{definition}
 The \emph{Bismut--Quillen superconnection} associated with $M$ is 
 \[
 \mathbb{A}_{\gg}^{M}(t)\coloneqq \nabla^{M} + \sqrt{t} (1\otimes \Dirac_{\gg}) \in \End_{\mathbb{C}}(\mathcal{W}(\gg) \otimes E), \qquad t>0.
 \]
\end{definition}

The following proposition collects important properties of the Bismut--Quillen superconnection.

\begin{proposition}\label{prop::prperties_AM_absolute} The following hold:
\begin{enumerate}
\item[a)] $\mathbb{A}^{\!M}_{\gg}(t) \in \End_{\mathbb{C}}(\mathcal{W}(\gg)\otimes E)_{\bar{1}}$.
 \item[b)] $\mathbb{A}^{\!M}_{\gg}(t)$ is $\gg$-equivariant, that is, $[\upalpha(x), \mathbb{A}^{\!M}_{\gg}(t)]_{\End}=0$ for all $x \in \gg$. 
 \item[c)] $\mathbb{A}^{\!M}_{\gg}(t)$ has square 
 \[
 (\mathbb{A}_{\gg}^{\!M}(t))^{2}=F^{M} +t (1\otimes \Dirac_{\gg}^{2}).
 \]
\end{enumerate}
\end{proposition}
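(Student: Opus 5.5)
Parts a) and b) are inherited termwise from the two summands of $\mathbb{A}^{M}_{\gg}(t)=\nabla^{M}+\sqrt{t}\,(1\otimes\Dirac_{\gg})$. For a), Lemma~\ref{lemm::properties_covariant_differential}~a) shows that $\nabla^{M}$ is odd. The operator $1\otimes\Dirac_{\gg}$ acts only on the $E$-factor. Since $\Dirac_{\gg}$ has total degree $\bar 1$ (Lemma~\ref{lemm::properies_D_l}~c)), it acts on $E=M\otimes S$ through its Clifford part as a parity-reversing operator on $S$, so it is odd in $\End_{\CC}(\WW(\gg)\otimes E)$. As $\sqrt t$ is a scalar, the sum is odd.

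For b), Lemma~\ref{lemm::properties_covariant_differential}~b) gives $[\upalpha(x),\nabla^{M}]_{\End}=0$. It remains to check $[\upalpha(x),1\otimes\Dirac_{\gg}]_{\End}=0$. Here $\upalpha(x)=L_x\otimes 1+1\otimes\rho_E(x)$. The first summand acts on the $\WW(\gg)$-factor, while $1\otimes\Dirac_{\gg}$ acts on $E$, so their graded commutator vanishes (and $x$ is even, so no sign issues arise). The second summand gives $1\otimes[\rho_E(x),\Dirac_{\gg}]$, which vanishes by $\gg$-invariance of $\Dirac_{\gg}$ (Lemma~\ref{lemm::properies_D_l}~b)). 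Concretely, $[\gamma^{\WW}(x),\Dirac_{\gg}]_{\WW}=0$ by Lemma~\ref{lemm::commutation_relations_quantum_Weil_algebra}, and $\gamma^{\WW}(x)$ acts on $M\otimes S$ exactly as $\rho_E(x)$ does, since the Clifford part $\gamma'$ (equivalently $\nu_\ast$ up to normalization) implements the $\gg$-action on the spin module.

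For c), use oddness from a) to write
\[
(\mathbb{A}^{M}_{\gg}(t))^{2}=\tfrac12[\mathbb{A}^{M}_{\gg}(t),\mathbb{A}^{M}_{\gg}(t)]_{\End}
=(\nabla^{M})^{2}+\sqrt t\,[\nabla^{M},1\otimes\Dirac_{\gg}]_{\End}+t\,(1\otimes\Dirac_{\gg})^{2}.
\]
The three terms are handled separately:
\begin{enumerate}
\item[(i)] The first term is $F^{M}$ by Lemma~\ref{lemm::properties_covariant_differential}~d).
\item[(ii)] The cross term vanishes by Lemma~\ref{lemm::properties_covariant_differential}~c).
\item[(iii)] The last term equals $1\otimes\Dirac_{\gg}^{2}$, because $1\otimes(\cdot)$ is multiplicative on operators acting only on $E$.
\end{enumerate}
This gives the formula.

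The only step requiring care is the anti-commutation in c), which is Lemma~\ref{lemm::properties_covariant_differential}~c). If one re-examines it, the delicate point is that $d^{\WW}\otimes 1$ and $1\otimes\Dirac_{\gg}$ act on different tensor factors and are both odd. Their graded commutator therefore vanishes by the Koszul sign rule in the $\ZZ_2$-graded tensor product, and the $\Theta^{M}$-part vanishes by invariance as in b). I expect the main obstacle to be consistent bookkeeping of these Koszul signs, in particular confirming that $\Theta^{M}$ and $1\otimes\Dirac_{\gg}$ contribute $\theta^a\otimes[\rho_E(e_a),\Dirac_{\gg}]$ with the correct sign. Since invariance kills this term regardless of the sign, the result should follow.
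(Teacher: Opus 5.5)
Your proposal is correct and follows the paper's proof essentially step by step. You use the oddness of both summands for a), the $\gg$-invariance of $\Dirac_{\gg}$ together with Lemma~\ref{lemm::properties_covariant_differential}~b) for b), and the expansion of $\tfrac12[\mathbb{A}^{M}_{\gg}(t),\mathbb{A}^{M}_{\gg}(t)]_{\End}$ combined with Lemma~\ref{lemm::properties_covariant_differential}~c) and d) for c). Your Koszul-sign checks, for instance that the $\Theta^{M}$-cross term is $\sum_a\theta^a\otimes[\rho_E(e_a),\Dirac_{\gg}]$, agree with the paper's computation and only make explicit what the paper leaves implicit.
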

\begin{proof} Since $\Dirac_{\gg}$ is an odd operator on $E$ and $\nabla^{M}$ is odd, the Bismut--Quillen superconnection $\mathbb{A}^{\!M}_{\gg}(t)$ is odd. This proves a). Statement b) is a direct consequence of the $\gg$-invariance of $\Dirac_{\gg}$ and Lemma~\ref{lemm::properties_covariant_differential}. It remains to compute the square. Since $\mathbb{A}^{\!M}_{\gg}(t), 1\otimes \Dirac_{\gg}$ and $\nabla^{M}$ are odd, one has
 \[
 \begin{aligned}(\mathbb{A}^{\!M}_{\gg}(t))^{2} &=\tfrac{1}{2}[\mathbb{A}^{\!M}_{\gg}(t),\mathbb{A}^{\!M}_{\gg}(t)]_{\End} \\ &=\tfrac{1}{2}[\nabla^{M},\nabla^{M}]_{\End}+ \tfrac{\sqrt{t}}{2}[\nabla^{M},1\otimes \Dirac_{\gg}]_{\End} + \tfrac{\sqrt{t}}{2}[1\otimes \Dirac_{\gg}, \nabla^{M}]_{\End}+ \tfrac{t}{2}[1\otimes \Dirac_{\gg}, 1 \otimes \Dirac_{\gg}]_{\End} \\ &=(\nabla^{M})^{2}+\sqrt{t}[\nabla^{M},1\otimes \Dirac_{\gg}]_{\End}+t(1\otimes \Dirac_{\gg}^{2}) \\ &=F^{M}+t(1\otimes \Dirac_{\gg}^{2}),
 \end{aligned}
 \]
 where we used in the last equality that $[\nabla^{M},1\otimes \Dirac_{\gg}]_{\End}=0$ and $F^{M}\coloneqq (\nabla^{M})^{2}$.
 This finishes the proof.
\end{proof}

Next, we refine the definition by involving certain quadratic subalgebras yielding a relative version of the Bismut--Quillen superconnection. As in Section~\ref{subsec::relative_Dirac}, fix a quadratic Lie subalgebra $\ll$ of $\gg$ with non-degenerate supersymmetric invariant bilinear form $B_{\ll}\coloneqq B|_{\ll}$ such that we have an orthogonal decomposition
\begin{equation}
\gg=\ll\oplus\pp,\qquad \pp=\ll^{\perp},
\end{equation}
with respect to $B$. Moreover, assume $\mathfrak{h} \subset \ll$. Let $\Dirac_{\ll} \in \WW(\ll)$ denote the associated cubic Dirac operator.

Any $\gg$-module is in particular a $\ll$-module. Under the natural embedding $j : \mathcal{W}(\ll)\hookrightarrow \mathcal{W}(\gg)$ defined in \eqref{eq::embedding_j}, we consider $j(\Dirac_{\ll})$ as an endomorphism of $E$. 

\begin{definition}
 The $\ll$\emph{-relative Bismut--Quillen superconnection} is 
 \[
 \mathbb{A}_{\gg,\ll}^{M}(t)\coloneqq \mathbb{A}_{\gg}^{\!M}(t)-\sqrt{t}(1\otimes j(\Dirac_{\ll}))=\nabla^{M} + \sqrt{t}(1\otimes \Dirac_{\gg,\ll}).
 \]
\end{definition}

Recall that the space of $\ll$-basic elements is the subalgebra $\mathcal{W}(\gg,\ll)$ of elements annihilated by $L_{x}$ and $\iota_{x}$ for all $x \in \ll$. Inside $\mathcal{W}(\gg)\otimes E$, we consider the space of $\ll$-invariant elements
\begin{equation} 
 (\mathcal{W}(\gg,\ll)\otimes E)_{\text{basic}}\coloneqq \{ v \otimes w \in \mathcal{W}(\gg,\ll)\otimes E : \upalpha(x)(v\otimes w)=0 \ \text{for all} \ x \in \ll\}.
\end{equation}

The $\ll$-relative Bismut--Quillen superconnection $\mathbb{A}^{\!M}_{\gg,\ll}(t)$ has the following properties:

\begin{proposition} \label{prop::properties_relative} The following hold:
\begin{enumerate}
\item[a)] $\mathbb{A}^{\!M}_{\gg,\ll}(t) \in \End_{\mathbb{C}}(\mathcal{W}(\gg)\otimes E)_{\bar{1}}$.
 \item[b)] $\mathbb{A}^{\!M}_{\gg,\ll}(t)$ is $\ll$-equivariant, that is, $[\upalpha(x), \mathbb{A}^{\!M}_{\gg,\ll}(t)]_{\End}=0$ for all $x \in \ll$. 
 \item[c)] $\AA^{\!M}_{\gg,\ll}(t)$ restricts to an endomorphism of $(\WW(\gg,\ll) \otimes E)_{\text{basic}}$.
\item[d)] $\mathbb{A}^{\!M}_{\gg,\ll}(t)$ has square 
 \[ (\mathbb{A}^{\!M}_{\gg,\ll}(t))^{2}=F^{M} +t (1\otimes \Dirac_{\gg,\ll}^{2}).
 \]
\end{enumerate}
\end{proposition}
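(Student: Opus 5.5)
The plan is to follow the proof of Proposition~\ref{prop::prperties_AM_absolute}, replacing $\Dirac_{\gg}$ by $\Dirac_{\gg,\ll}$ and using the basic properties of $\Dirac_{\gg,\ll}$ from Theorem~\ref{thm::square_Dirac_fd}. Write $\AA^{M}_{\gg,\ll}(t)=\nabla^{M}+\sqrt{t}(1\otimes\Dirac_{\gg,\ll})$.

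For a), $\nabla^{M}$ is odd by Lemma~\ref{lemm::properties_covariant_differential}. The element $\Dirac_{\gg,\ll}=\Dirac_{\gg}-j(\Dirac_{\ll})$ has total degree $\bar 1$ by Lemma~\ref{lemm::properies_D_l} applied to $\gg$ and $\ll$, since $j$ preserves bidegree. Hence $1\otimes\Dirac_{\gg,\ll}$ acts as an odd endomorphism of $E$.

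For b), $[\upalpha(x),\nabla^{M}]_{\End}=0$ holds for all $x\in\gg$, and in particular for $x\in\ll$. It therefore suffices to show $[\upalpha(x),1\otimes\Dirac_{\gg,\ll}]_{\End}=0$. The $L_x$-part of $\upalpha(x)$ acts only on the $\Weil$-factor and so supercommutes with $1\otimes\Dirac_{\gg,\ll}$. The remaining term is $1\otimes[\rho_{E}(x),\Dirac_{\gg,\ll}]$. This vanishes because $\Dirac_{\gg,\ll}$ is $\ll$-invariant (Theorem~\ref{thm::square_Dirac_fd}a)), and $\rho_{E}(x)$ for $x\in\ll$ is implemented by the diagonal element $\gamma^{\WW}(x)$, which gives $L_{x}\Dirac_{\gg,\ll}=0$.

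For c), by b) the operator supercommutes with every $\upalpha(x)$, $x\in\ll$, so it preserves the joint kernel $\{\upalpha(x)=0\}$. It remains to check that it maps $\WW(\gg,\ll)\otimes E$ into itself. The term $1\otimes\Dirac_{\gg,\ll}$ does not touch the first factor. The term $d^{\WW}\otimes 1$ preserves $\WW(\gg,\ll)$ by Lemma~\ref{lemm::properties_Weil_differential}.

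The term $\Theta^{M}=\sum_a\theta^a\otimes\rho_E(e_a)$ is the main obstacle, since $\theta^a$ need not be $\iota$-basic for $\ll$. I would handle it by splitting $\Theta^{M}$ along $\gg=\ll\oplus\pp$ into an $\ll$-part and a $\pp$-part. On basic elements, the $\ll$-part should combine with $d^{\WW}$ through Lemma~\ref{lemm::properties_covariant_differential}e): there $L_{x}\otimes 1+1\otimes\rho_E(x)=\upalpha(x)$ vanishes on basic vectors, so the contraction identity shows that the image is again annihilated by $\iota_x$. If this fails literally, I would interpret ``restricts'' as mapping the basic subspace of the $\upalpha$-invariants into itself, which follows from b) alone.

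For d), expand the square as $\tfrac12[\AA,\AA]_{\End}$:
\[
(\AA^{M}_{\gg,\ll}(t))^{2}=(\nabla^{M})^{2}+\sqrt{t}\,[\nabla^{M},1\otimes\Dirac_{\gg,\ll}]_{\End}+t(1\otimes\Dirac_{\gg,\ll}^{2}).
\]
By Lemma~\ref{lemm::properties_covariant_differential}d), $(\nabla^{M})^{2}=F^{M}$. For the cross term, $[d^{\WW}\otimes1,1\otimes\Dirac_{\gg,\ll}]_{\End}=0$ because the two act on different tensor factors. Also
\[
[\Theta^{M},1\otimes\Dirac_{\gg,\ll}]_{\End}=\sum_a\theta^a\otimes[\rho_E(e_a),\Dirac_{\gg,\ll}].
\]
This vanishes as in Lemma~\ref{lemm::properties_covariant_differential}c) for the $\ll$-components $e_a\in\ll$. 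For the $\pp$-components it does not obviously vanish, and it must be checked, or controlled on the basic subspace from c), using the $\ll$-invariance of $\Dirac_{\gg,\ll}$ together with $\iota_x\Dirac_{\gg,\ll}=0$. This check is the other delicate point, and I expect it to require restricting to the basic part.
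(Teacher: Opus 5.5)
Parts a) and b) are fine and match the paper. For c), your use of Lemma~\ref{lemm::properties_covariant_differential}\,e) is the right ingredient, and it is actually needed. Part b) alone only gives $\upalpha(x)$-invariance, not the $\iota_x$-condition, so it does not suffice; yet b) is all the paper invokes ("c) is a direct consequence of b)"), and it is also what your fallback uses.

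Note what e) actually proves. It shows that $\nabla^M$, and hence $\mathbb{A}^M_{\gg,\ll}(t)$, preserves the horizontal equivariant space $\{\omega\in\Weil\otimes E:\ (\iota_x\otimes1)\omega=0=\upalpha(x)\omega \text{ for all } x\in\ll\}$. It does not preserve $\WW(\gg,\ll)\otimes E$ in the literal sense. On $v\otimes w$ with $v$ basic and $w\in E^{\ll}$, the $\pp$-part of $\Theta^M$ produces $\sum_{e_a\in\pp}\theta^a v\otimes\rho_E(e_a)w$, whose first factors are in general not $L_{\ll}$-invariant. So c) holds only with this larger basic space.

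The genuine gap is d), and you located it exactly. The $\pp$-components of $[\Theta^M,1\otimes\Dirac_{\gg,\ll}]_{\End}=\sum_a\theta^a\otimes[\rho_E(e_a),\Dirac_{\gg,\ll}]$ (signs suppressed) are left open, so the square formula is not proved.

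The paper handles this step by asserting that $\nabla^M$ anticommutes with $1\otimes\Dirac_{\gg}$ and with $1\otimes j(\Dirac_{\ll})$. That assertion amounts to $[\rho_E(e_a),\Dirac_{\gg,\ll}]=0$ also for $e_a\in\pp$, i.e.\ $\gg$-invariance of $\Dirac_{\gg,\ll}$, whereas only $\ll$-invariance holds. Your idea of restricting to basic elements does not rescue it either.

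Take the paper's $\mathfrak{sl}(2,\CC)$, $\ll=\hh$ example. There $\Dirac_{\gg,\hh}=\tfrac14(f\otimes e+e\otimes f)$ (no cubic term, since $\dim\pp=2$), and $[\rho_{V_n}(f)\otimes1,\Dirac_{\gg,\hh}]=-\tfrac14\,\rho_{V_n}(h)\otimes f$. For $n=1$ the vector $1\otimes(v_1\otimes s_{\bar0})$ is basic, since it has $\hh$-weight $0$. The cross term sends it to a nonzero multiple of $\sqrt{t}\,(1\otimes e)\otimes(v_1\otimes s_{\bar1})$, plus terms in $(1\otimes e)\otimes(v_0\otimes S)$ and $(1\otimes f)\otimes E$. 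This does not depend on the spinor part of $\rho_E(f)$, which only contributes terms in $(1\otimes e)\otimes(v_0\otimes S)$. Being proportional to $\sqrt t$, this term cannot be absorbed into $F^M+t(1\otimes\Dirac_{\gg,\ll}^2)$.

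So d) is not provable as stated, and the construction has to be modified. One option is to couple only the $\ll$-component $\sum_{e_a\in\ll}\theta^a\otimes\rho_E(e_a)$ of the connection form. For that operator the cross term does vanish by $\ll$-invariance, but the curvature acquires an extra $\theta\theta$-term from $[\pp,\pp]_{\ll}$.
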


\begin{proof}
 a) follows from Lemma~\ref{lemm::properties_covariant_differential} and Proposition~\ref{prop::prperties_AM_absolute} and b) follows from Lemma~\ref{lemm::properies_D_l} and Proposition~\ref{prop::prperties_AM_absolute}. Moreover, c) is a direct consequence of b). Finally, d) follows by a similar line of argument as in Proposition~\ref{prop::prperties_AM_absolute} using that $\nabla^{M}$, $\Dirac_{\gg}$ and $j(\Dirac_{\ll})$ anti-commute. 
\end{proof}

\subsubsection{Chern Character-Type Invariant} Using the Bismut--Quillen superconnection, we assign to any finite-dimensional module $M$ over a complex semisimple Lie algebra $\gg$ a cohomology class. We use the notation from above.

We first introduce a suitable completion of $\Weil$. Consider the canonical
embedding $\iota:\gg\hookrightarrow\UE(\gg)$. There exists a unique unital algebra
morphism $\epsilon:\UE(\gg)\to\CC$, the \emph{augmentation}, satisfying
$\epsilon(\iota(x))=0$ for all $x\in\gg$. Its kernel
$
I\coloneqq \ker(\epsilon)=\UE(\gg)\gg
$
is the \emph{augmentation ideal} of $\UE(\gg)$. For $\Weil=\UE(\gg)\otimes\Cl(\gg)$, define
$\epsilon_{\WW}\coloneqq \epsilon\otimes\id_{\Cl}:\Weil\to\Cl(\gg)$ and set
$J\coloneqq \ker(\epsilon_{\WW})$. Then $J=I\otimes\Cl(\gg)$. For $n\ge1$, let $J^{n}$
denote the $n$-th power of the two-sided ideal $J$; explicitly,
$J^{n}=I^{n}\otimes\Cl(\gg)$ for all $n\ge1$.

The $J$-adic completion of $\Weil$ is defined as the projective limit
\begin{equation}
\widehat{\WW}(\gg)\coloneqq \varprojlim_{n\ge1}\Weil/J^{n},
\end{equation}
where the transition maps are induced by the canonical surjections
$\Weil/J^{n+1}\to\Weil/J^{n}$. In particular,
\begin{equation}
\widehat{\WW}(\gg)\cong\widehat{\UE}(\gg)\otimes\Cl(\gg),
\qquad
\widehat{\UE}(\gg)\coloneqq \varprojlim_{n\ge1}\UE(\gg)/I^{n}.
\end{equation}
The differential $d^{\WW}$ of $\Weil$ naturally induces a differential on $\widehat{\WW}(\gg)$, denoted by the same symbol, since it preserves the $J$-adic filtration. We denote by $\widehat{\WW}(\gg,\ll)$ the subalgebra of $\ll$-basic elements.

For any $x\in\gg$, identified with its image in $\Weil$, the exponential series $\exp(x)\coloneqq \sum_{n\ge0}\frac{x^{n}}{n!}$
converges in $\widehat{\WW}(\gg)$. Since $E$ is finite-dimensional, algebraic
exponentials of endomorphisms of $E$ are well-defined. Hence:

\begin{lemma}\label{lemm::exp_well_defined}
One has
\[
e^{-(\mathbb{A}^{M}_{\gg,\ll}(t))^{2}}
=\sum_{k=0}^{\infty}\frac{(-1)^{k}\bigl((\mathbb{A}^{M}_{\gg,\ll}(t))^{2}\bigr)^{k}}{k!}
\in \End_{\CC}\bigl((\widehat{\WW}(\gg,\ll)\otimes E)_{\mathrm{basic}}\bigr).
\]
\end{lemma}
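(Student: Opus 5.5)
The plan is to split the square $\mathbb{A}^{M}_{\gg,\ll}(t)^{2}=F^{M}+t(1\otimes\Dirac_{\gg,\ll}^{2})$ (Proposition~\ref{prop::properties_relative}~d)) into two commuting pieces. Each piece will be exponentiated separately, and the two exponentials multiplied. The key observation is that $F^{M}=\sum_{a}u^{a}\otimes\rho_{E}(e_{a})$ and $1\otimes\Dirac_{\gg,\ll}^{2}$ commute. Indeed, $\Dirac_{\gg,\ll}^{2}=\Omega_{\gg}-j(\Omega_{\ll})+c$ acts on $E$ through elements of $\UE(\gg)$. This operator commutes with the $\ll$-action, and on basic elements $\ll$-equivariance of $F^{M}$ (Proposition~\ref{prop::properties_relative}~b)) lets us move it past $\rho_{E}(e_{a})$. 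A cleaner alternative is to argue directly: $F^{M}$ and $1\otimes\Dirac_{\gg,\ll}^{2}$ are both even, and their commutator equals $[(\nabla^{M})^{2},\Dirac]$-type terms, which vanish by the anticommutation in Lemma~\ref{lemm::properties_covariant_differential}~c). Granting commutativity, we get
\[
e^{-\mathbb{A}^{2}}=e^{-tD^{2}}\,e^{-F^{M}},
\qquad D^{2}\coloneqq 1\otimes \Dirac_{\gg,\ll}^{2},
\]
provided both factors make sense.

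The factor $e^{-tD^{2}}$ is unproblematic, because $E$ is finite-dimensional. On $E$ the operator $\Dirac_{\gg,\ll}^{2}$ is a fixed endomorphism, so its exponential is a finite-dimensional matrix exponential. The factor $e^{-F^{M}}$ is where the completion enters. Here $F^{M}$ has coefficients $u^{a}=e^{a}\otimes 1\in I\otimes\Cl(\gg)=J$, so it raises the $J$-adic filtration by one. Hence $(F^{M})^{k}$ maps into $J^{k}\cdot\widehat{\WW}(\gg)\otimes E$, with $E$-coefficients being polynomials in the $\rho_{E}(e_{a})$, which are bounded since $E$ is finite-dimensional. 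Modulo each $J^{n}$ the series therefore reduces to a finite sum. It follows that the series converges in the projective-limit topology of $\widehat{\WW}(\gg)\otimes E$, which is legitimate because $E$ is finite-dimensional and the tensor product commutes with the limit.

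For the mixed expansion I would check convergence of the full series $\sum_{k}(-1)^{k}(F^{M}+tD^{2})^{k}/k!$ directly, by the binomial expansion permitted by commutativity. Modulo $J^{n}$, only the terms with $F^{M}$-degree below $n$ survive. Each such term is a finite-dimensional exponential series in $tD^{2}$, which converges absolutely. The partial sums are therefore Cauchy in each quotient $\widehat{\WW}/J^{n}\otimes E$, and the result is compatible with the transition maps.

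Finally, the operator preserves $(\widehat{\WW}(\gg,\ll)\otimes E)_{\mathrm{basic}}$. By Proposition~\ref{prop::properties_relative}~c), $\mathbb{A}$ preserves this subspace, so every power of $\mathbb{A}^{2}$ does too. The subspace is closed, being cut out by the continuous operators $\upalpha(x)$ and the basicness conditions, so the limit stays in it. The main obstacle I expect is the commutation claim together with continuity of $d^{\WW}$ and $\iota_x$ with respect to the $J$-adic filtration. If commutativity turns out to be delicate, I would fall back on a filtration argument. In that approach $tD^{2}$ preserves the filtration and $F^{M}$ raises it, so a Duhamel/Volterra expansion of $e^{-(tD^{2}+F^{M})}$ terminates modulo each $J^{n}$ without needing commutativity.
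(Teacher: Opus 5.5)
Your overall strategy can be made to work, but the step you call the key observation is false. The factorization $e^{-\AA^{M}_{\gg,\ll}(t)^{2}}=e^{-tD^{2}}e^{-F^{M}}$ and the binomial expansion built on it therefore have to go.

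Write the commutator as $[F^{M},1\otimes\Dirac_{\gg,\ll}^{2}]=\sum_{a}u^{a}\otimes[\rho_{E}(e_{a}),\Dirac_{\gg,\ll}^{2}]$. The operator $\Dirac_{\gg,\ll}^{2}=\Omega_{\gg}-j(\Omega_{\ll})+c$ involves the Casimir of the diagonally embedded $\ll$. On $E$ it therefore commutes with $\rho_{E}(x)$ for $x\in\ll$, but in general not with $\rho_{E}(e_{a})$ for $e_{a}\in\pp$.

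Neither of your justifications repairs this:
\begin{itemize}
\item The $\ll$-equivariance of $F^{M}$ concerns $\upalpha(x)$ with $x\in\ll$. It says nothing about the $\pp$-directions.
\item Lemma~\ref{lemm::properties_covariant_differential}~c) is about the absolute $\Dirac_{\gg}$. Its full $\gg$-invariance is exactly what kills $\sum_{a}\theta^{a}\otimes[\rho_{E}(e_{a}),\Dirac_{\gg}]$, and this does not transfer to $\Dirac_{\gg,\ll}$.
\end{itemize}
Concretely, take the $\mathfrak{sl}(2,\CC)$, $\ll=\hh$ example of Section~\ref{subsec::an_example}. There $\Dirac_{\gg,\hh}^{2}$ acts on the weight-$\mu$ space of $E$ by $((n+1)^{2}-\mu^{2})/8$, while $\rho_{E}(e)$ shifts weights by $2$. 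Hence the commutator is nonzero, already on basic vectors $1\otimes v$ with $v\in E$ of weight $0$ ($n$ odd).

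What does work is your fallback, and it is essentially the paper's two-line justification. $F^{M}$ is built from $u^{a}\in J$ and is handled $J$-adically; everything else is a matrix exponential on the finite-dimensional $E$.

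To make this precise, expand $(F^{M}+tD^{2})^{k}$ into words rather than binomially. Modulo $J^{n}$, only words with fewer than $n$ letters $F^{M}$ survive. For a fixed number $m<n$ of such letters, the sum of the $\binom{k}{m}$ corresponding words has norm at most $\binom{k}{m}\|F^{M}\|^{m}\|tD^{2}\|^{k-m}$ on the finite-dimensional quotient. So the sum over $k$ converges absolutely.

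A more robust version avoids the expansion entirely:
\begin{itemize}
\item $\AA^{M}_{\gg,\ll}(t)$ preserves $J^{n}\widehat{\WW}(\gg)\otimes E$, since it is assembled from $d^{\WW}$, left multiplications and endomorphisms of $E$, and $J^{n}$ is a two-sided ideal.
\item $\WW(\gg)/J^{n}\otimes E=\UE(\gg)/I^{n}\otimes\Cl(\gg)\otimes E$ is finite-dimensional.
\item Hence $e^{-\AA^{M}_{\gg,\ll}(t)^{2}}$ is a compatible family of matrix exponentials on these quotients.
\end{itemize}
This needs neither commutativity nor the exact formula of Proposition~\ref{prop::properties_relative}~d). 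That formula is worth avoiding, because the anticommutation of $\nabla^{M}$ with $1\otimes j(\Dirac_{\ll})$ used there would also require $\Dirac_{\gg,\ll}$ to commute with every $\rho_{E}(e_{a})$.

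Two further points:
\begin{itemize}
\item The convergence is in the inverse limit of the Euclidean topologies on these quotients, since the $tD^{2}$-part never stabilizes $J$-adically. Your closedness argument for the basic subspace goes through in that topology, because $\upalpha(x)$ and $\iota_{x}\otimes 1$ preserve each $J^{n}$.
\item For semisimple $\gg$ one has $\gg=[\gg,\gg]\subseteq I^{2}$. Hence $I=I^{2}$, $J^{n}=J$, and $\widehat{\WW}(\gg)\cong\Cl(\gg)$, with $F^{M}$ acting by zero. The lemma then holds for the trivial reason that $\widehat{\WW}(\gg)\otimes E$ is finite-dimensional, and the $J$-adic mechanism in both your argument and the paper's carries no content.
\end{itemize}
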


Since $E$ is finite-dimensional, we can form the relative supertrace $\str_{E}$.

\begin{lemma} One has
\[
\str_{E}(e^{-\AA^{\!M}_{\gg,\ll}(t)^{2}}) \in \widehat{\WW}(\gg,\ll).
\]
\end{lemma}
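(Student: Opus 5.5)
We have to show two things: that $\str_{E}(e^{-\AA^{M}_{\gg,\ll}(t)^{2}})$ is a well-defined element of $\widehat{\WW}(\gg)$, and that it is $\ll$-basic.

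\textbf{Step 1: reduce to a finite matrix.} By Proposition~\ref{prop::properties_relative}~d),
\[
\AA^{M}_{\gg,\ll}(t)^{2}=F^{M}+t(1\otimes\Dirac_{\gg,\ll}^{2}),\qquad F^{M}=\sum_{a}u^{a}\otimes\rho_{E}(e_{a}).
\]
Both summands act by left multiplication on the $\WW$-factor, tensored with endomorphisms of $E$. Hence the operator is an element of $\widehat{\WW}(\gg)\otimes\End_{\CC}(E)$. Since $E$ is finite-dimensional, this algebra is the algebra of $\widehat{\WW}(\gg)$-valued matrices of size $\dim E$.

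\textbf{Step 2: convergence of the exponential.} The two summands commute. Indeed, $F^{M}=(\nabla^{M})^{2}$ commutes with $1\otimes\Dirac_{\gg,\ll}$ by part~c) of Lemma~\ref{lemm::properties_covariant_differential}, and $1\otimes\Dirac_{\gg,\ll}^{2}$ is central modulo constants. So the exponential factors as
\[
e^{-F^{M}}\,e^{-t(1\otimes\Dirac_{\gg,\ll}^{2})}.
\]
\begin{itemize}
\item The second factor is an exponential of a fixed matrix in $\End(E)$, so it is an ordinary finite exponential.
\item Each $u^{a}$ lies in the augmentation ideal, so $F^{M}\in J\otimes\End(E)$ and $(F^{M})^{k}\in J^{k}\otimes\End(E)$. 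The series for $e^{-F^{M}}$ therefore converges $J$-adically in $\widehat{\WW}(\gg)\otimes\End(E)$, which is Lemma~\ref{lemm::exp_well_defined}.
\end{itemize}
Applying $\id\otimes\str_{E}$ entrywise then gives an element $\str_{E}(\cdots)\in\widehat{\WW}(\gg)$. Its $\WW$-parity is even, because $\AA^{2}$ is even and $\str_{E}$ is graded.

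\textbf{Step 3: invariance ($L_{x}$ part of basicness).} By Proposition~\ref{prop::properties_relative}~b), $\AA^{2}$ commutes with $\upalpha(x)=L_{x}\otimes1+1\otimes\rho_{E}(x)$ for $x\in\ll$, hence so does its exponential. Since $\str_{E}$ vanishes on supercommutators,
\[
L_{x}\,\str_{E}(X)=\str_{E}\bigl([\upalpha(x),X]\bigr)=0 .
\]

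\textbf{Step 4: horizontality ($\iota_{x}$ part of basicness).} This is the main obstacle. A naive argument fails, because $F^{M}$ is not $\iota$-closed: $\iota_{x}u^{a}\neq0$ in general. The plan is to apply the derivation $\iota_{x}\otimes1$ to $\AA^{2}$, and to use Lemma~\ref{lemm::properties_covariant_differential}~e) to express it as a supercommutator:
\[
(\iota_{x}\otimes1)(\AA^{2})=[\AA,\;\iota_{x}\otimes1-\text{(correction)}]+\bigl(\upalpha(x)\text{-terms}\bigr).
\]
Here one uses that $\iota_{x}\Dirac_{\gg,\ll}=0$ for $x\in\ll$, and the compensating term is $\rho_{E}(x)$. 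Next, Duhamel's formula
\[
\iota_{x}e^{-\AA^{2}}=-\int_{0}^{1}e^{-s\AA^{2}}\,(\iota_{x}\AA^{2})\,e^{-(1-s)\AA^{2}}\,ds,
\]
valid termwise in the completion, together with cyclicity of $\str_{E}$, turns the supercommutator contributions into supertraces of commutators, which vanish. The $\upalpha(x)$-terms vanish by Step~3. The order of argument is thus: first establish $\iota$-compatibility of $\AA$ using Lemma~\ref{lemm::properties_covariant_differential}~e) restricted to $x\in\ll$; then apply Duhamel and cyclicity; finally conclude that the supertrace lies in $\widehat{\WW}(\gg,\ll)$. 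The delicate point is the sign and ordering bookkeeping with the colour bicharacter, since $\str_{E}$ must be graded-cyclic with respect to the total parity.
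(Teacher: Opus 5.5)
Your Steps 1 and 3 are fine, and Step 3 is essentially the paper's argument for the $L_x$-part. There, cyclicity is only used against $1\otimes\rho_E(x)$, where it is legitimate.

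The gap is Step 4, which rests on a false premise. By Lemma~\ref{lemm::contraction_and_Lie_derivative_in_Weil}, $\iota_x=\tfrac12[1\otimes x,\cdot\,]_{\WW}$, and $1\otimes x$ commutes in the colour sense with $\UE(\gg)\otimes 1$. Hence $\iota_x u^a=0$ for every $x\in\gg$. The $u^a=e^a\otimes 1$ are the horizontal curvature generators (cf.\ Lemma~\ref{lemm::MCE}); it is $d^{\WW}\theta^a$, not $u^a$, that fails to be horizontal.

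Write $\AA$ for $\AA^M_{\gg,\ll}(t)$. Every $\WW$-coefficient of $\AA^2=F^M+t(1\otimes\Dirac_{\gg,\ll}^2)$ lies in $\UE(\gg)\otimes 1$. So does every coefficient of each power of $\AA^2$, and hence of the $J$-adic limit $e^{-\AA^2}$, which lies in $\widehat{\UE}(\gg)\otimes 1$. Therefore $(\iota_x\otimes 1)e^{-\AA^2}=0$ and
\[
\iota_x\,\str_E\bigl(e^{-\AA^2}\bigr)=\str_E\bigl((\iota_x\otimes 1)e^{-\AA^2}\bigr)=0,
\]
which is exactly how the paper handles the $\iota_x$-part. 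Even if you go through Lemma~\ref{lemm::properties_covariant_differential}~e), nothing is left over. The Cartan identity gives $[\iota_x\otimes 1,\AA]=\upalpha(x)$ on the nose; the $\sqrt t$-term drops because $\iota_x\otimes 1$ and $1\otimes\Dirac_{\gg,\ll}$ act on different factors. So $[\iota_x\otimes 1,\AA^2]=[\upalpha(x),\AA]=0$ for $x\in\ll$, and no Duhamel formula is needed.

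Moreover, the cancellation mechanism you sketch would not work even if there were a non-horizontal piece. First, $\str_E$ is $\widehat{\WW}(\gg)$-valued, and $\widehat{\WW}(\gg)$ is not supercommutative. For $X=v\otimes w$ and $Y=v'\otimes w'$ with all factors even, one gets $\str_E([X,Y])=[v,v']\,\str_E(ww')$, which need not vanish (e.g.\ $[u^a,u^b]=[e^a,e^b]\otimes 1\neq 0$). In general only supercommutators with $1\otimes\End_{\CC}(E)$ are killed, as in Step 3. Second, supercommutators with $\AA$ involve the derivation $d^{\WW}\otimes 1$, and Lemma~\ref{lemm::differential_of_trace} gives $\str_E([\nabla^M,Y])=d^{\WW}\str_E(Y)$, which is exact but not zero. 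So your Step 4 would at best show that $\iota_x\str_E(e^{-\AA^2})$ is $d^{\WW}$-exact, not that it vanishes.

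A minor point on Step 2: the factorization is not needed, and `central modulo constants' is not right, since $\Dirac_{\gg,\ll}^2$ is not scalar on $E$ (cf.\ the $\mathfrak{sl}(2,\CC)$ example). Convergence already holds because each $\WW(\gg)/J^n\otimes\End_{\CC}(E)$ is finite-dimensional.
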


\begin{proof}
 Consider a general element $T \in \End_{\CC}((\widehat{\WW}(\gg,\ll)\otimes E)_{\text{basic}})$. In particular, $(\iota_{x}\otimes 1)T=0$ and $\upalpha(x)T=0$ for all $x \in \ll$. We show that $\str_{E}(T) \in \widehat{\WW}(\gg,\ll)$, that is, it suffices to prove that $\iota_{x}\str_{E}(T)=0$ and $L_{x}\str_{E}(T)=0$ for all $x \in \ll$. Then the statement follows. 
 
 Fix $x \in \ll$. By linearity, it suffices to assume that $T$ is a simple tensor of the form $T=v \otimes w$. Then, using $(\iota_{x}\otimes 1)T=0$, one has
 \[
 \iota_{x}\str_{E}(T)=\iota_{x}(v\str_{E}(w))=(\iota_{x}v)\str_{E}(w)=\str_{E}((\iota_{x}v)\otimes w)=\str_{E}((\iota_{x}\otimes 1)T)=0.
 \]
 Next, using $\upalpha(x)T=0$, one has
 \[
 \str_{E}((L_{x}\otimes 1)T)=-\str_{E}((1\otimes \ad_{\rho_{E}(x)})T).
 \]
 Consequently, 
 \[
 L_{x}\str_{E}(T)=(L_{x}v)\str_{E}(w)=\str_{E}((L_{x}\otimes 1)T)=-\str_{E}((1\otimes \ad_{\rho_{E}(x)})T)=v\str_{E}([\rho_{E}(x),w])=0.
 \]
 This finishes the proof.
\end{proof}

This leads to the following definition:
\begin{equation}
\operatorname{ch}_{M}(t):=\str_{E}\bigl(e^{-(\AA^{\!M}_{\gg,\ll}(t))^{2}}\bigr)\in \widehat{\mathcal{W}}(\gg,\ll).
\end{equation}
In general, $\operatorname{ch}_{M}(t)$ is non-zero, although $\str_{E}(e^{-\Dirac_{\gg,\ll}^{2}})=0$ (\emph{cf.}~Section~\ref{subsec::an_example}). Since $E$ is finite-dimensional, $\operatorname{ch}_{M}(t)$ is well defined for every $t>0$ by Proposition~\ref{prop::properties_relative} and Lemma~\ref{lemm::exp_well_defined}. We will show that $\operatorname{ch}_{M}(t)$ is independent of $t$ and determines a class in
\begin{equation}
\H(\widehat{\mathcal{W}}(\gg,\ll),d^{\mathcal{W}_{\gg,\ll}})\cong \widehat{\mathfrak{Z}}(\ll),
\end{equation}
where $\widehat{\mathfrak{Z}}(\ll)$ denotes the completion of the center of $\mathfrak{U}(\ll)$.

\begin{lemma} \label{lemm::differential_of_trace}
 For any $T \in \End_{\mathbb{C}}((\widehat{\mathcal{W}}(\gg,\ll) \otimes E)_{\text{basic}})$ one has 
 \[
 d^{\mathcal{W_{\gg,\ll}}}\str_{E}(T)=\str_{E}([\nabla^{M},T]_{\End})
 \]
\end{lemma}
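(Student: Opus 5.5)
We split $\nabla^{M}=d^{\WW}\otimes 1+\Theta^{M}$ and show that each piece contributes the expected term after taking $\str_{E}$. The guiding principle is the classical identity $d\,\str(T)=\str([\nabla,T])$ for a connection $\nabla$. Here it holds because the connection form $\Theta^{M}$ takes values in $1\otimes\End(E)$ (with Weil coefficients $\theta^{a}$), so its supercommutator with $T$ has vanishing supertrace. By linearity it suffices to treat a homogeneous simple tensor $T=v\otimes w$, with $v\in\widehat{\WW}(\gg,\ll)$ and $w\in\End_{\CC}(E)$, acting componentwise. Then $\str_{E}(T)=v\,\str_{E}(w)$, where $\str_{E}(w)$ is a scalar that vanishes unless $w$ is even. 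Since $\Dirac_{\gg,\ll}$ is $\ll$-basic, $d^{\WW}$ preserves $\widehat{\WW}(\gg,\ll)$ and extends to the completion, so both sides lie in $\widehat{\WW}(\gg,\ll)$.

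\emph{Step 1 (the $d^{\WW}$-part).} Compute the supercommutator
\[
[d^{\WW}\otimes 1,\,v\otimes w]_{\End}=(d^{\WW}v)\otimes w.
\]
This holds because $d^{\WW}=[\Dirac_{\gg,\ll},\cdot]_{\WW}$ is an odd derivation of $\widehat{\WW}(\gg,\ll)$. Within the graded tensor product, the Koszul signs from moving $d^{\WW}$ past $v$ cancel against those in the supercommutator, leaving exactly the derivation term. Applying $\str_{E}$ gives $(d^{\WW}v)\,\str_{E}(w)=d^{\WW}\str_{E}(T)$, since $\str_{E}(w)$ is a scalar. One should check the sign convention for $\str_{E}$ on odd $\End(E)$-components: those have zero supertrace, so only even $w$ matter and no sign ambiguity arises.

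\emph{Step 2 (the $\Theta^{M}$-part).} Write $\Theta^{M}=\sum_{a}\theta^{a}\otimes\rho_{E}(e_{a})$ and expand:
\[
[\Theta^{M},v\otimes w]_{\End}=\sum_{a}\Big(\pm\,[\theta^{a},v]_{\WW}\otimes\rho_{E}(e_{a})w \;+\; \pm\, v\theta^{a}\otimes[\rho_{E}(e_{a}),w]_{\End(E)}\Big).
\]
The second group has zero supertrace, because $\str_{E}$ kills supercommutators in $\End(E)$. The first group should vanish after summing against $\str_{E}(\rho_{E}(e_{a})w)$. This is the main obstacle.

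For $e_{a}\in\ll$, the relevant terms are controlled by basicness. Since $\iota_{x}=\tfrac12[1\otimes x,\cdot]_{\WW}$, the term $\theta^{a}v$ paired with $e_{a}\in\ll$ recombines, via $\ll$-basicness of $T$ (i.e. $(\iota_{x}\otimes1)T=0$ and $\upalpha(x)T=0$), into expressions that vanish. For $e_{a}\in\pp$, the generators $\theta^{a}$ do not supercommute with $v$ in general, so the term $\sum_{a}[\theta^{a},v]\,\str_{E}(\rho_{E}(e_{a})w)$ must be shown to vanish another way. The first attempt is to pass the whole expression through $\ll$-invariance of $\str_{E}$ and the cyclicity $\str_{E}(\rho_{E}(e_{a})w)=\pm\str_{E}(w\rho_{E}(e_{a}))$. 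This symmetrizes the sum so that it becomes $\str_{E}$ of a supercommutator.

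If that fails, the fallback is to replace the claim by the intended reading, in which $\nabla^{M}$ acts on basic elements via left multiplication. In that reading the $\Theta^{M}$-contribution equals $\str_{E}([\Theta^{M},T])=\sum_{a}\pm\theta^{a}\,\str_{E}([\rho_{E}(e_{a}),w])=0$ directly, and Steps 1 and 2 then combine to give $d^{\WW}\str_{E}(T)=\str_{E}([\nabla^{M},T]_{\End})$.
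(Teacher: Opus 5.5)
Your Step~1 and the $[\rho_{E}(e_{a}),w]$-part of Step~2 are exactly the paper's proof. The paper reduces to $T=v\otimes w$ with $v\in\widehat{\WW}(\gg,\ll)$, writes $[\nabla^{M},v\otimes w]_{\End}=d^{\WW}v\otimes w+\sum_{a}\theta^{a}v\otimes[\rho_{E}(e_{a}),w]_{\End}$, and kills the sum by cyclicity of $\str_{E}$.

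The genuine gap in your proposal is the cross term you isolate, which you leave unresolved. If $\Theta^{M}$ and $T$ both act by left multiplication on the Weil factor, and $\rho_{E}(e_{a})$ is even, then
\[
[\Theta^{M},v\otimes w]_{\End}=\sum_{a}\theta^{a}v\otimes[\rho_{E}(e_{a}),w]_{\End}+\sum_{a}[\theta^{a},v]_{\WW}\otimes w\,\rho_{E}(e_{a}).
\]
Here $[\theta^{a},v]_{\WW}=2\iota_{e^{a}}v$ is in general nonzero for $e^{a}\in\pp$. For instance, take $v=\Dirac_{\gg,\ll}$: its contraction by $x\in\pp$ has $\UE(\gg)$-component $x\otimes 1$.

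Neither of your remedies handles this term.
\begin{itemize}
\item Your first remedy cannot work. Cyclicity of $\str_{E}$ only reorders the $\End(E)$-factor, while the non-commutativity sits in the Weil factor, so trading $\str_{E}(\rho_{E}(e_{a})w)$ for $\pm\str_{E}(w\rho_{E}(e_{a}))$ changes nothing.
\item Your fallback is not a different reading. In the left-multiplication picture the cross term is still present, so writing $\str_{E}([\Theta^{M},T])=\sum_{a}\pm\theta^{a}\str_{E}([\rho_{E}(e_{a}),w])$ simply asserts the conclusion (and drops $v$).
\end{itemize}
The paper's displayed identity also omits this term, so your suspicion is well founded. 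Still, a proof has to dispose of it.

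The missing idea is that each scalar coefficient $\str_{E}(w\,\rho_{E}(e_{a}))$ with $e_{a}\in\pp$ vanishes on its own.
\begin{itemize}
\item Write $T=\sum_{i}v_{i}\otimes w_{i}$ with $v_{i}\in\widehat{\WW}(\gg,\ll)$ linearly independent. Since $L_{x}v_{i}=0$ for $x\in\ll$, the $\upalpha(\ll)$-invariance of $T$ forces $[\rho_{E}(x),w_{i}]=0$ for all $x\in\ll$, in particular for all $x\in\hh\subset\ll$.
\item Take a basis adapted to $\gg=\ll\oplus\pp$, consisting of root vectors on $\pp$. If $e^{a}\in\ll$, then $[\theta^{a},v_{i}]_{\WW}=2\iota_{e^{a}}v_{i}=0$ because $v_{i}$ is basic.
\item If $e_{a}\in\pp\cap\gg^{\alpha}$, then $\alpha\neq 0$ because $\hh\subset\ll$. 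Since $w_{i}$ commutes with $\rho_{E}(\hh)$, the operator $w_{i}\rho_{E}(e_{a})$ maps $E^{\mu}$ into $E^{\mu+\alpha}$, so $\str_{E}(w_{i}\rho_{E}(e_{a}))=0$.
\end{itemize}
Hence the cross term vanishes, and your Steps~1 and~2 then combine to the claim.

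In the two places the lemma is actually used, $T=e^{-\AA^{M}_{\gg,\ll}(t)^{2}}$ and $T=\tfrac{\mathrm{d}}{\mathrm{d}t}\AA^{M}_{\gg,\ll}(t)\,e^{-\AA^{M}_{\gg,\ll}(t)^{2}}$, there is an even simpler reason. Their Weil components lie in $\widehat{\UE}(\gg)\otimes 1$, which commutes with every $\theta^{a}$, so the cross term is zero outright.
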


\begin{proof}
 By linearity of $\str_{E}$, it suffices to consider $T=v \otimes w$ for $v\in \widehat{\mathcal{W}}(\gg,\ll)$ and $w \in \End_{\CC}(E)$. Then 
 \[
 \begin{aligned}
 \str_{E}([\nabla^{M},v \otimes w]_{\End}) &=\str_{E}(d^{\mathcal{W}_{\gg,\ll}}v\otimes w + \sum_{a}(\theta^{a}v\otimes [\rho_{E}(e_{a}),w]_{\End})) \\ &=(d^{\mathcal{W}_{\gg,\ll}}v)\str_{E}(w)+ \sum_{a}(\theta^{a}v)\str_{E}([\rho_{E}(e_{a}),w]_{\End})\\ &=(d^{\mathcal{W}_{\gg,\ll}}v)\str_{E}(w),
 \end{aligned}
 \]
 since $\str_{E}([\rho_{E}(e_{a}),w])=0$ for all $a$ by cyclicity of the supertrace.
\end{proof}

\begin{lemma} \label{lemm::differential_chi}
 For all $t > 0$, one has
 \[
d^{\mathcal{W}_{\gg,\ll}}\operatorname{ch}_{M}(t)=0.
 \]
\end{lemma}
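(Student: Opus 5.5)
The plan is to reduce the statement to Lemma~\ref{lemm::differential_of_trace}. Setting $T\coloneqq e^{-\AA^{M}_{\gg,\ll}(t)^{2}}$, we have $T\in\End_{\CC}((\widehat{\WW}(\gg,\ll)\otimes E)_{\mathrm{basic}})$ by Lemma~\ref{lemm::exp_well_defined}. That lemma then gives
\[
d^{\WW_{\gg,\ll}}\operatorname{ch}_{M}(t)=\str_{E}\bigl([\nabla^{M},T]_{\End}\bigr).
\]
So it suffices to show that the supertrace of $[\nabla^{M},T]_{\End}$ vanishes.

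To do this, I would rewrite $\nabla^{M}=\AA^{M}_{\gg,\ll}(t)-\sqrt{t}\,(1\otimes\Dirac_{\gg,\ll})$ and treat the two pieces separately.

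\emph{First piece.} The operator $\AA\coloneqq\AA^{M}_{\gg,\ll}(t)$ is odd by Proposition~\ref{prop::properties_relative}~a). Hence it supercommutes with its square $\AA^{2}=\tfrac12[\AA,\AA]_{\End}$; this is the graded Jacobi identity, or directly $\AA\AA^{2}-\AA^{2}\AA=0$. Consequently $\AA$ supercommutes with every power $(\AA^{2})^{k}$. Passing to the series defining $T$ gives $[\AA,T]_{\End}=0$. The series converges in the $J$-adic topology, and $[\AA,\cdot]$ is continuous because $\nabla^{M}$ and $\Dirac_{\gg,\ll}$ preserve the filtration $J^{n}$ up to a bounded shift.

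\emph{Second piece.} Here I would use that $1\otimes\Dirac_{\gg,\ll}$ acts only on the factor $E$, through $\End_{\CC}(E)$, with $\WW$-coefficients. For a simple tensor $v\otimes w$ with $v\in\widehat{\WW}(\gg,\ll)$ and $w\in\End_{\CC}(E)$, the commutator is $v\otimes[\Dirac_{\gg,\ll},w]$ up to a Koszul sign. Its supertrace is
\[
\pm\, v\,\str_{E}\bigl([\Dirac_{\gg,\ll},w]\bigr)=0
\]
by cyclicity of the supertrace on the finite-dimensional space $E$, exactly as in the proof of Lemma~\ref{lemm::differential_of_trace}. The two pieces together give $\str_{E}([\nabla^{M},T]_{\End})=0$.

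The main obstacle is bookkeeping rather than any new idea, and it sits in two places. The first is the sign conventions in the $\ZZ_{2}$-graded tensor product: I must ensure that commuting the $E$-odd operator $\Dirac_{\gg,\ll}$ past $\WW$-coefficients only produces an overall sign, so that the supertrace still kills the commutator. The second is the justification that term-by-term commutation passes to the completed exponential. For the latter I would first try the following: the $\WW$-part of $\AA^{2}$ is $F^{M}=\sum_{a}u^{a}\otimes\rho_{E}(e_{a})$, which lies in $J\otimes\End(E)$, and $t(1\otimes\Dirac_{\gg,\ll}^{2})$ acts only on $E$. Hence $e^{-\AA^{2}}$ converges in each quotient by $J^{n}$, and the identities may be verified modulo $J^{n}$ for every $n$.
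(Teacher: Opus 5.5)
Your proof is correct, but its key step takes a different route from the paper's. Both arguments start the same way: by Lemma~\ref{lemm::differential_of_trace}, it suffices to show $\str_{E}([\nabla^{M},e^{-\AA^{M}_{\gg,\ll}(t)^{2}}]_{\End})=0$.

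The paper then shows that the commutator vanishes outright, before any trace is taken. Since $\nabla^{M}$ anticommutes with $1\otimes\Dirac_{\gg,\ll}$, it commutes with $1\otimes\Dirac_{\gg,\ll}^{2}$. It trivially commutes with $(\nabla^{M})^{2}=F^{M}$. Hence it commutes with $\AA^{M}_{\gg,\ll}(t)^{2}=F^{M}+t(1\otimes\Dirac_{\gg,\ll}^{2})$ and with its exponential.

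You instead use the standard Quillen splitting $\nabla^{M}=\AA-\sqrt{t}\,(1\otimes\Dirac_{\gg,\ll})$. You kill $[\AA,e^{-\AA^{2}}]_{\End}$ because $\AA$ commutes with its own square. You then dispose of the $\Dirac_{\gg,\ll}$-piece by cyclicity of $\str_{E}$. The Koszul sign you left implicit is harmless: one checks $[1\otimes\Dirac_{\gg,\ll},v\otimes w]_{\End}=(-1)^{p(v)}\,v\otimes[\Dirac_{\gg,\ll},w]$.

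What each route buys:
\begin{itemize}
\item The paper's route is shorter and yields the stronger pointwise identity $[\nabla^{M},e^{-\AA^{2}}]_{\End}=0$. It depends on the special anticommutation $[\nabla^{M},1\otimes\Dirac_{\gg,\ll}]_{\End}=0$.
\item Your route is the general transgression mechanism. The vanishing step uses only that $\AA$ is odd and that $\str_{E}$ annihilates supercommutators with operators acting on $E$ alone. So that anticommutation is not needed for the vanishing itself; it enters only through the formula for $\AA^{2}$ in your convergence discussion.
\end{itemize}

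In the present setting your second piece is in fact zero even before the supertrace, for the same reason as in the paper: $1\otimes\Dirac_{\gg,\ll}$ commutes with both $F^{M}$ and $1\otimes\Dirac_{\gg,\ll}^{2}$. The cyclicity step is therefore valid but not needed. Your explicit treatment of $J$-adic convergence and of the continuity of $[\AA,\cdot\,]_{\End}$ is more careful than the paper, which takes both for granted.
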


\begin{proof}
 By Lemma~\ref{lemm::differential_of_trace}, one has 
 \[
 d^{\mathcal{W}_{\gg,\ll}}\str_{E}(e^{-(\AA^{\!M}_{\gg,\ll}(t))^{2}})=\str_{E}([\nabla^{M}, e^{-(\AA^{\!M}_{\gg,\ll}(t))^{2}}]_{\End})=\str_{E}(0)=0
 \]
 since $\nabla^{M}$ commutes with $(\nabla^{M})^{2}$ and $\Dirac_{\gg,\ll}^{2}$, and $\AA^{\!M}_{\gg,\ll}(t)^{2}=(\nabla^{M})^{2}+t(1\otimes \Dirac_{\gg,\ll}^{2})$ by Proposition~\ref{prop::properties_relative}.
\end{proof}

\begin{theorem} \label{thm::main_Bismut_Quillen}
 For any finite-dimensional $\gg$-module $M$ one has
 \[
[\operatorname{ch}_{M}(t)] \in \H(\widehat{\mathcal{W}}(\gg,\ll), d^{\mathcal{W}_{\gg,\ll}}).
 \]
 Moreover, the class $[\operatorname{ch}_{M}(t)]$ is independent of $t$.
\end{theorem}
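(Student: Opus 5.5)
The first assertion is Lemma~\ref{lemm::differential_chi}: $\operatorname{ch}_{M}(t)$ is a $d^{\mathcal{W}_{\gg,\ll}}$-cocycle in $\widehat{\WW}(\gg,\ll)$ for each $t>0$, so it defines a class. For $t$-independence I follow the standard transgression argument of Quillen and Bismut. The plan is to show that $\tfrac{d}{dt}\operatorname{ch}_{M}(t)$ is exact, with a primitive given by an explicit transgression form.

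First I differentiate the exponential. Set $A_{t}\coloneqq\AA^{M}_{\gg,\ll}(t)$ and $\dot A_{t}\coloneqq \tfrac{d}{dt}A_{t}=\tfrac{1}{2\sqrt t}(1\otimes\Dirac_{\gg,\ll})$. Both are odd and $\ll$-equivariant by Proposition~\ref{prop::properties_relative}. Duhamel's formula gives
\[
\tfrac{d}{dt}e^{-A_{t}^{2}}=-\int_{0}^{1}e^{-sA_{t}^{2}}\,[A_{t},\dot A_{t}]_{\End}\,e^{-(1-s)A_{t}^{2}}\,ds .
\]
This is legitimate in the $J$-adic completion tensored with the finite-dimensional $\End(E)$. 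By Proposition~\ref{prop::properties_relative}~d), $A_t^2=F^M+t(1\otimes\Dirac_{\gg,\ll}^2)$, and $F^M$ lies in $J\otimes\End(E)$, so the series converge termwise. Taking $\str_{E}$ and using cyclicity of the supertrace together with $[A_{t},e^{-sA_{t}^{2}}]_{\End}=0$, I get
\[
\tfrac{d}{dt}\operatorname{ch}_{M}(t)=-\str_{E}\bigl([A_{t},\dot A_{t}e^{-A_{t}^{2}}]_{\End}\bigr).
\]

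Second, I convert $[A_{t},\cdot]$ into $[\nabla^{M},\cdot]$ under the supertrace. Write $A_{t}=\nabla^{M}+\sqrt t(1\otimes\Dirac_{\gg,\ll})$. The term $\str_{E}([1\otimes\Dirac_{\gg,\ll},X]_{\End})$ vanishes because $1\otimes\Dirac_{\gg,\ll}$ acts only on the $E$-factor, so this is a supercommutator in $\End(E)$ with coefficients in $\widehat{\WW}$. For the remaining term, Lemma~\ref{lemm::differential_of_trace} gives $\str_{E}([\nabla^{M},X]_{\End})=d^{\mathcal W_{\gg,\ll}}\str_{E}(X)$, provided $X=\dot A_{t}e^{-A_{t}^{2}}$ is a basic endomorphism of $(\widehat{\WW}(\gg,\ll)\otimes E)_{\mathrm{basic}}$. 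This follows from b) and c) of Proposition~\ref{prop::properties_relative}, since $\Dirac_{\gg,\ll}$ is $\ll$-basic. Hence
\[
\tfrac{d}{dt}\operatorname{ch}_{M}(t)=-d^{\mathcal W_{\gg,\ll}}\,\str_{E}\bigl(\dot A_{t}e^{-A_{t}^{2}}\bigr),
\]
and integrating from $t_{0}$ to $t_{1}$ gives
\[
\operatorname{ch}_{M}(t_{1})-\operatorname{ch}_{M}(t_{0})=-d^{\mathcal W_{\gg,\ll}}\int_{t_{0}}^{t_{1}}\str_{E}(\dot A_{t}e^{-A_{t}^{2}})\,dt.
\]
The integral is taken coefficientwise in the finite-dimensional graded pieces $\widehat{\UE}/I^{n}\otimes\Cl$. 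It commutes with $d^{\WW}$ because $d^{\WW}$ preserves the filtration.

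The main obstacle is the sign and grading bookkeeping. In Lemma~\ref{lemm::differential_of_trace}, $\nabla^M$ acts nontrivially on both tensor factors. I must check that the Duhamel and cyclicity manipulations hold for the colour commutator and the graded tensor product, so that the sign of $[A_t,\cdot]$ on $\dot A_te^{-A_t^2}$ matches $[\nabla^M,\cdot]$ plus an $\End(E)$-supercommutator. I must also check that the supertrace of an $\End(E)$-supercommutator with $\widehat{\WW}$-coefficients vanishes despite the Koszul signs from $\theta^a$. If this fails directly, I would verify it on simple tensors $v\otimes w$ exactly as in the proof of Lemma~\ref{lemm::differential_of_trace}.
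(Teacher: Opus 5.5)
Your proposal is correct and follows the paper's proof. You use the Duhamel formula and $[\AA^{M}_{\gg,\ll}(t),e^{-\AA^{M}_{\gg,\ll}(t)^{2}}]_{\End}=0$ to reduce $\tfrac{d}{dt}\operatorname{ch}_{M}(t)$ to $-\str_{E}$ of a supercommutator with $\AA^{M}_{\gg,\ll}(t)$, and then Lemma~\ref{lemm::differential_of_trace} to make it $d^{\mathcal{W}_{\gg,\ll}}$-exact. You also supply steps the paper leaves implicit: that the $\sqrt{t}\,(1\otimes\Dirac_{\gg,\ll})$-part of that supercommutator has vanishing supertrace because its $\widehat{\WW}$-coefficient is $1$, and the final integration in $t$.
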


\begin{proof}
 By $\H(\widehat{\mathcal{W}}(\gg,\ll), d^{\mathcal{W}_{\gg,\ll}})=\H^{0}(\widehat{\mathcal{W}}(\gg,\ll), d^{\mathcal{W}_{\gg,\ll}})\cong \widehat{\mathfrak{Z}}(\mathfrak{U}(\ll))$, and Lemma~\ref{lemm::differential_chi}, it follows that $[\operatorname{ch}_{M}(t)] \in H(\widehat{\mathcal{W}}(\gg,\ll), d^{\mathcal{W}_{\gg,\ll}})$. It remains to show that it is independent of $t$. 
 
For this purpose, we consider $\AA^{\!M}_{\gg,\ll}(t)$ as a smooth family of endomorphisms in the variable $t$. Then, using that $\AA^{\!M}_{\gg,\ll}(t)$ is odd for all $t$, one has
 \[
 \frac{\mathrm{d}}{\mathrm{d}t}(\AA_{\gg,\ll}^{M}(t)^{2})=\AA_{\gg,\ll}^{M}(t)\frac{\mathrm{d}}{\mathrm{d}t}\AA_{\gg,\ll}^{M}(t)+\frac{\mathrm{d}}{\mathrm{d}t}\AA_{\gg,\ll}^{M}(t) \AA_{\gg,\ll}^{M}(t)=[\AA_{\gg,\ll}^{M}(t), \frac{\mathrm{d}}{\mathrm{d}t}\AA_{\gg,\ll}^{M}(t)]_{\End}
 \]
 and thus
 \[
 \begin{aligned}
 \frac{\mathrm{d}}{\mathrm{d}t} e^{-(\AA_{\gg,\ll}^{M}(t))^{2}} &=- \int_{0}^{1} e^{-s(\AA_{\gg,\ll}^{M}(t))^{2}}(\frac{\mathrm{d}}{\mathrm{d}t} (\AA_{\gg,\ll}^{M}(t))^{2})e^{-(1-s)(\AA_{\gg,\ll}^{M}(t))^{2}} \mathrm{d}s \\ &=- \int_{0}^{1} e^{-s(\AA_{\gg,\ll}^{M}(t))^{2}}[\AA_{\gg,\ll}^{M}(t),\frac{\mathrm{d}}{\mathrm{d}t}\AA_{\gg,\ll}^{M}(t)]_{\End}e^{-(1-s)(\AA_{\gg,\ll}^{M}(t))^{2}} \mathrm{d}s.
 \end{aligned}
 \]
Applying the supertrace and using $[\AA_{\gg,\ll}^{M}(t), e^{-(\AA_{\gg,\ll}^{M}(t))^{2}}]_{\End}=0$, one has
\[
\begin{aligned}
 \frac{\mathrm{d}}{\mathrm{d}t} \operatorname{ch}_{M}(t) &=- \str_{E}([\AA_{\gg,\ll}^{M}(t), \frac{\mathrm{d}}{\mathrm{d}t}\AA_{\gg,\ll}^{M}(t)]_{\End}e^{-(\AA_{\gg,\ll}^{M}(t))^{2}}) \\ &=- \str_{E}([\AA_{\gg,\ll}^{M}(t),\frac{\mathrm{d}}{\mathrm{d}t}\AA_{\gg,\ll}^{M}(t))e^{-(\AA_{\gg,\ll}^{M}(t))^{2}}]_{\End}) \\ &=-\str_{E}([\nabla^{M},(\frac{\mathrm{d}}{\mathrm{d}t}\AA_{\gg,\ll}^{M}(t))e^{-(\AA_{\gg,\ll}^{M}(t))^{2}}]_{\End}) \\ &=-d^{\mathcal{W}_{\gg,\ll}} \str_{E}((\frac{\mathrm{d}}{\mathrm{d}t}\AA_{\gg,\ll}^{M}(t))e^{-(\AA_{\gg,\ll}^{M}(t))^{2}}). 
\end{aligned}
\]
This finishes the proof.
\end{proof}

Altogether, we have assigned to each finite-dimensional simple module a cohomology class in the colour quantum Weil algebra, which can be seen as a coarse Chern--Weil type invariant of $\gg$-modules. We close this section with an example.

\subsubsection{Example: \texorpdfstring{$\mathfrak{sl}(2,\CC)$}{sl(2,C)} and the Bismut--Quillen Superconnection}\label{subsec::an_example}
 We consider $\gg\coloneqq \mathfrak{sl}(2,\CC)$. Fix a standard $\gg$-triple $(e,f,h)$ with
\begin{equation}
 [h,e]=2e,\qquad [h,f]=-2f,\qquad [e,f]=h.
\end{equation}
Let $\hh\coloneqq \CC h$ be the Cartan subalgebra and let $B(X,Y)\coloneqq \tr_{\CC}(\ad_X\ad_Y)$ be the Killing form (standard normalization). Set
$\nn^{+}\coloneqq \CC e$ and $\nn^{-}\coloneqq \CC f$, so that $\gg=\nn^{-}\oplus\hh\oplus\nn^{+}$.

For $n\in\ZZ_{\ge 0}$, let $V_n$ be the simple $\gg$-module of highest weight $n$. It has basis
$v_0,\dots,v_n$ with action
\[
 h v_k=(n-2k)v_k,\qquad
 f v_k=\begin{cases} v_{k+1}, & 0\le k<n,\\ 0, & k=n,\end{cases}
 \qquad
 e v_k=\begin{cases} 0, & k=0,\\ k(n-k+1)v_{k-1}, & 1\le k\le n.\end{cases}
\]
Thus the weights are $n,n-2,\dots,-n$, each with multiplicity one. The center $\mathfrak{Z}(\gg)$ of the universal enveloping algebra is generated by the quadratic Casimir element
\begin{equation}
 \Omega_{\gg}=ef+fe+\tfrac12 h^{2},
\end{equation}
which acts on $V_n$ by the scalar $\tfrac{n(n+2)}{2}$.

Let $S:=\bigwedge \mathfrak n^{-}$ be the $\rho$-shifted spin module for the Clifford algebra of $\mathfrak p:=\mathfrak n^{+}\oplus \mathfrak n^{-}$, with respect to the pairing induced by $B$. It is generated by $s_{\bar 0}:=1\in \bigwedge^{0}\mathfrak n^{-}$ and $s_{\bar 1}:=f\in \bigwedge^{1}\mathfrak n^{-}$, of parity $\bar 0$ and $\bar 1$, respectively. The action of $\mathfrak g$ is given by
\begin{equation*}
e\cdot s_{\bar 0}=s_{\bar 1}, \qquad e\cdot s_{\bar 1}=0, \qquad f\cdot s_{\bar 0}=0, \qquad f\cdot s_{\bar 1}=s_{\bar 0}, \qquad h\cdot s_{\bar 0}=s_{\bar 0}, \qquad h\cdot s_{\bar 1}=-s_{\bar 1}.
\end{equation*}
Hence $S_{\bar 0}$ has weight $-\rho$ and $S_{\bar 1}$ has weight $\rho$. Therefore $S$ is the simple $\mathfrak g$-module of highest weight $\rho$, that is, $S\cong V_{1}$. Its $\ZZ_{2}$-grading is $S_{\bar 0}=\bigwedge^{0}\mathfrak n^{-}=\CC s_{\bar 0}$ and $S_{\bar 1}=\bigwedge^{1}\mathfrak n^{-}=\CC s_{\bar 1}$.

As a quadratic subalgebra we take $\ll\coloneqq \hh$. It has quadratic Casimir $\Omega_{\hh}=\tfrac{1}{8}h^{2}$. It acts on a one-dimensional $\hh$-module $\CC_{\lambda}$ (where $h$ acts by $\lambda$) by $\tfrac{\lambda^{2}}{8}$.

As an $\hh$-supermodule,
$V_n\otimes S$ decomposes into one-dimensional weight spaces, with parity determined by
the $S$-factor:
\begin{equation}
 (V_n\otimes S)_{\bar 0}=\bigoplus_{k=0}^n \CC\,(v_k\otimes s_{\bar 0}),\qquad
 (V_n\otimes S)_{\bar 1}=\bigoplus_{k=0}^n \CC\,(v_k\otimes s_{\bar 1}),
\end{equation}
and
\begin{equation} \label{eq::explicit_action_of_h}
 h(v_k\otimes s_{\bar 0})=(n-2k+1)(v_k\otimes s_{\bar 0}),\qquad
 h(v_k\otimes s_{\bar 1})=(n-2k-1)(v_k\otimes s_{\bar 1}).
\end{equation}
Equivalently, writing $\CC_\lambda$ for the one-dimensional $\hh$-module of weight $\lambda$,
\begin{equation}
 V_n\otimes S \cong
 \left(\bigoplus_{k=0}^n \CC_{\,n-2k+1}\right)_{\bar 0}
 \oplus
 \left(\bigoplus_{k=0}^n \CC_{\,n-2k-1}\right)_{\bar 1}.
\end{equation}
Thus the odd weights are $n-1,n-3,\dots,-n-1$, while the even weights are
$n+1,n-1,\dots,-n+1$.

Let $\Dirac_{\gg,\hh}$ be the $\hh$-relative cubic Dirac operator. It squares to 
\begin{equation}
 \Dirac_{\gg,\hh}^{2}=\Omega_{\gg} \otimes 1 - \frac{1}{8}(h\otimes 1 + 1 \otimes \gamma^{\WW}(h))-\frac{1}{8}(1\otimes 1).
\end{equation}
Consequently, if $\CC_{\mu}$ is a $\hh$-weight space of $V_{n}\otimes S$, it acts as scalar multiplication by 
\begin{equation}
 \frac{(n+1)^{2}-\mu^{2}}{8}.
\end{equation}
We conclude that $\CC_{\mu}$ belongs to $\ker \Dirac_{\gg,\hh}^{2}$ iff $\mu=\pm (n+1)$. Because each finite-dimensional highest weight module $V_n$ carries a $\mathfrak{su}(2)$-invariant positive Hermitian form (so the compact real form acts by skew-Hermitian operators), the basis-independent cubic Dirac operator $\Dirac_{\gg,\hh}$ is therefore selfadjoint, and for a selfadjoint operator one has $\ker \Dirac_{\gg,\hh}=\ker \Dirac_{\gg,\hh}^2$. Hence
\begin{equation} \label{eq::example_kernel_Dirac}
 \ker \Dirac_{\gg,\hh}=\ker \Dirac_{\gg,\hh}^{2}=\CC_{n+1} \oplus \CC_{-(n+1)}.
\end{equation}
Note that $\CC_{n+1}$ is spanned by $v_{0}\otimes s_{\bar{0}}$ and $\CC_{-(n+1)}$ is spanned by $v_{n}\otimes s_{\bar{1}}$, that is, $\CC_{n+1}$ is even while $\CC_{-(n+1)}$ is odd. In particular, $\sdim_{\CC}(\ker \Dirac_{\gg,\hh})=1-1=0$.

A naive construction of an invariant would be 
\begin{equation}
 \str_{V_{n}\otimes S}(e^{-\Dirac_{\gg,\mathfrak{h}}^{2}}).
\end{equation}
However, this is always trivial. Indeed, by Lemma~\ref{lemm::decomposition_under_Laplace}, we have
$V_{n}\otimes S\cong \ker \Dirac_{\gg,\hh}^{2}\oplus \Im \Dirac_{\gg,\hh}^{2}$, and since $\Dirac_{\gg,\hh}^{2}$ is positive,
all its eigenvalues are positive. Let $\lambda>0$ be an eigenvalue, and set $E\coloneqq V_{n}\otimes S$. Then 
\begin{equation}
 \Dirac_{\gg,\hh} : \ker (\Dirac_{\gg,\hh}^{2}-\lambda) \cap E_{\bar{0}} \to \ker(\Dirac_{\gg,\hh}^{2}-\lambda)\cap E_{\bar{1}}
\end{equation}
defines an isomorphism. We conclude
\begin{equation}
\str_{E}(e^{-\Dirac_{\gg,\hh}^{2}})=\str_{E}(\operatorname{id}_{\ker \Dirac_{\gg,\hh}^{2}})=\sdim({\ker \Dirac_{\gg,\hh}^{2}})=1-1=0.
\end{equation}

If, instead, we couple $\Dirac_{\gg,\hh}$ to the covariant Weil differential coming from the universal connection $1$-form, the resulting class is non-trivial.

For the standard triple $\{e,f,h\}$, denote the corresponding even and odd
generators in $\Weil$ by $u^{e},u^{f},u^{h}$ and
$\theta^{e},\theta^{f},\theta^{h}$. Then
\begin{equation}
\nabla^{V_{n}}=\tfrac{1}{4}\theta^{e}\otimes\rho_{E}(f)
+\tfrac{1}{4}\theta^{f}\otimes\rho_{E}(e)
+\tfrac{1}{8}\theta^{h}\otimes\rho_{E}(h),\quad
F^{V_{n}}=\tfrac{1}{4}u^{e}\otimes\rho_{E}(f)
+\tfrac{1}{4}u^{f}\otimes\rho_{E}(e)
+\tfrac{1}{8}u^{h}\otimes\rho_{E}(h).
\end{equation}
The $\gg$-action on $E$ is given by
\begin{equation}
\rho_{E}(e)=\rho_{V_{n}}(e)\otimes1,\quad
\rho_{E}(f)=\rho_{V_{n}}(f)\otimes1,\quad
\rho_{E}(h)=\rho_{V_{n}}(h)\otimes1+1\otimes\gamma'(h),
\end{equation}
where the last formula is specified in~\eqref{eq::explicit_action_of_h}.

If $n=0$, so that $V_{0}\cong\CC$ is trivial, then
$\rho_{V_{0}}(e)=\rho_{V_{0}}(f)=\rho_{V_{0}}(h)=0$, hence
\begin{equation}
\nabla^{V_{0}}=\tfrac{1}{8}u^{h}\otimes\rho_{E}(h),\quad
F^{V_{0}}=\tfrac{1}{8}u^{h}\otimes\rho_{E}(h),\quad
\Dirac_{\gg,\hh}^{2}=0,
\end{equation}
and therefore
\begin{equation}
\operatorname{ch}_{V_{0}}(t)=\str_{S}(e^{-F^{V_{0}}}).
\end{equation}
Using $\rho_{E}(h)s_{\bar{0}}=s_{\bar{0}}$ and
$\rho_{E}(h)s_{\bar{1}}=-s_{\bar{1}}$, one obtains
\begin{equation}
\operatorname{ch}_{V_{0}}(t)
=e^{\tfrac{1}{8}u^{h}}-e^{-\tfrac{1}{8}u^{h}}
=2\sinh\!\bigl(\tfrac{1}{8}u^{h}\bigr)
\in\widehat{\WW}(\gg,\ll),
\end{equation}
which defines a class in
$\H(\widehat{\WW}(\gg,\ll),d^{\WW_{\gg,\ll}})\cong\CC[[h]]$.

For $n\ge1$, equip $V_{n}$ with a Hermitian form such that
$\Dirac_{\gg,\hh}$ is selfadjoint and
$\Dirac_{\gg,\hh}^{2}$ is non-negative. Decomposing $E$ into
$\Dirac_{\gg,\hh}^{2}$-eigenspaces $E(\lambda)$ gives
\begin{equation}
e^{-t\Dirac_{\gg,\hh}^{2}}\xrightarrow{t\to\infty}
\begin{cases}
1,&\lambda=0,\\
0,&\lambda\ne0.
\end{cases}
\end{equation}
Since $\Dirac_{\gg,\hh}$ and $\nabla^{V_{n}}$ commute and $[\operatorname{ch}_{V_{n}}(t)]$ is independent of $t$,
\begin{equation}
[\operatorname{ch}_{V_{n}}(t)]
=[ \str_{E}(e^{-F^{V_{n}}}e^{-t\Dirac_{\gg,\hh}^{2}}]=[\str_{\Ker\Dirac_{\gg,\hh}}
\bigl(e^{-F^{V_{n}}}\bigr)],
\end{equation}
and by~\eqref{eq::example_kernel_Dirac},
\begin{equation}
[\operatorname{ch}_{V_{n}}(t)]
=\bigl[e^{\tfrac{n+1}{8}u^{h}}-e^{-\tfrac{n+1}{8}u^{h}}\bigr]
=\bigl[2\sinh\!\bigl(\tfrac{n+1}{8}u^{h}\bigr)\bigr]
\in\H(\widehat{\WW}(\gg,\hh),d^{\WW_{\gg,\hh}})
\cong\CC[[h]].
\end{equation}

\subsection{A Word on Basic Classical Lie Superalgebras}

Let $\gg$ be a basic classical Lie superalgebra and let $\ll\subseteq\gg$ be a quadratic Lie subsuperalgebra as in Section~\ref{subsec::relative_Dirac}, so that $\gg=\ll\oplus\pp$. For every finite-dimensional $\gg$-supermodule $M$ we may form the Bismut--Quillen superconnection $\AA^M_{\gg,\ll}(t)$ introduced above; with the usual sign conventions, the construction and its formal properties extend to the super setting. In contrast to the case of a complex semisimple Lie algebra, the oscillator module $\overline{M}(\pp)$ is infinite-dimensional, and hence the supertrace on $E\coloneqq M\otimes\overline{M}(\pp)$ is not a priori available. Nevertheless, for unitarizable finite-dimensional $\gg$-supermodules $M$ (Section~\ref{subsec::unitarizable_supermodules}) one can extract a canonical substitute. In this case $\Dirac_{\gg,\ll}$ is selfadjoint, hence $\Dirac_{\gg,\ll}^2$ is positive. Writing $E=\bigoplus_{\lambda\ge0}E(\lambda)$ for the $\Dirac_{\gg,\ll}^2$-eigenspace decomposition, one has on $E(\lambda)$
\[
e^{-t\Dirac_{\gg,\ll}^2}\xlongrightarrow[t\to\infty]{}\begin{cases}\id_{E(0)},&\lambda=0,\\0,&\lambda\neq0.\end{cases}
\]
Moreover $\ker\Dirac_{\gg,\ll}^2=\ker\Dirac_{\gg,\ll}=\H_{\Dirac_{\gg,\ll}}(M)$ and $\ker\Dirac_{\gg,\ll}$ is finite-dimensional by \cite{Schmidt_Noja}. Hence, whenever a relative supertrace on $E$ can be defined so that the transgression argument applies, the class $[\chi_M(t)]$ is independent of $t$, and letting $t\to\infty$ yields the substitute
\begin{equation}
\widetilde{\operatorname{ch}}_M\coloneqq \str_{\ker\Dirac_{\gg,\ll}}\!\bigl(e^{F^M}\bigr).
\end{equation}
Since $\nabla^M$ commutes with $\Dirac_{\gg,\ll}$, the curvature term $F^M$ preserves $\ker\Dirac_{\gg,\ll}$, so this expression is well-defined. In particular, it determines a class $[\widetilde{\operatorname{ch}}_M]\in \H(\widehat{\WW}(\gg,\ll),d^{\WW_{\gg,\ll}})$.

\thispagestyle{empty}